\documentclass[11pt,reqno]{amsart}
\usepackage{booktabs}
\usepackage[utf8]{inputenc}
\usepackage[T1]{fontenc}
\usepackage{amssymb,amsmath,amsthm,amsfonts,eufrak,enumerate}
\usepackage{mathabx}
\usepackage{ulem}
\usepackage[english]{babel}
\usepackage[dvipsnames]{xcolor}
\usepackage{enumitem}
\usepackage[pagebackref, colorlinks = true, linkcolor = teal, urlcolor  = teal, citecolor = violet]{hyperref}
\usepackage{tikz}
\newcommand*\circled[1]{\tikz[baseline=(char.base)]{
            \node[shape=circle,draw,inner sep=2pt] (char) {#1};}}

\usepackage{xr}


\newtheorem{thm}{Theorem}[section]
\newtheorem{ass}{Assumption}[section]
\newtheorem{prop}[thm]{Proposition}

\newtheorem{cor}[thm]{Corollary}
\theoremstyle{plain}
\newtheorem{lem}[thm]{Lemma}
\theoremstyle{plain}
\theoremstyle{definition}
\newtheorem{defi}[thm]{Definition}
\theoremstyle{remark}

\newtheorem{rmq}{Remark}

\newcommand{\luc}{\lesssim_{uc}}
\newcommand{\aaa}{\mathfrak{a}}
\newcommand{\deux}{\mathfrak{c}_1}
\newcommand{\cun}{\mathfrak{c}_3}

\newcommand{\ran}{r_{n}}

\newcommand{\E}{\mathbb{E}}

\newcommand{\R}{\mathbb{R}}
\newcommand{\ER}{\mathbb{R}}

\newcommand{\N}{\mathbb{N}}

\newcommand{\PE}{\mathbb{P}}
\newcommand{\lld}{\lambda_d}
\newcommand{\LL}{\mathcal{L}}

\newcommand{\cal}{\mathcal}

\newcommand{\Ur}{\zeta}

\newcommand{\Rd}{\mathbb{R}^d}
\newcommand{\Rq}{\mathbb{R}^q}
\newcommand{\Pth}{\mathbb{P}_{\theta}}
\newcommand{\Y}{\bar{X}}
\newcommand{\Yg}{\bar{X}^{\gamma}}

\newcommand{\Pthz}{\mathbb{P}_{\theta^{\star}}}
\renewcommand{\and}{\mbox{ and }}
\newcommand{\IWC}{(\mathbf{I_{W_1}(c)})}
\newcommand{\UPI}{(\mathbf{PI_U})}
\newcommand{\CPU}{C^U_P}
\newcommand{\ic}{\alpha_c}
\newcommand{\AL}{(\mathbf{A}_L)}
\newcommand{\AnL}{(\mathbf{A}_{n L})}
\newcommand{\SC}{(\mathbf{SC}_\rho)}

\newcommand{\tgno}{\widehat{\theta}^{\gamma}_{n,t_N}}

\newcommand{\nabdob}{\nabla W}
\newcommand{\dob}{W}
\newcommand{\hesdob}{\nabla^2 W}

\newcommand{\Xn}{X^{(n)}}

\newcommand{\bX}{\bar{X}}
\newcommand{\tX}{\tilde{X}}
\newcommand{\epsi}{\varepsilon}
\newcommand{\un}{\underline}

\newcommand{\mte}{\mathfrak{e}}

\newcommand{\Hcu}{\mathbf{(H}^{q,r}_{\mathfrak{KL}})}

\newcommand{\id}{\mathbf{I}_d}

\newcommand{\troiss}{{\mathfrak{c}_2}}
\newcommand{\cpar}{C_{{\rm par}}}
\newcommand{\ra}[1]{\renewcommand{\arraystretch}{#1}}

\newcommand{\tcb}[1]{{#1}}

\newcommand{\nsp}[1]{\|{#1}\|_{\star}}
\newcommand{\flip}{[f]_1}
\newcommand{\dflip}{[Df]_{1,\star}}
\newcommand{\dw}{\Upsilon}

\usepackage{xargs}
\usepackage[prependcaption]{todonotes}
\newcommandx{\clem}[2][1=]{\todo[inline, author={Clem}, linecolor=blue,backgroundcolor=blue!25,bordercolor=blue,#1]{#2}}
\newcommandx{\clemnote}[2][1=]{\todo[author={Clem}, linecolor=blue,backgroundcolor=blue!25,bordercolor=blue,#1]{#2}}

\newcommandx{\seb}[2][1=]{\todo[inline, author={Seb}, linecolor=green,backgroundcolor=green!25,bordercolor=green,#1]{#2}}
\newcommandx{\sebnote}[2][1=]{\todo[author={Seb}, linecolor=green,backgroundcolor=green!25,bordercolor=green,#1]{#2}}

\newcommandx{\fab}[2][1=]{\todo[inline, author={Fab}, linecolor=green,backgroundcolor=red!25,bordercolor=red,#1]{#2}}
\newcommandx{\fabnote}[2][1=]{\todo[author={Fab}, linecolor=red,backgroundcolor=red!25,bordercolor=red,#1]{#2}}

\author{S\'ebastien Gadat,  Fabien Panloup and Cl\'ement Pellegrini }

\address{S. Gadat: Toulouse School of Economics, CNRS UMR 5314\\ Universit\'e Toulouse 1 Capitole\\
Esplanade de l'Universit\'e, Toulouse, France.\\
${}^{\ddagger}$ Institut Universitaire de France.}
\address{F. Panloup: Universit\'e d'Angers, CNRS, LAREMA, SFR MATHSTIC, F-49000 Angers, France.} 
\address{C. Pellegrini: Institut de Math\'ematiques de Toulouse; UMR5219, UPS IMT, F-31062 Toulouse Cedex 9, France.}%
\email{\url{sebastien.gadat(at)tse-fr.eu},  \url{fabien.panloup(at)univ-angers.fr}, 
\url{clement.pellegrini(at)math.univ-toulouse.fr}}%

\title{On the cost of Bayesian posterior mean strategy for log-concave models}

\date{\today}

\begin{document}
	
\thanks{S. Gadat acknowledges funding from the French National Research Agency (ANR) under the Investments for the Future program (Investissements d'Avenir, grant ANR-17-EURE-0010) and for the grant MaSDOL - 19-CE23-0017-01.}

\thanks{
The authors gratefully acknowledge Patrick Cattiaux, Max Fathi, Gersende Fort, Nathael Gozlan and Ald\'eric Joulin for stimulating discussions and valuable insights during the development of this work and the anonymous referees for comments that led to a significant improvement of the work.}

\begin{abstract} In this paper, we investigate the problem of computing Bayesian estimators using  Langevin Monte-Carlo type  approximation. The novelty of this paper is to consider together the statistical and numerical counterparts (in a  general log-concave setting). More precisely, we address the following question: given $n$ observations in $\ER^q$ distributed under an unknown probability $\PE_{\theta^\star}$, $\theta^\star\in\ER^d$, what is the optimal numerical strategy and its cost for the approximation of $\theta^\star$ with the Bayesian posterior mean?

\smallskip

\noindent To answer this question, we establish some quantitative statistical bounds related to the underlying  Poincar\'e constant of the model and  establish new results about the numerical approximation of Gibbs measures by Cesaro averages of Euler schemes of (over-damped) Langevin diffusions. These last results {are mainly based on} some quantitative controls on the solution of the related \textit{Poisson equation} of the (over-damped) Langevin diffusion {in strongly and weakly convex settings}.

\end{abstract}

\maketitle





\section{Introduction}

\subsection{Log-concave statistical models \label{sec:log-concave-model}}

In this paper, we consider a statistical model $(\Pth)_{\theta \in \mathbb{R}^d}$ parametrized by a parameter $\theta \in \mathbb{R}^d$. 
We assume that each distribution $\Pth$ defines a probability measure on   $(\mathbb{R}^q,\mathcal{B}(\mathbb{R}^q))$
and that all the distributions $\Pth$ are absolutely continuous with respect to the Lebesgue measure $\lambda_q$, we denote by $\pi_\theta$ the corresponding density:

$$
\forall \xi \in \Rq \qquad 
\pi_\theta(\xi):=\frac{\text{d}\Pth}{\text{d}\lambda_q}(\xi).
$$
We assume that we observe  $n$ i.i.d. realizations $(\xi_1,\ldots,\xi_n)$, sampled according to  $\Pthz$  where $\theta^{\star}$ is an unknown parameter. We are then interested in Bayesian statistical procedures designed to recover $\theta^\star$.
In all the paper, we restrict our study to the specific class of \textit{log-concave models} where the distributions are described by:
\begin{equation}\label{eq:ptheta}
\pi_{\theta}(\xi) :=  e^{-U(\xi,\theta)},
\end{equation} where  {$(\xi,\theta) \longmapsto U(\xi,\theta)=- \log(\pi_{\theta}(\xi))$} is assumed to be a convex function.
 Note that implicitly, the normalizing constant 
$
Z_{\theta} := \int_{\R^q} e^{-U(\xi,\theta)} \text{d}\xi
$
is assumed to be equal to $1$, which is not restrictive up to a modification of $U$.

 
{Besides the Gaussian toy model that trivially fall{s} into our framework, log-concave statistical models have a longstanding history in a wide range of applied mathematics and it seems almost  impossible to  enumerate exhaustively the range of possible applications.
 {For instance, the log-concave setting appears} with exponential families thanks to the
Pitman-Koopman-Darmois Theorem, in  extreme value theory, tests (chi-square distributions), Bayesian statistics among others.  Log-concave distributions also play a central role in probability and functional analysis (\cite{Poincare_log_concave,Bobkov-aop}), or geometry (see \textit{e.g.} \cite{KLS}). 
 The log-concave property is commonly used in economics (for example the density of customer's utility parameters is generally assumed to  satisfy this property \cite{Bagnoli_economics}), 
in game theory (see \textit{e.g.} \cite{Caplin_competition} and \cite{Tirole_contracts}), in political science and social choice (see \textit{e.g.}  \cite{Caplin_election}) or in econometrics (for example through the Roy model, see \textit{e.g.} \cite{Heckman_roy}){.}

 An important example comes from all distributions that are built with a multivariate convex function $U: \mathbb{R}^d \times \mathbb{R}^q \longrightarrow \mathbb{R}$ and where the first $d$ coordinates are considered as the hidden parameter $\theta$ while the $q$ other ones are the observations.
Starting from a convex function $U: \mathbb{R}^q \longrightarrow \mathbb{R}^q$,  translation models $(\xi,\theta)\longmapsto e^{-U(\xi-\theta)}$ also generate typical examples of log-concave models in $(\xi,\theta)$.
Many other distributions satisfy the log-concave property: Gumbel and Weibull distributions with a shape parameter larger than 1. 
 In particular, this includes a large class of parametric probability distributions such as Gaussian or Laplace models, logistic regression models, Subbotin distributions,  Gamma or Wishart distributions, Beta and uniform distributions on real intervals among others.   
We refer to \cite{SW} for a detailed  survey on properties of log-concave distributions, to \cite{Walther} for a list of modeling and application issues and to \cite{Samworth} for non-parametric density estimation procedures of log-concave distributions.
}

\subsection{Bayesian estimation of $\theta^{\star}$}
We briefly sketch Bayesian strategies for estimating $\theta^{\star}$.

\noindent
 Considering a prior distribution $\Pi_0$ on  $\theta$,  we assume that $\Pi_0$ is absolutely continuous with respect to the Lebesgue measure $\lld$ on $\mathbb{R}^d$. 
Without any {possible confusion} with the familiy of densities  $(\pi_\theta)_{\theta \in \R^d}$, the associated density of the prior is denoted by $\pi_0$
and we further assume that $\pi_0$ is also log-concave: $\pi_0(\theta):= e^{-V_0(\theta)}$
where $V_0$ encodes the prior knowledge on $\theta$. We emphasize that this last assumption is not restrictive since the prior distribution is chosen by the user.
We  denote by {$\pi_n$  the density of the posterior distribution} (that depends on the  observations $\mathbf{\xi^n}:= (\xi_1,\ldots,\xi_n)$) given by:
$\pi_n(\theta)\,\propto \,\pi_{0}(\theta)\prod_{k=1}^n \pi_{\theta}(\xi_k ).$
The posterior distribution is a data-driven probability distribution that  may be written as:
\begin{equation}\label{eq:def_wn}
\forall \theta\in \Rd \qquad \pi_n(\theta)=\frac{e^{-W_n(\mathbf{\xi^n},\theta)}}{Z_n(\mathbf{\xi^n})}\pi_0(\theta)
\quad \text{where}
\quad 
 W_n(\mathbf{\xi^n},\theta)=\sum_{i=1}^nU(\xi_i,\theta).
\end{equation}  The quantity $Z_n(\mathbf{\xi^n})$ corresponds to the normalizing constant and depends as well on $\mathbf{\xi^n}$:
$$
Z_n(\mathbf{\xi^n}) = \int_{\R^d} \pi_0(\theta) e^{-W_n(\mathbf{\xi^n},\theta)} \text{d} \theta.
$$
It is well known that  the posterior distribution enjoys consistency properties (see \textit{e.g.} \cite{Schwartz,IbraHas}):  under mild assumptions on the prior distribution and on the statistical model, the posterior distribution concentrates its mass around $\theta \in \R^d$ whose distribution is close to $\mathbb{P}_{\theta^{\star}}$.
 
With additional metric and identifiability assumptions, some stronger results may be obtained   in general parametric or non-parametric models. We refer to the seminal contribution of \cite{GhosalGhoshvdVaart} and the references therein,  to the work of \cite{CasvdV} for an extension to less standard situations of high-dimensional models and to \cite{vZvdV} for infinite dimensional models. 
The posterior distribution may be used to define 
Bayesian estimators, in particular, we shall introduce the popular posterior mean estimator of $\theta^{\star}$ defined by:
\begin{equation}
\label{eq:posterior_mean}
\widetilde{\theta}_n = \int_{\mathbb{R}^d} \theta \pi_n(\theta) \text{d}\theta.
\end{equation}
This estimator is usually consistent and a popular issue  is to establish \textit{rates of convergence} towards $\theta^{\star}$ through some $L^p$-criterion, \textit{i.e.} find a sequence $\epsilon_n \longrightarrow 0$ such that:
\begin{equation}\label{eq:statbound}
\mathbb{E}_{\theta^\star} \left(|\widetilde{\theta}_n-\theta^\star|^p\right) \leq \epsi_n^p,
\end{equation}
for a given $p>1$. Nevertheless, when a such bound is obtained,   the story is not over. Actually, the theoretical posterior mean given by Equation \eqref{eq:posterior_mean} being generally not explicit, the practical use of the above statistical bounds certainly requires to provide computable algorithms that may approximate $\widetilde{\theta}_n$.  In particular, it is legitimate to look for a tractable estimator 
$\widehat{\theta}_n$ that approaches an $\epsi_n$-neighborhood of $\widetilde{\theta}_n$ with as less operations as possible. 

\subsection{Langevin Monte Carlo discretization and practical estimator}\label{sec:intro_disc}

To approximate $\widetilde{\theta}_n$, it is commonly used  to write $\pi_n$  as a Gibbs field:
\begin{equation}\label{eq:tildeW}
\pi_n(\theta)\, \propto \,  \exp(-\widetilde W_n(\mathbf{\xi^n},\theta)) \quad \text{with} \quad 
\widetilde{W}_n(\theta)  = \sum_{i=1}^n U(\xi_i,\theta)+\log \left(\frac{1}{\pi_0(\theta)}\right).\end{equation}
%
Under some mild assumptions on $\widetilde W_n$, it is well known that a such probability measure is the unique invariant distribution   of the (over-damped) Langevin diffusion defined by:
\begin{eqnarray}\label{eq:diffusion}
d\Xn_t =- \nabla\widetilde W_n(\Xn_t)dt + \sqrt 2 dB_t,
\end{eqnarray} 
where $(B_t)$ is a $d$-dimensional standard Brownian motion.
Thus, the probability $\pi_n$ can be approximated using the long-time ergodic convergence of $(\Xn_t)_{t\ge0}$ towards $\pi_n$. One can  distinguish two types of convergences towards $\pi_n$: the convergence of the distribution of $\Xn_t$ as $t\rightarrow+\infty$ or the $a.s.$ convergence of the occupation measure  of $(\Xn_t)_{t\ge0}$. 
 
 Here, we build   our algorithm with the second type of convergence, which requires only one path of the diffusion.
 We are thus led to consider the occupation measure applied to the identity function denoted by $\id$. In this case, this is nothing but the Cesaro average:
\begin{equation}
\label{def:cesaro_shift}
\forall n \in \N^\star \quad \forall t >0 \qquad \widehat{\theta}_{n,t} := \frac{1}{t} \int_{0}^t\Xn_s \text{d}s.
\end{equation}

{In  \eqref{def:cesaro_shift}, Cesaro averages are based on the ``true'' diffusion but to obtain a tractable algorithm, we need to introduce Cesaro averages of some discretization schemes of  \eqref{def:cesaro_shift}.}
For this purpose,  we consider a  positive step size $\gamma$ and introduce a constant step-size explicit Euler-Maruyama scheme related to $\Xn$ {(omitting the index $n$ for simplicity)}:
\begin{equation}
\label{eq:Euler_def}
\forall k \ge 0 \quad \Y_{t_{k+1}}:=\Y_{t_k}-\gamma \nabla \widetilde{W}_n(\Y_{t_k})+\sqrt{2} \Ur_{k+1}  \quad  \text{with} \quad t_k=k \gamma,
\end{equation}
where   for all $k\ge1$, $\Ur_k={B}_{{t_{k}}}-{B}_{t_{k-1}}$ and $({B}_t)_{t\ge0}$ is a standard $d$-dimensional Brownian motion. 
%
It is possible to define a continuous affine interpolation of \eqref{eq:Euler_def} but from a practical point of view, it will be more comfortable to consider some initialization and ending times in the discrete grid $(t_k)_{k \ge 0}$. 

For any time horizon $t_N>0$, the approximation of $\tgno$ with a step-size $\gamma$ is given by:
\begin{equation}\label{def:tgn}
\tgno :=
\displaystyle\frac{1}{N}{\sum_{j=0}^{N-1}  \Y_{t_j}},
\end{equation}
which corresponds to the Cesaro average of the discretized trajectory \eqref{eq:Euler_def}. 


{This Cesaro construction first appeared in \cite{talay}  where some convergence properties of the empirical measure of the Euler scheme with constant step size were investigated. In a series of recent papers (among others, see \cite{lamberton_pages,LP03,pagespanloup2012} or \cite{pagespanloup2018} for a \textit{multilevel} extension), the decreasing-step setting has also been studied. Compared with these papers, the novelty of our work is that, we  propose some non-asymptotic quantitative bounds (see Section \ref{sec:contribplan} for details about the corresponding results). 
Note that for ease of presentation, we prefered to mainly consider the (less technical) constant step setting.}

\subsection{Contributions and plan of the paper.} \label{sec:contribplan} 
{Our main results are stated in Section \ref{sec:main}}. We address two main points:


\begin{itemize}
\item \textbf{The Bayesian consistency}: \textit{i.e.}  the  distance between the  posterior mean $\widetilde{\theta}_n$ and $\theta^\star$.
\end{itemize}

The Bayesian consistency is studied in  Section \ref{sec:bayes}. First, Theorem \ref{theo:consistency} derives an upper bound of the $L^p$ loss in terms of the \textit{Poincar\'e constant} of the model (which is assumed to satisfy a uniformity condition).  Compared with the literature on this problem (see $e.g.$ \cite{Jordan-Wainwright}), this result is  written under general assumptions on the family of log-concave models and does not require specific assumptions related to a dynamical system. Second, Theorem \ref{theo:lower_bound} proves that our upper bound is minimax optimal for a large class of models.

%

\begin{itemize}
\item \textbf{The  numerical question} related to the approximation of $\widetilde{\theta}_n$ by a computable algorithm. 
\end{itemize}

 Our main results about Cesaro-type LMC and optimal tuning of the parameters (in terms of $n$ and $d$) are Theorems \ref{theo:learning_weak_convex} and \ref{theo:learning_strong_convex} whose complete study is devoted to Section \ref{sec:disc_c} and Section \ref{sec:discretisation}. Our approach based on the \textit{Poisson equation}, $i.e.$ on the inversion of the infinitesimal generator of the diffusion with a quantitative point of view, will allow to obtain Theorem \ref{cor:thmPoisson} (weakly convex case) and Theorem \ref{cor:thmPoissonSC} (strongly convex case).
 On this numerical topic, our main  contribution is the (almost) quantitative study of the weakly convex case (with the help of the Kurdyka-\L ojasiewicz inequality introduced in $\Hcu$) based on quantitative controls on the solution of the \textit{Poisson equation} (and on its derivatives). Nevertheless, the Poisson approach seems also of interest in the strongly convex case, the results being very close to \cite{durmus} but slightly improve the dependence in the parameters of the model. This is probably due to the fact that Poisson approach does not require to couple the Euler scheme with the continuous process (see Remark \ref{rmqdurmusmus} for details). 
In the two cases, we obtain some  explicit bounds on the distance between the discretized Cesaro average and the posterior mean. These bounds lead to  an optimal tuning in terms of $n$ and $d$ in the weakly convex setting.


%
 In short, for both weakly and strongly convex situations, for some given $n$ and $d$, we first exhibit an  $\epsi_n$ that upper-bounds the ($L^p$-type) error between $\widetilde{\theta}_n$ and $\theta^\star$. Then, for this $\epsi_n$, we  aim at tuning  the procedure \eqref{def:tgn} in order to {obtain} an $\epsi_n$-approximation\footnote{By $\epsi$-approximation, we mean an approximation of the target with an $L^p$-error {of the order} ${\cal O}(\epsi)$.} of $\widetilde{\theta}_n$ with a minimal computational cost. 
  This cost $N_{\varepsilon_n}$ (number of iterations of the Euler scheme) will be explicited  as an amount of operations upper bounded by $n^a d^b$, where $a$ and $b$ are some positive numbers that will be explicited below.
Our main contribution states that Bayesian learning can be optimally performed in polynomial time less than $n^a d^b$ operations for typically weakly convex situations with explicit $a$ and $b$, and in even
$ n d$ operations in strongly convex cases.\\
 {For an improved readability of the work, numerous proofs and technical results are deferred to the Appendix.

{\subsection{Notations }

\begin{itemize}
\item{}  The notation $a\lesssim_{uc}b$ means that $a\le c b$ where $c$ is a \textit{universal constant}, $i.e.$ independent of any of the relevant parameters of the problems. 
\item{} {The usual scalar product on $\ER^d$ is denote by $\langle\,,\,\rangle$ and the induced Euclidean norm by $|\,.\,|$. The set $\mathbb{M}_{d,d}$ refers to the set of real $d \times d$ matrices. The Frobenius norm on $\mathbb{M}_{d,d}$ is denoted by $\|\,.\,\|_F$: for any $A\in\mathbb{M}_{d,d}$, $\|A\|_F^2=\sum_{1\le i,j\le d} A_{i,j}^2.$ We also denote by $\nsp{.}$ the spectral norm, defined for any matrix $A$ by $\nsp{A}=\sup_{|x|=1} |Ax| = \sqrt{\bar{\lambda}_{A^t A}} $, where $\bar{\lambda}_{A^t A}$ refers to the maximal eigenvalue of the symmetric matrix ${A^t A}$.} 
\item{} {For a Lipschitz continuous function $f:\ER^d\rightarrow\ER^{d'}$, we denote by $[f]_1$ its related Lipschitz constant. When $f$ is ${\cal C}^1$, its Jacobian matrix denoted by $Df$ maps from $\ER^d$ to $\mathbb{M}_{d,d'}$. When $Df$ is Lipschitz {for} the spectral norm $\nsp{.}$, we denote by $\dflip$ the related Lipschitz constant. As usual the notations $\nabla$ and $\Delta$ will hold for the gradient and the Laplacian.}  
\item{}  In the sequel,  we call upon to different distances on the space of probability measures: the $p$-Wasserstein distance is denoted by $\mathcal W_p$, $p\in[1,\infty[$ (mainly used with $p=1$ or $p=2$ in this paper). We recall that
$$\mathcal{W}_p(\mathbb{P}_{\theta_1},\mathbb{P}_{\theta_2})=\left(\inf_{\pi\in\Pi(\mathbb{P}_{\theta_1},\mathbb{P}_{\theta_1})}\int_{\mathbb R^q}d(x,y)^p\pi(x,y)\right)^{1/p},$$
where $\Pi(\mathbb{P}_{\theta_1},\mathbb{P}_{\theta_2})$ stands for the set of all coupling measures of $\mathbb{P}_{\theta_1}$ and $\mathbb{P}_{\theta_2}$. The Kullback-Leibler divergence is denoted by $KL(\mathbb{P}_{\theta_1},\mathbb{P}_{\theta_2})$ and is classically defined by
$$KL(\mathbb{P}_{\theta_1},\mathbb{P}_{\theta_2})=\int_{\mathbb R^q}\log\left(\frac{\pi_{\theta_1}(\xi)}{\pi_{\theta_2}(\xi)}\right)\pi(\theta_1)(\xi)d\xi.$$
\end{itemize}
}

\section{Main results and discussion \label{sec:main}}
%
%
%
In our work, two sources of randomness are considered. The first one is derived from the observations {$\mathbf{\xi^n}= (\xi_1,\ldots,\xi_n)$}. The notations $\mathbb{P}_{\theta}$ and $\mathbb{E}_{\theta}$ refer to the probability and expectation on {the unknown distribution of} the sampling process. The second source of randomness is related to the posterior distribution $\pi_n$ over $\Rd$: for any Borelian  $\mathcal{B}$ of $\Rd$, $\pi_n(\mathcal{B})$ is the probability of $\mathcal{B}$ when $\theta$ is sampled according to $\pi_n$, conditionally to the observations. Hence, $\mathbb{E}_{\pi_n}$ is the expectation when $\theta \sim \pi_n$, conditionally to {$\mathbf{\xi^n}$}.\\
\subsection{Functional inequality and Assumption $\UPI$}\label{sec:functional}

For any measure $\mu$  and $f\in L^1(\mu)$,   $\mu(f)$ refers to the mean value of $f$, and when  $f\in L^2(\mu)$, $Var_{\mu}(f)$ is the variance of $f$:
$$
\mu(f) := \int_{\mathbb{R}^q} f(\xi) \text{d}\mu(\xi) \qquad \text{and} \qquad 
Var_{\mu}(f) := \int_{\mathbb{R}^q} [f(\xi)-\mu(f)]^2 \text{d}\mu(\xi).
$$
A crucial property of log-concave measures is that they satisfy a Poincar\'e inequality. This will be used extensively in the rest of the paper. We refer to \cite{Ledoux,BGL} for a complete presentation and some applications on concentration inequalities and Markov processes.

\begin{defi}[Poincar\'e inequality]
A measure $\mu$ satisfies a Poincar\'e inequality with  $C_P(\mu)$ if
$$
\forall f \in L^2(\mu) \qquad Var_{\mu}(f) \leq  C_P(\mu) \mu(\vert\nabla f\vert^2).
$$
 \end{defi}

We   remind an important result obtained in \cite{Bobkov-aop} (see also 
 \cite{Poincare_log_concave}) that establishes the existence of a Poincaré inequality for every log-concave probability distribution.
 \begin{thm}[\label{theo:log-concave_poincare}\cite{Bobkov-aop,KLS}]
 Every log-concave measure $\mu$ satisfies a Poincar\'e inequality: a universal constant $K$ exists such that:
$$
 C_P(\mu) \leq 4 K^2 Var_{\mu}(\id),
 $$
 where $\id$ refers to the identity map: $\id: \xi \longmapsto \xi$.
 \end{thm}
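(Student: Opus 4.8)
\emph{Proof strategy.} The plan is to derive the Poincar\'e inequality from a lower bound on the Cheeger isoperimetric constant of $\mu$, the latter being produced by the Kannan--Lov\'asz--Simonovits localization lemma. First, centering: replacing $\mu$ by its image under $\xi\mapsto\xi-\mathbb{E}_{\mu}[\xi]$ leaves both sides of the claimed inequality unchanged, so we may assume $\mathbb{E}_{\mu}[\xi]=0$ and set $\sigma^2:=Var_{\mu}(\id)=\int_{\mathbb{R}^q}|\xi|^2\,d\mu(\xi)=\tr(\mathrm{Cov}(\mu))$. Let $h(\mu)=\inf_A \mu^{+}(\partial A)/\min(\mu(A),\mu(A^c))$ be the Cheeger constant, with $\mu^{+}(\partial A)=\liminf_{\epsi\to0}\epsi^{-1}(\mu(A_\epsi)-\mu(A))$ and $A_\epsi$ the Euclidean $\epsi$-neighbourhood of $A$. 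The Cheeger inequality --- a standard consequence of the co-area formula, valid for any probability measure --- gives $1/C_P(\mu)=\lambda_1(\mu)\ge h(\mu)^2/4$. Hence it suffices to prove the isoperimetric estimate $h(\mu)\ge 1/(K\sigma)$ for a universal constant $K$, which then yields $C_P(\mu)\le 4/h(\mu)^2\le 4K^2\sigma^2=4K^2\,Var_{\mu}(\id)$.

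The core of the argument is the estimate $h(\mu)\ge 1/(K\sigma)$, which I would prove by contradiction. If some Borel set $A$ with $\mu(A)\le 1/2$ satisfied $\mu^{+}(\partial A)<\mu(A)/(K\sigma)$, then, after approximating $\mathbf 1_A$ by Lipschitz functions, the failure of the isoperimetric inequality can be rephrased as the failure of an integral inequality among a few functions attached to the partition (interior of $A$, boundary collar, exterior) of the type handled by the localization lemma. The KLS localization lemma asserts that such a statement, if it fails for a log-concave $\mu$ on $\mathbb{R}^q$, already fails on a one-dimensional ``needle'': a measure of the form $e^{at}\mathbf 1_{[p,q]}(t)\,dt$ carried by a segment, which after normalisation is a log-concave probability measure $\nu$ on $\mathbb{R}$ and which inherits from the decomposition a second-moment bound $Var_{\nu}(\id)\le\sigma^2$ (using that $\mu\mapsto\int|\xi|^2\,d\mu$ is linear and thus preserved by the localization). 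One is thereby reduced to the one-dimensional statement: every log-concave probability measure $\nu$ on $\mathbb{R}$ satisfies $h(\nu)\ge c/\sqrt{Var_{\nu}(\id)}$, equivalently $C_P(\nu)\le C\,Var_{\nu}(\id)$, which is Bobkov's one-dimensional lemma and can be checked directly --- e.g.\ via the Muckenhoupt--Hardy criterion for weighted Poincar\'e inequalities on an interval, or from the unimodality of the density. Tracking the constants along this chain gives an admissible value such as $K=1/\ln 2$, with the finer analysis of \cite{Bobkov-aop} yielding a smaller one.

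The main obstacle is the localization lemma itself together with the bookkeeping needed to push the moment constraint $\int|\xi|^2\,d\mu\le\sigma^2$ down to the one-dimensional needle with the correct constant; by comparison, the Cheeger inequality and the one-dimensional base case are routine. An alternative, localization-free route would be to construct a Lyapunov function $\Psi$ for the generator $\mathcal L=\Delta-\langle\nabla V,\nabla\cdot\rangle$ of the Langevin diffusion with invariant measure $\mu=e^{-V}$, satisfying $\mathcal L\Psi\le-\kappa\Psi+b\,\mathbf 1_{B}$ on a Euclidean ball $B$ of radius comparable to $\sigma$, and to combine it with a local Poincar\'e inequality on $B$ via the Lyapunov-function criterion of Bakry, Cattiaux and Guillin; convexity of $V$ is exactly what produces such a $\Psi$ with parameters controlled by $\sigma$, but the resulting constants are typically worse than those obtained by localization. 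In either case one obtains $C_P(\mu)\le 4K^2\,Var_{\mu}(\id)$, where $Var_{\mu}(\id)$ is understood as $\mathbb{E}_{\mu}|\xi-\mathbb{E}_{\mu}\xi|^2=\tr(\mathrm{Cov}(\mu))$.
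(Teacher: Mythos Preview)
The paper does not prove Theorem~\ref{theo:log-concave_poincare}: it is quoted as a known result from \cite{Bobkov-aop,KLS} (introduced with ``We remind an important result obtained in \cite{Bobkov-aop}\ldots''), and no argument is given. Your sketch therefore cannot be compared to a proof in the paper, because there is none.

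That said, your outline is essentially the route taken in the cited references: reduce the Poincar\'e inequality to a Cheeger isoperimetric bound via $C_P(\mu)\le 4/h(\mu)^2$, then use the KLS localization lemma to reduce the isoperimetric estimate to one-dimensional log-concave needles, where Bobkov's one-dimensional lemma finishes the job. One point to be careful about in the bookkeeping: what is preserved under localization is the second moment $\int|\xi|^2\,d\mu$ (a linear functional), not the variance of the needle itself, so the one-dimensional bound you invoke must be stated in terms of the second moment about the barycenter of the original measure rather than $Var_\nu(\id)$; this is handled in \cite{Bobkov-aop} but is a genuine step, not a triviality. With that caveat, your proposal is a faithful summary of the literature proof, and the alternative Lyapunov route you mention is also standard and correct, though with worse constants.
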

Since $(\xi,\theta) \longmapsto U(\xi,\theta)$ is a convex function, 
Theorem \ref{theo:log-concave_poincare} implies  that for all $\theta \in \mathbb{R}^d$, $\mathbb{P}_{\theta}$ satisfies a Poincaré inequality of constant $C_P(\mathbb{P}_{\theta})$.
We introduce   an assumption that stands for a uniform   bound of the collection of Poincaré constants $C_P(\mathbb{P}_{\theta})$.

\begin{ass}[Uniform Poincaré Inequality $\UPI$\label{ass:upi}]
A constant $\CPU$ exists such that
$$
\forall \theta \in \R^d \qquad C_P(\mathbb{P}_{\theta}) \leq \CPU.
$$
\end{ass}

We emphasize that according to Theorem \ref{theo:log-concave_poincare}, a uniform bound on the variance of each distribution $\mathbb{P}_{\theta}$ over $\theta \in \R^d$ entails $\UPI$.
{Note that the uniform upper bound on the Poincar\'e constant involved in the family of distributions $(\pi_{\theta})_{\theta \in \theta}$ is needed to obtain some concentration rates that are independent from the value of $\theta$.}



\subsection{Bayesian consistency\label{sec:sub:bayes}}
\subsubsection{Assumptions $\AL$ and $\IWC$}




We introduce some mild assumptions necessary to
obtain some consistency  rates of $\widetilde{\theta}_n$.
First, we  handle smooth functions $(\xi,\theta) \longmapsto U(\xi,\theta)$ 
and assume that:
\begin{ass}[Assumption $\AL$]
$U$ satisfies the $\mathcal{C}^1_L$ hypothesis: \textit{i.e.} the partial gradient of $U$ with respect to $\theta$ is a L-Lipschitz function of $\theta$ and of $\xi$:
$$
\forall \xi  \in \mathbb{R}^q \quad \forall \theta_1,\theta_2 \in \R^d: \qquad 
|\nabla_\theta U(\xi,\theta_1)-\nabla_\theta U(\xi,\theta_2)| \leq L |\theta_1-\theta_2|,
$$
and
$$
\forall (\xi_1,\xi_2)  \in \mathbb{R}^q \quad \forall \theta  \in \R^d: \qquad 
|\nabla_\theta U(\xi_1,\theta)-\nabla_\theta U(\xi_2,\theta)| \leq L |\xi_1-\xi_2|.
$$
\end{ass}

{\begin{rmq} Let us mention that Assumption $\AL$ is standard in the optimization community (see \cite{Nesterov,Bubeck}). Essentially, this allows to quantify the error made when using a first order Taylor expansion. In optimization theory, $L$-smooth functions are then used to produce descent lemmas. Here it will be used for lower-bounding the normalizing constants of the Bayesian posterior distributions (see Equation \eqref{eq:minoration_intermediaire}).
This assumption also appears in the recent contribution \cite{Jordan-Wainwright} but the L-smooth property is only assumed for the function $\theta \longmapsto \mathbb{E}_{\theta^\star}[U(\xi,\theta)]$ associated with either a strong or weak convex assumption on $U$  (Assumptions (S.1) or (W.1) of \cite{Jordan-Wainwright}).
\end{rmq}}

%
To make the estimation problem tractable, we need to manipulate statistically identifiable models.
If statistical identifiability is a free result for any $L^1$ location model when $U(\xi,\theta)=U(\xi-\theta)$, this is no longer the case in general   models, even log-concave ones. We therefore introduce an identifiability assumption, which will useful for our theoretical results.

\begin{ass}[Assumption $\IWC$ - Wasserstein identifiability]
A strictly increasing map $c: \mathbb{R}_+ \longrightarrow \mathbb{R}_+$ and a $1$-Lipschitz function $\Psi$ exist such that $c(0)=0$ and :
$$
{\forall \theta_1,\theta_2\,\in\,\ER^d,\quad}|\pi_{\theta_1}(\Psi)-\pi_{\theta_2}(\Psi)| \geq c(|\theta_1-\theta_2|).
$$
Furthermore, we shall assume that a pair $(b_1,b_2)\in \{\mathbb{R}_+^{\star}\}^2$ and $\ic$>0 exists such that:
\begin{equation}\label{eq:ic}
\forall \Delta \geq 0 \qquad 
c(\Delta) \geq b_1 \Delta^{\ic} \mathbf{1}_{\Delta \leq 1} + b_2 [\log(\Delta)+1] \mathbf{1}_{\Delta \geq 1}. 
\end{equation}
\end{ass}

\begin{rmq}\noindent  Assumption
$\IWC$ is a quantitative identifiability condition. It implies in particular that
$$
\mathcal{W}_1(\mathbb{P}_{\theta_1},\mathbb{P}_{\theta_2}) \geq c(|\theta_1-\theta_2|),
$$
where $\mathcal{W}_1(\mathbb{P}_{\theta_1},\mathbb{P}_{\theta_2})$ refers to the Wasserstein distance between $\mathbb{P}_{\theta_1}$ and $\mathbb{P}_{\theta_2}$. The constants $b_1$ and $b_2$ are not fundamental in our forthcoming analysis but  $\ic$  plays a central role:  it  asserts how the distributions $\pi_{\theta_1+\delta}$ moves from  $\pi_{\theta_1}$ for small values of $\delta$ (see   Section \ref{sec:sub:discussion}).
{It is straightforward to verify that $\IWC$ is satisfied in the location models with $\Psi=\id$ since in that case
$
|\pi_{\theta_1}(\id)-\pi_{\theta_2}(\id)|=|\theta_1-\theta_2|. 
$}

All along our work, we consider $b_1$ and $L$ involved in Assumption $\IWC$ and $\AL$ as fixed quantities, that describe the variations of $U$ and therefore the variations of the distribution $(\mathbb{P}_{\theta})_{\theta \in \mathbb{R}^d}$. Since $b_1$ quantifies how $\pi_{\theta}$ moves when $\theta$ moves, a link exists between $b_1$ and $L$ since oppositely $L$ quantifies in $\AL$ an upper bound of variation of $U(\theta,.)$ itself, and therefore an upper bound of variation of $\pi_{\theta}$. Even though an interesting question that is related to contamination models (see \textit{e.g.} \cite{Cai-Wu,Laurent-Marteau-Maugis}), we have chosen to leave  open the analysis between $b_1$ and $L$, and simply assume in what follows that $b_1>0$ and $L>0$ are fixed constants independent from $n$ and $d$.
\end{rmq}

\begin{rmq}
Our identifiability condition with the Wasserstein distance is different from what is usually studied with mixture models where label permutations need to be considered. In mixture models, $\theta$ commonly refers to a collection of weights of components $(\omega_i)_{1 \leq i \leq K}$ +locations of components $(p_i)_{1 \leq i \leq K}$ and identifiability is then related to a conditions that is close to:
$$
d(\mathbb{P}_{\theta_1},\mathbb{P}_{\theta_2}) \geq \mathcal{W}_1(\mu_{\theta_1},\mu_{\theta_2}) \quad \text{with} \quad \mu_{\theta} = \sum_{i=1}^K \omega_i(\theta) \delta_{p_i(\theta)},
$$
where $\mathcal{W}_1(\mu_{\theta_1},\mu_{\theta_2})$ is the Wasserstein distance between the mixing distributions $\mu_{\theta_1}$ and $\mu_{\theta_2}$.
We refer to \cite{Ho1,Ho2} and the references therein for some recent statistical work on identifiability and consistency rates in finite mixture models. Another popular subject of investigation is the effet of the size of the weights $(\omega_i)_{1 \leq i \leq K}$ on the identifiability and estimation, in particular in contamination models. We refer for example to \cite{Gadat_mix} for a detailed understanding of the effect of the contamination level on the identifiability of Gaussian mixture models, and therefore on $b_1$ involved in $\IWC$.
However, we emphasize that the model we are considering are log-concave, which is far from being the case with mixtures.
\end{rmq}

\begin{rmq}
Besides our log-concave framework, we could imagine use $\IWC$ in more general statistical models such as location-scale mixture models even though these models are from being log-concave and much more challenging in terms of identifiability. We refer to \cite{Gadat_mix,Heinrich-Kahn} and the references therein for some recent contributions in this direction. As an example, using the dual formulation of the $W_1$ distance, it may be shown that for a location/scale Gaussian model $\mathcal{N}_{\mu,\sigma^2}$, $\ic=1$ for the dependency on the means $|\mu_1-\mu_2|$. Regarding the dependency in terms of $(\sigma_1,\sigma_2)$, the same strategy with $f(x)=|x|$ yields $\ic=1$ for $|\sigma_1-\sigma_2|$. In the multivariate setting, the situation is already more difficult, and we refer to \cite{Dowson} for an exact value of the $W_2$ distance that upper bounds the $W_1$ one and it seems reasonnable to think that $\ic=1$ in this situation for location/scale Gaussian models.
\end{rmq}

\begin{rmq}
{ This natural separation assumption is related to the hypothesis testing in the statistical model. Statistical test has a longstanding history in Bayesian literature (see \textit{e.g.} \cite{LeCam,Birge,GhosalGhoshvdVaart,CasvdV} among others). In general, the former papers build some global statistical tests using metric considerations with covering arguments on the statistical models. In particular this is done using Hellinger distance or the Kullback-Leibler divergence. 
Here, our assumption is related to a separation with the help of the $\mathcal{W}_1$  distance over  $\mathbb{P}_{\theta}$.  The function $\Psi$ involved in $\IWC$ will be then used to build a global test.}
\end{rmq}

In a sense, a link  exists between the conjunction of $\AL + \IWC$ and a metric complexity (in terms of covering numbers) as used in the seminal contribution \cite{GhosalGhoshvdVaart}[Equation (2.2)]. In particular, it is  straightforward to prove that if $N(\epsi,\theta\cap K,d_{KL})$ is the covering number of the statistical model with the KL  divergence and if $K$ is a compact subset of $\R^d$, then
$$
\log N(\epsi, K,d_{KL}) \lesssim d \log(\sqrt{L} \epsi^{-1}).
$$
Hence, $\IWC$ shall be thought of as a way to both ``compactify'' the space where   $\theta$ is living and  make a local link between $d(\mathbb{P}_\theta,\mathbb{P}_{\theta^\star})$ and $|\theta-\theta^\star|$. This is useful to avoid sieve considerations (see \textit{i.e.} \cite{GhosalGhoshvdVaart,vZvdV,ShenWasserman} for example) and {this} allows to quantify the tail behaviour of the posterior distribution $\pi_n$ far away from $\theta^\star$ (see \textit{e.g.} \cite{CasvdV}).

Finally, $\IWC$ { also has} a tight link with Assumption $(W.1)$ of \cite{Jordan-Wainwright} that assumes that if $F(\theta):= \E_{\theta^\star} [\log \pi_\theta(\xi)]$, then for $h$ a non-decreasing convex function such that $h(0)=0$:
$$
\langle \nabla F(\theta),\theta^\star-\theta \rangle \ge h(|\theta-\theta^\star|),
$$
For any $\theta \in \R^d$, we introduce $\theta_t=\theta^\star+t(\theta-\theta^\star)$ and $f_\theta(t)=KL(\mathbb{P}_{\theta_t},\mathbb{P}_{\theta^\star})$.
 A straightforward computation shows that Assumption $(W.1)$ implies that:
\begin{align*}
KL(\mathbb{P}_{\theta},\mathbb{P}_{\theta^\star})& = f_\theta(1)= f_\theta(0)+\int_{0}^1 f'_\theta(s) \text{d} s \\
& = \int_{0}^1 \langle \nabla F(\theta_s),\theta^\star-\theta \rangle \text{d}s  = \int_{0}^1 \frac{\langle \nabla F(\theta_s),\theta^\star-\theta_s \rangle}{s} \text{d}s \ge \int_{0}^1 \frac{h(s |\theta-\theta^\star|) }{s} \text{d}s \\
& \ge \int_{0}^1 \frac{h(0)+ h'(0) s |\theta-\theta^*|}{s} \text{d}s = h'(0) |\theta-\theta^*|,\\
\end{align*}
where we used in the last line the convexity of $h$. The previous inequality is not trivial when $h'(0)>0$.
Oppositely, if $h'(0)=0$, it is reasonable to assume that $h$ is $\beta$-H\"older around $\theta^\star$ with $\beta>1$ and we obtain again a local inequality of the form $KL(\mathbb{P}_{\theta},\mathbb{P}_{\theta^\star}) \gtrsim  |\theta-\theta^\star|^{\beta}$ near $\theta^\star$. In the meantime, since $h$ is non-decreasing, it is also possible to use
$
\int_{0}^1 \frac{h(s |\theta-\theta^\star|) }{s} \text{d}s \ge \int_{1/2}^1 \frac{h(s |\theta-\theta^\star|) }{s} \text{d}s \ge h\left(\frac{|\theta-\theta^\star|}{2}\right)$. Therefore, $(W.1)$ implies that a $\beta \ge 1$ exists such that:
$$
KL(\mathbb{P}_{\theta},\mathbb{P}_{\theta^\star}) \gtrsim |\theta-\theta^\star|^\beta \wedge  h\left(\frac{|\theta-\theta^\star|}{2}\right).
$$
The link between $\IWC$ and $(W.1)$ is then made with the help of functional inequalities: if  $\mathbb{P}_{\theta^\star}$ is strongly log-concave, then $\mathbb{P}_{\theta^\star}$ satisfies the $T_1$ inequality (see \textit{e.g.} \cite{Ledoux}) and $W_1(\mathbb{P}_{\theta},\mathbb{P}_{\theta^\star}) \lesssim \sqrt{KL(\mathbb{P}_{\theta},\mathbb{P}_{\theta^\star})}$. More generally, if $P_{\theta^\star}$ satisfies a sub-Gaussian concentration inequality, \cite{Bobkov-Gotze} shows that this Talagrand inequality still holds. Hence, both $\IWC$ and $(W.1)$ of \cite{Jordan-Wainwright} implies a lower bound on the KL-divergence in many log-concave reasonable situations, and the existence of convenient statistical tests.



\subsubsection{Bayesian consistency \label{sec:sub:bayes}}
\bigskip\hphantom{}

\noindent\textit{Upper bound}
{We obtain the next result \textit{ which is still valid besides the log-concave settings}. We have chosen to keep this setting for the sake of readability, even though $\UPI$ is sufficient here to guarantee the result.}

The next result states an upper bound on the $L^p$ loss between the posterior mean $\widetilde{\theta}_n$ and $\theta^{\star}$. We define
\begin{equation}\label{def:epsn}
 \epsi_n^2 := \left(\CPU L^2\right)^{1/\ic}  \left( \frac{d \log n}{n}\right)^{1/\ic},
 \end{equation}
and consider situations where $n$ is large enough (with respect to $d$) in order to  guarantee that $b_1 \epsi_n^{\ic} \leq 1$ where $b_1$ is defined in $\IWC$. We then have the following result.

\begin{thm}\label{theo:consistency} If $\pi_0 =e^{-V_0}$ is a $\mathcal{C}^1(\R^d,\R)$ log-concave prior with $V_0 \in \mathcal{C}^1_1$, if $\UPI$, $\AL$ and $\IWC$ hold, and if $(d,n)$ are such that $b_1 \epsi_n^{\ic} \leq 1$, then:
$$
\forall p > 1 \qquad 
\left(\mathbb{E }_{\theta^{\star}} \left[\left|\widetilde{\theta}_n-\theta^{\star}\right|^p\right] \right)^{1/p} \luc
K(U)  
\left( \frac{  d\log(n)}{n}\right)^{{\frac{1}{2 \ic}}},
$$
with  $K(U)= \left(\sqrt{\CPU} L\right)^{\frac{1}{\ic}}$
and $\CPU$ is the Poincaré constant given in  $\UPI$ (Assumption \ref{ass:upi}).
\end{thm}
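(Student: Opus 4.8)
Since the posterior mean is a barycenter, $|\widetilde\theta_n-\theta^\star|\le\int|\theta-\theta^\star|\,\pi_n(\theta)\der\theta=\mathbb E_{\pi_n}[|\theta-\theta^\star|]$, so it suffices to show $\big\|\mathbb E_{\pi_n}[|\theta-\theta^\star|\,\mathbf 1_{|\theta-\theta^\star|>\epsi_n}]\big\|_{L^p(\Pro_{\theta^\star})}\luc\epsi_n$, the region $\{|\theta-\theta^\star|\le\epsi_n\}$ contributing at most $\epsi_n$. Peeling $\{|\theta-\theta^\star|>\epsi_n\}$ into dyadic shells and using $\pi_n\le 1$ (so $\pi_n(\cdot)^p\le\pi_n(\cdot)$) reduces the whole estimate to an upper bound on $\mathbb E_{\theta^\star}[\pi_n(|\theta-\theta^\star|>t)]$ for $t\ge\epsi_n$. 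This is the classical posterior concentration route of \cite{GhosalGhoshvdVaart}, which asks for (a) a lower bound on the evidence $Z_n$, (b) exponentially powerful tests of $\theta^\star$ against $\{|\theta-\theta^\star|>t\}$, and (c) a uniform control of the posterior mass very far from $\theta^\star$.

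\textbf{Evidence lower bound.} Writing $\pi_n(\der\theta)=\pi_0(\theta)e^{-\Lambda_n(\theta)}\der\theta/D_n$ with $\Lambda_n(\theta):=W_n(\theta)-W_n(\theta^\star)=\sum_i\log(\pi_{\theta^\star}(\xi_i)/\pi_\theta(\xi_i))$ and $D_n=\int\pi_0e^{-\Lambda_n}$, I would lower bound $D_n$ by restricting the integral to a Euclidean ball $B(\theta^\star,r_n)$ with $r_n$ of order $(nL)^{-1/2}$. On this ball, Assumption $\AL$ (descent lemma, Equation \eqref{eq:minoration_intermediaire}) gives $\Lambda_n(\theta)\le\langle S_n,\theta-\theta^\star\rangle+\tfrac{nL}{2}|\theta-\theta^\star|^2$, where $S_n:=\sum_i\nabla_\theta U(\xi_i,\theta^\star)$ is the score, centered under $\Pro_{\theta^\star}$, with $\mathbb E_{\theta^\star}|S_n|^2\lesssim n(1+L^2 d\,\CPU)$ ($\AL$ makes $\nabla_\theta U(\cdot,\theta^\star)$ $L$-Lipschitz in $\xi$, and $\UPI$ bounds $\mathrm{Var}_{\mathbb P_{\theta^\star}}(\xi)$). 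The prior $e^{-V_0}$ with $V_0\in\mathcal C^1_1$ is bounded below on $B(\theta^\star,r_n)$, and the volume of the ball contributes $-\tfrac d2\log(ndL)+O(d)$ to $\log D_n$. This yields, on an event $\mathcal E_n$ of $\Pro_{\theta^\star}$-probability close to one, $D_n\ge e^{-\kappa_n}$ with $\kappa_n\lesssim d\log n$ — note that this is precisely the $d\log n/n$ scale appearing in \eqref{def:epsn}.

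\textbf{Tests via $\Psi$.} For $t\ge\epsi_n$ I would test $\theta^\star$ against $\{|\theta-\theta^\star|>t\}$ with the empirical mean $\widehat m_n:=\tfrac1n\sum_i\Psi(\xi_i)$ of the $1$-Lipschitz function $\Psi$ from $\IWC$, via $\phi_{n,t}:=\mathbf 1\{|\widehat m_n-\pi_{\theta^\star}(\Psi)|>\tfrac12 c(t)\}$. As $\Psi$ is $1$-Lipschitz and every $\mathbb P_\theta$ is log-concave with Poincar\'e constant $\le\CPU$ (Theorem \ref{theo:log-concave_poincare} and $\UPI$), $\Psi(\xi)$ is sub-exponential under every $\mathbb P_\theta$, so a Bernstein bound gives $\mathbb E_{\theta^\star}[\phi_{n,t}]\le 2\exp(-c\,n\,c(t)^2/\CPU)$ and, for any $\theta$ with $|\theta-\theta^\star|>t$, using $|\pi_\theta(\Psi)-\pi_{\theta^\star}(\Psi)|\ge c(|\theta-\theta^\star|)\ge c(t)$ and the triangle inequality, $\mathbb E_\theta[1-\phi_{n,t}]\le 2\exp(-c\,n\,c(|\theta-\theta^\star|)^2/\CPU)$ on the bounded range. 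Combining with the evidence bound through the change of measure $\mathbb E_{\theta^\star}\!\big[(1-\phi_{n,t})e^{-\Lambda_n(\theta)}\big]=\mathbb E_\theta[1-\phi_{n,t}]$ and Fubini yields, on $\mathcal E_n$,
\[
\mathbb E_{\theta^\star}\!\big[(1-\phi_{n,t})\,\pi_n(|\theta-\theta^\star|>t)\big]\ \le\ e^{\kappa_n}\!\!\int_{|\theta-\theta^\star|>t}\!\!\pi_0(\theta)\,\mathbb E_\theta[1-\phi_{n,t}]\,\der\theta .
\]
On $\{|\theta-\theta^\star|\ge 1\}$ one invokes the logarithmic lower bound $c(\Delta)\ge b_2(\log\Delta+1)$ of $\IWC$, which forces a decay in $|\theta-\theta^\star|$ fast enough (for $n$ large) to make this integral converge and to render its contribution — together with the contribution of $\mathcal E_n^c$, controlled via the Lipschitz concentration of $S_n$ under the log-concave $\mathbb P_{\theta^\star}$ — negligible. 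On $t\le|\theta-\theta^\star|\le 1$ one uses $c(\Delta)\ge b_1\Delta^{\ic}$, and the explicit choice \eqref{def:epsn} — calibrated so that $n\,c(\epsi_n)^2/\CPU\gtrsim d\log n\gtrsim\kappa_n$ — makes the right-hand side $\le e^{\kappa_n-c' n c(t)^2/\CPU}$; reinserting this into the dyadic sum over $t=2^j\epsi_n$ and rescaling $t=\epsi_n u$ turns it into a convergent Gaussian-type series of total size $\luc\epsi_n$. Collecting the three pieces and optimizing constants (keeping $b_1,L$ fixed) gives the announced bound with $K(U)=(\sqrt{\CPU}\,L)^{1/\ic}$.

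\textbf{Main obstacle.} The delicate point is (b)–(c): one needs a \emph{single} family of tests that is simultaneously powerful against the unbounded composite alternative with a power decaying fast enough in $|\theta-\theta^\star|$ to beat the inverse evidence $e^{\kappa_n}$. This is exactly why $\IWC$ is posited with a growth lower bound that is polynomial near $0$ (governing the rate, through $\ic$) and logarithmic at infinity (taming the tails and the bad event), and why the precise constants in \eqref{def:epsn} must be tracked to land on $\epsi_n$ rather than a poly-logarithmically worse rate. The uniform sub-exponential concentration of $\widehat m_n$ across all $\mathbb P_\theta$, resting on $\UPI$, is the other indispensable ingredient — and, as noted in the text, essentially the only place where log-concavity (or merely $\UPI$) is used.
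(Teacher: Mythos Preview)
Your outline is essentially the paper's proof: a Jensen/tail representation reducing to bounds on $\mathbb{E}_{\theta^\star}[\pi_n(|\theta-\theta^\star|>r_n)]$, an evidence lower bound on a small ball around $\theta^\star$ via the descent lemma from $\AL$, tests $\phi_{n,t}$ built from $\Psi$ in $\IWC$ with sub-exponential concentration coming from $\UPI$ (Proposition~\ref{prop:BK}), and the change of measure $\mathbb{E}_{\theta^\star}[(1-\phi_{n,t})e^{-\Lambda_n(\theta)}]=\mathbb{E}_\theta[1-\phi_{n,t}]$ to handle the numerator. The calibration of $\epsi_n$ and the role of the two regimes of $c$ in $\IWC$ are also correctly identified.

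The one point where your sketch is genuinely thinner than the paper is the treatment of the bad event $\mathcal{E}_n^c$. You use a \emph{single} event (controlling $|S_n|$) and a single evidence constant $\kappa_n\lesssim d\log n$; on $\mathcal{E}_n^c$ you fall back to $\pi_n(\cdot)\le 1$ and declare the contribution ``negligible''. But in your dyadic sum (or equivalently in the integral $\int r_n^{p-1}\,\mathbb{E}_{\theta^\star}[\pi_n(\ge r_n)]\,dr$ used by the paper) the shell weights diverge, so a bound that is constant in the shell index does not close. The paper's fix is to make both the small-ball radius and the score event depend on the shell: it sets $t=t_{r,n}=A^{-1}[c(r_n)\wedge\sqrt{c(r_n)}]$ and works on $\mathcal{A}_n^{t_{r,n}}=\{\,|n^{-1}S_n|\le t_{r,n}\,\}$. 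With this choice, (i) the evidence penalty $n t_{r,n}^2(L/2+1)$ is dominated by the test exponent $n[c(r_n)^2/\CPU\wedge c(r_n)/\sqrt{\CPU}]$, and (ii) $\mathbb{P}_{\theta^\star}(\{\mathcal{A}_n^{t_{r,n}}\}^c)$ itself decays with $r$ at the same rate (Lemma~\ref{lem:petite_boule}), so all three pieces of \eqref{eq:introtest} carry the same $r$-dependent exponent and the $r$-integral converges. This $r$-adaptive coupling of the auxiliary event to the shell radius is the only technical device your proposal does not capture; everything else matches the paper.
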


\begin{rmq}\label{rem:d_vs_n}
In particular, when $p=2$, we recover the standard mean square error rate.
We emphasize that our consistency result makes sense only when considering some situations where the dimension $d$ is smaller than $n$ the number of observations, so that $\varepsilon_n$ is asymptotically vanishing when $n \rightarrow + \infty$. Therefore, our model is not concerned by high dimensional situations where $d$ may be larger than $n$.

If the separation provided by $\IWC$ is sharp, \textit{i.e.} when $\ic=1$, the $L^2$ loss is proportional to {$\sqrt{\frac{d}{n}}$} (up to a $\log$-term), which is the optimal loss in many statistical models. Our upper bound is deteriorated when $\ic$ increases, \textit{i.e.} when the separation of the distributions $\mathbb{P}_{\theta}$ near $\mathbb{P}_{\theta^{\star}}$ is ``flat'', \textit{i.e.} when  the Wasserstein distance $\mathcal{W}_1(\mathbb{P}_\theta,\mathbb{P}_{\theta^{\star}}) \sim b_1 \|\theta-\theta^{\star}\|^{1+\epsi}$ for $\epsi>0$ near $\theta^{\star}$. Below in Theorem \ref{theo:lower_bound}, we extend this optimality result to a larger range of values for $\ic$.
\end{rmq}
 It is worth mentionning the recent work of \cite{Jordan-Wainwright} that provides a similar analysis and derives a similar contraction rate: 
Using the notations of \cite{Jordan-Wainwright}, we observe that the  value $\alpha$ of their Corollary 1 corresponds to $\alpha_c+1$, essentially because of the definition of $F$ that is the Kullback Leibler divergence between $\pi_{\theta}$ and $\pi_{\theta^\star}$ and remarking that
$$
\langle \nabla F(\theta),\theta-\theta^\star \rangle = \mathbb{E}_{\theta^\star}[\langle \theta-\theta^\star,\partial_{\theta} U(\xi,\theta)] \lesssim |\theta-\theta^\star|^{1+\alpha_{c}}.
$$
Therefore, in the worst situation where $\beta=0$ in \cite{Jordan-Wainwright},  the associated rate of convergence is $(d/n)^{\frac{1}{2(\alpha-1)}} = (d/n)^{\frac{1}{2\alpha_c}}$, which is exactly the  result we obtain in Theorem \ref{theo:consistency}.
 In particular, in the situation of the Bayesian logistic regression, the noise inherited from this model is bounded so that $\beta=0$, which leads to the same result both for Theorem \ref{theo:consistency} and Corollary 1 of \cite{Jordan-Wainwright}.

\bigskip
 				
 				\noindent
\textit{Lower bound}
  In order to assess the accuracy of the Bayesian posterior mean estimator, we study and derive a lower bound of estimation following the minimax paradigm. 
In this view, we introduce  $\mathcal{F}_{\ic}^L$ the set of
 models $(\mathbb{P}_{\theta})_{\theta \in \R^d}$ such that $\IWC$, $\UPI$ and $\AL$ hold.}

We then define $r_{n,d}(\ic)$ as the following \textit{minimax risk} given by:
$$
r_{n,d}(\ic) := \inf_{\hat{\vartheta}_n} \sup_{\mathcal{F}_{\ic}^L} \E\left[\|\hat{\vartheta}_n - \theta^\star\|_2\right],
$$
where the above infimum is taken over the set of all possible estimators
$\hat{\vartheta}_n$ constructed from the observations $\mathbf{\xi^n}$. Hence,
$r_{n,d}(\ic)$  is the best achievable $L^2$ risk of estimation of $\theta^\star$ in the worst situation  under the assumptions of Theorem \ref{theo:consistency}. We obtain the following result.

\begin{thm}\label{theo:lower_bound} Over the class $\mathcal{F}_{\ic}^L$, a constant $\kappa(L)$ exists such that:
$$
r_{n,d}(\ic) \ge \kappa(L) \left( \frac{d}{n} \right)^{\frac{1}{2\ic}}
$$
\end{thm}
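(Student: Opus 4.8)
The plan is to establish the lower bound by the standard two-point (Le Cam) method, which suffices to capture the rate $(d/n)^{1/(2\ic)}$; a multiple-hypotheses (Assouad/Fano) argument is likely unnecessary since the dependence in $d$ here is only polynomial of exponent $1/(2\ic)$ and can be recovered by testing between $\theta^\star$ and a single well-chosen alternative $\theta^\star+\delta e_1$. First I would fix a reference model in $\mathcal{F}_{\ic}^L$ realizing the ``worst case'': a translation-type family $\pi_\theta(\xi)=e^{-U(\xi-\theta)}$ with $U$ chosen $L$-smooth and strongly log-concave enough that $\UPI$ and $\AL$ hold with the prescribed constant $L$, and such that the separation function $c$ in $\IWC$ behaves exactly like $b_1\Delta^{\ic}$ for small $\Delta$ — i.e. the Wasserstein separation is genuinely flat of order $\ic$ at $0$. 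Concretely one takes $U$ so that $\xi\mapsto\pi_\theta(\Psi)$ as a function of $\theta$ has vanishing derivatives up to order $\ic-1$ at the base point, forcing $c(\Delta)\asymp \Delta^{\ic}$; this is the step where one must check all three assumptions simultaneously for an explicit family, and it is the main obstacle.

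Next I would invoke the classical reduction of estimation to testing: for any estimator $\hat\vartheta_n$ and any two parameters $\theta_0,\theta_1$ at distance $2\delta$,
$$
\sup_{\mathcal F_{\ic}^L}\E\big[\|\hat\vartheta_n-\theta^\star\|_2\big]\ \ge\ \frac{\delta}{2}\Big(1-\|\mathbb P_{\theta_0}^{\otimes n}-\mathbb P_{\theta_1}^{\otimes n}\|_{TV}\Big)\ \ge\ \frac{\delta}{2}\Big(1-\sqrt{\tfrac{n}{2}\,KL(\mathbb P_{\theta_0},\mathbb P_{\theta_1})}\Big),
$$
using Pinsker's inequality and tensorization of the KL divergence. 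It then remains to control $KL(\mathbb P_{\theta_0},\mathbb P_{\theta_1})$ from above in terms of $\delta=|\theta_0-\theta_1|$. Here I would use $\AL$: since $\nabla_\theta U(\xi,\cdot)$ is $L$-Lipschitz, a second-order Taylor expansion of $\theta\mapsto U(\xi,\theta)$ together with the fact that $\E_{\theta_0}[\nabla_\theta U(\xi,\theta_0)]=0$ (the score has zero mean) yields $KL(\mathbb P_{\theta_0},\mathbb P_{\theta_1})=\E_{\theta_0}[U(\xi,\theta_1)-U(\xi,\theta_0)]\le \frac{L}{2}\,\delta^2$. Plugging this in gives a lower bound $\gtrsim \delta(1-\sqrt{nL\delta^2/4})$, optimized by choosing $\delta\asymp L^{-1/2}n^{-1/2}$, which produces a lower bound of order $n^{-1/2}$ — but this is the $\ic=1$, $d=1$ rate only.

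To recover the exponent $1/(2\ic)$ and the dimension factor, two refinements are needed. For the $\ic$-dependence: the constraint is that the alternative $\theta_1$ must stay within the class, and the relevant ``signal'' seen by any estimator consistent across $\mathcal F_{\ic}^L$ is governed by the identifiability margin — but crucially $\IWC$ only gives a \emph{lower} bound on separation, so one instead exploits that for the chosen worst-case family the KL divergence itself scales like $\delta^{2}$ while the \emph{informative} scale at which the family is indexed forces $\delta$ to be taken of order $n^{-1/(2\ic)}$ after reparametrizing by the separation; more directly, one re-derives the two-point bound with $\mathbb P_{\theta}$ replaced by the family in which the Fisher information at $\theta^\star$ degenerates like $\delta^{2(\ic-1)}$, so that $KL\asymp \delta^{2\ic}$ and the optimal $\delta$ becomes $n^{-1/(2\ic)}$. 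For the $d$-dependence: I would instead use $d$ orthogonal perturbations $\theta^\star\pm\delta e_j$, $j=1,\dots,d$, and Assouad's lemma, noting that the per-coordinate KL cost is additive and that matching the total budget $n\cdot d\,\delta^{2\ic}\asymp 1$ gives $\delta\asymp (d/n)^{1/(2\ic)}$ per coordinate while the $\ell_2$ error aggregates the $d$ coordinates to yield exactly $\kappa(L)(d/n)^{1/(2\ic)}$, with $\kappa(L)$ tracking the constants from $\AL$ through the Taylor bound. The delicate point throughout remains the explicit construction of the worst-case family with degenerate separation exponent $\ic$ satisfying $\UPI$, $\AL$, $\IWC$ simultaneously, and keeping the resulting $\kappa(L)$ honestly dependent only on $L$.
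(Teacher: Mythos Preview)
Your high-level strategy --- reduce to multiple-hypothesis testing over a family in which the per-coordinate KL scales like $|\theta_i|^{2\ic}$, then aggregate over $d$ coordinates --- is the right one, and it is essentially what the paper does (via Fano + Varshamov--Gilbert rather than Assouad, which is immaterial). The paper's explicit realization of your ``degenerate Fisher information'' idea is the reparametrized Gaussian family $\mathbb P_\theta=\mathcal N(\delta\,\mu_{\ic}(\theta),I_d)$ with $\mu_{\ic}(\theta)_i=\mathrm{sgn}(\theta_i)|\theta_i|^{\ic}$ near $0$; this is what you should write down in place of the vague ``take $U$ so that derivatives vanish up to order $\ic-1$.'' However, there are two genuine gaps in your argument beyond the missing construction.

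First, the arithmetic is wrong. You write ``matching the total budget $n\cdot d\,\delta^{2\ic}\asymp 1$ gives $\delta\asymp (d/n)^{1/(2\ic)}$,'' but $nd\,\delta^{2\ic}\asymp1$ solves to $\delta\asymp(nd)^{-1/(2\ic)}$. More importantly, the correct Fano/Assouad constraint is $n\cdot(\text{max KL})\lesssim \log M\asymp d$, i.e.\ $n\,d\,\delta^{2\ic}\lesssim d$, whence $\delta\asymp n^{-1/(2\ic)}$ and an $\ell_2$ separation of order $\sqrt{d}\,n^{-1/(2\ic)}$. This equals $(d/n)^{1/(2\ic)}$ only when $\ic=1$; for $\ic>1$ the two rates differ, so your bookkeeping cannot be producing the stated result.

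Second, and this is the structural point that closes the gap: the class $\mathcal F_{\ic}^L$ requires $\IWC$ to hold with a \emph{dimension-free} constant $b_1$. For the naive family $\mathcal N(\mu_{\ic}(\theta),I_d)$ one gets, by Jensen, only $W_1(\mathbb P_{\theta_1},\mathbb P_{\theta_2})\gtrsim d^{(1-\ic)/2}\|\theta_1-\theta_2\|^{\ic}$, so $b_1$ shrinks with $d$ and the family leaves the class. The paper repairs this by inserting the scaling $\delta=d^{(\ic-1)/2}$ in the mean; this forces $b_1$ to be $d$-independent but inflates the KL between hypercube hypotheses by a factor $\delta^2=d^{\ic-1}$. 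Re-running the Fano budget with this extra factor converts the would-be $\sqrt{d}\,n^{-1/(2\ic)}$ rate into exactly $(d/n)^{1/(2\ic)}$. In other words, the exponent $d^{1/(2\ic)}$ is not a consequence of hypercube packing alone but of the interplay between packing and the requirement that the identifiability margin be dimension-free --- a mechanism your proposal does not yet contain.
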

We shall conclude from Theorem \ref{theo:consistency} and Theorem \ref{theo:lower_bound} 
that the Bayesian posterior mean estimator achieves an optimal rate of estimation in terms of $n$ and $d$ up to a $\log(n)^{1/\ic}$ term. 

\subsubsection{Comments}		
We provide below several comments on Theorem \ref{theo:consistency}.
First, we emphasize that our results apply in any log-concave situations included the weakly convex case.
					 	
\noindent 
Theorem \ref{theo:consistency} describes the behaviour of the posterior \textit{mean} $\int \theta \text{d} \pi_n(\theta)$ and not of the entire posterior distribution as classically studied in Bayesian statistics. In a sense, such a result seems less informative than the knowledge of the behaviour of the entire distribution $\pi_n$. However, a good behaviour of the posterior mean requires a sharp control of the tails of the posterior distributions whereas a ``contraction rate'' relative to the Hellinger distance (or with other distances such as the Kullback-Leibler or total variation ones) sometimes blur{s} the tail behaviour of the posterior. We refer to \cite{CasvdV} for an illustration in high dimensional linear models of the efforts needed to extend posterior concentration to posterior mean consistency.

\noindent 
If we now pay attention to the convergence rate obtained in Theorem \ref{theo:consistency}, we emphasize that when the separation is sharp, \textit{i.e.} when $\alpha_c=1$ in $\IWC$, we obtain the standard minimax convergence rate $\frac{d}{n}$ up to a $\log(n)$ term, and this result is in a sense non-asymptotic (a universal constant could be exhibited with a price of a huge technicalicity). In comparison with \cite{Jordan-Wainwright}, we also obtain{ed} a slightly better convergence rate  of $(d/n)^{1/\ic}$  (see Corollary 1 of \cite{Jordan-Wainwright}). But in general situations, their value of $\beta$ is equal to $1$   so that the rates derived from Theorem \ref{theo:consistency} and Corollary 1 of \cite{Jordan-Wainwright} are equivalent.

\subsection{Discretization in the weakly convex settings} 

 The main contribution of our paper is to carry out a precise study of the weakly convex situation (\textit{e.g.} when the underlying function is convex but not strongly convex). If we could derive the convergence rate of the continuous Cesaro average with the help of simple functional inequalities and Poincare constants, it is no longer the case when dealing with discretized Cesaro average. To bypass this difficulty, we are then led to introduce some functional inequalities that are slightly more parametric (see $\Hcu$ below).
\subsubsection{General result} {In this section, we focus on the discretization procedure in the general weakly convex situation.}
 We still assume   $\AL$  and introduce for any convex function $W$ {a set of inequalities that are related to the Kurdyka-\L ojasiewicz inequality (see \textit{e.g.} \cite{Lojasiewicz,Kurdyka}). This type of inequality is widely used 
in optimization and geometry to extend results from the strongly convex case to the more difficult weakly convex settings (see \textit{e.g.} \cite{ Bolte,Gadat-Panloup}).}
{For this purpose, for any matrix $B$, we denote  by ${\rm Sp}(B)$ the spectrum of B.}
We set:
\begin{equation}\label{def:lambda}
\bar{\lambda}_{\hesdob}(x) := \sup \{{\rm Sp}(\nabla^2 \dob(x))\} \quad \text{and} \quad \underline{\lambda}_{\hesdob}(x) = \inf \{{\rm Sp}(\nabla^2 \dob(x))\}.
\end{equation}
 

{\begin{ass}[Assumption $\Hcu$]  {$\dob$ is a positive ${\cal C}^2$-function with $\min \dob=_{uc} 1$} and there exist $0\le q\le r<1$ {and positive $\deux$ and $\troiss$} such that,
$$\forall x\in\ER^d,   \qquad \deux \dob(x)^{-r}\le \underline{\lambda}_{\hesdob}(x)\le \bar{\lambda}_{\hesdob}(x)\le \troiss \dob(x)^{-q}.$$
\end{ass}}

{Assumption $\Hcu$ implies that (see Lemma \ref{lem:gradW} in appendix for details)
$$ \limsup_{|x|\rightarrow+\infty} |\nabla W|^2 W^{q-1}(x)<+\infty\quad \textnormal{and}\quad  \liminf_{|x|\rightarrow+\infty} |\nabla W|^2 W^{r-1}(x){>-\infty}.$$
In the case $r=q$, it implies a global standard Kurdyka-\L ojasiewicz assumption (see \textit{e.g.} \cite{Gadat-Panloup} for a recent application for stochastic gradient descent, and the seminal contributions of \cite{Lojasiewicz}).}
\noindent  The case $r=1$ corresponds to the limiting Laplace case, whereas $r=0$ (and $q=0$) is associated to the strongly convex case. In particular, the complexity of the procedure will of course increase with $r$. {Finally, note that since $\nabla W$ is $L$-Lipschitz,
the eigenvalues of $\nabla^2 W(x)$ are uniformly bounded, which implies  that 
$q\ge0$ in $\Hcu$ is given by $\AL$.
Assumption 2.4 $\Hcu$  relates the behaviour of the spectrum of the Hessian (which translates some information on the curvature of the landscape) with the size of the potential itself and
 $r>q \ge 0$ is associated to some lack of strong convexity in some directions of the landscape function. Interestingly, thanks to $\Hcu$ and the convexity assumption, some tights relationship exist between $W(x)$, $|\nabla W(x)|$ and the eigenvalues of $\nabla^2 W(x)$.
}

\begin{rmq}\label{rmqexample1} 
\noindent Let us consider $\Hcu$ with $\dob(x)=(1+|x|^2)^p$ with $p\in(1/2,1]$. We verify that
$$\nabdob(x)=2p  W(x)^{(p-1)/p }x\quad\textnormal{and}\quad (\hesdob(x))_{ij}=4p(p-1) x_i x_j W(x)^{(p-2)/p}+2p\delta_{ij}W(x)^{(p-1)/p}.$$
Moreover, for any vector $u$ with $|u|=1$,
$$\langle \hesdob(x) u, u\rangle =2p(1+|x|^2)^{p-1}\left(1-2(1-p)\frac{\langle x,u\rangle ^2}{1+|x|^2}\right).$$
It implies that:
$$\underline{\lambda}_{\hesdob}(x)\ge 2p(1-2(1-p))(1+|x|^2)^{p-1}=2p(1-2(1-p)) \dob(x)^{-\frac{1-p}{p}},$$
and 
$$\overline{\lambda}_{\hesdob}(x)\le  (1+|x|^2)^{p-1}=2p \dob(x)^{-\frac{1-p}{p}}.$$
This entails $\Hcu$ with $r=q=(1-p)/p$.

\end{rmq}
Under $\Hcu$, we are able to obtain exponential  bounds for the Euler scheme denoted by  $\Yg$ and for the diffusion $X$ (see  Lemma \ref{lem:expbounds1cont} and Proposition \ref{lem:expbounds2scheme} of in Appendix B),  which in turn involve the following quantitative controls of the moments. 

\begin{prop} 
\label{lem:expbounds22}  Assume $\Hcu$.  {Assume that $\dob(x^\star)\le 1$}. Suppose that  $\gamma\le\gamma_0:= \frac{1}{4 d L + 1}$, then
 a locally finite positive function $p\mapsto c_p$ on $(0,+\infty)$, independent of $\dob$,  such that for any $p>0$,
$$\sup_{t\ge0} \E_x[\dob ^p (X_{t})]+\sup_{t\ge0} \E_x[\dob ^p (\Yg_{t})]\le c_p \left(  \dob^p (x)+\dw^p\right),$$
where $\dw$ is a positive number that satisfies:
$$1\le \dw\le c_r {(\troiss\vee L)^{\frac{1}{1+q-r}}}{\deux^{-\frac{1}{1-r}}}\log(1+dL) d^{\frac{1}{1+q-r}},$$ 
where $c_r$ is a constant depending only on $r$.
\begin{rmq} Note that in the particular case $q=0$, we obtain $\dw\le c_r (L/\deux)^{\frac{1}{1-r}} \log(1+dL) d^{\frac{1}{1-r}}$ so that when $r=q=0$ (that corresponds to the strongly convex setting) we obtain $\dw\,\propto \,\frac{L}{\deux} d\log (1+dL)$.
It is worth saying that the previous bound is universal, but may be improved in some situations with specific statistical models.
\end{rmq}
\end{prop}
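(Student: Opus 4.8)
The plan is to build a single Lyapunov functional of the form $V = W^p$ (and, more precisely, a suitably normalized version of it) and show that it is a good Lyapunov function both for the generator $\mathcal{L}$ of the diffusion $X$ and for the one-step transition operator of the Euler scheme $\Yg$, provided $\gamma \le \gamma_0$. First I would compute $\mathcal{L} W^p = p W^{p-1}\mathcal{L} W + p(p-1) W^{p-2}|\nabla W|^2$, with $\mathcal{L} W = -|\nabla W|^2 + \Delta W$. The key algebraic input is Assumption $\Hcu$ combined with convexity and Lemma \ref{lem:gradW}: $\Hcu$ forces $\Delta W(x) \le d\,\troiss\, W(x)^{-q}$ and, crucially, a lower bound $|\nabla W(x)|^2 \gtrsim \deux^{2} W(x)^{1-2r}$ type estimate for $W(x)$ large (the precise form coming from integrating the lower curvature bound along rays from the minimizer $x^\star$, where $W(x^\star) \le 1$). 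Plugging these in, one sees that $-|\nabla W|^2$ dominates $\Delta W$ once $W(x)$ exceeds a threshold of the order $\dw$; the threshold is exactly where $\deux W^{1-2r} \asymp d\,\troiss\, W^{-q}$, i.e.\ $W \asymp (d\troiss/\deux)^{1/(1+q-r)}$, modulo the $(1-r)$-power bookkeeping that produces the stated exponents $\frac{1}{1+q-r}$ on $d$ and $\frac{1}{1-r}$ on $\deux$ and the $\log(1+dL)$ correction (the log enters through the comparison of $\gamma_0^{-1} \asymp dL$ with the diffusive fluctuations). This yields a drift inequality $\mathcal{L} W^p \le -\kappa W^p + C \dw^p$ outside a compact set, hence $\sup_{t} \E_x[W^p(X_t)] \le c_p(W^p(x) + \dw^p)$ by a standard Gronwall/Dynkin argument, which is Lemma \ref{lem:expbounds1cont}.

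For the Euler scheme the argument is the discrete analogue but requires more care because of the explicit scheme's lack of contractivity for large gradients. I would expand $W^p(\Yg_{t_{k+1}})$ around $\Yg_{t_k}$ using a second-order Taylor expansion with the increment $-\gamma\nabla W(\Yg_{t_k}) + \sqrt 2 \Ur_{k+1}$, take conditional expectation, and isolate the deterministic drift $-\gamma p W^{p-1}|\nabla W|^2$, the Itô-type correction $\gamma p \Delta W\cdot(\text{bounded factors})$ coming from $\E|\Ur_{k+1}|^2 = \gamma d$, and the remainder terms. The remainder is controlled using that $\nabla W$ is $L$-Lipschitz (from $\AL$), so $|\nabla W(y)| \lesssim L|y - x^\star| + |\nabla W(x^\star)|$ and higher-order terms in $\gamma$ carry factors $\gamma^2 L^2 d$ etc., which are absorbed precisely because $\gamma \le \gamma_0 = (4dL+1)^{-1}$. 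One then obtains $\E[W^p(\Yg_{t_{k+1}}) \mid \Yg_{t_k}] \le (1 - \kappa\gamma) W^p(\Yg_{t_k}) + C\gamma \dw^p$ outside the region $\{W \le c\dw\}$, and inside that region $\E[W^p(\Yg_{t_{k+1}}) \mid \Yg_{t_k}] \le W^p(\Yg_{t_k}) + C\gamma\dw^p$; iterating gives the uniform-in-$k$ (hence uniform-in-$t$) bound on $\sup_t \E_x[W^p(\Yg_t)]$ with the same constant $c_p$, which is Proposition \ref{lem:expbounds2scheme}. Summing the two contributions gives the claimed inequality.

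The bound on $\dw$ itself is then read off from the threshold computation: it is the smallest value $v \ge 1$ such that the drift is dissipative for $W \ge v$, namely $v \asymp c_r (\troiss \vee L)^{1/(1+q-r)} \deux^{-1/(1-r)} \log(1+dL)\, d^{1/(1+q-r)}$. The appearance of $\troiss \vee L$ rather than just $\troiss$ comes from bounding $\Delta W \le d\,(\troiss \wedge \text{stuff})$ but also needing $L$ in the Euler remainder terms; the $\deux^{-1/(1-r)}$ reflects that the lower bound $|\nabla W|^2 \gtrsim \deux W^{1-2r}$ must be integrated/exponentiated over the range $W \in [1, \dw]$; the $\log(1+dL)$ is the price of the discretization constraint. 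The special cases $q = 0$ and $r = q = 0$ follow by direct substitution.

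The main obstacle, and where I would spend the bulk of the effort, is the discrete Euler estimate: the explicit scheme can make large excursions when $|\nabla W|$ is large (since there is no projection or implicit correction), so one must verify carefully that the condition $\gamma \le \gamma_0 = (4dL+1)^{-1}$ is exactly strong enough to tame the $\gamma^2|\nabla W|^2$ and higher-order Taylor remainders — using $|\nabla W(x)|^2 \le 2L(W(x) - \min W) \lesssim L\, W(x)$ (a consequence of $L$-smoothness and $\min W \asymp 1$) to re-express everything in powers of $W$ — and that the dissipativity from $-\gamma|\nabla W|^2$ survives with a constant $\kappa$ independent of $\gamma$ and of $W$ (i.e.\ not degenerating as $r \to 1$ except through the explicit $(1-r)$-exponents). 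Getting the $\dw$-dependence sharp (rather than with spurious extra powers of $d$ or $L$) is the delicate bookkeeping point.
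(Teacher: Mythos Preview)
Your strategy has a genuine gap at the very first step: in the weakly convex regime ($r>0$) the polynomial Lyapunov function $V=W^p$ does \emph{not} satisfy a geometric drift inequality of the form $\mathcal{L}W^p\le -\kappa W^p + C$. Computing as you do and inserting the correct lower bound from Lemma~\ref{lem:gradW}, namely $|\nabla W|^2\gtrsim \deux\, W^{1-r}$ (not $\deux^{2}W^{1-2r}$), the dominant negative term is only of order $-\deux\, W^{p-r}$, which is \emph{sublinear} in $W^p$ when $r>0$. The resulting inequality $\mathcal{L}W^p\lesssim -\deux W^{p-r}+d\troiss W^{p-1-q}$ is a sub-geometric drift condition, and your appeal to ``a standard Gronwall/Dynkin argument'' breaks down: from $\frac{d}{dt}\E[W^p(X_t)]\le -c\,\E[W^{p-r}(X_t)]+C$ one cannot pass to a closed differential inequality for $u(t)=\E[W^p(X_t)]$, because Jensen goes the wrong way ($\E[W^{p-r}]\le (\E[W^p])^{(p-r)/p}$, not $\ge$). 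The same obstruction arises verbatim for the discrete scheme.

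The paper circumvents this by working with the \emph{exponential} Lyapunov function $f_\aaa(x)=e^{\aaa W(x)}$ (Lemma~\ref{lem:expbounds1cont} and Proposition~\ref{lem:expbounds2scheme}): since $\nabla f_\aaa = \aaa f_\aaa\nabla W$, the drift term $-\aaa(1-\aaa)|\nabla W|^2 f_\aaa$ is automatically \emph{linear} in $f_\aaa$ outside the compact set $\mathcal{C}^K_{c,\gamma_0}$ where $|\nabla W|^2$ dominates $d\bar\lambda_{\rm loc}$, and one obtains a genuine geometric inequality $\mathcal{L}f_\aaa\le -\tilde\aaa f_\aaa+\tilde\aaa\beta$. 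The paper explicitly remarks that ``introducing exponential Lyapunov functions is a way to compensate the lack of mean-reverting'' in the weakly convex case. Polynomial moments are then recovered a posteriori in Proposition~\ref{cor:momentsEUler} via the concavity of $x\mapsto\log^p x$ on $[e^{p-1},\infty)$: one writes $\E[W^p]=\aaa^{-p}\E[\log^p(e^{\aaa W})]$ and applies Jensen to get $\E[W^p]\lesssim \aaa^{-p}\log^p(\E[e^{\aaa W}])$, which is how the stated form of $\Upsilon$ (with the $\log(1+dL)$ factor) emerges. Your threshold heuristic for $\Upsilon$ is morally right, but the analytic route to it must go through exponential moments.
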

%


%
%
{
We are now able to state our first bounds on the complexity of the discretization procedure. This general and technical result is divided into two cases $(i)$ and $(ii)$ leading respectively to a complexity of order $\varepsilon^{-4}$ and $\varepsilon^{-3}$ (the second case requires additional assumptions on $\dob$). Then, each case is divided into two parts $(a)$ and $(b)$. In Statement $(a)$, the result gives the dependency on the whole set of parameters whereas $(b)$ only focuses on the dimension $d$ (in order to provide a bound which is less precise but more simple to read). 
}

\begin{thm}\label{cor:thmPoisson}
Assume  {$\AL$}, $\Hcu$ and that $\gamma\le \gamma_0:= \frac{1}{8}((d (L\vee \troiss))^{-1}\wedge \frac{1}{8})$. Let $f$ be a ${\cal C}^2$ function from $\ER^d$ to $\ER^{d'}$ with $d'\le d$, $\flip\le 1$ and $\dflip\le 1$.
Let $\varepsilon>0$ and $\mte>0$.\\

\begin{itemize}
\item $(i.a)$  If $\gamma_\varepsilon =  \frac{\deux^{2(1+\mte)}}{d L^2 \dw^{2r(1+\mte)}} \varepsilon^2 \wedge 
\frac{d}{\troiss \dw^{1-q-2r \mte}}  \wedge
\frac{d}{\troiss 
(1+q)^{\frac{1-q}{1+q}} \troiss^{\frac{1-q}{1+q}} \dw^{-2r \mte}|x_0-x^\star|^{2 \frac{1-q}{1+q}}}
 \wedge
\frac{\dw^{2r(1+\mte)+q-1} d L^2}{\troiss \deux^{2(1+\mte)}}$
and

$$
N_{\varepsilon} \ge \dw^{\frac{1+3r}{2}(1+\mte)} \deux^{-\frac{3}{2}(1+\mte)} \gamma_{\varepsilon}^{-1} \epsilon^{-1} \vee d' \dw^{2r(1+\mte)} \deux^{-2(1+\mte)} \gamma_{\varepsilon}^{-1} \varepsilon^{-2}
$$
%
%

 
\noindent
  then,  a constant $c_{\mte,r,q}$  exists independent from $\deux$, $\troiss$, $L$, $d$, $\varepsilon$ such that:
\begin{equation} \label{eq:MSEleeps}
 {\cal E}_\varepsilon:=\E_{x_0}\left[\left|\frac{1}{N_\varepsilon}\sum_{k=1}^{N_\varepsilon} \bX_{k\gamma}^{\gamma}-\pi(\id)\right|^2\right]\le c_{\mte,r,q}{\varepsilon^2}\quad\textnormal{{with $\pi\,\propto \, e^{-\dob}$}}.
 \end{equation}

\item $(i.b)$ If $\varepsilon \in (0,1\wedge (d' d^{-\frac{1-r}{2(1+q-r)}})]$, $\gamma_\varepsilon=\varepsilon^{2}d^{-(1+\frac{2r}{1+q-r}+\mte)}$ and $N_\varepsilon=\varepsilon^{-4} d' d^{1+\frac{4r}{1+q-r}+\mte}$, then
 ${\cal E}_\varepsilon\le c\varepsilon^2$ where $ c$  only depends on $\mte,r,q,\deux,\troiss$ and $L$ (but not  on $d$ and $\varepsilon$).\\
\item  $(ii.a)$ Assume furthermore that $\dob$ is ${\cal C}^3$ with $D^2 W$ $\tilde{L}$-Lipschitz for $\nsp{.}$ and that
$$\|\vec{\Delta}(\nabdob)\|_{2,\infty}^2:=\sup_{x\in\ER^d} |\vec{\Delta}(\nabdob)(x)|^2<+\infty,$$
where $\vec{\Delta}(\nabdob)=(\Delta \partial_{1} \dob,\ldots,\Delta \partial_{1} \dob)$. Then, if $\gamma_\varepsilon= c_{2,1} \varepsilon$ with 
$$c_{2,1}=  \deux^{1+\mte}\min\left(\frac{1}{L \troiss^{\frac{1}{2}} \dw^{\frac{1-q+2r}{2}(1+\mte)}},\frac{ \deux^{1+\mte}}{L\tilde{L} d \Upsilon^{2r(1+\mte)}},\frac{1}{\|\vec{\Delta}(\nabdob)\|_{2,\infty} \dw^{r(1+\mte)}}\right),$$
and 
$$c_{1,2}=\max\left( \frac{d' \Upsilon^{2r(1+\mte)}}{\deux^{2(1+\mte)}},\varepsilon \frac{\Upsilon^{\frac{1+3r}{2}(1+\mte)}}{\deux^{\frac{3}{2}+\mte}}  \right),$$
and $ N_\varepsilon=\frac{c_{1,2}}{c_{2,1}} \varepsilon^{-3}$, then the conclusion of $(i.a)$ holds true.\\
\item 
$(ii.b)$ Under the assumptions of $(ii.a)$, assume that a constant $C$ independent on $d$ exists such that $\|\vec{\Delta}(\nabdob)\|_{2,\infty}^2 \leq C d^{2\rho} $ with $\rho\ge0$. 
If $\varepsilon\in (0,1\wedge (d' d^{-\frac{1-r}{2(1+q-r)}})]$,
 $\gamma_\varepsilon=\varepsilon d^{-\max(1+\frac{2r}{1+q-r},  \rho+\frac{r}{1+q-r})-\mte}$ and $N_\varepsilon=\varepsilon^{-3}d' d^{\max(1+\frac{4r}{1+q-r},\rho+\frac{3r}{1+q-r})+\mte}$, then the conclusion of $(i.b)$ holds true with a positive constant $c$ which only depends on $\mte,r,q,\deux,\troiss$, $L$ and $\tilde{L}$ (but not  on $d$ and $\varepsilon$).\\

\end{itemize}

%

%
%
%
\end{thm}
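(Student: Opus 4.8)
The plan is to establish the mean-square error bound \eqref{eq:MSEleeps} via the Poisson-equation decomposition, which is the backbone of the whole section. Let $\pi \propto e^{-W}$ with infinitesimal generator $\mathcal{L}\varphi = -\langle \nabla W, \nabla \varphi \rangle + \Delta \varphi$. For the target function $g = f - \pi(f)$ (here $f = \id$, so $d' = d$ and $g(x) = x - \pi(\id)$), I would first invoke the quantitative solvability of the Poisson equation $\mathcal{L} u = -g$ established earlier (Theorem \ref{cor:thmPoisson} refers to Theorems \ref{cor:thmPoissonSC}/\ref{cor:thmPoisson} and the Poisson analysis of Section \ref{sec:disc_c}), together with the explicit bounds on $u$, $\nabla u$, $\nabla^2 u$ (and, for case $(ii)$, $\nabla^3 u$) in terms of $W$-weighted norms, where the weights are powers of $W$ controlled via $\Hcu$. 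Then, writing $\bar{X}^\gamma$ for the Euler scheme and using Itô-Taylor / discrete Dynkin expansion, I would decompose
\begin{equation*}
\frac{1}{N}\sum_{k=1}^{N} g(\bar{X}_{k\gamma}^\gamma) = \underbrace{\frac{u(\bar{X}_0^\gamma) - u(\bar{X}_{N\gamma}^\gamma)}{N\gamma}}_{\text{(telescoping)}} + \underbrace{\frac{1}{N}\sum_{k=1}^N M_k}_{\text{(martingale)}} + \underbrace{\text{(discretization remainder)}}_{R_N},
\end{equation*}
where $M_k$ is the martingale increment coming from the Brownian part of the scheme acting on $\nabla u$, and $R_N$ collects the error between the discrete generator of the scheme and $\mathcal{L}$ applied to $u$ (second-order Taylor remainder in case $(i)$, third-order in case $(ii)$).

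**Key steps, in order.** (1) Control the telescoping term: $|u(\bar{X}_0^\gamma) - u(\bar{X}_{N\gamma}^\gamma)|/(N\gamma) \lesssim \|u\|_W$-weighted, with the $W$-moments of $\bar{X}_0^\gamma$ and $\bar{X}_{N\gamma}^\gamma$ bounded uniformly by Proposition \ref{lem:expbounds22}, giving a term of size $\dw^{\text{(power)}} \deux^{-\text{(power)}} / (N\gamma)$; squaring and matching against $\varepsilon^2$ forces the first term in the lower bound on $N_\varepsilon$. (2) Control the martingale term: $\E|\frac{1}{N}\sum M_k|^2 = \frac{1}{N^2}\sum \E|M_k|^2 \lesssim \frac{\gamma}{N} \sup_k \E[|\nabla u(\bar{X}_{k\gamma}^\gamma)|^2]$, and again bounding $|\nabla u|$ by a $W$-power times a constant in $\deux,\troiss$, and using Proposition \ref{lem:expbounds22} for the $W$-moments, yields a term $\propto d' \dw^{2r(1+\mte)}\deux^{-2(1+\mte)} \gamma / N$; this is where the $d'$ and the $\varepsilon^{-2}$ in the second part of the $N_\varepsilon$ bound come from. (3) Control the discretization remainder $R_N$: a Taylor expansion of $u$ along one Euler step, combined with $\E[\Ur_{k+1}] = 0$ and $\E[|\Ur_{k+1}|^2] = 2\gamma d$, shows each step contributes at order $\gamma^2$ (resp. $\gamma^3$ in case $(ii)$ thanks to the extra cancellation from the third-order term and the hypothesis $\|\vec{\Delta}(\nabla W)\|_{2,\infty} < \infty$); the coefficient involves $\|\nabla^2 u\|$ (resp. $\|\nabla^3 u\|$), $L$, $\tilde{L}$, $\|\vec{\Delta}(\nabla W)\|_{2,\infty}$, and $W$-moments, and matching $\gamma^2 \times (\text{coeff})^2 \leq \varepsilon^2$ (resp. $\gamma \leq \varepsilon$ after the cube) is exactly what pins down $\gamma_\varepsilon$ in $(i.a)$ (resp. $c_{2,1}$ in $(ii.a)$). (4) Assemble: plugging the chosen $\gamma_\varepsilon$ and $N_\varepsilon$ into (1)–(3) and invoking $\gamma \le \gamma_0$ to keep the scheme stable gives \eqref{eq:MSEleeps} with a constant $c_{\mte,r,q}$ absorbing only the combinatorial/Taylor constants. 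For $(i.b)$ and $(ii.b)$, I would then specialize: under $\Hcu$ the bound on $\dw$ from Proposition \ref{lem:expbounds22} is $\dw \lesssim d^{1/(1+q-r)}$ up to log and $\deux,\troiss,L$ factors, so substituting $\dw \asymp d^{1/(1+q-r)}$ into the formulas of $(i.a)$/$(ii.a)$ and collecting the powers of $d$ yields the stated exponents $1+\frac{4r}{1+q-r}$ etc., with all $\deux,\troiss,L,\tilde{L}$-dependence swept into the constant $c$; the condition $\varepsilon \leq d' d^{-\frac{1-r}{2(1+q-r)}}$ is precisely what guarantees the first (telescoping) term in $N_\varepsilon$ is dominated by the second, so that $N_\varepsilon \asymp \varepsilon^{-4} d' d^{\ldots}$ (resp. $\varepsilon^{-3}$) is the binding constraint.

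**The main obstacle** will be step (3), the sharp quantitative control of the discretization remainder, together with making sure the $W$-weighted norms of $u,\nabla u,\nabla^2 u,\nabla^3 u$ from the Poisson analysis interact correctly with the exponential moment bounds of Proposition \ref{lem:expbounds22}: one must check that every weight $W^{\text{power}}$ that appears has a power small enough that $c_p$ in Proposition \ref{lem:expbounds22} is finite, and then carefully track how each power of $\dw$, $\deux$, $\troiss$, $L$ propagates — this bookkeeping is what produces the (admittedly heavy) explicit formulas for $\gamma_\varepsilon$ and $N_\varepsilon$, and getting the exponent $2r(1+\mte)$ on $\dw$ (rather than something worse) requires using the interplay between $|\nabla W|$, $W$ and $\mathrm{Sp}(\nabla^2 W)$ afforded by $\Hcu$ and convexity. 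The passage to $(ii)$ additionally requires verifying that the third-order Itô-Taylor term genuinely gains an extra power of $\gamma$ after taking expectations, which hinges on the boundedness of $\vec{\Delta}(\nabla W)$ and on a bound for $\nabla^3 u$ — the latter being the most delicate output of the Poisson-equation estimates.
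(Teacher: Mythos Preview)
Your overall architecture---Poisson solution, then telescoping / martingale / remainder---matches the paper's, and your handling of the first two pieces is essentially right (modulo a scaling slip: the martingale MSE is $\sim 1/(N\gamma)=1/t_N$, not $\gamma/N$). The substantive gap is in step~(3) for case~$(i)$. You propose to Taylor-expand the Poisson solution (your $u$, the paper's $g$) to second order along an Euler step and bound the bias by $O(\gamma^2)$ per step via $\|\nabla^2 u\|$. But under the hypotheses of~$(i)$ only \emph{first}-derivative bounds on the Poisson solution are available: Proposition~\ref{prop:fundamentalbounds2} controls $\nsp{Dg}$ and $|g|$, while any bound on $D^2 g$ (item~$(iii)$ there, the Lipschitz constant of $Dg$) already requires $\nabla^2 W$ to be $\tilde L$-Lipschitz, which is precisely the extra hypothesis of case~$(ii)$. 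Without it your second-order remainder cannot be closed. Even heuristically your scaling does not match the statement: a per-step $O(\gamma^2)$ bias gives $R_N=O(\gamma)$, hence MSE $\sim\gamma^2$ and $\gamma_\varepsilon\propto\varepsilon$, $N_\varepsilon\propto\varepsilon^{-3}$; but $(i.a)$ asserts $\gamma_\varepsilon\propto\varepsilon^2$ and $N_\varepsilon\propto\varepsilon^{-4}$.

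The paper circumvents $D^2 g$ in case~$(i)$ by working with the \emph{continuous-time interpolation} $(\bar X_t)_{t\ge0}$ of the scheme and applying It\^o to $g(\bar X_t)$. The discretization error then reads $A_{t_N}^{(2)}=t_N^{-1}\int_0^{t_N} Dg(\bar X_s)\bigl[\nabla W(\bar X_{\underline s})-\nabla W(\bar X_s)\bigr]\,ds$, which involves only $Dg$ and the Lipschitz constant of $\nabla W$. Its dominant ``Brownian'' part $A_{t_N}^{(2,2)}$ is bounded crudely by Jensen/Cauchy--Schwarz as $\E|A_{t_N}^{(2,2)}|^2\lesssim d\gamma$ (times the $Dg$-moment), and it is this $O(\gamma)$---not $O(\gamma^2)$---MSE contribution that forces $\gamma_\varepsilon\propto\varepsilon^2/d$ in~$(i.a)$. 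For case~$(ii)$, the extra regularity is used not to Taylor-expand $g$ to third order (no $\nabla^3 g$ bound is ever established), but to apply It\^o to $\nabla W(\bar X_{\underline s}+\Delta_{\underline s s})$ itself: this splits $A_{t_N}^{(2,2)}$ into a per-interval martingale (contributing $\gamma^2/t_N$ in MSE) plus a drift controlled by $\|\vec\Delta(\nabla W)\|_{2,\infty}$ and the Lipschitz constant of $Dg$ from Proposition~\ref{prop:fundamentalbounds2}$(iii)$, both of order $\gamma^2$, which is what yields $\gamma_\varepsilon\propto\varepsilon$ and $N_\varepsilon\propto\varepsilon^{-3}$.
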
 
\begin{rmq} \label{rmqlaplacien} {Since $\|\vec{\Delta}(\nabdob)\|_{2,\infty}^2={\sup_{x\in\ER^d}\sum_{i=1}^{d} (\sum_{j=1}^d \partial_{i,j,j}^3 \dob(x)})^2$,
the condition  $\|\vec{\Delta}(\nabdob)\|_{2,\infty}^2\le C d^{2\rho} $ }reasonably holds with $\rho \le 3/2$ (if each $\partial_{i,j,j}^3 \dob$ is bounded by a universal constant) and may hold with lower $\rho$ when the coordinates of the dynamical system are not ``fully connected'' (for instance if $\dob_i$ depends only on $x_i$, we get $\rho=3/2$).   {When $\rho\le 1+\frac{r}{1+q-r}$ (which is always the case when $r\ge 1/2$),  we obtain the same dependency in $d$ in $(ii.b)$ as in $(i.b)$ but with $\varepsilon^{-3}$ instead of $\varepsilon^{-4}$.}
\end{rmq}

\subsubsection{Application to Bayesian learning}
A consequence of Theorem \ref{cor:thmPoisson} is stated below when we observe a $n$ i.i.d. sample $\xi^{n}$ and when $\varepsilon_n$ refers to the statistical accuracy defined in \eqref{def:epsn} given by $\epsi_n= (d/n)^{\frac{1}{2\ic}}$ (we omit the logarithmic term) and when we implicitely assume that $d$ is smaller than $n$, \textit{i.e.} that the sequence $\varepsilon_n \rightarrow 0$ as $n \rightarrow + \infty$ (see Remark \ref{rem:d_vs_n}).

%





\begin{thm}\label{theo:learning_weak_convex} Assume $\IWC$,   $\AL$ and $\UPI$.
 Suppose that for any $\xi\in\ER^q$, $U(\xi,.)$ satisfies $\Hcu$ with  $\deux$, {$\troiss$, $r$ and $q$} independent of $\xi$. Assume furthermore that $r=q$. For the following tunings of $\gamma_{\varepsilon_n}$ and $N_{\varepsilon_n}$ given by:

\begin{itemize}
\item[$(i)$] If  $r \ge 1$, then $\gamma_{\varepsilon_n} = d^{-(2r+1-\ic^{-1})}  n^{-2r-\ic^{-1}},$
and $N_{\varepsilon_n}  = n^{2 \ic^{-1}+4r} d^{1+4r-2\ic^{-1}}.$

\item[$(ii)$] If  $ r=q=0$, then
$\gamma_{\varepsilon_n} =n^{-2}$
and
$
{N_{\varepsilon_n}} =  n^{\frac{1}{2}(1+\ic^{-1})} d^{\frac{1}{2}(1-\ic^{-1})}.
$
\item[$(iii)$] For any $r \in (0,1)$, we have
$
\gamma_{\varepsilon_n}= n^{-(2r+\ic^{-1})}d^{-1-2r+\ic^{-1}} \wedge n^{-\frac{2}{1+r}} d^{\frac{2r}{1+r}},
$
and 
$$
N_{\varepsilon_n} = \gamma_{\varepsilon_n}^{-1} \left[ n^{\frac{1}{2}\ic^{-1}-\frac{3}{2}(1-r)} d^{\frac{1+3r}{2} -\frac{1}{2} \ic^{-1}}\vee d^{2r-\ic^{-1}} n^{\ic^{-1}-2(1-r)}\right].
$$
\end{itemize}
we can find a constant $C$ that depends on $q,r,\deux,\troiss$ but not on $d$ and $n$ such that:
$$
\mathbb{E}[\|\tgno-\theta^\star\|^2] \leq C \varepsilon_n^2
$$
\end{thm}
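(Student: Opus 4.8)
The plan is to control $\E[\|\tgno-\theta^\star\|^2]$ by inserting the exact posterior mean $\widetilde{\theta}_n=\pi_n(\id)$ and splitting into a \emph{statistical} error, governed by Theorem~\ref{theo:consistency}, and a \emph{numerical} error, governed by Theorem~\ref{cor:thmPoisson}. Since $\widetilde{\theta}_n$ is $\mathbf{\xi^n}$-measurable whereas $\tgno$ also carries the randomness of the Euler scheme \eqref{eq:Euler_def}, one writes
\[
\E[\|\tgno-\theta^\star\|^2]\;\le\;2\,\E_{\theta^\star}\!\Big[\E\big[\|\tgno-\widetilde{\theta}_n\|^2\,\big|\,\mathbf{\xi^n}\big]\Big]\;+\;2\,\E_{\theta^\star}\!\big[\|\widetilde{\theta}_n-\theta^\star\|^2\big].
\]
Under $\IWC$, $\AL$ and $\UPI$, Theorem~\ref{theo:consistency} (with $p=2$, and $\varepsilon_n=(d/n)^{1/(2\ic)}$ up to the logarithmic factor) bounds the second term by $\luc\varepsilon_n^2$, so it remains to show that, for a typical realisation of $\mathbf{\xi^n}$, $\E[\|\tgno-\widetilde{\theta}_n\|^2\mid\mathbf{\xi^n}]\le C\varepsilon_n^2$.

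Conditionally on $\mathbf{\xi^n}$, I would apply Theorem~\ref{cor:thmPoisson} with $\dob=\widetilde W_n$ — up to the harmless additive normalisation that makes it positive with $\min\dob=_{uc}1$ (this changes neither $\nabla\widetilde W_n$, nor the diffusion \eqref{eq:diffusion}, nor $\pi_n$) — and with test function $f=\id$, for which $\flip=1$, $D\,\id=\Id$ is constant so $\dflip=0\le1$, $d'=d$, and $\pi\propto e^{-\widetilde W_n}=\pi_n$, hence $\pi(\id)=\widetilde{\theta}_n$ and (up to a negligible boundary term in the Cesaro sum \eqref{def:tgn}) the left-hand side of \eqref{eq:MSEleeps} is exactly $\E[\|\tgno-\widetilde{\theta}_n\|^2\mid\mathbf{\xi^n}]$. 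The crux is then to identify the $\AL$ and $\Hcu$ constants of the \emph{aggregated} potential $\widetilde W_n=\sum_{i=1}^n U(\xi_i,\cdot)+V_0$ in terms of $(\deux,\troiss,L)$ and $(n,d)$: since each $U(\xi_i,\cdot)$ is $L$-smooth, $\nabla\widetilde W_n$ is $nL$-Lipschitz (up to the prior contribution); and since each $U(\xi_i,\cdot)$ satisfies $\Hcu$ with the \emph{same} $(\deux,\troiss,r,q)$ and $r=q$, the convexity of $t\mapsto t^{-r}$ (Jensen) applied to $\underline{\lambda}_{\hesdob}\ge\sum_i\underline{\lambda}_{\nabla^2 U(\xi_i,\cdot)}$, together with the $\xi$-Lipschitz part of $\AL$ — which keeps the individual potentials pointwise comparable up to an additive term of size $L\,\mathrm{diam}(\mathbf{\xi^n})$, hence up to a factor polylogarithmic in $n$ on a typical sample — shows that $\widetilde W_n$ satisfies $\Hcu$ with parameters scaling polynomially in $n$ (with $\deux$ and $\troiss$ each carrying a factor of the order of $n$ up to such slowly growing corrections, and the same exponents $r=q$). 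Plugging these into Proposition~\ref{lem:expbounds22}, for which $1+q-r=1$, yields the moment constant $\dw=\Upsilon$ of order $d\log(nd)$ times constants depending on $(\deux,\troiss,L,r)$ (and merely $\asymp d\log(nd)$ when $r=q=0$); and since the minimiser $x^\star$ of $\widetilde W_n$ converges to $\theta^\star$, initialising at a fixed $x_0$ keeps $|x_0-x^\star|=O(1)$.

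Finally I would substitute $\varepsilon=\varepsilon_n=(d/n)^{1/(2\ic)}$ and the constants above into Theorem~\ref{cor:thmPoisson}$(i)$ and check, regime by regime, that the announced $(\gamma_{\varepsilon_n},N_{\varepsilon_n})$ fulfil the required inequalities (the chosen $\gamma_{\varepsilon_n}$ need not saturate the maximal admissible step, only be $\le$ it, up to the corresponding increase of $N_{\varepsilon_n}$). In case $(ii)$, $r=q=0$ (the strongly convex regime), the four-term minimum defining $\gamma_\varepsilon$ and the $\max$ defining $N_\varepsilon$ collapse, with $\deux,\troiss$ of order $n\deux,n\troiss$, $L$ of order $nL$, $\dw\asymp d\log(nd)$ and the free parameter $\mte\downarrow0$, to $\gamma_{\varepsilon_n}\propto n^{-2}$ and $N_{\varepsilon_n}\propto n^{(1+\ic^{-1})/2}d^{(1-\ic^{-1})/2}$. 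In case $(iii)$, $r=q\in(0,1)$, the same substitutions give, using $1+q-r=1$ to collapse the exponents of $n$ and $d$ and with the $n^{-r/(1-r)}$ contributions stemming from $\dw$, exactly the announced $(\gamma_{\varepsilon_n},N_{\varepsilon_n})$; case $(i)$, the Laplace-type regime, follows by letting $r\uparrow1$ in the bounds of $(iii)$, keeping the dominant term of the minimum. In all cases, for $n$ large enough relatively to $d$, $\gamma_{\varepsilon_n}\le\gamma_0$, so the hypotheses of Theorem~\ref{cor:thmPoisson} hold and \eqref{eq:MSEleeps} gives $\E[\|\tgno-\widetilde{\theta}_n\|^2\mid\mathbf{\xi^n}]\le c_{\mte,r,q}\,\varepsilon_n^2$; combining with the statistical bound yields the claim, with $C$ depending only on $q,r,\deux,\troiss$ (and $L$).

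\noindent\textbf{Main obstacle.} The delicate point is the parameter-tracking step: establishing that $\widetilde W_n$ satisfies $\Hcu$ (and $\AL$) with constants of the \emph{correct} polynomial order in $n$ — the additive normalisation $\min\dob=_{uc}1$, the fact that the summands $U(\xi_i,\cdot)$ need not be positive, and the need to keep them pointwise comparable all require care, and this is precisely where the hypothesis $r=q$ is essential for the Jensen step — and then propagating these constants through the four-term minimum defining $\gamma_\varepsilon$ in Theorem~\ref{cor:thmPoisson} and through Proposition~\ref{lem:expbounds22} so as to recover the exact exponents of $n$ and $d$ in each of the three regimes.
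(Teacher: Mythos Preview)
Your high-level plan --- split via $\widetilde\theta_n$ into a statistical error (Theorem~\ref{theo:consistency}) and a numerical error (Theorem~\ref{cor:thmPoisson} applied conditionally on $\mathbf{\xi^n}$ with $\dob=\widetilde W_n$, $f=\id$, $d'=d$), then calibrate $(\gamma_{\varepsilon_n},N_{\varepsilon_n})$ regime by regime --- is exactly the paper's, and you correctly identify the parameter-tracking for $\Hcu$ on the aggregate $\widetilde W_n$ as the crux. But your execution of that step has a genuine gap.

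For the \emph{upper} eigenvalue bound you try to keep $\tilde q=q$ for the aggregate by arguing that the individual $U(\xi_i,\cdot)$ are pointwise comparable through the $\xi$-Lipschitz part of $\AL$. This does not work: $\AL$ controls $\nabla_\theta U$ in $\xi$, not $U$ itself, so it yields no uniform additive comparison of the potentials (each $U(\xi_i,\cdot)$ may carry its own $\xi_i$-dependent constant); and even granting comparability, Jensen goes the wrong way --- you would need $\sum_i U(\xi_i,\cdot)^{-q}\lesssim W_n^{-q}$ from \emph{above}, which convexity of $t\mapsto t^{-q}$ does not deliver. The paper sidesteps this entirely: since $\nabla W_n$ is $nL$-Lipschitz by $\AL$, one has $\bar\lambda_{\nabla^2 W_n}(x)\le nL=nL\cdot W_n(x)^{0}$ for all $x$, so $\Hcu$ holds for $W_n$ with $\tilde q=0$ and $\tilde{\troiss}=nL$, with no reference whatsoever to the sample configuration or to $\mathrm{diam}(\mathbf{\xi^n})$. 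For the \emph{lower} bound, the paper uses exactly your Jensen idea, but the output is $\tilde{\deux}=\deux\,n^{1-r}$, not $n\deux$: from $\underline{\lambda}_{\nabla^2 W_n}\ge\sum_i\underline{\lambda}_{\nabla^2 U(\xi_i,\cdot)}\ge\deux\sum_i U(\xi_i,\cdot)^{-r}$ and convexity of $t\mapsto t^{-r}$ one gets $\ge \deux\,n^{1-r}W_n^{-r}$. Thus the aggregate satisfies $\Hcu$ with $(\tilde{\deux},\tilde{\troiss},\tilde r,\tilde q)=(\deux n^{1-r},\,nL,\,r,\,0)$; these are the values fed into Proposition~\ref{lem:expbounds22} to obtain $\Upsilon_n$ of order $d\,n^{\mte}$, and then into Theorem~\ref{cor:thmPoisson}(i.a) to produce the explicit constraints on $\gamma_{\varepsilon_n}$ and $N_{\varepsilon_n}$ that you compare case by case. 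In particular, the hypothesis $r=q$ on the individual potentials is \emph{not} what enables the Jensen step (which only concerns the lower bound and uses $r$ alone); it is used so that the final answer can be parametrised by a single exponent.
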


\subsection{Discretization in the strongly convex setting} 
 This paragraph concerns the $L$-smooth  and strongly convex case, which is defined as follows. 
\begin{ass}[Assumption $\SC$]
We assume that for any $\xi \in \R^q$, $U(\xi,.)$ is $\rho$-strongly convex: \textit{i.e.}, for any $\xi \in \R^q$:
$$
\forall z \in \R^d \quad  \forall \theta \in \R^d: \qquad ^t z \nabla^2_{\theta} U(\xi,\theta) z \ge \rho |z|^2.
$$
\end{ass}
\begin{thm}\label{cor:thmPoissonSC}
Assume  {$\AL$} with $L\ge 1$ and $\SC$.  Let $\Yg$ denote the Euler scheme with constant step-size $\gamma$ initialized at $x_0$. 
 Let $f$ be a ${\cal C}^2$ function from $\ER^d$ to $\ER^{d'}$ with $d'\le d$,  $\flip\le 1$ and $\dflip\le 1$ and consider $\varepsilon>0$

\noindent  (i)  If $\gamma_\varepsilon=  \frac{\rho^2}{L^2 d } \varepsilon^2 \wedge \frac{\rho}{L^2}  \wedge \frac{d}{L^2 |x_0-x^\star|^2}$ and  
$
N_\varepsilon \ge d' \rho^{-2}  \gamma_{\varepsilon}^{-1} \varepsilon^{-2} \vee  \sqrt{d} \rho^{-3/2} \varepsilon^{-1} \gamma_{\varepsilon}^{-1}
$
, then:
\begin{equation*}
 \E_{x_0}\left[\left|\frac{1}{N_\varepsilon}\sum_{k=1}^{N_\varepsilon} \bX_{k\gamma}^{\gamma}-\pi(\id)\right|^2\right]\lesssim_{uc} {\varepsilon^2}.
 \end{equation*}

\noindent
 (ii) Assume furthermore that $\dob$ is ${\cal C}^3$ with $D^2 W$ $\tilde{L}$-Lipschitz for $\nsp{.}$ and that
$\|\vec{\Delta}(\nabdob)\|_{2,\infty}^2<+\infty.$
If $\gamma_\varepsilon=  \mathfrak{b}_2^{-\frac{1}{2}} \varepsilon$ with:
$$ \mathfrak{b}_2=
d^2+\frac{L^4 d}{\rho^3}+
L^2 {\left( \frac{1 }{\rho^2}+ \frac{\tilde{L}^2}{\rho^4}\right)}
+
\frac{ \|\vec{\Delta}(\nabdob)\|_{2,\infty}^2 }{\rho^2  } 
+{2   \frac{L^4 }{\rho^2}  |x_0-x^\star|^2},$$
and $ N_\varepsilon=\varepsilon^{-3} \frac{\sqrt{\mathfrak{b}_2} d'}{\rho^2} \vee \gamma_{\varepsilon}^{-1} \vee d \gamma_{\varepsilon}^{-1} \rho^{-1}$, then:
\begin{equation*}
 \E_{x_0}\left[\left|\frac{1}{N_\varepsilon}\sum_{k=1}^{N_\varepsilon} \bX_{k\gamma}^{\gamma}-\pi(\id)\right|^2\right]\lesssim_{uc} {\varepsilon^2}.
 \end{equation*}

\end{thm}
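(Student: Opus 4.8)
The plan is to run, for the Euler scheme itself, the machinery of the \emph{Poisson equation} associated with the overdamped Langevin diffusion $X$ having invariant law $\pi\propto e^{-\dob}$; as announced in the introduction, one never couples $\Yg$ with $X$. Since the identity map $\id$ satisfies $\flip=1$ and $\dflip=0$, it suffices to prove the $L^2$ bound for $\frac1{N_\varepsilon}\sum_{k=1}^{N_\varepsilon}f(\Yg_{t_k})-\pi(f)$ for an arbitrary ${\cal C}^2$ map $f:\ER^d\to\ER^{d'}$, $d'\le d$, with $\flip\le 1$, $\dflip\le 1$, and then to specialize to $f=\id$, $d'=d$. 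Let $\Lin=\Delta-\langle\nabdob,\nabla\cdot\rangle$ be the generator of $X$ and let $\phi=-\int_0^{+\infty}(P_tf-\pi(f))\,dt$ solve $\Lin\phi=f-\pi(f)$, $(P_t)$ being the diffusion semigroup. Under $\SC$, Bakry--Émery (equivalently a synchronous coupling) yields $\mathcal W_2(P_t\mu,P_t\nu)\le e^{-\rho t}\mathcal W_2(\mu,\nu)$; differentiating the representation of $\phi$ and controlling the first and second variation processes of the SDE gives the a priori bounds $\nsp{\nabla\phi}\luc\rho^{-1}$, $\nsp{\nabla^2\phi}\luc$ [an explicit polynomial in $\rho^{-1},L,\tilde{L}$], and, under the extra hypotheses of $(ii)$ ($\dob\in{\cal C}^3$, $\DdW$ $\tilde L$-Lipschitz, $\|\vec{\Delta}(\nabdob)\|_{2,\infty}<+\infty$), a quantitative control of $\nabla^3\phi$ that is polynomial in $\rho^{-1},L,\tilde L$ and linear in $\|\vec{\Delta}(\nabdob)\|_{2,\infty}$. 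These are exactly the $\phi$-estimates one would establish beforehand (in the spirit of Section~\ref{sec:discretisation}).

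Next, I would Taylor-expand $\phi$ along the scheme. Writing the increment $\Yg_{t_{k+1}}-\Yg_{t_k}=-\gamma\nabdob(\Yg_{t_k})+\sqrt2\,\Ur_{k+1}$ and using $\E[\Ur_{k+1}\mid\mathcal F_k]=0$, $\E[\Ur_{k+1}\Ur_{k+1}^t\mid\mathcal F_k]=\gamma\id$, one obtains
$$\E\bigl[\phi(\Yg_{t_{k+1}})\mid\mathcal F_k\bigr]-\phi(\Yg_{t_k})=\gamma\,\Lin\phi(\Yg_{t_k})+e_k=\gamma\bigl(f(\Yg_{t_k})-\pi(f)\bigr)+e_k,$$
where the one-step defect satisfies $\|e_k\|_2\luc\gamma^2\,\nsp{\nabla^2\phi}\,L^2\,(\E|\Yg_{t_k}-x^\star|^4)^{1/2}+(\text{higher order})$, the pure Brownian $\gamma^{3/2}$ contribution vanishing by symmetry of $\Ur_{k+1}$. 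Summing for $k=1,\dots,N_\varepsilon$ and dividing by $\gamma N_\varepsilon$,
$$\frac1{N_\varepsilon}\sum_{k=1}^{N_\varepsilon}\bigl(f(\Yg_{t_k})-\pi(f)\bigr)=\underbrace{\frac1{\gamma N_\varepsilon}\bigl(\phi(\Yg_{t_{N_\varepsilon+1}})-\phi(\Yg_{t_1})\bigr)}_{A}-\underbrace{\frac1{\gamma N_\varepsilon}\sum_{k=1}^{N_\varepsilon}M_k}_{B}-\underbrace{\frac1{\gamma N_\varepsilon}\sum_{k=1}^{N_\varepsilon}e_k}_{C},$$
with $M_k=\phi(\Yg_{t_{k+1}})-\E[\phi(\Yg_{t_{k+1}})\mid\mathcal F_k]$ an $(\mathcal F_k)$-martingale difference (this uses the independence of the $\Ur_{k+1}$). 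I would also record here uniform-in-time moment bounds for the scheme under $\SC+\AL$ (an analogue of Proposition~\ref{lem:expbounds22}, valid precisely because $\gamma\le\gamma_\varepsilon\le\rho/L^2$ makes the recursion geometrically stable): $\sup_k\E|\Yg_{t_k}-x^\star|^p\luc(d/\rho)^{p/2}+|x_0-x^\star|^p$ for $p\in\{2,4\}$.

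It then remains to bound the three pieces. Using $\nsp{\nabla\phi}\luc\rho^{-1}$ and the moment bound, $\|A\|_2\luc\rho^{-1}(\gamma N_\varepsilon)^{-1}(\sqrt{d/\rho}+|x_0-x^\star|)$; a first-order Taylor expansion of $M_k$ together with $\|\nabla\phi\|_F^2\le d'\nsp{\nabla\phi}^2$ gives $\E|M_k|^2\luc\gamma d'\rho^{-2}$, hence $\|B\|_2\luc\rho^{-1}\sqrt{d'/(\gamma N_\varepsilon)}$; finally $\|C\|_2$ is, by the above, at most $\gamma^{1/2}$ times a constant gathering $\nsp{\nabla^2\phi}$, $L$ and the scheme moments in case $(i)$, and — thanks to the ${\cal C}^3$-regularity of $\dob$, the cancellation of the third Gaussian moment, and a secondary application of the Poisson identity to control $\frac1{N_\varepsilon}\sum_k e_k$ — at most $\gamma$ times $\sqrt{\mathfrak{b}_2}$ in case $(ii)$. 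The prescribed $\gamma_\varepsilon$ is exactly the largest step for which the bias term is $\luc\varepsilon$ (i.e. $\gamma_\varepsilon^{1/2}\times(\cdot)\luc\varepsilon$, resp. $\gamma_\varepsilon\sqrt{\mathfrak{b}_2}\luc\varepsilon$), and the prescribed $N_\varepsilon$ is exactly what makes $\rho^{-1}\sqrt{d'/(\gamma_\varepsilon N_\varepsilon)}\luc\varepsilon$ and $\rho^{-1}(\gamma_\varepsilon N_\varepsilon)^{-1}(\sqrt{d/\rho}+|x_0-x^\star|)\luc\varepsilon$ (the two terms in the lower bound on $N_\varepsilon$); altogether $\|A\|_2+\|B\|_2+\|C\|_2\luc\varepsilon$, which gives $\E_{x_0}[|\cdot|^2]\luc\varepsilon^2$, and taking $f=\id$, $d'=d$ finishes the proof.

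The real obstacle is the first step: obtaining the a priori bounds on $\nabla\phi$, $\nabla^2\phi$ (and $\nabla^3\phi$ in case $(ii)$) with the correct \emph{quantitative} dependence on $\rho^{-1}$, $L$, $\tilde L$ and $\|\vec{\Delta}(\nabdob)\|_{2,\infty}$ — this requires differentiating the semigroup through the tangent (and second-tangent) flows of the SDE and is where the slight improvement over \cite{durmus} comes from — together with the bookkeeping of the secondary Poisson argument that upgrades the bias from order $\gamma^{1/2}$ to order $\gamma$ in case $(ii)$. The telescoping decomposition and the estimates of $A$ and $B$ are, by comparison, routine.
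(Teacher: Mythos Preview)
Your overall strategy---solve the Poisson equation $\mathcal L\phi=f-\pi(f)$, telescope $\phi$ along the scheme, split into boundary/martingale/bias terms---is the same as the paper's. The \emph{implementation}, however, is different and this creates a genuine gap in case $(i)$.

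You work with a discrete Taylor expansion of $\phi(\Yg_{t_{k+1}})-\phi(\Yg_{t_k})$ and read off $e_k=\E[\phi(\Yg_{t_{k+1}})\mid\mathcal F_k]-\phi(\Yg_{t_k})-\gamma\,\mathcal L\phi(\Yg_{t_k})$. To get $\|e_k\|\lesssim\gamma^{3/2}$ (hence $\|C\|_2\lesssim\gamma^{1/2}$) you invoke $\nsp{\nabla^2\phi}$ and a symmetry cancellation of the ``pure Brownian $\gamma^{3/2}$ term''. Both ingredients require more regularity than case $(i)$ provides. A quantitative bound on $\nsp{\nabla^2\phi}$ comes from differentiating the tangent flow once more, which forces $\nabla^2\dob$ to be Lipschitz (see Proposition~\ref{prop:bisbis}(iii), stated only under that extra assumption); and isolating the third-order Gaussian moment so that it vanishes by oddness requires $\phi\in\mathcal C^3$, which again needs $\nabla^2\dob$ Lipschitz. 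Under $\SC+\AL$ alone you only have $\nsp{D\phi}\le\rho^{-1}$, and with a bare second-order expansion the defect $e_k$ contains a term $\frac12\E[\Delta^T(D^2\phi(\xi)-D^2\phi(\Yg_{t_k}))\Delta]$ that you cannot control.

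The paper avoids this precisely by working with the \emph{continuous-time interpolation} $(\bar X_t)_{t\ge 0}$ of the Euler scheme and applying the It\^o formula to $g(\bar X_t)$. This produces the exact identity $g(\bar X_{t_N})=g(x_0)+\int_0^{t_N}\bar{\mathcal L}g(\bar X_s,\bar X_{\underline s})\,ds+\mathcal M_{t_N}^{(g)}$ with $\bar{\mathcal L}g(x,\underline x)=-Dg(x)\nabla\dob(\underline x)+\Delta g(x)$, so that the whole discretisation error collapses to $\frac{1}{t_N}\int_0^{t_N}Dg(\bar X_s)\bigl[\nabla\dob(\bar X_{\underline s})-\nabla\dob(\bar X_s)\bigr]ds$, which involves only $Dg$ and the $L$-Lipschitz constant of $\nabla\dob$. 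That is why case $(i)$ goes through with nothing more than $\nsp{Dg}\le\rho^{-1}$. Your decomposition would work for case $(ii)$ (where $D^2\phi$ is indeed Lipschitz by Proposition~\ref{prop:bisbis}(iii)), though the ``secondary Poisson identity'' you allude to is not the route the paper takes: there the bias term $A_{t_N}^{(2,2)}$ is refined by an It\^o expansion of $\nabla\dob(\bar X_{\underline s}+\Delta_{\underline s s})$ and a martingale-in-$s$ argument, producing directly the constant $\mathfrak b_2$.
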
 

\begin{rmq}\label{rmqdurmusmus}
{Note that in $(i)$, when $\varepsilon$ is small enough and $x_0$ sufficiently close to $x^\star$, we have  $\gamma_\varepsilon=  \frac{\rho^2}{L^2 d }\varepsilon^2$ . In this case, for a $\mathcal{C}^2$ function, the computational cost induced by $(i)$ is of the order $\mathcal{O}\left( \frac{d d' L^2}{\rho^4} \varepsilon^{-4}\right)$.}
This result slightly improves the bound derived in \cite{durmus} with the same dependency in $d$ and $\varepsilon$ and a better one in terms of $\rho$ and $L$, with a completely different method of proof. We also observe that we recover in this case the limiting case $r=q=0$ of Theorem \ref{cor:thmPoisson} $i)$ and $ii)$ with a sharp dependency in terms of $\rho$ and $L$.

If now the function $f$ is more regular and three times differentiable and if $\|\vec{\Delta}(\nabdob)\|_{2,\infty}^2 \lesssim_{uc} \rho^{-2}(L \tilde{L}d)^{{2}}$ (see Remark \ref{rmqlaplacien} for a discussion on $\|\vec{\Delta}(\nabdob)\|_{2,\infty}^2$), then the computational cost becomes $\mathcal{O}_{id}\left( \frac{d L^2}{\rho^4} \varepsilon^{-3}\right)$. Up to our knowledge, this result is new for deriving an $\varepsilon$ approximation M.S.E. cost.
\end{rmq}

The next theorem gives a quantitative setup to attain an $\epsi_n^2$ accuracy for a discrete LMC procedure with a constant step-size $\gamma$ where  $\epsi_n^2 = (d/n)^{\ic^{-1}}$ is defined by \eqref{def:epsn} in the situation where  $\varepsilon_n \rightarrow 0$ as $n \rightarrow + \infty$.


\begin{thm}\label{theo:learning_strong_convex}
Assume that $\IWC$, $\AL$, $\UPI$ and $\SC$ hold and that $|\theta_0-\theta^\star|^2 \lesssim_{uc} d$. For the following tunings of parameters:
 \begin{itemize}
\item $(i.a)$ if $\gamma_{\varepsilon_n} =  n^{-2}$ and $N_{\varepsilon_n} =  n^{\frac{1}{2}(1+\ic^{-1})} d^{\frac{1}{2}(1-{\alpha_c}^{-1})}$ operations.
\item $(i.b)$ if $|\hat{\theta}^{\gamma}_{n,0} - \theta^\star|^2 \leq d n^{-2}$, $\gamma =  n^{-1}$ and $N = n \vee d$ operations.
\end{itemize} 
then a constant $C$ that depends on $\ic,L,\rho$ but not on $d$ and $n$ exists such that:
$$\mathbf{E}\left[\left|\tgno-\theta^\star\right|^2\right] \leq C \epsi_n^2.$$
\end{thm}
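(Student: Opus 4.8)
\textbf{Proof plan for Theorem \ref{theo:learning_strong_convex}.}
The plan is to combine the statistical bound of Theorem \ref{theo:consistency} with the numerical bound of Theorem \ref{cor:thmPoissonSC} applied to the random potential $\widetilde W_n$ of the posterior, using a triangle inequality
$$
\mathbb{E}\left[\left|\tgno-\theta^\star\right|^2\right] \luc \mathbb{E}_{\theta^\star}\left[\left|\widetilde\theta_n-\theta^\star\right|^2\right] + \mathbb{E}\left[\left|\tgno-\widetilde\theta_n\right|^2\right].
$$
Theorem \ref{theo:consistency} (which applies since $\IWC$, $\AL$, $\UPI$ hold, and $\SC$ in particular implies the log-concavity needed) gives the first term $\luc \epsi_n^2$. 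For the second term, the key point is that under $\SC$ and $\AL$ the posterior potential $\widetilde W_n = \sum_i U(\xi_i,\cdot) + V_0$ is $n\rho$-strongly convex (ignoring the prior, or $(n\rho)$-strongly convex with the log-concave prior only helping) and $nL+1$-smooth; so Theorem \ref{cor:thmPoissonSC}$(i)$ applies with $f=\id$, $d'=d$, and with $\rho \rightsquigarrow n\rho$, $L \rightsquigarrow nL$ (up to the scaling of the Lipschitz constant of $\nabla\widetilde W_n$, which is $\luc n$). The target of that theorem is $\pi(\id)=\widetilde\theta_n$, exactly the posterior mean.

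The second step is to substitute these scalings into the tuning formulas of Theorem \ref{cor:thmPoissonSC}$(i)$. With $\rho\to n\rho$ and $L\to nL$ one gets $\rho^2/(L^2 d) = \rho^2/(L^2 d)$ (the $n$'s cancel), $\rho/L^2 \to 1/(nL^2)\cdot\rho$, and the initialization term $d/(L^2|x_0-x^\star|^2) \to d/(n^2 L^2 |x_0-x^\star|^2)$; using the hypothesis $|\theta_0-\theta^\star|^2\luc d$ this last term is $\luc 1/(n^2 L^2)$. Setting the required accuracy $\varepsilon = \varepsilon_n$ with $\varepsilon_n^2=(d/n)^{\ic^{-1}}$ (dropping logs), the binding constraint on $\gamma_{\varepsilon_n}$ becomes, after simplification, of order $n^{-2}$ up to constants in $\rho,L$; this yields $\gamma_{\varepsilon_n}=n^{-2}$ as in $(i.a)$. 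Plugging $\gamma_{\varepsilon_n}=n^{-2}$ into the iteration-count requirement $N_\varepsilon \ge d'\rho^{-2}\gamma_\varepsilon^{-1}\varepsilon^{-2} \vee \sqrt d\,\rho^{-3/2}\varepsilon^{-1}\gamma_\varepsilon^{-1}$, with $d'=d$, $\rho\to n\rho$, $\varepsilon=\varepsilon_n$, gives after bookkeeping the first term $\asymp d\,(n\rho)^{-2} n^2 \varepsilon_n^{-2} = \rho^{-2} d\, \varepsilon_n^{-2} = \rho^{-2} d (n/d)^{\ic^{-1}} = \rho^{-2} n^{\ic^{-1}} d^{1-\ic^{-1}}$, and the $\sqrt d$-term is of smaller order, so that taking $N_{\varepsilon_n}= n^{\frac12(1+\ic^{-1})} d^{\frac12(1-\ic^{-1})}$ (the form stated, which one checks dominates $\rho^{-2} n^{\ic^{-1}} d^{1-\ic^{-1}}$ whenever $d\le n$) suffices. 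Absorbing $\rho,L$ (and the universal constants of Theorem \ref{cor:thmPoissonSC}) into the constant $C$ delivers $\mathbb{E}[|\tgno-\theta^\star|^2]\le C\varepsilon_n^2$, proving $(i.a)$.

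For $(i.b)$ the argument is the same, but now the extra hypothesis $|\hat\theta^{\gamma}_{n,0}-\theta^\star|^2 \le d n^{-2}$ means the initialization is already within statistical accuracy $\varepsilon_n^2$; this kills the dependence on $|x_0-x^\star|$ entirely and allows a larger step $\gamma=n^{-1}$ (the constraint $\gamma\le \rho/(nL^2)\cdot$const is the binding one, giving $\gamma\asymp n^{-1}$ up to $\rho,L$), and the iteration count requirement collapses to $N\ge d'(n\rho)^{-2} n\,\varepsilon_n^{-2}\vee \sqrt d (n\rho)^{-3/2}\varepsilon_n^{-1} n \vee$(lower order), i.e. $N\asymp \rho^{-2} n^{-1} d\, (n/d)^{\ic^{-1}}$ which for $d\le n$ is $\le n\vee d$ up to the constant; hence $N=n\vee d$ works. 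Then the triangle inequality with Theorem \ref{theo:consistency} concludes.

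\textbf{Main obstacle.} The routine-but-delicate part is the bookkeeping of how the strong-convexity and smoothness constants of the \emph{rescaled} posterior $\widetilde W_n$ (which scale like $n\rho$ and $nL$ because of the sum of $n$ terms) propagate through the constraints of Theorem \ref{cor:thmPoissonSC}, and checking that the stated simple tunings $\gamma_{\varepsilon_n}=n^{-2}$, $N_{\varepsilon_n}=n^{\frac12(1+\ic^{-1})}d^{\frac12(1-\ic^{-1})}$ actually dominate the precise (messier) requirements for \emph{all} admissible $(d,n)$ with $d\le n$ — in particular that the $\sqrt d\,\rho^{-3/2}$-term is never the binding one and that no hidden $\varepsilon_n^{-1}$ vs $\varepsilon_n^{-2}$ mismatch appears. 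A secondary point to be careful about is that Theorem \ref{cor:thmPoissonSC} is stated for a deterministic potential, whereas here $\widetilde W_n$ is random (depends on $\xi^n$); one must either apply it conditionally on $\xi^n$ with constants $(\rho,L)$ that are $\xi^n$-independent (which they are, by $\AL$ and $\SC$), and then take expectation over $\xi^n$, using that the bound $C\varepsilon_n^2$ is uniform in $\xi^n$.
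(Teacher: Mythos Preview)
Your approach is correct and is essentially the paper's own: apply Theorem \ref{cor:thmPoissonSC}$(i)$ to the posterior potential with the rescaled parameters $\rho\to n\rho$, $L\to nL$ (which are $\xi^n$-independent, so the conditional application you flag is indeed unproblematic), and combine with Theorem \ref{theo:consistency} by the triangle inequality.

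One bookkeeping correction for $(i.a)$: your claim that the $\sqrt{d}\,\rho^{-3/2}$-term in $N_\varepsilon$ is ``of smaller order'' is backwards. With $\gamma^{-1}=n^2$ and $\rho\to n\rho$, that term equals $\rho^{-3/2} d^{(1-\ic^{-1})/2} n^{(1+\ic^{-1})/2}$, which is \emph{exactly} the stated $N_{\varepsilon_n}$ (up to a factor in $\rho$), and it dominates the first term $\rho^{-2} d^{1-\ic^{-1}} n^{\ic^{-1}}$ whenever $d\le n$ and $\ic\ge 1$ (their ratio is $\rho^{1/2}(n/d)^{(\ic-1)/(2\ic)}$). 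So the stated $N_{\varepsilon_n}$ is not an overshoot of the first term but the second term itself; your verification that it dominates the first term is the right check, but your reason for ignoring the second term is wrong. For $(i.b)$, the paper proceeds slightly differently: rather than reading off the $\rho/L^2$-constraint from the statement of Theorem \ref{cor:thmPoissonSC}, it goes back into the error decomposition and identifies the $A_{t_N}^{(2,1)}$-term (yielding $nd\gamma^2\le\varepsilon_n^2$) as the binding constraint; both routes give $\gamma\asymp n^{-1}$.
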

\noindent This result deserves several comments.
\begin{itemize}
\item 
This theorem indicates that the step size should be chosen as  $\gamma \propto n^{-2}$ (first case) or $\gamma\, \propto \, n^{-1} $ (second case), which becomes smaller when $n$ increases. This   is due to the sharper statistical accuracy we can expect with the posterior mean when we have a large amount of observations.


\item A stricking point derived from the previous result is that the tuning
$(\gamma,N) = (n^{-2},n)$ is ``universal'', which means that regardless the value of $\ic$, the computation of the posterior mean with an explicit Euler scheme with step-size $n^{-2}$ and $nd$ iterates leads to an optimal statistical result.
If we now consider that each iteration of the LMC has a cost $d$, the global cost of the procedure is then $ n d$.

 \item We also observe that our results of Theorem \ref{theo:learning_weak_convex} and of Theorem \ref{theo:learning_strong_convex}
 match, \textit{i.e.}, the cost of learning in strongly convex situation is identical to the one obtained under $\Hcu$ when $r=q=0$.

\item We  observe that with Assumption $\SC$, besides the obvious curse of dimensionality for large $d$ concerning the \textit{statistical} accuracy of $\widehat{\theta}_n$, there is no burst of the 
\textit{computational} cost, which remains polynomial in terms of $n$ and $d$. 
\item Finally, it is worth saying that in the previous statement, we did not use the $\mathcal{C}^3$ assumption to assess the cost of the Bayesian learning, essentially because the potential improvement is strongly model dependent owing to the size of $\|\vec{\Delta}(\nabdob)\|_{2,\infty}^2$. It could be certainly helpful in some very specific situations.
\end{itemize}


%
\subsection{Discussion on the discretization results \label{sec:sub:discussion}}

\textit{Polynomial cost.}
Overall, the leading take-home message when considering the discrete approximation and the concrete estimator is that  in both strongly and weakly convex case, we obtain a complexity that evolves as a polynomial of $n$ and $d$, the complexity being much lower in the strongly convex case. 
The worst situation is attained when $r$ is close to $1$, and $\ic = 1$, and we obtain in that case a computational cost of the order $n^{4} d^{3}$  (by applying Theorem \ref{theo:learning_weak_convex} $(ii)$).
All the more, we observe that in our results, the complexity of Bayesian learning is seriously damaged with the loss of strong convexity, both in terms of $n$ and $d$, and this loss is parametrically described with the help of Assumption $\Hcu$.

\textit{Lack of log-concavity.}
 Behind our polynomial cost result, the log-concave assumption plays a central role: withouth a such assumption, sampling has to be considered with specific models and specific algorithms.
 To bypass the absence of log-concavity, a popular strategy relies on the existence of a functional inequality satisfied by the target measure. For example, \cite{Jordan-Flammarion} (see also \cite{Improved_LSI}) assumes the existence of a  log-Sobolev inequality (LSI) and obtain a total variation mixing time of the order $K d \varepsilon^{-2}$. A such LSI is then shown to be verified with the help of a perturbation approach (see \cite{Holley}) for strongly log-concave distributions outside a ball of radius $R$, but the dependency of $K$ may be exponentially large with $R$, so that the overall cost depends on a balance between $d$ and $R$.
   As a new example of the role of functional inequalities, in \cite{Ho3}, the authors obtain a polynomial computational cost with a reflected Metropolis-Hastings random walk, that is well suited for symmetric mixture model (which is not log-concave) and is far different from a pure Langevin strategy.
   
   \noindent Interestingly, our approach relies on the Poisson equation (that could be studied in non-convex situations) whereas  \cite{Ho3}  relies on functional inequalities such as the Poincar\'e one.

\textit{Existing results on MSE.}
As indicated above,  we obtained the complexity to compute an 
$\epsi$-approximation of the posterior mean with the M.S.E. loss, \textit{i.e.} the complexity to guarantee that the M.S.E. becomes smaller than $\epsi^2$. This is the purpose of Theorem \ref{cor:thmPoisson} and Theorem 
\ref{cor:thmPoissonSC}
  respectively in the weakly and strongly convex cases.
   The related orders of complexity  are given in Table \ref{table:complexity}, where we also draw some comparisons with some state of the art results related to the complexity of Bayesian sampling. Up to our knowledge, the only paper that derives an ad-hoc study of the M.S.E. criterion is \cite{durmus}, that obtains a computational cost of the order $d \varepsilon^{-4}$ and $d \varepsilon^{-3}$ in the strongly convex case. If our results are rather similar to those of \cite{durmus} in the strongly convex case (same dependency in terms of $d$ and $\varepsilon$), we largely improve the state of the art in the weakly convex situation.

\textit{Existing results with other criteria.}
   Other results are related to some different criteria and for our purpose, these results are stated up to a normalization factor: actually, in the recent papers \cite{Durmus3,Dalalyan2} (see also \cite{Jordan-acceleration}) that we compare with, the complexity is defined in a slightly different way, omitting the Monte-Carlo factor. More precisely, oppositely to our paper based on a Cesaro average (involving only one path), these papers use a classical Monte-Carlo approach to approximate $\pi(f)$ (for a given function $f$) by $N_{MC}^{-1}(Z_1+\ldots+Z_{N_{MC}})$ where
  $(Z_j)_{1 \le j \le N_{MC}}$ denotes an $i.i.d.$ sequence of  $N_{MC}$ i.i.d. random variables. Then, for a given $\varepsilon$, these papers define the complexity as the number $n_\varepsilon$ of iterations of the Euler-scheme to compute $Z_1$. In order to draw some fair comparisons, we need to consider the ``true'' complexity, $i.e.$ to multiply their complexity $n_\varepsilon$ by $MC(\varepsilon)={\rm Var}(Z_1)\varepsilon^{-2}$, $i.e.$ by the number of Monte-Carlo simulations that are necessary to obtain a Monte-Carlo M.S.E  lower than $\varepsilon^2$. Furthermore, since the involved function is $\id$, it is reasonable to assume that  ${\rm Var}(Z_1)\propto d$, so that we assume that the true complexity of the compared papers is $n_\varepsilon d \varepsilon^{-2}$.
%
%
 
 Finally, former works  state results with different distances: Total Variation, Kullback-Leibler, $W_1$ or $W_2$. We only consider $W_1$ or $W_2$ results in Table \ref{table:complexity}, which {seem to be} the only ones that can apply to the non-bounded (Lipschitz) function $\id$, the KL divergence and TV distance being too weak   to draw some conclusions on the posterior mean approximation. In particular, we do not report in Table \ref{table:complexity} the TV results presented in \cite{Durmus3} or KL results in \cite{Dalalyan}. First, we emphasize that we obtain in our work the best dependency (in terms of $\epsi$) in the weakly-convex situation, thanks to our parametric $\Hcu$ assumption. It would be very tempting to understand the behaviour of KLMC and $\alpha$-KLMC within this framework. 
  Second, we recover the good dependency on $d$ and $\epsi$ of the LMC in the strongly convex situation. At last, LMC is outperformed by KLMC in this same setting as reported in \cite{Dalalyan-Bernoulli} when $\epsi$ is small enough.

%
%
%
%
%

%
%

 \small{
\small{
\begin{table*}\centering
\ra{1.3}
\begin{tabular}{@{}lrrrcrrrcrrr@{}}\toprule
 & \multicolumn{3}{c}{$\SC$ and $\AL$} & \phantom{abc}& \multicolumn{3}{c}{$\Hcu$ (or weakly convex) and $\AL$} &
\phantom{acc} \\
\cmidrule{2-4} \cmidrule{6-8} 
& $\gamma$
 & $N(\epsi)$   &&&  $\gamma$ 
 & $N(\epsi)$   \\ \midrule
This work - LMC - $\mathcal{C}^2$ & $  \mathcal{O}( \frac{\rho^2}{L^2 d} \epsi^{{2}})$   & $   \mathcal{O}( \frac{d L^2}{\rho^4 } \epsi^{{-4}})$  &  & &$  \mathcal{O}(d^{-(1+ \frac{2r}{1+q-r})}  \epsi^{2})$   &$  \cal{O}_{id}(d^{1+ \frac{4r}{1+q-r}} \epsi^{-4})$   \\
This work - LMC - $\mathcal{C}^3$ & $  \mathcal{O}( \frac{\rho}{L^2 d} \epsi^{{}})$    & $   \mathcal{O}( \frac{d L^2}{\rho^4 } \epsi^{{-3}})$ & &  & $  \mathcal{O}(d^{1+ \frac{2r}{1+q-r}} \epsi)$   &$  \cal{O}_{id}(d^{1+ \frac{4r}{1+q-r}} \epsi^{-3})$   \\
  \cite{durmus} - LMC - $\mathcal{C}^2$ & $  \cal{O}(\frac{\rho^3}{d L^4}\epsi^{2})$    &$ \cal{O}(\frac{d L^4}{\rho^4} \epsi^{-4})$  \\
    \cite{durmus} - LMC - $\mathcal{C}^3$ & $  {\cal{O}(\frac{\rho}{d L^2}\epsi)}$    &$ {\cal{O}(\frac{d L^2}{\rho^4} \epsi^{-3})}$  \\
\cite{Durmus3} - LMC  - $\mathcal{C}^2$ & $  {\cal{O}(\frac{\rho}{d L}\epsi^2)}$  
 &$ {\cal{O}(\frac{d^2 L}{\rho^2} \epsi^{-4})}$  \\
  \cite{Dalalyan-Bernoulli} -  KLMC  $\mathcal{C}^2$ & $  \cal{O} (\frac{\rho}{L \sqrt{d}}\epsi)$ 
  & $\cal{O}_{id}(\frac{d^2 L}{\rho} \epsi^{-3})$ \\
  \cite{Dalalyan2} - $\alpha$-KLMC  - $\mathcal{C}^2$ &  &   &  && $  \cal{O}_{id}(d^{-1} \epsi^4)$ 
  & $\cal{O}_{id}(d^3 \epsi^{-7})$\\
  \cite{Dalalyan2} - $\alpha$-KLMC  - $\mathcal{C}^3$   &  &   &  && $  \cal{O}_{id}(d^{-1/2} \epsi^3)$ 
   & $\cal{O}_{id}(d^3 \epsi^{-6})$\\
\bottomrule
\end{tabular}
\caption{Complexity $N(\epsi)$ of an $\epsi$-approximation of the Mean-Squared Error with a constant step-size $\gamma$ of several methods. Left: strongly convex case (with dependency on $d$, $\rho$ and $L$), Right: weakly convex case with dependency on $d$.
LMC: overdamped Langevin Monte Carlo, KLMC: Kinetic Langevin Monte Carlo, $\alpha$-KLMC: $\alpha$ penalized KLMC. 
\label{table:complexity}}
\end{table*}  }

 
%
%
%

\section{Minimax Bayesian posterior mean consistency}\label{sec:bayes}
This paragraph is dedicated to the proof of Theorem \ref{theo:consistency} (Bayesian consistency rate) and of Theorem \ref{theo:lower_bound} (minimax rate).

%
%

\subsection{Poincare inequality and consequences}
 We   state a  famous result for the family $(\mathbb{P}_{\theta})_{\theta \in \mathbb{R}^d}$ of  Bobkov and Ledoux (see \textit{e.g.} \cite{BL}), borrowed in \cite{Ledoux}\footnote{In \cite{BL}, the authors assume that the function $f$ is   bounded. 
{However, when} the concentration function $\frac{\delta^2}{4C k}\wedge\frac{\delta}{2\sqrt{C k}}$ {goes} to $\infty$ when $\delta\to\infty$, {the boundedness assumption can be removed (see Proposition 1.7 in \cite{Ledoux} for details)}.
}. 



\begin{prop}\label{prop:BK}
Assume $\UPI$, then for any differentiable k-Lipschitz real function $f$:
$$
\forall \theta \in \theta \quad \forall n \in \mathbb{N}^* \qquad 
\mathbb P_{\theta}\left(\left| \frac 1n\sum_{i=1}^nf(\xi_i)-\pi_\theta(f)\right| \geq\delta\right)\leq 2e^{-n\frac{\delta^2}{4k^2 \CPU}\wedge\frac{\delta}{2 k \sqrt{ \CPU}}}.$$
\end{prop}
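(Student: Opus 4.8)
The plan is to combine the classical self-improvement of a Poincaré inequality into a sub-exponential (Bernstein-type) estimate for a single observation with a Cramér--Chernoff argument that exploits the independence of the $\xi_i$. First I would reduce to a normalized setting: replacing $f$ by $g:=(f-\pi_\theta(f))/k$ (which is differentiable, $1$-Lipschitz and centered, $\pi_\theta(g)=0$), it suffices to show that for $C:=C_P(\mathbb P_\theta)$ one has
$$
\mathbb P_\theta\Bigl(\bigl|\tfrac1n\textstyle\sum_{i=1}^n g(\xi_i)\bigr|\ge \delta\Bigr)\le 2\,e^{-n\left(\frac{\delta^2}{4 C}\wedge\frac{\delta}{2\sqrt C}\right)},
$$
and then rescale back ($\delta\mapsto\delta/k$) and use Assumption $\UPI$: since $x\mapsto e^{-c/x}$ is increasing, $C_P(\mathbb P_\theta)\le\CPU$ only weakens the exponent, so the inequality with $C$ implies the stated one with $\CPU$.

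\textbf{Single-sample exponential moment bound.} The main step is to deduce from the Poincaré inequality a sub-gamma control of the Laplace transform of $g$: for $0\le\lambda<2/\sqrt C$,
$$
F(\lambda):=\pi_\theta\bigl(e^{\lambda g}\bigr)<\infty\qquad\text{and}\qquad \log F(\lambda)\le \frac{C\lambda^2}{2\bigl(1-\tfrac{\sqrt C}{2}\lambda\bigr)}.
$$
I would obtain this by the Gromov--Milman trick: applying the Poincaré inequality to $h=e^{\lambda g/2}$ and using $|\nabla h|^2=\tfrac{\lambda^2}{4}|\nabla g|^2e^{\lambda g}\le\tfrac{\lambda^2}{4}e^{\lambda g}$ gives $F(\lambda)-F(\lambda/2)^2\le\tfrac{C\lambda^2}{4}F(\lambda)$, i.e. $F(\lambda)\bigl(1-\tfrac{C\lambda^2}{4}\bigr)\le F(\lambda/2)^2$; iterating this functional inequality down to $0$ and using $F(0)=1$, $F'(0)=\pi_\theta(g)=0$ yields the displayed estimate. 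The delicate point — precisely the one flagged in the footnote — is that $g$ is in general unbounded, so $F(\lambda)<\infty$ must be established first; this is done by running the argument for the bounded truncations $g_M=(-M)\vee g\wedge M$ (which are still $1$-Lipschitz, so the constants in the iteration are uniform in $M$) and passing to the limit, the a priori exponential tail bound itself serving to justify the limit, exactly as in Proposition~1.7 of \cite{Ledoux}.

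\textbf{From one to $n$ observations.} Since the $\xi_i$ are i.i.d.\ with law $\mathbb P_\theta$, $\mathbb E_\theta\bigl[e^{\lambda\sum_{i=1}^n g(\xi_i)}\bigr]=F(\lambda)^n$, so for $0<\lambda<2/\sqrt C$ Markov's inequality gives
$$
\mathbb P_\theta\Bigl(\tfrac1n\textstyle\sum_{i=1}^n g(\xi_i)\ge\delta\Bigr)\le e^{-n\lambda\delta}F(\lambda)^n\le\exp\!\Bigl(-n\Bigl(\lambda\delta-\tfrac{C\lambda^2}{2(1-\frac{\sqrt C}{2}\lambda)}\Bigr)\Bigr).
$$
Optimizing the sub-gamma rate over $\lambda$ (the standard computation: the supremum is bounded below by $\tfrac{\delta^2}{2(C+\frac{\sqrt C}{2}\delta)}\ge\tfrac{\delta^2}{4C}\wedge\tfrac{\delta}{2\sqrt C}$) yields the one-sided bound; applying the same reasoning to $-g$ and adding the two probabilities produces the factor $2$. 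An alternative for this step is to tensorize the Poincaré inequality ($\mathbb P_\theta^{\otimes n}$ still satisfies it with constant $C$) and apply a single-measure concentration inequality to the $(1/\sqrt n)$-Lipschitz map $(x_1,\dots,x_n)\mapsto\tfrac1n\sum_i g(x_i)$, which gives a bound of the same Bernstein type.

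\textbf{Main obstacle.} The only genuinely technical point is the first step: making the Gromov--Milman iteration rigorous for an unbounded Lipschitz integrand, i.e.\ the finiteness of $F(\lambda)$ and the convergence of the iteration — which is the content of the footnote's reference to \cite{BL} and \cite{Ledoux}. Once the sub-gamma exponential moment bound is available, the i.i.d.\ Chernoff tensorization and the optimization in $\lambda$ are routine, and the reduction at the beginning together with $\UPI$ conclude.
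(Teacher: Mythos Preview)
Your proposal is correct and is essentially the same approach as the paper's. The paper's proof is just a direct citation of Corollary~3.2 in \cite{BL} (with the removal of the boundedness assumption handled via Proposition~1.7 of \cite{Ledoux}, exactly as you note in your ``main obstacle'' paragraph), whereas you have unpacked that citation by writing out the Gromov--Milman iteration and the Cram\'er--Chernoff tensorization explicitly; the two routes you mention for passing from one sample to $n$ (product-Laplace versus tensorized Poincar\'e on $\mathbb P_\theta^{\otimes n}$) are both standard and either recovers the stated bound.
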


 
 We will apply this result for $f=\Psi$ involved in  $\IWC$ and with $f=\nabla U_{\theta}$. In particular, using Proposition \ref{prop:BK}, we obtain the following result (see the proof in appendix A).


\begin{cor}\label{lem:BK}

 Let  $\IWC$ holds and denote by $\Psi$ the corresponding 1-Lipschitz function. Then,
\begin{itemize}\item[$i)$]
$$\forall \theta \in \mathbb{R}^{{d}} \qquad \mathbb P_{\theta}\left(\left|\frac 1n\sum_{i=1}^n\Psi({\xi}_i)-\mathbb E_\theta \Psi(\xi_1)\right|\geq\delta\right)\leq 2 e^{-n\frac{\delta^2}{4 \CPU}\wedge\frac{\delta}{2\sqrt{\CPU}}}.$$

\item[$ii)$]
$$
\forall \theta \in \mathbb{R}^{{d}} \qquad 
\mathbb P_{\theta}\left(\left|\frac 1n\sum_{i=1}^n\nabla_{\theta} U(\xi_i,\theta)\right|\geq\delta\right)\leq 2 d e^{-n\frac{\delta^2}{4 L^2 \CPU  d}\wedge\frac{\delta}{2 L \sqrt{\CPU  d}}}.$$
\end{itemize}
\end{cor}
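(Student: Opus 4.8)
The plan is to derive Corollary \ref{lem:BK} directly from Proposition \ref{prop:BK}, treating the two items separately.

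\textbf{Item $i)$.} Here I would simply apply Proposition \ref{prop:BK} verbatim with $f = \Psi$, which is the $1$-Lipschitz function provided by $\IWC$. Since $\Psi$ is $1$-Lipschitz (so $k=1$) and real-valued, and since $\pi_\theta(\Psi) = \mathbb{E}_\theta \Psi(\xi_1)$ by definition of the measure $\mathbb{P}_\theta$, the bound $2 e^{-n(\delta^2/(4\CPU)) \wedge (\delta/(2\sqrt{\CPU}))}$ falls out immediately with $k=1$. The only mild point to check is that $\Psi$ can be assumed differentiable; if $\IWC$ only gives Lipschitz, one invokes a standard mollification/approximation argument, or simply notes that Proposition \ref{prop:BK} extends to Lipschitz functions by density.

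\textbf{Item $ii)$.} This is the substantive part. I would apply Proposition \ref{prop:BK} coordinate by coordinate to the vector-valued empirical average $\frac{1}{n}\sum_{i=1}^n \nabla_\theta U(\xi_i,\theta)$. Fix $\theta$, and for each $j \in \{1,\dots,d\}$ let $f_j(\xi) := \partial_{\theta_j} U(\xi,\theta)$. By Assumption $\AL$, $\xi \mapsto \nabla_\theta U(\xi,\theta)$ is $L$-Lipschitz (as a map $\R^q \to \R^d$), hence each coordinate $f_j$ is $L$-Lipschitz as a real function. Moreover $\pi_\theta(f_j) = \mathbb{E}_\theta[\partial_{\theta_j} U(\xi_1,\theta)] = \partial_{\theta_j}\big(\text{const}\big)$; more precisely, since $\int \pi_\theta(\xi)\,d\xi = 1$ for all $\theta$, differentiating under the integral sign gives $\int \partial_{\theta_j} \pi_\theta(\xi)\,d\xi = 0$, i.e. $\mathbb{E}_\theta[\partial_{\theta_j} U(\xi,\theta)] = 0$. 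So each $f_j$ is centered under $\mathbb{P}_\theta$. Applying Proposition \ref{prop:BK} to $f_j$ with Lipschitz constant $k = L$ and threshold $\delta/\sqrt d$ yields
$$
\mathbb{P}_\theta\!\left(\left|\frac{1}{n}\sum_{i=1}^n f_j(\xi_i)\right| \ge \frac{\delta}{\sqrt d}\right) \le 2 e^{-n\frac{\delta^2}{4 L^2 \CPU d} \wedge \frac{\delta}{2 L \sqrt{\CPU d}}}.
$$
Then a union bound over $j = 1,\dots,d$ together with the elementary inclusion $\{|v| \ge \delta\} \subseteq \bigcup_{j=1}^d \{|v_j| \ge \delta/\sqrt d\}$ (valid since $|v|^2 = \sum_j v_j^2$ forces at least one coordinate to exceed $\delta/\sqrt d$ in absolute value) gives the factor $2d$ and the claimed exponent.

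\textbf{Main obstacle.} There is no serious obstacle; the argument is a routine combination of a coordinatewise concentration inequality, the Lipschitz bound from $\AL$, the centering identity coming from the normalization $Z_\theta \equiv 1$, and a union bound. The only points requiring a line of care are (a) justifying differentiation under the integral sign to get $\mathbb{E}_\theta[\nabla_\theta U(\xi,\theta)] = 0$ — which is legitimate under $\AL$ and standard dominated-convergence hypotheses implicit in the model — and (b) handling the possible non-differentiability of $\Psi$ in item $i)$, which is dealt with by the same approximation remark already used in the footnote preceding Proposition \ref{prop:BK}.
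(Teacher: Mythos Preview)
Your proposal is correct and follows essentially the same route as the paper: item $i)$ is a direct application of Proposition~\ref{prop:BK} with $f=\Psi$ and $k=1$, and item $ii)$ proceeds by first using $Z_\theta\equiv 1$ to get $\mathbb{E}_\theta[\nabla_\theta U(\xi,\theta)]=0$, then applying Proposition~\ref{prop:BK} coordinatewise with $k=L$ and threshold $\delta/\sqrt{d}$, and finishing with the union bound $\{|v|\ge\delta\}\subset\bigcup_j\{|v_j|\ge\delta/\sqrt{d}\}$. The additional remarks you make about differentiability of $\Psi$ and justifying differentiation under the integral sign are reasonable technical caveats that the paper leaves implicit.
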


Corollary \ref{lem:BK} will be an essential ingredient for the construction of some efficient statistical tests in the family of probability distributions $(\Pth)_{\theta \in \R^d}$. This key corollary is used in Section \ref{sec:tests}.
\subsection{Consistency rate of the posterior mean}\label{sec:consistency}
To study the behavior of $(\widetilde{\theta}_n)_{n \ge 0}$
 introduced in Equation \eqref{eq:posterior_mean},
 we adopt the presentation of \cite{CasvdV} and in particular the link between the posterior mean and the posterior distribution. As noticed in  \cite{CasvdV}, there is an important need to upper bound the tail of the posterior distribution (far from $\theta^{\star}$). 
  To this end, for a non-negative sequence $(\epsi_n)_{n\geq 1}$ fixed later on, we introduce the separation radius:
\begin{equation}\label{eq:ran}
r_{n}=\epsi_n+r,\end{equation}
where $r$ will vary from $0$ to $+ \infty$. 
\subsubsection{Statistical tests} \label{sec:tests}

Statistical tests have a long standing history in Bayesian literature (see \textit{e.g.} \cite{LeCam,GhosalGhoshvdVaart}) to obtain consistency results as well as rates of convergence of Bayes procedures.
We introduce an appropriate family of tests $(\phi_n^r)_{n\geq 1}$ parametrized by $r>0$ (see Equation \eqref{eq:ran}) and
 defined for $n\in\mathbb N^\star$ by:
 \begin{equation}\label{def:tests}
\phi_n^r\left(\mathbf{\xi^{n}}\right) = \mathbf{1}_{\left| \frac{1}{n} \sum_{i=1}^n \Psi(\xi_i) - \pi_{\theta^\star}(\Psi) \right| \geq \frac{c(\ran)}{2}}. 
\end{equation}
It is expected that $\phi_n^r$ is equal to $0$ with an overwhelming probability under the null hypothesis $\mathbb{P}_{\theta^{\star}}$ whereas $\phi_n^r$ it is equal to $1$ w.o.p.  under $\mathbb{P}_{\theta}$ when $|\theta-\theta^{\star}|$ is large enough thanks to $\IWC$.
We prove the following estimations in appendix A of the first and second type error of  $(\phi_n^r)_{n\geq1}$.

\begin{prop}\label{prop:test} The sequence of tests $(\phi_n^r)_{n\geq1}$ satisfies
\begin{itemize}
\item[$i)$] 
$
\mathbb{P}_{\theta^{\star}} \left( \phi_n^r\left(\mathbf{\xi^{n}}\right)
=1\right) \leq 2  e^{-n \frac{c(\ran)^2}{ 16 \CPU} \wedge \frac{c(\ran)}{4 \sqrt{\CPU}}},
$
\item[$ii)$] 
$
 \sup_{\theta \, : \, |\theta-\theta^{\star}| \geq \ran} \mathbb{P}_{\theta} \left(\phi_n^r\left(\mathbf{\xi^{n}}\right)
=0 \right) \leq 
 2   e^{-n \frac{c(\ran)^2}{16 \CPU} \wedge \frac{c(\ran)}{4 \sqrt{\CPU}}}.
$
\end{itemize}

\end{prop}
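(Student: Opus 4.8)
\textbf{Proof plan for Proposition \ref{prop:test}.}

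The plan is to read off both error bounds directly from the concentration estimate of Corollary \ref{lem:BK}$i)$, applied to the $1$-Lipschitz function $\Psi$, after choosing the threshold $\delta$ appropriately in each case. Throughout I keep in mind that the test $\phi_n^r$ compares the empirical mean $\frac{1}{n}\sum_{i=1}^n \Psi(\xi_i)$ to the true mean $\pi_{\theta^\star}(\Psi)$ under $\mathbb{P}_{\theta^\star}$, with threshold $c(\ran)/2$.

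\emph{First type error.} Under $\mathbb{P}_{\theta^\star}$, we have $\phi_n^r(\mathbf{\xi^n}) = 1$ exactly when $\left|\frac{1}{n}\sum_{i=1}^n \Psi(\xi_i) - \pi_{\theta^\star}(\Psi)\right| \geq c(\ran)/2$. Since $\pi_{\theta^\star}(\Psi) = \mathbb{E}_{\theta^\star}\Psi(\xi_1)$, this is precisely the event appearing in Corollary \ref{lem:BK}$i)$ with $\theta = \theta^\star$ and $\delta = c(\ran)/2$. Substituting $\delta = c(\ran)/2$ into the bound $2e^{-n(\delta^2/(4\CPU) \wedge \delta/(2\sqrt{\CPU}))}$ gives exponent $-n\left(\frac{c(\ran)^2}{16\CPU} \wedge \frac{c(\ran)}{4\sqrt{\CPU}}\right)$, which is exactly claim $i)$.

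\emph{Second type error.} Fix $\theta$ with $|\theta - \theta^\star| \geq \ran$. On the event $\{\phi_n^r(\mathbf{\xi^n}) = 0\}$ we have $\left|\frac{1}{n}\sum_{i=1}^n \Psi(\xi_i) - \pi_{\theta^\star}(\Psi)\right| < c(\ran)/2$. By the triangle inequality,
\begin{equation*}
\left|\frac{1}{n}\sum_{i=1}^n \Psi(\xi_i) - \pi_{\theta}(\Psi)\right| \geq |\pi_{\theta}(\Psi) - \pi_{\theta^\star}(\Psi)| - \left|\frac{1}{n}\sum_{i=1}^n \Psi(\xi_i) - \pi_{\theta^\star}(\Psi)\right| > |\pi_{\theta}(\Psi) - \pi_{\theta^\star}(\Psi)| - \frac{c(\ran)}{2}.
\end{equation*}
Now $\IWC$ gives $|\pi_{\theta}(\Psi) - \pi_{\theta^\star}(\Psi)| \geq c(|\theta - \theta^\star|) \geq c(\ran)$, using that $c$ is strictly increasing and $|\theta-\theta^\star| \geq \ran$. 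Hence on $\{\phi_n^r = 0\}$ we have $\left|\frac{1}{n}\sum_{i=1}^n \Psi(\xi_i) - \pi_{\theta}(\Psi)\right| > c(\ran) - c(\ran)/2 = c(\ran)/2$, so that $\mathbb{P}_\theta(\phi_n^r = 0) \leq \mathbb{P}_\theta\left(\left|\frac{1}{n}\sum_{i=1}^n \Psi(\xi_i) - \mathbb{E}_\theta\Psi(\xi_1)\right| \geq c(\ran)/2\right)$. Applying Corollary \ref{lem:BK}$i)$ at this $\theta$ with $\delta = c(\ran)/2$ yields the same bound $2e^{-n(c(\ran)^2/(16\CPU) \wedge c(\ran)/(4\sqrt{\CPU}))}$, uniformly in $\theta$ with $|\theta-\theta^\star| \geq \ran$, which is claim $ii)$.

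The argument is essentially a routine combination of the sub-Gaussian/sub-exponential concentration of Lipschitz statistics under a Poincaré inequality with the quantitative separation of $\IWC$; the only mildly delicate point is the bookkeeping in the triangle inequality to reduce the type-II error to a deviation of the empirical mean around $\pi_\theta(\Psi)$ rather than $\pi_{\theta^\star}(\Psi)$, together with the use of monotonicity of $c$ to get $c(|\theta-\theta^\star|)\geq c(\ran)$ from $|\theta-\theta^\star|\geq\ran$. No further obstacle is expected.
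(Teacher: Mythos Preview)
Your proof is correct and follows essentially the same approach as the paper: both parts reduce to Corollary \ref{lem:BK}$i)$ with $\delta=c(\ran)/2$, and for $ii)$ the triangle inequality combined with $\IWC$ and the monotonicity of $c$ is exactly the argument used.
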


The next result is a technical estimation related to the denominator (normalizing constant) involved in the posterior distribution distribution. A simple application of Corollary \ref{lem:BK} yields the next result.

\begin{lem}\label{lem:petite_boule}
For any $t>0$, define the sets
$\mathcal A_n^t =\left\{\left| \frac 1n\sum_{i=1}^n\nabla_{\theta} U(\xi_i,\theta^\star)\right| \leq t\right\},$ then:
 $$
\forall n \in \mathbb{N}^\star \quad \forall t>0 \qquad  
 \mathbb{P}_{\theta^{\star}}(\{\mathcal{A}_n^{t}\}^{c}) \leq 2 d e^{-n \left( \frac{t^2}{4 L^2 \CPU d} \wedge \frac{t}{2 L \sqrt{\CPU {d}  }}\right)}.$$
\end{lem}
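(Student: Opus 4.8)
The plan is to directly apply Corollary \ref{lem:BK} $ii)$ with the choice $\theta = \theta^\star$, so there is essentially nothing to prove beyond recognizing that the event $\{\mathcal{A}_n^t\}^c$ is exactly the complementary event whose probability is controlled there. More precisely, note that
$$
\{\mathcal{A}_n^t\}^c = \left\{ \left| \frac{1}{n} \sum_{i=1}^n \nabla_\theta U(\xi_i,\theta^\star) \right| > t \right\},
$$
and under $\mathbb{P}_{\theta^\star}$ the observations $(\xi_i)_{1\le i \le n}$ are i.i.d. with common law $\mathbb{P}_{\theta^\star}$. Assumption $\AL$ guarantees that $\xi \longmapsto \nabla_\theta U(\xi,\theta^\star)$ is $L$-Lipschitz, and Assumption $\UPI$ provides the uniform Poincaré constant $\CPU$ for $\mathbb{P}_{\theta^\star}$, so that the hypotheses of Corollary \ref{lem:BK} $ii)$ are met at the point $\theta^\star$.

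The one point that requires a line of justification is that the concentration bound in Corollary \ref{lem:BK} $ii)$ is stated for the closed event $\left\{ \left| \frac{1}{n} \sum_{i=1}^n \nabla_\theta U(\xi_i,\theta^\star) \right| \ge \delta \right\}$, whereas $\{\mathcal{A}_n^t\}^c$ involves a strict inequality. Since $\left\{ |\,\cdot\,| > t \right\} \subseteq \left\{ |\,\cdot\,| \ge t \right\}$, monotonicity of $\mathbb{P}_{\theta^\star}$ immediately gives
$$
\mathbb{P}_{\theta^\star}\big( \{\mathcal{A}_n^t\}^c \big) \le \mathbb{P}_{\theta^\star}\left( \left| \frac{1}{n} \sum_{i=1}^n \nabla_\theta U(\xi_i,\theta^\star) \right| \ge t \right) \le 2 d\, e^{-n\left( \frac{t^2}{4 L^2 \CPU d} \wedge \frac{t}{2 L \sqrt{\CPU d}}\right)},
$$
which is exactly the claimed inequality.

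There is no genuine obstacle here: the lemma is an immediate corollary of a result already proven in appendix A, recorded separately only because the set $\mathcal{A}_n^t$ (and its complement) will be reused when lower-bounding the normalizing constant $Z_n(\mathbf{\xi^n})$ of the posterior in the consistency proof. If one wanted to make the exposition self-contained at this point, the alternative would be to re-derive the bound from Proposition \ref{prop:BK} applied coordinatewise to the $d$ components of $\nabla_\theta U(\cdot,\theta^\star)$ — each component being $L$-Lipschitz — then apply a union bound over the $d$ coordinates and absorb the resulting factor $d$ together with the rescaling $\delta \mapsto \delta/\sqrt{d}$ needed to pass from the Euclidean norm to the coordinates; but since Corollary \ref{lem:BK} already packages precisely this computation, invoking it is the cleanest route.
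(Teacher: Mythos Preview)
Your proof is correct and takes essentially the same approach as the paper, which simply states that the lemma follows from a direct application of Corollary \ref{lem:BK} $ii)$ at $\theta=\theta^\star$. Your extra remark on the strict versus non-strict inequality is a harmless clarification.
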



\subsubsection{Proof of the posterior mean consistency \label{sec:proof-posterior}}

\begin{proof}[Proof of Theorem \ref{theo:consistency}] 
Our proof adopts the strategy  of \cite{CasvdV}.

\noindent \underline{Step 1: Decomposition of the quadratic risk.}
We remark that  for all $n\in\mathbb N^\star$:
\begin{align}
\mathbb{E}_{\theta^{\star}}\left[\left|\widetilde{\theta}_n-\theta^{\star}\right|^p\right] & =  \mathbb{E}_{\theta^{\star}} \left[ \left| \int_{\mathbb R^d} (\theta - \theta^{\star}) \text{d}\pi_n(\theta) \right|^p \right] \nonumber\\
& \leq   \mathbb{E}_{\theta^{\star}} \left[  \int_{\mathbb R^d}  \left|\theta - \theta^{\star}  \right|^p \text{d}\pi_n(\theta)\right] \nonumber\\
& =   p \mathbb{E}_{\theta^{\star}} \left[  \int_{0}^\infty t^{p-1} \pi_n(|\theta-\theta^{\star}| \geq t) \text{d}t \right]\nonumber\\
& = p \mathbb{E}_{\theta^{\star}} \left[  \int_{0}^{\epsi_n} t^{p-1} \pi_n(|\theta-\theta^{\star}| \geq t) \text{d}t+\int_{\epsi_n}^\infty t^{p-1} \pi_n(|\theta-\theta^{\star}| \geq t) \text{d}t \right]\nonumber\\
&\leq   \epsi_n^p + p \int_{0}^{+ \infty}  \ran^{p-1}  \mathbb{E}_{\theta^{\star}} \left[ \pi_n(|\theta-\theta^{\star}| \geq \ran)\right] \text{d}r, \label{eq:bornequad}
\end{align}
where we used the Jensen inequality in the second line, an integration by part in the third line, a direct integration $\int_0^{\epsi_n} p t^{p-1} \text{d} t =  \epsi_n^p$ in the last line associated with the Fubini relationship.

\noindent
\underline{Step 2: Use of the tests $(\phi_n^r)_{n\geq1}$.}
We now use the tests $(\phi_n^r)_{n\geq1}$ and the sets $(\mathcal A_n^t)$, we can write:
\begin{align}
\mathbb{E}_{\theta^{\star}} \left[ \pi_n(|\theta-\theta^{\star}| \geq \ran  )
\right] = & \,\mathbb{E}_{\theta^{\star}} \left[ \phi_n^r(\mathbf{\xi^{n}}) \pi_n(|\theta-\theta^{\star}| \geq \ran  )
\right]\nonumber  \\
&+ \mathbb{E}_{\theta^{\star}} \left[ (1- \phi_n^r(\mathbf{\xi^{n}})) \pi_n(|\theta-\theta^{\star}| \geq \ran  ) \mathbf{1}_{\mathcal{A}_n^t}
\right]\nonumber \\ 
& + \mathbb{E}_{\theta^{\star}} \left[ (1- \phi_n^r(\mathbf{\xi^{n}})) \pi_n(|\theta-\theta^{\star}| \geq \ran  ) \mathbf{1}_{\{\mathcal{A}_n^t\}^c}
\right] .\label{eq:introtest}
\end{align}
From this expression we can deduce the following inequality:
\begin{align*}
\mathbb{E}_{\theta^{\star}} \left[ \pi_n(|\theta-\theta^{\star}| \geq \ran  )
\right]\leq&\, \mathbb{E}_{\theta^{\star}} \left[ \phi_n^r(\mathbf{\xi^{n}})\right] +  \mathbb{E}_{\theta^{\star}} \left[ (1- \phi_n^r(\mathbf{\xi^{n}})) \pi_n(|\theta-\theta^{\star}| \geq \ran  ) \mathbf{1}_{\mathcal{A}_n^{t}}
\right] \\& + \mathbb{P}_{\theta^{\star}}(\{\mathcal{A}_n^{t}\}^c).
\end{align*}

\noindent
$\bullet$
Study of $\mathbb{E}_{\theta^{\star}} \left[ (1- \phi_n^r(\mathbf{\xi^{n}})) \pi_n(|\theta-\theta^{\star}| \geq \ran  ) \mathbf{1}_{\mathcal{A}_n^t}\right]$. We write that:
\begin{align}
\displaystyle\pi_n(|\theta-\theta^{\star}| \geq \ran  )  &= \int_{\theta \, : \, |\theta-\theta^{\star}| \geq \ran } \text{d}\pi_n(\theta)  = \frac{\displaystyle\int_{\theta \, : \, |\theta-\theta^{\star}| \geq \ran }  \frac{e^{-W_n(\mathbf{\xi^n},\theta)}}{e^{-W_n(\mathbf{\xi^n},\theta^{\star})}}\text{d}\pi_0(\theta)}{\displaystyle\int_{\R^d }  \frac{e^{-W_n(\mathbf{\xi^n},\theta)}}{e^{-W_n(\mathbf{\xi^n},\theta^{\star})}}\text{d}\pi_0(\theta)}.\label{eq:numerateur_denominateur}
\end{align}
At this stage, we  control the denominator and the numerator separately.  Let us denote by $Z_t = \pi_0(B(\theta^{\star},t))$ the prior mass of the Euclidean ball centered at $\theta^{\star}$ and of radius $t$. We have \begin{align*}
\log \left( \int_{\R^d} \frac{e^{-W_n(\mathbf{\xi^n},\theta)}}{e^{-W_n(\mathbf{\xi^n},\theta^{\star})}}d\pi_0(\theta) \right) & \geq  
\log \left( \int_{B(\theta^{\star},t)} \frac{e^{-W_n(\mathbf{\xi^n},\theta)}}{e^{-W_n(\mathbf{\xi^n},\theta^{\star})}}\text{d} \pi_0(\theta) \right)
\\ 
& \geq \log \left( \int_{B(\theta^{\star},t)} \frac{e^{-W_n(\mathbf{\xi^n},\theta)}}{e^{-W_n(\mathbf{\xi^n},\theta^{\star})}} \frac{\text{d} \pi_0(\theta)}{Z_t} \right) + \log Z_t \\
& \geq 
\int_{B(\theta^{\star},t)}  \sum_{i=1}^n [U(\xi_i,\theta^{\star})-U(\xi_i,\theta)] \frac{\text{d} \pi_0(\theta)}{Z_t}  + \log Z_t,
\end{align*}
where we used the Jensen inequality in the last line with the concave function $\log$ and the normalized measure $\text{d} \pi_0 Z_t^{-1}$ over $B(\theta^{\star},t)$.
Using that 
$\nabla_\theta U(\xi,.)$ is L-Lipschitz we get
$$ 
\forall x \in \mathbb{R}^d \qquad 
U(\xi,\theta_1)-U(\xi,\theta_2) \leq \langle \theta_1-\theta_2, \nabla_{\theta} U(\xi,\theta_2)\rangle + \frac{L}{2} \|\theta_1-\theta_2\|^2,$$
for all $(\theta_1,\theta_2)\in \mathbb{R}^d$, which implies that:
$$
\forall i \in \{1,\ldots,n\} \quad \forall \theta \in \R^q \qquad 
\left|U(\xi_i,\theta)-U(\xi_i,\theta^{\star}) - \langle \theta-\theta^{\star},\nabla U(\xi_i,\theta^{\star})\rangle\right| \leq \frac{L}{2} |\theta-\theta^{\star}|^2.
$$
 Using a sum over $i$ and the triangle inequality, we then deduce that:
$$
\forall \theta \in \R^d \quad \sum_{i=1}^n U(\xi_i,\theta^\star)-U(\xi_i,\theta) \geq - \left| \langle \theta-\theta^{\star},\sum_{i=1}^n \nabla_{\theta} U(\xi_i,\theta^{\star} )\rangle \right| - n \frac{L}{2} |\theta-\theta^{\star}|^2.
$$
The Cauchy-Schwarz inequality yields:
$$
\left| \langle \theta-\theta^{\star},\sum_{i=1}^n \nabla_\theta U(\xi_i,\theta^{\star}) \rangle \right| \leq |\theta-\theta^{\star} | \, \left| \sum_{i=1}^n \nabla_\theta U(\xi_i,\theta^{\star})\right|.
$$
An integration over $B(\theta^{\star},t)$ with the normalized measure $\pi_0 Z_t^{-1}$  leads to:
\begin{align*}
\log \left( \int_{\R^d} \frac{e^{-W_n(\mathbf{\xi^n},\theta)}}{e^{-W_n(\mathbf{\xi^n},\theta^{\star})}}\text{d}\pi_0(\theta) \right) & \geq - n \frac{L}{2}  t^2 \frac{ \pi_0(B(\theta^{\star},t)) }{Z_t} - t \left\| \sum_{i=1}^n \nabla_{\theta} U(\xi_i,\theta^{\star})\right\| + \log(Z_t) \nonumber\\
& \geq - n \frac{L}{2}  t^2  - t \left\| \sum_{i=1}^n \nabla_\theta U(\xi_i,\theta^{\star})\right\| +  \log (Z_t).
\end{align*}
To lower bound the denominator, we use the set $\mathcal A_n^t$ and we have
\begin{align}
\log \left( \int_{\R^d} \frac{e^{-W_n(\mathbf{\xi^n},\theta)}}{e^{-W_n(\mathbf{\xi^n},\theta^{\star})}}\text{d}\pi_0(\theta) \right)\mathbf 1_{\mathcal A_n^t} & \geq\left( - n \frac{L}{2}  t^2  - t \left\| \sum_{i=1}^n \nabla_\theta U(\xi_i,\theta^{\star})\right\| + \log(Z_t)\right)\mathbf 1_{\mathcal A_n^t} \nonumber\\
& \geq\left( - nt^2 \left(\frac{L}{2}+1\right)   +  \log (Z_t)\right)\mathbf 1_{\mathcal A_n^t}.\label{eq:minoration_intermediaire}
\end{align}
Using \eqref{eq:minoration_intermediaire} with \eqref{eq:numerateur_denominateur} and the Jensen Inequality, we have
\begin{align*}
\lefteqn{\mathbb{E}_{\theta^{\star}} \left( (1-\phi_n^r(\mathbf{\xi^{n}})) \pi_n(|\theta-\theta^{\star}| \geq \ran) \mathbf{1}_{\mathcal{A}_n^{t}} \right)} \\
& \leq \frac{ \mathbb{E}_{\theta^{\star}} \left[ (1-\phi_n^r(\mathbf{\xi^{n}})) \int_{\theta: |\theta-\theta^{\star}| \geq \ran} \frac{e^{-W_n(\mathbf{\xi^n},\theta)}}{e^{-W_n(\mathbf{\xi^n},\theta^{\star})}} \text{d}\pi_0(\theta) \right]}{Z_t e^{- n \left(\frac{L}{2}+1\right) t^2 }} \\
& \leq \int_{\theta: |\theta-\theta^{\star}| \geq \ran}
\mathbb{E}_{\theta^{\star}} \left[ (1-\phi_n^r(\xi^n))\frac{e^{-W_n(\mathbf{\xi^n},\theta)}}{e^{-W_n(\mathbf{\xi^n},\theta^{\star})}} \right]  \text{d}\pi_0(\theta) e^{n t^2  \left(  \frac{L}{2}  + 1 \right) }(Z_t )^{-1} \\
& \leq(Z_t )^{-1} e^{n t^2 \left(  \frac{L}{2}  + 1 \right)}\sup_{\{\theta \, : |\theta-\theta^{\star}| \geq \ran\}}\mathbb{E}_{\theta} \left[ 1-\phi_n^r(\mathbf{\xi^{n}})  \right] \\
& \leq 2   e^{n t^2 \left(  \frac{L}{2}  + 1 \right)-\log \pi_0(B(\theta^{\star},t))} e^{-n \frac{c(\ran)^2}{16 \CPU} \wedge \frac{c(\ran)}{4 \sqrt{\CPU}}},
\end{align*}
where in the penultimate line we used a change of measure from $\mathbb{P}_{\theta^{\star}}$ to $\mathbb{P}_{\theta}$.

\noindent
$\bullet$
Study of $\mathbb{E}_{\theta^{\star}} \left[ \phi_n^r(\mathbf{\xi^{n}})\right]$.
Using the  first type error given by $i)$ of Proposition \ref{prop:test}, we have:
$$
\mathbb{E}_{\theta^{\star}}[\phi_n^r(\mathbf{\xi^{n}})] 
 \leq 2  e^{-n \frac{c(\ran)^2}{16 \CPU} \wedge \frac{c(\ran)}{4 \sqrt{\CPU}}}.
$$

\noindent
$\bullet$
Study of $\mathbb{E}_{\theta^{\star}} \left[ (1- \phi_n^r(\mathbf{\xi^{n}})) \pi_n(|\theta-\theta^{\star}| \geq \ran  ) \mathbf{1}_{\{\mathcal{A}_n^t\}^c}\right]$. We introduce the dependency in $r$ and $n$ to upper bound $\mathbb{P}_{\theta^{\star}}(\{\mathcal{A}_n^{t}\}^c)$ with $t $ and apply Lemma \ref{lem:petite_boule}. We have:
 $$\mathbb{P}_{\theta^{\star}}(\{\mathcal{A}_n^{t}\}^{c}) \leq 2 d e^{-n \left( \frac{t^2}{4 L^2 \CPU d} \wedge \frac{t}{2 L \sqrt{\CPU  d}}\right)}.$$
We then obtain that:
\begin{align}
\lefteqn{
\mathbb{E}_{\theta^{\star}} \left[ \pi_n(|\theta-\theta^{\star}| \geq \ran  )
\right] } \nonumber \\
& \leq 4   e^{n t^2 \left(  \frac{L}{2}  + 1 \right)-\log \pi_0(B(\theta^{\star},t))} e^{-n \frac{c(\ran)^2}{16 \CPU}\wedge \frac{c(\ran)}{4 \sqrt{\CPU}}}+2 d e^{-n \left( \frac{t^2}{4 L^2 \CPU d} \wedge \frac{t}{2 L \sqrt{\CPU  d}}\right)}. \label{eq:intermediaire}
\end{align}
\noindent
\underline{Step 3: Small ball calibration and prior mass}
 We   now adjust the different parameters in order to obtain the best   rate for $(\widetilde{\theta}_n)_{n\geq0}$. 
  We  choose $t$ that depends on $r$ and $n$, \textit{i.e.} $t=t_{r,n}$ according to:
$$
t_{r,n} =  \frac{c(\ran) \wedge \sqrt{c(\ran)}}{A},
$$
with $A$ chosen sufficiently large such that $A = \sqrt{32 (L+1) \CPU \vee \sqrt{\CPU}}$, so that:
$$
n t_{r,n}^2 \left(  \frac{L}{2}  + 1 \right) - n \frac{c(\ran)^2}{16 \CPU}\wedge \frac{c(\ran)}{4 \sqrt{\CPU}} \leq 
- n \frac{c(\ran)^2}{32 \CPU}\wedge \frac{c(\ran)}{8 \sqrt{\CPU}}.
$$
The previous inequality may be verified by considering the value of $c(\ran)$ and its position in comparison to $1$ and to $2 \sqrt{\CPU}$.
In the meantime, we get that:

$$
-\log \pi_0(B(\theta^{\star},t_{r,n})) \leq - \log \pi_0\left( B(\theta^{\star},A^{-1}[c(\epsi_n) \wedge \sqrt{c(\epsi_n)}])\right).
$$
We introduce $\epsi_n$ as:
$$
\epsi_n =  \left(L^2 \CPU d \frac{\log n}{n}\right)^{1/2 \ic},
$$
and consider $n$ large enough such that $ \epsi_n \leq 1$ and $b_1 \varepsilon_n^{\ic} < 1$ (where $b_1$ is given in Assumption $\IWC$), then:
 $$
-\log \pi_0(B(\theta^{\star},t_{r,n})) \leq - \log \pi_0\left( B(\theta^{\star},A^{-1}c(\epsi_n)) \right).
$$
Since $\pi_0 = e^{-V_0}$ with $V_0$ a $\mathcal{C}_1^1$ function, we then deduce that: 
$$
\forall \delta>0, \quad 
\forall \theta \in B(\theta^{\star},\delta): \qquad |V_0(\theta)-V_0(\theta^{\star})| \leq \delta \|\nabla V(\theta^{\star})\| + \frac{1}{2} \delta^2,
$$
which implies:
\begin{align*}
\lefteqn{- \log \pi_0(B(\theta^{\star},t_{r,n}))} \\
 &\leq - \log \left( \int_{B(\theta^{\star},A^{-1}c(\epsi_n))} e^{V_0(\theta^{\star}) - A^{-1}c(\epsi_n) \|\nabla V(\theta^{\star})\| - A^{-2}c(\epsi_n)^2/2} \text{d} \lambda_d(\theta) \right) \\
& = -V_0(\theta^{\star}) + A^{-1}c(\epsi_n) \|\nabla V(\theta^{\star})\| + \frac{A^{-2}c(\epsi_n)^2}{2} + d \log (A c(\epsi_n)^{-1})- \log \lambda_d (B(0,1))\\
& \leq -V_0(\theta^{\star}) + A^{-1} c(a \epsi_2) \|\nabla V(\theta^{\star})\| + \frac{A^{-2} c(a \epsi_2)^2}{2} + d \log (c(\epsi_n)^{-1}).
\end{align*} 
Using the behaviour of $c$ near $0$ (see $\IWC$), a constant $C_{\theta^{\star}}$ exists such that:
$$
- \log \pi_0(B(\theta^{\star},t_{r,n})) \leq \log(C_{\theta^{\star}}) + d \ic \log(\epsi_n^{-1}).
$$
We then obtain  with $K=32 \vee 4 A^2$ that a universal $q$ exists such that  \eqref{eq:intermediaire} may be upper bounded as:
\begin{align*}
\mathbb{E}_{\theta^{\star}} \left[ \pi_n(|\theta-\theta^{\star}| \geq \ran  )
\right] & \le q d e^{-\frac{n}{K} \left[ \frac{c(r_{n})^2}{L^2 \CPU d} \wedge \frac{c(r_{n})}{L \sqrt{\CPU d}} \right]+ d \ic \log(\epsi_n^{-1})}.
\end{align*}
Finally, Equation \eqref{eq:ic} yields:
\begin{align}\label{eq:deviation}
\mathbb{E}_{\theta^{\star}} \left[ \pi_n(|\theta-\theta^{\star}| \geq \ran)
\right] & \le q d e^{-\frac{n}{K} b_1^2 \frac{\{r_{n}\}^{2 \ic}}{L^2 \CPU d}} \mathbf{1}_{r_{n} \leq 1}
+q d e^{-\frac{n}{K} b_2 \frac{ \log(r_{n})+1}{L \sqrt{\CPU d}}} \mathbf{1}_{r_{n} \geq 1}.
\end{align}


\noindent
\underline{Step 4: Convergence rate}
 We use \eqref{eq:bornequad} and \eqref{eq:deviation} and obtain that:

\begin{eqnarray*}
\mathbb{E}_{\theta^{\star}}[\|\widetilde{\theta}_n-\theta^{\star}\|^p] &\lesssim_{uc} & \epsi_n^p   + d \int_{0}^{+ \infty} r_{n}^{p-1} \left(
 e^{-\frac{n}{K} b_1^2 \frac{\{r_{n}\}^{2 \ic}}{L^2 \CPU d}} \mathbf{1}_{r_{n} \leq 1} +  e^{-\frac{n}{K}  b_2 \frac{ \log(r_{n})+1}{L \sqrt{\CPU d}}} \mathbf{1}_{r_{n} \geq 1}\right)\text{d}r\\
& \luc &  \epsi_n^p + d \left[ \int_{\epsi_n}^{1} r^{p-1} 
  e^{- \frac{n}{K}  b_1^2 \frac{r^{2\ic}}{L^2 \CPU d}} \text{d}r+
\int_{1}^{+\infty} r^{p-1} e^{- \frac{n}{K}  b_2 \frac{\log(r)+1}{L \sqrt{\CPU d}}} \text{d}r\right].
\end{eqnarray*}
Using the value of $\epsi_n$ we introduced in Step 2,
we then observe that:
$$
\int_{\epsi_n}^{+ \infty} r^{p-1}  e^{- \frac{n}{K}   b_1^2 \frac{r^{2\ic}}{L^2 \CPU d}} \text{d}r \leq  \left( \frac{K L^2 \CPU d}{ b_1^2 n}\right)^{p/2\ic}  (2 \ic)^{-1}\int_{b_1^2 K \log n}^{+ \infty} v^{p/2\ic-1}e^{-v} \text{d}v 
=  \cal{O}_{uc}(\epsi_n^p).
$$
The second integral may be made exponentially small (in terms of $n$). We then observe that the leading contribution of the $L^p$ loss is then brought by $\epsi_n^p$.
\end{proof}


\subsection{ Minimax lower bound (proof of Theorem \ref{theo:lower_bound})}\label{sec:lower_bound}
 
Below, we establish a lower bound of estimation that matches with the Bayesian consistency rate we derive in Theorem \ref{theo:consistency} in terms of $n$ and $d$.

\begin{proof}[Proof of Theorem \ref{theo:lower_bound}]
 For this purpose, we introduce $\varphi_{\ic}$ defined by:
$$
\varphi_{\ic}(x) = sgn(x) |x|^{\ic} \mathbf{1}_{|x| \leq 1} + x   \mathbf{1}_{|x| > 1},
$$
and the family of multivariate Gaussian distributions $\mathbb{P}_{\theta} = \mathcal{N}(\delta \mu_{\ic}(\theta),I_d)$ with  $\delta>0$ defined later on and 
$$
\forall i \in \{1,\ldots,d\} \qquad 
\mu_{\ic}(\theta)_i = \varphi_{\ic}(\theta_i).
$$
Considering all positions of $x$ and $y$, we observe that a constant $c$ exists such that:
$$
\forall (x,y) \in \mathbb{R}^2 \qquad 
|\varphi_{\ic}(x)-\varphi_{\ic}(y)| \ge c |x-y|^{\ic}
$$

As a translation model, it is immediate to verify that:
\begin{align*}
W_1(\mathbb{P}_{\theta_1},\mathbb{P}_{\theta_2})^2& = \delta^2 |\mu_{\ic}(\theta_1)- \mu_{\ic}(\theta_2)|_2^2 \\
& =  \delta^2  \sum_{i=1}^d |\varphi_{\ic}(\theta_1^i)-\varphi_{\ic}(\theta_2^i)|^2\\
& \ge \delta^2  c^2 d \left(\frac{1}{d}\sum_{i=1}^d | \theta_1^i - \theta_2^i |^{2 \ic}\right) \\
& \ge d \delta^2   c^2 \left(\frac{1}{d}\|\theta_1-\theta_2\|^{2}\right)^{ \ic} = c^2 \delta^2   d^{1-\ic} \|\theta_1-\theta_2\|_2^{2 \ic}
\end{align*}
where we applied the Jensen inequality in the previous line with the convex function $r\longmapsto r^{\ic}$ with $\ic \ge 1$.
Therefore, we observe that when $\delta=d^{-(1-\ic)/2}$, the family $\mathbb{P}_{\theta}$ satisfies $\IWC$.
In the meantime, straightforward computations show that $\UPI$ and $\AL$ also hold for this statistical model so that this statistical model belongs to the $\mathcal{F}_{\ic}^L$ class used in the statement of the result.\\

In this statistical model, we then derive a lower bound of estimation with the $L^2$ risk applying the Fano Lemma associated with the Varshamov-Gilbert Lemma. We introduce the Hamming distance $\rho(\omega,\omega')$ defined by: 
$$
\forall (\omega,\omega') \in \{0,1\}^d \times \{0,1\}^d \qquad 
\rho(\omega,\omega') = \sum_{j=1}^d \mathbf{1}_{\omega^j \neq \omega'^j}.
$$
The Varshamov-Gilbert yields the existence of $M=\lfloor e^{d/32}\rfloor$ points in $\{0,1\}^d$ denoted by $(\omega_1,\ldots,\omega_M)$ such that 
$$
\forall j \neq k \qquad \rho(\omega_j,\omega_k) \ge \frac{d}{4}.
$$
With this net over $\{0,1\}^d$, we introduce the net $\theta_1,\ldots,\theta_M$ defined by
$$
\theta_i = \beta \omega_i,
$$
for some $\beta>0$ chosen later on.
 We then verify that:
$$
\forall j \neq k \qquad 
\|\theta_j-\theta_k\|_2 \ge \beta  \frac{\sqrt{d}}{2}.
$$
In the meantime,  the Kullback divergence for Gaussian distributions leads to:
$$
KL(\mathbb{P}_{\theta_i},\mathbb{P}_{\theta_j}) = \frac{n}{2} \delta^2 |\mu_{\ic}(\theta_i) - \mu_{\ic}(\theta_j)|_2^2 \leq \frac{n}{2} d^{\ic} \beta^{2 \ic}
$$
where we used that the maximal value of the Hamming distance is $d$ and $d \delta^2 = d^{\ic}$. We then apply the Fano Lemma and observe that:
$$
\inf_{\hat{\theta}_n} \sup_{\theta} \mathbb{P}_{\theta}\left( |\hat{\theta}_n-\theta|_2 \ge \frac{\beta \sqrt{d}}{8} \right) \ge c >0
$$
as soon as:
$$
\frac{\frac{n}{2} d^{\ic} \beta^{2 \ic} + \log(2)}{d/32} < 1.
$$
We then choose $\beta$ as large as possible, \textit{i.e.} we choose $\beta$ such that:
$$
\beta = b \left(\frac{d^{1-\ic}}{n}\right)^{\frac{1}{2 \ic}} = b \left(\frac{d}{n}\right)^{\frac{1}{2 \ic}} d^{-1/2}.
$$
This entails a minimax lower bound for the $L^2$ rate of the order:
$$
r_{n,d}(\ic) \gtrsim \sqrt{d} \beta \simeq \left(\frac{d}{n}\right)^{\frac{1}{2\ic}}.
$$
 \end{proof}

\section{Discretization of the Langevin procedure - Weakly convex case }\label{sec:disc_c}

The weakly convex (\textit{i.e.} not uniformly strongly convex) case is tackled {with a completely different approach with the help of Assumption $\Hcu$. Actually, in the weakly convex case, a series of properties disappear. For instance, one can not  easily control the pathwise distance between the process and its discretization. The problem is then  significantly harder and we choose here to make use of the inversion of the Poisson equation, which leads to a relatively tractable formulation of  the error between the discretized Cesaro average and the invariant distribution (applied to the identity function). } 
%
%
%
%
%
%
In particular, this ``Poisson equation approach'' is in the continuity of  \cite{lamberton_pages,pages_menozzi}) and has a long-standing history in the study of central limit theorem for Markov chains. 
We refer to \cite{Meyer,Neveu,Revuz,Meyn_Poisson} for seminal contributions on additive functionals of Markov chains.
We first stay at an informal level in this paragraph for the sake of readability.
We sketch the general idea behind the use of this equation with an   Euler scheme.

Again, we first state some general results with a diffusion process $(X_t)_{t \geq 0}$ solution of:
\begin{equation}\label{eq:EDS} 
dX_t=- \nabdob(X_t) dt+ \sqrt{2}  dB_t.\end{equation}

\subsection{How to use the Poisson equation $f-\pi(f)= \cal{L}  g$?}

This approach is based on the inversion of the operator ${\cal L}$ of the diffusion. For a given function $f$, we recall that the solution of the Poisson equation is the function $g$ such that $\pi(g)=0$ and that satisfies:
$$ f-\pi(f)={\cal L} g,$$
where $\pi$ denotes the invariant distribution of the diffusion (see below for background on existence and uniqueness of the solution). 
We consider $g$ the solution of the Poisson equation.

$\bullet$
For such a solution, a first important ingredient is based on the following remark: if $(X_t)_{t\ge0}$ is a Markov process with generator ${\cal L}$ and $g$ belongs to the (extended) domain, then the Ito formula yields:
$$
g(X_t)=  g(X_0) + \int_{0}^t {\cal L} g(X_s) \text{d}s + \mathcal{M}_t^g,
$$
so that
\begin{equation}\label{eq:extmart}
\int_0^t f(X_s)-\pi(f) \text{d}s= \int_0^t {\cal L} g(X_s) \text{d}s- (g(X_t)- g(X_0))
\end{equation}
is a local martingale (and certainly a true martingale under appropriate conditions). Thus, the control of the distance between 
$(\frac{1}{t}\int_0^t f(\xi_s))_{t\ge0}$ and $\pi(f)$ can be tackled from a martingale point of view. 

$\bullet$
The second main interest of this approach is the possibility to specify that our estimator involves $f=\id$, which is an important ingredient of the approximation of $\pi(f)$. Such a precision is  untractable when we handle distances between probability distributions.\\

{We first state that the Poisson equation is well-posed in our setting and recall a classical formulation of this solution. {The proof is postponed to the Appendix B.} Note that this result is only stated under the assumptions of our main theorems but may be certainly extended to a more general setting (see \cite[Corollary 3.2]{CCG} for a more general result).}
\begin{prop}[Poisson equation]\label{theo:poisson} 
{Assume $\Hcu$ and suppose that $\dob$ is ${\cal C}^3$ with bounded third derivatives. 
Then, Equation \eqref{eq:EDS}  admits a unique invariant distribution and for any ${\cal C}^2$-function $f$ with bounded derivatives,  the problem  $\LL g = f-\pi(f)$ is well-posed on the set of ${\cal C}^2$-functions such that $\pi(g)=0$ and the unique solution is given by:
$$
g(x) = \int_{0}^{+ \infty} [\pi(f)-P_s f(x)] {\rm d}s.
$$}
\end{prop}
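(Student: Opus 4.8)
I would first reduce to the case $\pi(f)=0$, so that the claim becomes: solve $\mathcal{L}g=f$ with $\pi(g)=0$ and identify the solution as $g(x)=-\int_0^{+\infty}P_sf(x)\,\mathrm{d}s$.

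The starting point is the invariant measure. Under $\Hcu$ the potential $W$ is strictly convex, and the lower curvature bound $\underline{\lambda}_{\nabla^2 W}(x)\ge \mathfrak{c}_1 W(x)^{-r}$ with $r<1$ forces superlinear growth of $W$: integrating $\phi''\ge \mathfrak{c}_1\phi^{-r}$ along a ray issued from the minimizer $x^\star$, with $\phi(t)=W(x^\star+tu)$ and $\phi'(0)=0$, gives first $\phi'(t)\gtrsim \phi(t)^{(1-r)/2}$ and then $\phi(t)\gtrsim t^{2/(1+r)}$, so $e^{-W}\in L^1(\lambda_d)$ and $\pi:=Z^{-1}e^{-W}$ is a probability measure with finite moments of all orders. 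It is reversible for \eqref{eq:EDS}; since the diffusion is uniformly elliptic with smooth confining drift ($\nabla W\in\mathcal{C}^2$, $L$-Lipschitz by $\AL$), it is positive recurrent and $\pi$ is its unique invariant law. Because $\pi$ is log-concave, Theorem~\ref{theo:log-concave_poincare} supplies a Poincaré inequality, hence the $L^2(\pi)$ contraction $\|P_sh\|_{L^2(\pi)}\le e^{-s/C_P(\pi)}\|h\|_{L^2(\pi)}$ for mean-zero $h$. Since $f$ is Lipschitz (boundedness of its derivatives) and $\pi$ has a finite second moment, $f\in L^2(\pi)$; therefore $g:=-\int_0^{+\infty}P_sf\,\mathrm{d}s$ converges in $L^2(\pi)$, satisfies $\pi(g)=0$, and, applying the closed generator, $\mathcal{L}g=\lim_{T\to+\infty}(f-P_Tf)=f$.

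The next step is to upgrade this $L^2$ solution to a classical $\mathcal{C}^2$ one and to justify the pointwise formula. The coefficients being smooth (here the hypothesis that $W$ is $\mathcal{C}^3$ with bounded third derivatives enters), the transition kernel $P_s(x,\mathrm{d}y)$ has a smooth density, and for a fixed $s_0>0$ the map $x\mapsto\|\mathrm{d}P_{s_0}(x,\cdot)/\mathrm{d}\pi\|_{L^2(\pi)}$ is bounded on compact sets. Writing $P_sf=P_{s_0}\big(P_{s-s_0}f\big)$ and using the $L^2(\pi)$ decay of $P_{s-s_0}f$ yields $|P_sf(x)|\le C(x)\,e^{-s/C_P(\pi)}$ locally uniformly in $x$, so the integral defining $g$ converges pointwise and locally uniformly and produces a continuous function agreeing $\pi$-a.e.\ with the $L^2$ solution. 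Interior parabolic (Schauder) estimates for $u(s,x)=P_sf(x)$, solution of $\partial_s u=\mathcal{L}u$, give the analogous locally uniform bounds for $\nabla P_sf$ and $\nabla^2P_sf$, so one may differentiate under the integral sign twice: $g\in\mathcal{C}^2$ and $\mathcal{L}g=f$ pointwise. Alternatively one differentiates the representation $P_sf(x)=\mathbb{E}[f(X_s^x)]$, the Jacobian flow satisfying $\frac{\mathrm{d}}{\mathrm{d}s}|\nabla X_s^x v|^2=-2\langle\nabla^2 W(X_s^x)\nabla X_s^x v,\nabla X_s^x v\rangle\le 0$ by convexity, whence $|\nabla X_s^x|_{\mathrm{op}}\le 1$ and $|\nabla P_sf|\le[f]_1$, while a second differentiation controlled by $\|\nabla^3 W\|_\infty$ bounds $\nabla^2P_sf$.

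Uniqueness is then easy: two $\mathcal{C}^2$ solutions $g_1,g_2$ with $\pi(g_i)=0$ differ by $u=g_1-g_2$ with $\mathcal{L}u=0$; by Itô's formula $u(X_t)$ is a local martingale and positive recurrence of the diffusion forces $u$ to be constant, hence $u\equiv0$ since $\pi(u)=0$ (equivalently $P_su=u$ for all $s$ and $P_su\to\pi(u)=0$). The routine part of the argument is the $L^2(\pi)$ construction from the Poincaré inequality; \emph{the main obstacle} is the passage to a genuine $\mathcal{C}^2$ solution together with the locally uniform convergence of $\int_0^{+\infty}P_sf(x)\,\mathrm{d}s$, which requires quantitative smoothing estimates for the semigroup (parabolic Schauder bounds, or differentiation of the stochastic flow using $\Hcu$ and the bounded third derivatives of $W$), and a precise specification of the function class in which uniqueness is asserted.
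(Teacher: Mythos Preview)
Your argument is correct, but it proceeds along a different line from the paper's proof, and the comparison is instructive.

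For existence, the paper works entirely through the stochastic flow: it uses the first and second variation processes $Y_t^x=\partial_x X_t^x$ and their bounds under $\Hcu$ (essentially Lemma~\ref{lem:pathconttang} and the estimates behind Proposition~\ref{prop:fundamentalbounds2}) to show directly that $g_t(x)=\int_0^t[\pi(f)-P_sf(x)]\,\mathrm{d}s$ and its first two derivatives converge locally uniformly, whence $\mathcal{L}g=\lim_t\mathcal{L}g_t=f-\pi(f)$ via the Dynkin formula. Your route is more functional-analytic: you first build $g$ as an $L^2(\pi)$ object from the Poincar\'e spectral gap, then upgrade to $\mathcal{C}^2$ by parabolic Schauder estimates applied to $u(s,x)=P_sf(x)$. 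Both are valid; the paper's choice has the practical advantage that the very same flow estimates are reused quantitatively later (Proposition~\ref{prop:fundamentalbounds2}), so nothing is wasted. Your Schauder route is cleaner conceptually but would not, by itself, deliver the explicit dependence on $\mathfrak{c}_1,r,\Upsilon$ that the rest of the paper needs. Note also that your ``alternative'' via the bare contraction $\|Y_t^x\|_\star\le 1$ does not close on its own: a bound uniform in $s$ is not integrable in $s$, and one needs the decay coming from $\Hcu$ (as in the paper) or the Schauder transfer from $|P_sf|$ to $|\nabla P_sf|$ (as in your primary argument).

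For uniqueness, the paper uses the Dirichlet form identity $\int(g_1-g_2)\mathcal{L}(g_1-g_2)\,\mathrm{d}\pi=-\int|\nabla(g_1-g_2)|^2\,\mathrm{d}\pi$, while you use the semigroup fixed-point $P_su=u$ and ergodicity. Both are standard; your closing remark that the function class in which uniqueness is asserted should be specified (so that either the integration by parts or the convergence $P_su\to\pi(u)$ is justified) applies to both arguments and is a fair observation.
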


Note that in what follows, we will solve this equation $d$ times for a multivariate function $f=(f^1,\ldots,f^{{d'}})$ ({and will mainly consider the case  $f=\id$ for applications to Bayesian estimation}).
\subsection{Poisson equation and discretization}\label{sec:poissonbisbis}

In the  discretized case, the aim is then to mimick the martingale property of Equation \eqref{eq:extmart} but some additional error terms appear with the discretization approximation. Such ideas have been strongly studied in \cite{lamberton_pages,pages_menozzi} but since the solution to the Poisson equation is not explicit (in general), 
the previous works have usually made \textit{ad hoc} assumptions on the function $g$ and its derivatives. For our purpose, we identify the key properties satisfied by the solution when $f=\id$ in terms of the dimensional dependence.

We observe that $(\Y_{t_k})_{k \ge 1}$, 
computed through the recursion
$$
\Y_{t_k} = - \gamma \nabla W(\Y_{t_{k-1}}) + \sqrt{2 \gamma}  \Ur_k,
$$
where $(\Ur_k)_{k \ge 0}$ is an i.i.d. sequence of standard $d$-dimensional Gaussian random variables, 
is a sequence of discrete time observations of the continuous time process $(\Y_t)_{t \ge 0}$ defined by:
\begin{equation}\label{barxt}
\forall t \in [t_k,t_{k+1}] \qquad 
d \Y_{t} = -   \nabla W(\bX_{t_k}) dt + \sqrt{2}  dB_t,
\end{equation}
with $\sqrt{\gamma} \Ur_k = B_{t_k}-B_{t_{k-1}}$.

Considering a \textit{multivariate} function $g=(g^1,\ldots, g^{{d'}}):\R^d\rightarrow\R^{{d'}}$, 
we denote by $Dg=[\nabla g^1{;} \ldots{;} \nabla g^{{d'}}]$ {its Jacobian matrix} {which maps from $\ER^d$ to the space of $d'\times d$-matrices.} Similarly, $\Delta g$ refers to the vector built with $(\Delta g^1, \ldots, \Delta g^{{d'}})$.
For  $s >0$, we define $\un{s}$ the largest grid point in $(t_k)_{k \ge 0}$ below $s$:
\begin{equation}\label{def:s}
\un{s} := \sup \{t_k \, : t_k \le s \}.
\end{equation}
We then observe that:
$$g(\bX_t)=g(x)+\int_0^t \bar{\cal L} g(\bX_s,\bX_{\un{s}}) \text{d}s+ {\cal M}_t^{(g)},$$
where  $\un{s}$ is  defined in \eqref{def:s},  $\bar{\LL}$ is given by:
\begin{equation}\label{def:LL_b}
 \bar{\LL} g(x,\underline{x})=-  D g(x) \nabdob(\un{x}) + \Delta g(x),
 \end{equation}
and ${\cal M}^{(g)}$ is {the  $\R^d$-valued local martingale} defined by:
\begin{equation}\label{eq:martingale_g}
 {\cal M}_t^{(g)}=  \sqrt{2} \int_0^t   D g(\bX_s) \text{d}B_s.
 \end{equation} 
Similarly, the definition of $\LL$ shall be extended to multivariate functions by
\begin{equation}\label{def:LL}
\LL g(x)={(\LL g^1(x),\ldots\LL g^{{d'}}(x))} = - D g(x) \nabdob(x) + \Delta g(x).
\end{equation}



\noindent We first state some useful technical results for the proof of Theorem \ref{cor:thmPoisson} {whose proofs 
 ar postponed to  Appendix B}

\begin{lem}\label{lem:gradWmaindoc} Assume $\Hcu$. Then, $\forall x\in\ER^d$,
$$ \frac{\deux}{1-r}\left( \dob^{1-r}(x)-\dob^{1-r}(x^\star)\right)\le   |\nabla \dob(x)|^2\le \frac{\troiss}{1-r}\left( \dob^{1-q}(x)-\dob^{1-q}(x^\star)\right).$$
Furthermore, $\forall x\in\ER^d$,
\begin{equation}\label{eq:asympdob}
\dob^{1+r}(x)-\dob^{1+r}(x^\star)\ge (1+r)\deux|x-x^\star|^2\;\textnormal{and}\;\dob^{1+q}(x)-\dob^{1+q}(x^\star)\le \frac{\troiss(1+q)}{1-q}|x-x^\star|^2.
\end{equation}
\end{lem}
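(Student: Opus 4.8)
The plan is to reduce each of the two double inequalities to a one-dimensional comparison along a well-chosen curve, using only the spectral sandwich in $\Hcu$ and the convexity of $W$. As a first step I would note that $\nabla^2 W(x)\succeq \deux\, W(x)^{-r}\,\id$ is positive definite at every point, so $W$ is strictly convex; restricting $W$ to any line $\phi(t):=W(x_0+tv)$ gives $\phi''\ge \deux|v|^2\phi^{-r}$, hence
\[
(\phi^{1+r})''=(1+r)\bigl(r\,\phi^{r-1}(\phi')^2+\phi^{r}\phi''\bigr)\ \ge\ (1+r)\deux|v|^2>0 ,
\]
so along every line $\phi^{1+r}$ is uniformly convex; since a convex function whose restriction to every line tends to $+\infty$ at both ends is coercive, $W$ is coercive on $\ER^d$ and attains its minimum at a unique point $x^\star$ with $\nabla W(x^\star)=0$, which is the $x^\star$ of the statement. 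Taking now the segment $x_t=x^\star+t(x-x^\star)$, $t\in[0,1]$, $v=x-x^\star$, $\phi(t)=W(x_t)$, one has $\phi'(0)=0$ and $\phi'\ge0$, so integrating the inequality above twice (using $(\phi^{1+r})'(0)=0$) already yields the left inequality of the second display.

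For the gradient estimates of the first display I would integrate along the gradient flow $\dot\gamma(t)=-\nabla W(\gamma(t))$, $\gamma(0)=x$; by the coercivity and strict convexity just obtained the flow is globally defined and $\gamma(t)\to x^\star$, so $w(t):=W(\gamma(t))\to W(x^\star)$ and $u(t):=|\nabla W(\gamma(t))|^2\to 0$. From $w'=-u$ and $u'=-2\langle\nabla^2W(\gamma)\,\nabla W(\gamma),\nabla W(\gamma)\rangle$, the bounds $\deux\,w^{-r}\id\preceq\nabla^2W(\gamma)\preceq\troiss\,w^{-q}\id$ together with $-w^{-q}u=w^{-q}w'=\tfrac1{1-q}(w^{1-q})'$ and $-w^{-r}u=\tfrac1{1-r}(w^{1-r})'$ give
\[
\tfrac{2\troiss}{1-q}\,(w^{1-q})'\ \le\ u'\ \le\ \tfrac{2\deux}{1-r}\,(w^{1-r})' .
\]
Integrating on $[0,+\infty)$ and inserting the limits $u(+\infty)=0$, $w(+\infty)=W(x^\star)$ gives $|\nabla W(x)|^2\ge \tfrac{2\deux}{1-r}\bigl(W^{1-r}(x)-W^{1-r}(x^\star)\bigr)$ and $|\nabla W(x)|^2\le \tfrac{2\troiss}{1-q}\bigl(W^{1-q}(x)-W^{1-q}(x^\star)\bigr)$; using $q\le r<1$ to replace $1-q$ by $1-r$ in the second denominator, this is precisely the first display (up to a fixed numerical factor in the constants).

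Finally, for the right inequality of the second display I would feed the gradient bound just proved back into the segment computation: by Cauchy--Schwarz $(\phi')^2=\langle\nabla W(x_t),v\rangle^2\le|v|^2|\nabla W(x_t)|^2$, so the upper gradient bound gives $q\,\phi^{q-1}(\phi')^2\le C\,\tfrac{\troiss}{1-q}|v|^2$ for a universal constant $C$, hence
\[
(\phi^{1+q})''=(1+q)\bigl(q\,\phi^{q-1}(\phi')^2+\phi^{q}\phi''\bigr)\ \le\ C'\,\tfrac{(1+q)\troiss}{1-q}|v|^2 ,
\]
and integrating twice (again $(\phi^{1+q})'(0)=0$) yields the claimed upper bound on $W^{1+q}(x)-W^{1+q}(x^\star)$. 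The routine parts are the chain-rule identities and the two integrations; the one point that genuinely needs care --- and the main obstacle --- is the justification that the gradient flow is globally defined and converges to $x^\star$, so that the boundary contributions at $+\infty$ disappear: this relies on the coercivity established above and on $W$ strictly decreasing along non-stationary trajectories, together with the bookkeeping of the numerical constants needed to display them exactly as in the statement.
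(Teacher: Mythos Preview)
Your approach is essentially the same as the paper's: the gradient-flow argument for the first display and the Hessian bounds on $W^{1+r}$ and $W^{1+q}$ (which you phrase via the segment parametrization) for the second are exactly the route taken there. Your extra factor of $2$ in the gradient bounds is in fact correct---the paper silently drops it when differentiating $|\nabla W(x(t))|^2$---so your hedge ``up to a fixed numerical factor'' reflects a minor arithmetic inconsistency in the paper's own constants rather than any gap in your argument.
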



{In this second technical result, we obtain some crucial bounds related to the solution of the Poisson equation under Assumption $\Hcu$.}
\begin{prop}\label{prop:fundamentalbounds2} Assume $\Hcu$ with $\cun>0$, $\deux>0$ and $r\in[0,1]$. {Let $f:\ER^d\rightarrow\ER^{d'}$ be a Lispchitz ${\cal C}^2$-function with $\flip\le 1$ and $\dflip\le 1$}.    Then, $g{:\ER^d\rightarrow\ER^{d'}}$ is  a ${\cal C}^2$-function and for every $\mte\in(0,1)$, a constant  $c_\mte$ exists (which only depends on $\mte$ {and not on the other parameters}), such that for any $x$:
\begin{enumerate}

\item[$i-a)$]

{$\nsp{Dg(x)}{\le} c_\mte \deux^{-1-\mte} \left(\dob^{r(1+\mte)}(x)+\Upsilon^{r(1+\mte)}\right).$}

\item[$i-b)$]
{
If $\dob(x_0)\lesssim_{uc} \dw$,
$\sup_{t\ge0}\E_{x_0}[\nsp{Dg(\bX_t)}^2]{\le} c_\mte   \deux^{-2(1+\mte)} \Upsilon^{2r(1+\mte)}.$}
{
\item[$ii-a)$] If $\dob(x^\star)\lesssim_{uc}\Upsilon$,
$| g(x)-g(x^\star)|^2{\le}
c_\mte \deux^{-3-2\mte} \left({\dob}^{(1+3r)(1+\mte)}(x)+ \Upsilon^{(1+3r)(1+\mte)}\right).
 $
\item[$ii-b)$] If $\dob(x_0)\lesssim_{uc} \Upsilon$, then
$\sup_{t\ge0} \E_{x_0}[| g(\bX_t)-g(x_0)|^2]{\le} c_\mte \deux^{-3-2\mte}  \Upsilon^{(1+3r)(1+\mte)}.$}
{
\item[$iii-a)$] Assume that $\hesdob$ is $\tilde{L}$-Lipschitz for the norm $\nsp{.}$. Then, for any $x, y\in\ER^d$ with $x\neq y$,
$$\frac{\nsp{Dg(y)-Dg(x)}}{|y-x|}{\le}  c_\mte \deux^{-2(1+\mte)}\tilde{L}\left(\dob^{2r(1+\mte)}(x)+\dob^{2r(1+\mte)}(y)+\Upsilon^{2r(1+\mte)}\right).$$
\item[$iii-b)$] Set $\tX_{{t}}=\bX_{\un{t}}-(t-\un{t}) \nabdob(\bX_{\un{t}})$. If $\dob(x_0)\lesssim_{uc} \dw$,
$$\E_{x_0}\left[|(Dg(\bX_{{t}})-Dg(\tX_{{t}}))(\nabla W(\bX_{\un{t}}+\Delta_{\un{t}t})-\nabla W(\bX_{\un{t}}))|^2\right]{\le} \mathfrak{c}_\mte (L \tilde{L} \deux^{-2(1+\mte)}(t-\un{t}))^2 d^2 \Upsilon^{4r(1+\mte)}.$$
}
\end{enumerate}
\end{prop}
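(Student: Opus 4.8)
The plan is to exploit the closed form of the Poisson solution supplied by Proposition \ref{theo:poisson}, namely $g(x)=\int_0^{+\infty}[\pi(f)-P_sf(x)]\,\mathrm{d}s$ where $(P_s)_{s\ge0}$ is the semigroup of \eqref{eq:EDS}, and to differentiate under the integral. The central object will be the tangent flow $J^x_s:=\nabla_xX^x_s\in\MM_{d,d}$, solution of the deterministic linear equation $\dot J^x_s=-\hesdob(X^x_s)J^x_s$, $J^x_0=\id$ (there is no noise term since the diffusion coefficient is constant). The commutation identity $D(P_sf)(x)=\E_x[Df(X^x_s)\,J^x_s]$ gives $Dg(x)=-\int_0^{+\infty}\E_x[Df(X^x_s)\,J^x_s]\,\mathrm{d}s$, so that, using $\flip\le1$ (hence $\nsp{Df}\le1$), $\nsp{Dg(x)}\le\int_0^{+\infty}\E_x[\nsp{J^x_s}]\,\mathrm{d}s$. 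This reduces $i$-$a)$ to a decay estimate for $\E_x[\nsp{J^x_s}]$.

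Under $\Hcu$ the Hessian is positive definite, so $\nsp{J^x_s}\le1$, and moreover $\frac{\mathrm{d}}{\mathrm{d}s}|J^x_sv|^2=-2\langle\hesdob(X^x_s)J^x_sv,J^x_sv\rangle\le-2\deux W(X^x_s)^{-r}|J^x_sv|^2$, whence the state-dependent contraction $\nsp{J^x_s}\le\exp\big(-\deux\int_0^sW(X^x_u)^{-r}\,\mathrm{d}u\big)$. The difficulty is that this rate degenerates where $W$ is large, so one must lower bound the occupation integral $\int_0^sW(X_u)^{-r}\,\mathrm{d}u$. I would split according to whether the diffusion has reached the sub-level set $\{W\le\dw\}$ — on which $W^{-r}\ge\dw^{-r}$, giving the gain $e^{-\deux\dw^{-r}s}$ — and control the time/moment cost of getting there with the uniform moment bounds $\sup_u\E_x[W^p(X_u)]\lesssim_{uc}W^p(x)+\dw^p$ of Proposition \ref{lem:expbounds22} together with the Lyapunov drift estimates of Lemma \ref{lem:expbounds1cont}. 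Allowing an $\mte$-slack in the exponents to absorb the waiting time from $W(x)$ down to $\{W\le\dw\}$, this should yield $\E_x[\nsp{J^x_s}]\lesssim_\mte\deux^{-\mte}\big(W^{r(1+\mte)}(x)+\dw^{r(1+\mte)}\big)e^{-c\deux\dw^{-r}s}$, and integrating in $s$ (the factor $\int_0^{+\infty}e^{-c\deux\dw^{-r}s}\,\mathrm{d}s$ producing the extra $\deux^{-1}\dw^{r}$ which is absorbed into the exponents) gives $i$-$a)$. Statement $i$-$b)$ then follows by taking $x=\bX_t$ in $i$-$a)$, squaring, and using once more Proposition \ref{lem:expbounds22} — valid for both the diffusion and the continuous Euler interpolation \eqref{barxt} — to bound $\E_{x_0}[W^{2r(1+\mte)}(\bX_t)]\lesssim_{uc}\dw^{2r(1+\mte)}$ when $W(x_0)\lesssim_{uc}\dw$.

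The remaining items are consequences of $i$-$a)$ and the quantitative controls of Lemma \ref{lem:gradWmaindoc}. For $ii$-$a)$, write $g(x)-g(x^\star)=\int_0^1Dg(x_u)(x-x^\star)\,\mathrm{d}u$ with $x_u=x^\star+u(x-x^\star)$, bound $\nsp{Dg(x_u)}$ by $i$-$a)$ using $W(x_u)\le W(x)\vee W(x^\star)$ (convexity), and convert the displacement through $|x-x^\star|^2\le\frac1{(1+r)\deux}\big(W^{1+r}(x)-W^{1+r}(x^\star)\big)\le\deux^{-1}W^{1+r}(x)$ from \eqref{eq:asympdob}; after squaring, the exponents $2r(1+\mte)$ and $1+r$ combine into $(1+3r)(1+\mte)$ and the $\deux$-powers into $\deux^{-3-2\mte}$, which gives $ii$-$a)$, and $ii$-$b)$ follows by taking $x=\bX_t$ and invoking the moment bounds as above. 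For $iii$-$a)$ one differentiates once more: the second variation $K^x_s:=\partial^2_xX^x_s$ solves a linear equation forced by $\DtW(X^x_s)[J^x_s,J^x_s]$, so when $\hesdob$ is $\tilde L$-Lipschitz one has $\nsp{\DtW}\lesssim\tilde L$; combining the contraction of $J^x$ with the representation $\nsp{K^x_s}\lesssim\tilde L\int_0^s\nsp{\Phi(s,u)}\,\mathrm{d}u$ (the resolvent $\Phi$ of the first-variation system being itself contractive), together with $\dflip\le1$ and the moment bounds, and integrating in $s$, yields a bound on $\nsp{D^2g}$ of the announced form; integrating along $[x,y]$ produces the Lipschitz estimate. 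Finally, $iii$-$b)$ is a direct corollary of $iii$-$a)$: since $\bX_t-\tX_t=\sqrt2(B_t-B_{\un{t}})$ and $x\mapsto\nabdob(\bX_{\un t}+x)-\nabdob(\bX_{\un t})$ is $L$-Lipschitz, Cauchy--Schwarz separates the two factors, the Gaussian moment $\E|B_t-B_{\un t}|^4\lesssim_{uc}d^2(t-\un t)^2$ handles the increments, and Proposition \ref{lem:expbounds22} controls $\E_{x_0}[W^{4r(1+\mte)}(\bX_t)]\lesssim_{uc}\dw^{4r(1+\mte)}$, producing the stated $\big(L\tilde L\deux^{-2(1+\mte)}(t-\un t)\big)^2d^2\dw^{4r(1+\mte)}$ bound.

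\medskip\noindent\textbf{Main obstacle.} The crux is the second step: obtaining, \emph{uniformly in the starting point}, the decay of $\E_x[\nsp{J^x_s}]$ when the contraction rate $\deux W(X_u)^{-r}$ degenerates at infinity. The delicate balance is between the exponential gain, which is effective only once the diffusion has entered $\{W\le\dw\}$, and the time and moment cost of getting there; the $\mte$-slack present throughout the statement is precisely the device that buys this uniformity, at the price of slightly larger powers of $W$ and $\deux^{-1}$. Everything downstream — differentiation under the integral, convexity of $W$, the conversions of Lemma \ref{lem:gradWmaindoc}, the second-variation computation, and the repeated use of Proposition \ref{lem:expbounds22} — is essentially bookkeeping.
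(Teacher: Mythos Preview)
Your overall architecture --- differentiate the Poisson representation, control the first variation $J^x_s$ via the pathwise contraction $\nsp{J^x_s}\le\exp\big(-\deux\int_0^s W(X_u)^{-r}\,\mathrm{d}u\big)$, then cascade to $ii)$ and $iii)$ --- is exactly the paper's route, and your handling of $ii$-$a)$, $ii$-$b)$ and $iii$-$b)$ matches the paper. For $iii$-$a)$ the paper bounds $Y^y_t-Y^x_t$ directly via Lemma~\ref{lem:pathconttang}(ii) rather than going through the second variation process, but this is a cosmetic difference.

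The genuine gap is in your decay estimate for $\E_x[\nsp{J^x_s}]$ underpinning $i$-$a)$. The claimed bound
\[
\E_x[\nsp{J^x_s}]\lesssim_\mte \deux^{-\mte}\big(W^{r(1+\mte)}(x)+\dw^{r(1+\mte)}\big)e^{-c\deux\dw^{-r}s}
\]
cannot hold uniformly in $x$. The time for $W(X_t^x)$ to descend from level $W(x)$ to level $\dw$ is of order $W(x)^r/\deux$ (since $\frac{d}{dt}W\sim -|\nabla W|^2\sim -\deux W^{1-r}$ by Lemma~\ref{lem:gradWmaindoc}), and during this descent $\nsp{J^x_s}$ stays near $1$; evaluating your inequality at $s=W(x)^r/(2\deux)$ gives left side $\approx 1$ and right side $\approx W^{r(1+\mte)}(x)\,e^{-c(W(x)/\dw)^r/2}\to 0$ as $W(x)\to\infty$. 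A polynomial prefactor cannot compensate for a waiting time that is itself polynomial in $W(x)$ when the exponential rate is fixed at $\deux\dw^{-r}$.

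What the paper does instead is settle for \emph{polynomial} decay in $t$, which is enough. For any $\delta_1>0$ one writes
\[
\E_x[\nsp{Y_t}^2]\le e^{-2t^{\delta_1}}+\PE_x\Big(\int_0^t \deux W^{-r}(X_u)\,\mathrm{d}u\le t^{\delta_1}\Big),
\]
bounds the probability by Markov with exponent $2(1+\mte)$, and applies Jensen to the convex map $a\mapsto a^{-2(1+\mte)}$ on the time average $\frac{1}{t}\int_0^t W^{-r}(X_u)\,\mathrm{d}u$, obtaining $(\deux^{-1}t^{\delta_1})^{2(1+\mte)}t^{-2(1+\mte)}\sup_u\E_x[W^{2r(1+\mte)}(X_u)]$. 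Choosing $\delta_1=\mte/(2(1+\mte))$ gives, after a square root,
\[
\E_x[\nsp{Y_t}]\lesssim e^{-t^{\delta_1}}+\deux^{-(1+\mte)}t^{-1-\mte/2}\big(W^{r(1+\mte)}(x)+\dw^{r(1+\mte)}\big),
\]
which is integrable over $t\in[0,\infty)$ and delivers $i$-$a)$ directly. So the $\mte$-slack is not a device to absorb a hitting time; it is what makes the polynomial tail just barely integrable.
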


\begin{proof}[Proof of Theorem  \ref{cor:thmPoisson}]

The plan of the proof is the same for $i)$ and $ii)$ and is decomposed into three steps. Steps 1 and 2  are common whereas the last one is treated separately.

\noindent \textit{\underline{Step 1: Decomposition of $\pi_N(f)-\pi(f)$.}}
We observe that:
\begin{align*}
\pi_N(f)-\pi(f) & := \frac{1}{t_N} \sum_{k=1}^N \gamma_k f(\bX_{t_{k-1}}) -\pi(f)
 = \frac{1}{t_N} \sum_{k=1}^N  \int_{t_{k-1}}^{t_k} f(\bX_{\un{s}}) \text{d}s -\pi(f)\\
 &= \frac{1}{t_N} \int_{0}^{t_N} [f(\bX_{\un{s}}) - \pi(f)] \text{d}s\nonumber\\
& = \frac{1}{t_N} \int_{0}^{t_N} [f(\bX_{s}) - \pi(f)] \text{d}s  + \frac{1}{t_N} \int_{0}^{t_N} [f(\bX_{\un{s}}) - f(\bX_{s})] \text{d}s
\end{align*}
Now, we may use the Poisson equation $f-\pi(f)={\cal L} g$ and deduce that:
\begin{align*}
&\pi_N(f)-\pi(f)= \frac{1}{t_N} \int_{0}^{t_N} {\cal L}  g(\bX_{s}) \text{d}s+ \frac{1}{t_N} \int_{0}^{t_N} [{f}(\bX_{\un{s}})- {f}(\bX_{s})] \text{d}s\\
& = 
\frac{1}{t_N} \int_{0}^{t_N} \bar{{\cal L}}  g(\bX_s,\bX_{\un{s}}) \text{d}s+ \frac{1}{t_N} \int_{0}^{t_N} [{\cal L} g(\bX_{s})-\bar{{\cal L}}  g(\bX_s,\bX_{\un{s}})] \text{d}s  + \frac{1}{t_N} \int_{0}^{t_N} [{f}(\bX_{\un{s}})- {f}(\bX_{s})] \text{d}s.
\end{align*}
{where $\bar{{\cal L}}$ has been defined in \eqref{def:LL_b}.}
To handle the first term of the right-hand side, we use the  Ito formula to obtain:
$$
g(\bX_{t_N}) = g(x_{0})+
\int_{0}^{t_N} \bar{{\cal L}}  g(\bX_s,\bX_{\un{s}}) \text{d}s+ \mathcal{M}_{t_N}^{(g)}\quad\textnormal{with}\quad \mathcal{M}_{t_N}^{(g)}=2\int_0^{t_N} Dg(\bX_s) \text{d}B_s.
$$ 
By \eqref{def:LL_b}, we remark that
$$
{\cal L} g(x) - \bar{{\cal L}}g(x,\un{x}) =  D g(x) [\nabla W(\un{x})-\nabla W(x)].
$$
We then obtain that:
\begin{align}
\pi_N(f)-\pi(f) = 
& = \overbrace{\frac{g(\bX_{t_N}) - g(x_{0})}{t_N}}^{:=A_{t_N}^{(0)}} - \overbrace{\frac{\mathcal{M}_{t_N}^{(g)}}{t_N}}^{:=A_{t_N}^{(1)}}  + \overbrace{\frac{1}{t_N} \int_{0}^{t_N} Dg(\bX_{{s}})[\nabla W(\bX_{\un{s}})-\nabla W(\bX_{s})] \text{d}s}^{:=A_{t_N}^{(2)}} \nonumber\\
 &+\underbrace{ \frac{1}{t_N} \int_{0}^{t_N} [ f(\bX_{\un{s}}) - f(\bX_{s})] \text{d}s}_{:=A_{t_N}^{(3)}}.
\end{align}

\noindent The rest of the proof consists in studying the mean-squared error related to each term of the above righ-hand side and to deduce the result the upper-bound for  $\E|\pi_N(f)-\pi(f)|^2$.\\

\noindent \underline{\textit{Step 2: Mean squared error related to $A_{t_N}^{(0)}$, $A_{t_N}^{(1)}$ and $A_{t_N}^{(3)}$}:}\\

\noindent $\bullet$ By Proposition \ref{prop:fundamentalbounds2} $ii)-b)$, {there exists a constant $c_\mte$ depending only on $\mte$ (and which may change from line to line) such that},
{
$$ \E_{x_0}\left|A_{t_N}^{(0)}\right|^2=\E_{x_0}\left| \frac{g(\bX_{t_N}) - g(\bX_{0})}{t_N} \right|^2
\le c_\mte \deux^{-3-2\mte}  \frac{\Upsilon^{(1+3r)(1+\mte)}}{t_N^2}.$$
}

\noindent $\bullet$ Let us consider the martingale term $A_{t_N}^{(1)}$:
$$ \E_{{x_0}}[|{\cal M}_{t_N}^{(g)}|^2]= 2 \int_0^{t_N}   \E_{x_0}\|D g(\bX_s)\|_F^2 \text{d}s,$$
where $\|.\|_F$ refers to the Frobenius norm. Then, since for a {$d'\times d$-matrix $A$, $\|A\|_F^2\le {\rm Tr}(A A^T)\le d' \nsp{A}^2$},
Proposition \ref{prop:fundamentalbounds2} implies:
{
$$\E_{{x_0}}\left|A_{t_N}^{(1)}\right|^2 \le c_\mte   \deux^{-2(1+\mte)} \frac{d' \Upsilon^{2r(1+\mte)}}{t_N}  .$$}
\noindent $\bullet$ Let us now consider $A_{t_N}^{(3)}$.  On $[t_{k-1},t_k)$:
\begin{equation}
 \bX_s-\bX_{\un{s}} =-(s-\un{s}) \nabla W(\bX_{\un{s}}) + \Delta_{\un{s}s}. \label{eq:dif_bxs_bx}
 \end{equation}
 with $\Delta_{\un{s} s}=\sqrt{2}(B_s-B_{t_{k-1}})$.
 Thus, by Taylor formula, there exists  $\xi(s)\in [\bX_{\un{s}},\bX_{\un{s}}+ \Delta_{\un{s}s}]$ such that
 $$f(\bX_s)-f(\bX_{\un{s}})= {f(\bX_s)-f(\bX_s- (s-\un{s}) \nabla W(\bX_{\un{s}})}+ Df(\xi_2(s)) \Delta_{\un{s}s}.$$
Thus, 
$$\int_{t_{k-1}}^{t_k}
  f(\bX_s)-f(\bX_{\un{s}})\text{d}s=\underbrace{\int_{t_{k-1}}^{t_k} Df(\bar{X}_{\un{s}}) \Delta_{\un{s}s}\text{d}s}_{\Delta N_k}+R(s)$$
where 
$$|R(s)|\le \flip (s-\un{s})  |\nabdob(\bX_{\un{s}}) |+ \dflip |\Delta_{\un{s}s}|^2.$$

  On one hand, since $(\Delta N_k)_{k\ge1}$ is a sequence of martingale increments (and $\flip\le 1$),  
\begin{align*}
\E_{{x_0}}\left[\left(\frac{1}{t_N}\sum_{k=1}^N\Delta N_k\right)^2\right]&=\frac{1}{t_N^2}\sum_{k=1}^N\E_{{x_0}}|\Delta N_k|^2
\le \frac{\flip^2}{t_N^2} \sum_{k=1}^N \E \left|
\int_{t_{k-1}}^{t_k} (B_s-B_{t_{k-1}}) \text{d}s\right|^2\\
& \le  \frac{ N}{t_N^2} 
\E \left|
\int_{0}^{\gamma} B_s \text{d}s\right|^2 \le \frac{1}{N\gamma^2} \E \left|
\int_{0}^{\gamma} (\gamma-s) \text{d}B_s \right|^2 \le  \frac{ d \gamma}{3N}.
\end{align*}
On the other hand, the Jensen inequality (combined with the fact $\flip\le 1$ and $\dflip\le 1$) yields:



\begin{align*}
\E_{{x_0}}\left[\left(\frac{1}{t_N}\int_0^{t_N} R(s) ds\right)^2\right]
&\le \frac{2}{t_N}\int_0^{t_N} \left((s-\un{s})^2 \E\left|\nabdob(\bX_{\un{s}}) \right|^2+2\E |\Delta_{\un{s}s}|^4\right) ds\\
&\le \frac{2}{3}\gamma^2 \sup_{s\ge1} \E\left|\nabdob(\bX_{\un{s}}) \right|^2+ \frac{10 d^2}{t_N}\int_0^{t_N}(s-\un{s})^2 ds\\
&\lesssim_{uc} \frac{\troiss \gamma^2}{1-q} (\dob^{1-q}(x_0)+\dw^{1-q})+ d^2\gamma^2,
\end{align*}
where in the last inequality, we used Lemma \ref{lem:gradWmaindoc} and Proposition \ref{lem:expbounds22} $ii)$.  
We deduce from what precedes that:
{
\begin{align*}
\E_{{x_0}}\left|A_{t_N}^{(3)}\right|^2\lesssim_{uc} \frac{d\gamma^2}{t_N}+\frac{\troiss \gamma^2}{1-q}(\dw^{1-q}+W(x_0)^{1-q})+d^2\gamma^2.
\end{align*}
}



\noindent \underline{\textit{Step 3: Mean squared error related to $A_{t_N}^{(2)}$}} The study of this term is isolated not only because its study is more involved, but also because this term differentiates the bound of $i)$ and $ii)$.

\noindent We separate the drift and the diffusion components and recall that $\Delta_{\un{s}s}=\sqrt{2}(B_s-B_{\un{s}})$. We have:
 
 \begin{align*}
A_{t_N}^{(2)} =& -\overbrace{\frac{1}{t_N} \int_{0}^{t_N} Dg(\bX_{{s}})[\nabla W(\bX_{s})-\nabla W(\bX_{\un{s}}+\Delta_{\un{s}s})] {\rm d}s}^{:=A_{t_N}^{(2,1)}}\\
&- \underbrace{\frac{1}{t_N} \int_{0}^{t_N} Dg(\bX_{{s}})[\nabla W(\bX_{\un{s}}+\Delta_{\un{s}s})-\nabla W(\bX_{\un{s}})]{\rm d}s}_{:=A_{t_N}^{(2,2)}}.
\end{align*}
%
{Since $\nabla W$ is $L$-Lipschitz, Equation \eqref{eq:dif_bxs_bx} yields 
$|\nabla W(\bX_{s})-\nabla W(\bX_{\un{s}}+\Delta_{\un{s}s})|\le  L|s-\un{s}|. |\nabla W(\bX_{\un{s}})|$}. Then, Jensen and Cauchy-Schwarz inequalities imply that:
$$\E_{{x_0}}[|A_{t_N}^{(2,1)}|^2]\le \frac{L^2}{t_N} \int_{0}^{t_N}(s-\un{s})^2 \E_{{x_0}}[\nsp{Dg(\bX_{{s}})}^2]\E_{{x_0}}[|\nabdob (\bX_{\un{s}})|^2] {\rm d}s.$$
Using $\Hcu$, Proposition \ref{lem:expbounds22} $ii)$, Proposition \ref{prop:fundamentalbounds2} $i-b)$, we have:
{
$$\sup_{s\ge0}\E_{x_0}[\nsp{Dg(\bX_{{s}})}^2]\E_{x_0}[|\nabdob (\bX_{\un{s}})|^2] \lesssim_{uc} c_\mte   \deux^{-2(1+\mte)} \dw^{2r(1+\mte)} \troiss (\dw^{1-q} + W(x_0)^{1-q})
 ,$$}
so that:
{
$$\E_{{x_0}}[|A_{t_N}^{(2,1)}|^2]\lesssim_{uc}  L^2  \deux^{-2(1+\mte)}\troiss   (\dw^{(1-q+2r)(1+\mte)}+\dw^{2r(1+\mte)} W(x_0)^{1-q}) \gamma^2.$$
}
We finally separate the study of $ A_{t_N}^{(2,2)}$ into two cases, respectively for $i)$ and $ii)$.\\

\noindent \underline{\textit{Step 4a: End of Proof of Theorem \ref{cor:thmPoisson} $(i.a)$}:}
The Cauchy-Schwarz inequality yields
\begin{equation}\label{eq:firstview}
\E_{{x_0}}[|A_{t_N}^{(2,2)}|^2]\le \frac{L^2}{t_N} \int_{0}^{t_N} \E_{{x_0}}[\nsp{Dg(\bX_{{s}})}^2]\E_{{x_0}}[|\Delta_{\un{s}s}|^2] {\rm{d}}s 
\le \frac{2L^2}{t_N} \int_{0}^{t_N} \E_{{x_0}}[\nsp{Dg(\bX_{{s}})}^2] d (s-\un{s}) {\rm{d}}s
.
\end{equation}
Again Proposition \ref{prop:fundamentalbounds2} $i-a)$ implies that:
$$\E_{{x_0}}[|A_{t_N}^{(2,2)}|^2]\lesssim_{uc}  c_\mte\frac{L^2}{t_N} \int_0^{t_N}   \deux^{-2(1+\mte)} \dw^{2r(1+\mte)} d (s-\un{s}) {{\rm{d}}s}\lesssim_{uc}  c_\mte   \deux^{-2(1+\mte)} L^2  {\dw^{2r(1+\mte)} d} {\gamma}.$$
{Collecting the bounds obtained for $A_{t_N}^{(0)}$, $A_{t_N}^{(1)}$, $A_{t_N}^{(2,1)}$, $A_{t_N}^{(2,2)}$ and $A_{t_N}^{(3)}$,
we get:
\begin{align}
 &\E\Big|\frac{1}{t_N} \sum_{k=1}^N \gamma_k {f}(\bX_{t_{k-1}}) -\pi(f)\Big|^2\lesssim_{uc} \max\left( \deux^{-3-2\mte}  \frac{\Upsilon^{(1+3r)(1+\mte)}}{t_N^2},\deux^{-2(1+\mte)} \frac{d' \Upsilon^{2r(1+\mte)}}{t_N}, \frac{d \gamma^2}{t_N}\right) \label{eq:first3452}\\
 &+
\max\left( \troiss \gamma^2 (\dw^{1-q}+W^{1-q}(x_0)), L^2  \deux^{-2(1+\mte)}\troiss   {\dw^{2r(1+\mte)}}(\dw^{1-q}+W(x_0)^{1-q})) \gamma^2,
 \deux^{-2(1+\mte)} L^2  {\dw^{2r(1+\mte)} d} {\gamma}\right).\label{eq:first3453}
\end{align}
}

In order to deduce Theorem \ref{cor:thmPoisson} $i.a)$, we need to calibrate $\gamma$ and $N$ in order that whole these terms be dominated by 
$\varepsilon^2$ up to a constant depending only on $r$, $\mte$ and $q$. 
{The largest dependency in terms of $\gamma$ is induced by the last term in \eqref{eq:first3453} and we choose:
\begin{equation} 
\label{eq:gamma_principale}
\gamma \leq \frac{\deux^{2(1+\mte)}}{d L^2 \dw^{2r(1+\mte)}} \varepsilon^2.
\end{equation}
The other terms that involve $\gamma$ are upper bounded as follows:
$$
\frac{\troiss \gamma^2}{1-q}(\dw^{1-q}+W(x_0)^{1-q}) \leq \varepsilon^2 
\frac{\deux^{2(1+\mte)}}{d L^2 \dw^{2r(1+\mte)}} \frac{ \gamma \troiss (\dw^{1-q}+W(x_0)^{1-q})}{1-q},
$$
so that $\gamma$ must satisfy:
$$
\gamma \leq \frac{\dw^{2r(1+\mte)} d L^2}{\troiss \deux^{2(1+\mte)}}(\dw^{q-1} \wedge W(x_0)^{q-1}).
$$
Using Lemma \ref{lem:gradWmaindoc}, we have
$$
W(x_0)^{q-1} \geq \frac{1}{ (1+q)^{\frac{1-q}{1+q}}`\deux^{\frac{1-q}{1+q}} |x_0-x^\star|^{2 \frac{1-q}{1+q}}}.
$$
\noindent
In the same way, the middle term in \eqref{eq:first3453} induces that:
$$
\gamma \leq \frac{d}{\troiss \dw^{1-q-2r \mte}} \wedge \frac{d}{\troiss 
(1+q)^{\frac{1-q}{1+q}}`\deux^{\frac{1-q}{1+q}} \dw^{-2r \mte}|x_0-x^\star|^{2 \frac{1-q}{1+q}}
}.
$$
}

Let us now consider \eqref{eq:first3452} to calibrate $N_\varepsilon$. We identify again the largest element (in terms of $t_N$) and choose $t_N$ such that:
$$
t_N \ge \varepsilon^{-2} \frac{d' \dw^{2r(1+\mte)}}{\deux^{2(1+\mte)}},
$$
so that the   dependency on $\varepsilon$ is translated by:
$$
N_{\varepsilon} \ge \varepsilon^{-4} \frac{d' d L^2 \dw^{4 r(1+\mte)}}{\deux^{4(1+\mte)}} \vee \varepsilon^{-2}  \frac{d'  \dw^{(2 r)(1+\mte)}(\dw^{1-q} + W(x_0)^{1-q})\troiss }{d \deux^{2(1+\mte)}} \vee \varepsilon^{-2} \frac{\dw^{1-q} \troiss}{d L^2}.
$$
The two other terms of \eqref{eq:first3452} must also be upper bounded by $\varepsilon^2$. Using the constraint on $\gamma$ induced by \eqref{eq:gamma_principale} and $t_N = N_{\varepsilon} \gamma$, we verify that:
$$
 \frac{d \gamma^2}{t_N} \leq \varepsilon^2 \frac{\deux^{2(1+\mte)}}{L^2 \dw^{2r(1+\mte)} N_{\varepsilon}},
$$
so that $N_{\varepsilon}$ must satisfy:
$$
N_{\varepsilon} \ge \frac{\deux^{2(1+\mte)}}{L^2 \dw^{2r(1+\mte)} }.
$$
Finally, the last term involved in \eqref{eq:first3452} is automatically upper bounded by $\varepsilon^2$ since $t_N \ge 1$ induces the constraint
$$ t_N \ge \varepsilon^2 \frac{1}{d' \deux \dw^{(1+r)(1+\mte)}}.$$ \\

For Theorem \ref{cor:thmPoisson} $(i.b)$, we remark that 
$\dw \leq c (\log d) d^{\frac{1}{1+q-r}}$ where $c$ depends on $\mte,r,q,\deux,\troiss$ and $L$ but not on $\varepsilon$ and $d$. Plugging this estimate of $\dw$ into \eqref{eq:first3452} and \eqref{eq:first3453} leads to 
\begin{align*}
 \E\Big|\frac{1}{t_N} \sum_{k=1}^N \gamma_k {f}(\bX_{t_{k-1}}) -\pi(f)\Big|^2\leq c  d^{\tilde{\mte}} \max\left( \frac{d^{\frac{1+3r}{1+q-r}}}{t_N^2},\frac{d' d^{\frac{2r}{1+q-r}}}{t_N},d^{\frac{1-q+2r}{1+q-r}}\gamma^2,  d^{1+\frac{2r}{1+q-r}} {\gamma}\right),
\end{align*}
where $\tilde{\mte}$ denotes an arbitrary small positive number. The result easily follows using that the first term is smaller than the second one under the condition  $\varepsilon\le d' d^{-\frac{1-r}{2(1+q-r)}}$.\\

%
%
%
%
%

%
  

\noindent \underline{\textit{Step 4b: End of Proof of Theorem \ref{cor:thmPoisson} $(ii.a)$}:}
 The proof is given for any choice of $q$ and $r$ but the statement is valid only when $r=q$.
It is possible to exploit the centering of $\Delta_{\un{s}s}$. We decompose into two parts and we decompose $A_{t_N}^{(2,2)}$ as follows:
\begin{equation}\label{def:atn22}
\begin{split}
A_{t_N}^{(2,2)}&=\overbrace{\frac{1}{t_N} \int_{0}^{t_N} (Dg(\bX_{{s}})-Dg(\tX_{\un{s}}))[\nabla W(\bX_{\un{s}}+\Delta_{\un{s}s})-\nabla W(\bX_{\un{s}})] ds}^{:=\circled{1}}\\
&+\underbrace{\frac{1}{t_N} \int_{0}^{t_N} Dg(\tX_{\un{s}})[\nabla W(\bX_{\un{s}}+\Delta_{\un{s}s})-\nabla W(\bX_{\un{s}})] ds,}_{:=\circled{2}}
\end{split}
\end{equation}
where $\tX_{\un{s}}=\bX_{\un{s}}-(s-\un{s})\nabdob(\bX_{\un{s}})$. For the first term, we use  the Jensen inequality and Proposition \ref{prop:fundamentalbounds2} $iii-b)$. We obtain that:
\begin{equation}\label{eq:ocircqun}
\E_{{x_0}}[|\circled{1}|^2]\lesssim_{uc}  \mathfrak{c}_\mte (L \tilde{L} \deux^{-2(1+\mte)})^2 d^2 \Upsilon^{4r(1+\mte)} \frac{1}{t_N} \int_{0}^{t_N}(s-\un{s})^2
\text{d}s = \mathfrak{c}_\mte  (L \tilde{L} \deux^{-2(1+\mte)})^2 d^2 \Upsilon^{4r(1+\mte)}\frac{\gamma^2}{3}
\end{equation}

Let us finally consider $\circled{2}$. 
We introduce the martingale $(M_t)_{t \ge 0}$ defined as:
$$
M_t = \int_{0}^t \hesdob(\bX_{\un{u}} + \Delta_{\un{u}u}) \text{d}B_u.
$$
Using the {It\^o} formula, we obtain that:

\begin{align*}
\nabdob(\bX_{\un{s}}+\Delta_{\un{s}s})-\nabdob (\bX_{\un{s}}) &= \int_{\un{s}}^{s} \hesdob(\bX_{\un{s}} + \Delta_{\un{s}u}) \text{d}B_u + \int_{\un{s}}^s \vec{\Delta}(\nabdob)(\bX_{\un{s}}+ \Delta_{\un{s}u}) \text{d}u \\
& = M_s-M_{\un{s}} + \int_{\un{s}}^s \vec{\Delta}(\nabdob)(\bX_{\un{s}}+ \Delta_{\un{s}u}) \text{d}u.
\end{align*}
where for a vector field $\phi:\ER^d\mapsto\ER^d$, $\vec{\Delta} \phi=(\Delta \phi_1,\ldots,\Delta \phi_d)^T$ with $\Delta$ standing for the Laplacian operator.
We use this decomposition into $\circled{2}$ and observe that:
\begin{align*}
\circled{2} &=\frac{1}{t_N} \int_{0}^{t_N} Dg(\tilde{X}_{\un{s}}) \left( M_s-M_{\un{s}}\right) \text{d}s +\frac{1}{t_N} \int_{0}^{t_N} Dg(\tilde{X}_{\un{s}})
 \int_{\un{s}}^{s} \vec{\Delta}(\nabdob)(\bX_{\un{s}} + 
\Delta_{\un{s}u}) \text{d}u \text{d}s.
\end{align*}
The Young inequality yields:
\begin{equation}\label{def2a2b}
\E[|\circled{2}|^2] \leq 2 \underbrace{\E \left[ \left|\frac{1}{t_N} \int_{0}^{t_N} Dg(\tilde{X}_{\un{s}}) \left( M_s-M_{\un{s}}\right) \text{d}s  \right|^2\right]}_{:=\circled{2a}}+
2 \underbrace{\E \left[ \left|\frac{1}{t_N} \int_{0}^{t_N} Dg(\tilde{X}_{\un{s}}) \int_{\un{s}}^{s} \vec{\Delta}(\nabdob)(\bX_{\un{s}} + 
\Delta_{\un{s}u}) \text{d}u\text{d}s \right|^2\right]}_{:=\circled{2b}}
\end{equation}
%
%
%
We first deal with \circled{2a} and use the martingale decomposition, we obtain that:
\begin{equation}
\circled{2a} = 
\E\Big[ \Big| \frac{1}{t_N} \sum_{k=1}^{N}  \underbrace{\int_{t_{k-1}}^{t_k}
Dg(\tilde{X}_{t_{k-1}}) (M_s-M_{t_{k-1}}) \text{d}s}_{:=\Delta N_k}  \Big|^2 \Big] = \frac{1}{t_N^2}  \sum_{k=1}^N \E[\E[|\Delta N_k|^2 \, \vert \mathcal{F}_{t_{k-1}}]],\label{eq:decomposition_2a}
\end{equation}
where we used that $\E[\Delta N_k \, \vert \mathcal{F}_{t_{k-1}}]=0$.  Standard computations show that:
\begin{align*}
\E[|\Delta N_k|^2 \, \vert \mathcal{F}_{t_{k-1}}]&=
\E \left[ \int_{t_{k-1}}^{t_k} \int_{t_{k-1}}^{t_k}  
\{Dg(\tilde{X}_{t_{k-1}}) (M_v-M_{t_{k-1}})\}^T Dg(\tilde{X}_{t_{k-1}}) (M_u-M_{t_{k-1}}) \text{d}u \text{d}v  \, \vert \mathcal{F}_{t_{k-1}}\right]\\
& =  \int_{t_{k-1}}^{t_k} \int_{t_{k-1}}^{t_k}  
{\rm Tr}\left( \E \left[
Dg(\tilde{X}_{t_{k-1}})^T Dg(\tilde{X}_{t_{k-1}}) (M_u-M_{t_{k-1}}) (M_v-M_{t_{k-1}})^T\, \vert \mathcal{F}_{t_{k-1}}\right] \right) \text{d}u \text{d}v \\
 & = 2 \int_{t_{k-1}}^{t_k} \int_{t_{k-1}}^{u}  
{\rm Tr}\left(
Dg(\tilde{X}_{t_{k-1}})^T Dg(\tilde{X}_{t_{k-1}}) 
 \E \left[(M_u-M_{t_{k-1}}) (M_v-M_{t_{k-1}})^T\, \vert \mathcal{F}_{t_{k-1}}\right] \right)  \text{d}v \text{d}u.
\end{align*}
The last conditional expectation deserves a specific study: for any pair $(i,j)\in \{1,\ldots,d\}^2$ and any $t_{k-1}\leq v \leq u \leq t_{k}$,  we have:
\begin{align*}
\E  \big(\big[ \big((M_u-M_{t_{k-1}})& (M_v-M_{t_{k-1}})^T\big)_{i,j} \, \vert \mathcal{F}_{t_{k-1}} \big] \big) = 
\E \left(\left[ (M_u^{(i)}-M^{(i)}_{t_{k-1}}) (M^{(j)}_v-M^{(j)}_{t_{k-1}}) \, \vert \mathcal{F}_{t_{k-1}} \right] \right)\\
 &=  \int_{t_{k-1}}^{ v} \sum_{\ell=1}^d \E[ \nabla^2 W^{i,\ell}(\bar{X}_{t_{k-1}}+\Delta_{t_{k-1},s})\nabla^2 W^{j,\ell}(\bar{X}_{t_{k-1}}+\Delta_{t_{k-1},s}) \, \vert \mathcal{F}_{t_{k-1}}]\text{d}s,
\end{align*}
where the previous equality comes from the independent Brownian increments coordinates per coordinates and from $u \wedge v=v$. This induces the matricial equality:
$$
\E \left[  (M_u-M_{t_{k-1}}) (M_v-M_{t_{k-1}})^T  \, \vert \mathcal{F}_{t_{k-1}} \right] =
\int_{t_{k-1}}^{ v}\E[ \nabla^2 W(\bar{X}_{t_{k-1}}+\Delta_{t_{k-1},s})\nabla^2 W(\bar{X}_{t_{k-1}}+\Delta_{t_{k-1},s})^T \, \vert \mathcal{F}_{t_{k-1}}]\text{d}s.
$$
Noting that 
$$
{\rm Tr}\left(
Dg(\tilde{X}_{t_{k-1}})^T Dg(\tilde{X}_{t_{k-1}}) \right) 
=\|  Dg(\tilde{X}_{t_{k-1}})\|_{F}^2,
$$
we then deduce that:
\begin{equation}
\E[|\Delta N_k|^2 \, \vert \mathcal{F}_{t_{k-1}}]\nonumber\\
  = 2 \E \left[ \int_{t_{k-1}}^{t_k} \int_{t_{k-1}}^{u}  \int_{t_{k-1}}^{v}  \|  Dg(\tilde{X}_{t_{k-1}}) 
   \nabla^2 W(\bar{X}_{t_{k-1}}+\Delta_{t_{k-1},s})\|_{F}^2 \text{d}s \text{d}v \text{d}u
 \, \vert \mathcal{F}_{t_{k-1}}
 \right].\label{eq:Delta_Nk_conditionnel}
\end{equation}
Plugging Equation \eqref{eq:Delta_Nk_conditionnel} into Equation \eqref{eq:decomposition_2a}, we deduce that:
\begin{equation}\label{def2aa}
\circled{2a} = \frac{2}{t_N^2} \sum_{k=1}^N   \int_{t_{k-1}}^{t_k} \int_{t_{k-1}}^{u}  \int_{t_{k-1}}^{v}  \E \left[\|  Dg(\tilde{X}_{t_{k-1}}) 
   \nabla^2 W(\bar{X}_{t_{k-1}}+\Delta_{t_{k-1},s})\|_{F}^2\right] \text{d}s \text{d}v \text{d}u.
\end{equation}

We then use the relationship between the spectral and the Frobenius norm and the fact that the spectral norm is {an operator} norm, we observe that for any $k$ and $s \in [t_{k-1},t_k]$:
\begin{align}
\|Dg(\tX_{t_{k-1}})\hesdob(\bX_{t_{k-1}}+\Delta_{t_{k-1},s})\|_F^2 & \leq
d' \nsp{Dg(\tX_{t_{k-1}})\hesdob(\bX_{t_{k-1}}+\Delta_{t_{k-1},s})}^2\nonumber \\
& \leq d' \nsp{\hesdob(\bX_{t_{k-1}}+\Delta_{t_{k-1},s})}^2  \nsp{Dg(\tX_{t_{k-1}})}^2\nonumber\\
& \leq d' L^2 \ \nsp{Dg(\tX_{t_{k-1}})}^2,\label{defdfll}
\end{align}
where we use that $\nabla W$ is a L-Lipschitz function. We then use Proposition \ref{prop:fundamentalbounds2} $i-b)$, a slight adaptation of Proposition \ref{lem:expbounds22} to get the same bounds for $\E[\nsp{Dg(\tX_{\un{t}})}^2]$ as for $\E[\nsp{Dg(\bX_{\un{t}})}^2]$. We conclude that:

\begin{equation}
\circled{2a}  \lesssim_{uc} \frac{c_{\mte} L^2 \deux^{-2r(1+\mte)} d\dw^{2r(1+\mte)}  }{t_N^2} \sum_{k=1}^{N} \int_{t_{k-1}}^{t_k} \int_{t_{k-1}}^u \int_{t_{k-1}}^v \text{d}s \text{d}v \text{d}u =  c_{\mte} L^2 \deux^{-2(1+\mte)} d'\dw^{2r(1+\mte)}\frac{\gamma}{N}
\label{eq:terme1}
\end{equation}

Let us finally consider $\circled{2b}$. By Jensen inequality, 
\begin{align}
\circled{2b} & \leq \E\left[\frac{1}{t_N} \int_{0}^{t_{N}}  \left|  Dg(\tilde{X}_{\un{s}}) 
\int_{\un{s}}^s  \vec{\Delta}(\nabdob)(\bX_{\un{s}}+ \Delta_{\un{s}u}) \text{d}u \right|^2 \text{d}s\right]\nonumber\\
& \leq \E\left[\frac{1}{t_N} 
\int_{0}^{t_{N}} \nsp{Dg(\tilde{X}_{\un{s}})}^2 \|\vec{\Delta}(\nabdob)\|_{2,\infty}^2 (s-\un{s})^2 
\text{d}s\right] \nonumber\\
& \leq \frac{ \|\vec{\Delta}(\nabdob)\|_{2,\infty}^2 }{t_N} \int_{0}^{t_N}
\E[ \nsp{Dg(\tilde{X}_{\un{s}})}^2](s-\un{s})^2 \text{d}s\label{eq:terme2exact}\\
& \lesssim_{uc}  \|\vec{\Delta}(\nabdob)\|_{2,\infty}^2 \gamma^2 \deux^{-2(1+\mte)} \dw^{2r(1+\mte)},\label{eq:terme2}
\end{align}
where the last inequality follows from Proposition \ref{prop:fundamentalbounds2}. \\

\noindent By \eqref{eq:ocircqun}, \eqref{eq:terme1} and \eqref{eq:terme2}, we are now able to give a new bound for $A_{t_N}^{(2,2)}$:

\begin{align*}
 \E\left|A_{t_N}^{(2,2)}\right|^2&\lesssim_{uc}  \mathfrak{c}_\mte  \deux^{-2(1+\mte)} \gamma^2 \max\left(  \deux^{-2(1+\mte)}(L\tilde{L})^2 d^2 \Upsilon^{4r(1+\mte)},\|\vec{\Delta}(\nabdob)\|_{2,\infty}^2  \dw^{2r(1+\mte)}\right)\\
 &+c_{\mte}  \deux^{-2(1+\mte)} d'\dw^{2r(1+\mte)}\frac{(L\gamma)^2}{t_N}.
\end{align*}



{Collecting the bounds obtained for $A_{t_N}^{(0)}$, $A_{t_N}^{(1)}$, $A_{t_N}^{(2,1)}$, $A_{t_N}^{(2,2)}$ and $A_{t_N}^{(3)}$,
we get (using that $L\gamma\le 1$):
\begin{align}
& \E\Big|\frac{1}{t_N} \sum_{k=1}^N \gamma_k {\id}(\bX_{t_{k-1}}) -\pi(\id)\Big|^2\lesssim_{uc} c_\mte \max\left( \deux^{-3-2\mte}  \frac{\Upsilon^{(1+3r)(1+\mte)}}{t_N^2},\deux^{-2(1+\mte)} \frac{d' \Upsilon^{2r(1+\mte)}}{t_N} \right) \label{eq:first3454}\\
 &+
c_\mte  \deux^{-2(1+\mte)} \gamma^2 \max\left( L^2\troiss  {\dw^{(1-q+2r)(1+\mte)}} , (\deux^{-(1+\mte)}L\tilde{L} d)^2 \Upsilon^{4r(1+\mte)},\|\vec{\Delta}(\nabdob)\|_{2,\infty}^2  \dw^{2r(1+\mte)}\right).\label{eq:first3455}
\end{align}
}
In order to ensure that $\eqref{eq:first3455}\le c_{\mte,r,q}\varepsilon^2$, we fix $\gamma_\varepsilon= c_{2,1} \varepsilon$ with 
$$ c_{2,1}=  \deux^{1+\mte}\min\left(L^{-1}\troiss^{-\frac{1}{2}}  {\dw^{-\frac{1-q+2r}{2}(1+\mte)}} , \deux^{1+\mte}(L\tilde{L})^{-1} d^{-1} \Upsilon^{-2r(1+\mte)},\|\vec{\Delta}(\nabdob)\|_{2,\infty}^{-1}  \dw^{-r(1+\mte)}\right).$$
Then, the condition on $N_\varepsilon$ comes from \eqref{eq:first3454} with exactly the same form as in $(i.b)$ (since the right-hand side of \eqref{eq:first3454} is the same as in 
\eqref{eq:first3452}). This concludes the proof of $(ii.a)$.\\

{For $(ii.b)$, we use that $\dw\le c (\log d) d^{\frac{1}{1+q-r}}$ where $c$ does not depend on $d$ so that the previous bound leads to:  }
\begin{align*}
 \E\Big|\frac{1}{t_N} \sum_{k=1}^N \gamma_k {\id}(\bX_{t_{k-1}}) -\pi(\id)\Big|^2\leq c  d^{\tilde{\mte}} \max\left( \frac{d^{\frac{1+3r}{1+q-r}}}{t_N^2},\frac{d'd^{\frac{2r}{1+q-r}}}{t_N},\gamma^2 d^{2+\frac{4r}{1+q-r}},  \gamma^2 d^{2\rho+\frac{2r}{1+q-r}}\right),
\end{align*}
where $\tilde{\mte}$ denotes an arbitrary small positive number. 
Setting $\gamma_\varepsilon=\varepsilon d^{-\max(1+\frac{2r}{1+q-r},\rho+\frac{r}{1+q-r})-\tilde{\mte}}$ allows to control the two last terms. Then, one sets $N_\varepsilon=d'd^{\frac{2r}{1+q-r}}\gamma_{\varepsilon}^{-1}\varepsilon^{-2}$ in order to control the second term and finally checks that the first one is also controlled by $c\varepsilon^2$ under this condition  and the fact that $\varepsilon\le d' d^{-\frac{1-r}{2(1+q-r)}}$ (where $c$ does not depend on $d$). 
\end{proof}

 \subsection{Bayesian learning with discrete LMC - weakly convex case - Theorem \ref{theo:learning_weak_convex}}
\label{sec:learning_weak}


\begin{proof}[Proof of Theorem \ref{theo:learning_weak_convex}]

We know that for any $\xi$, $U(\xi,.)$ satifies $\AL$. It implies that $W_n$ is $nL$-smooth.
 Since  $U(\xi,.)$ satisfies $\Hcu$, a direct computation shows that $W_n$ satisfies $\Hcu$ with $q=0$ and $\troiss=nL$.
In the meantime, Assumption $\Hcu$ on each $U(\xi,.)$ implies that:
\begin{align*}
\underline{\lambda}(\nabla^2 W_n) &= \underline{\lambda} \left( \sum_{i=1}^n {\nabla^2 }U(\xi_i,.)\right) \\
& \ge \sum_{i=1}^n \underline{\lambda}(\nabla^2 U(\xi_i,.))\\
& \ge \sum_{i=1}^n \deux U(\xi_i,.)^{-r} = n \left(\frac{1}{n} \sum_{i=1}^n \deux U(\xi_i,.)^{-r}\right)\\ 
& \ge \{\deux n^{1-r}\} W_n^{-r}(\mathbf{\xi^n},x),
\end{align*}
where we applied the Jensen inequality to the convex function $u \mapsto u^{-r}$. Thus $\Hcu$ holds with the pair $(\tilde{\deux}=\deux n^{1-r},r)$ and  $(\tilde{\troiss}=nL,0)$. By Proposition \ref{lem:expbounds22}, we thus deduce that
$$\sup_{t\ge0} \E_x[\dob_n ^p (X_{t})]+\sup_{t\ge0} \E_x[\dob_n ^p (\bX_{t})]\le c_p \left(  \dob_n^p (x)+\dw^p_n\right),$$
with 
$$
\dw_n =c_r (n L) (\deux n^{1-r})^{\frac{-1}{1-r}} d  \log(1+dnL) =c_{r,L,\deux,\mte} d n^{\mte}
$$
where $\mte$ is an arbitrary small number and $c_{r,L,\deux,\mte}$ is a constant depending only on $r$, $L$, $\deux$ and $\mte$.
We are now able to apply Theorem \ref{cor:thmPoisson}(i.a) with $\Upsilon=\Upsilon_n$. We observe that $\gamma_{\varepsilon_n}$ must be chosen such that
$$
\gamma_{\varepsilon_n} = \frac{n^{2(1-r)}}{d n^2 d^{2r(1+\mte)}}\varepsilon_n^2 \wedge \frac{d}{n d^{1-q-2 r \mte}} \wedge \frac{d}{n d^{-2 r \mte} (d n)^{\frac{1-q}{1+q}}} \wedge \frac{d^{2r+q-1} d n^2}{n n^{2(1-r)}}
$$
For $n$ large enough, the minimum is then related to
$$
\gamma_{\varepsilon_n} =n^{-(2r+\alpha^{-1})} d^{\alpha^{-1}-1-2r} \wedge \frac{d^{q+2 r \mte}}{n} \wedge \frac{d^{\frac{2q}{1+q}+2 r \mte}}{n^{\frac{2}{1+q}}} \wedge \frac{d^{2r+q}}{n^{1-2r}}.
$$ 
We then verify that when $n \ge d$, we have:
$$
\forall q \in [0,1] \qquad 
\frac{d^{\frac{2q}{1+q}+2 r \mte}}{n^{\frac{2}{1+q}}} \leq \frac{d^{q+2 r \mte}}{n} \leq \frac{d^{2r+q}}{n^{1-2r}},
$$
so that
$$
\gamma_{\varepsilon_n} = n^{-(2r+\alpha^{-1})} d^{\alpha^{-1}-1-2r}
\wedge \frac{d^{\frac{2q}{1+q}+2 r \mte}}{n^{\frac{2}{1+q}}}.
$$
 We observe that 
$$
N_{\varepsilon_n} = \gamma_{\varepsilon_n}^{-1} \left[ 
\underbrace{d^{\frac{1+3r}{2}} n^{-\frac{3}{2}(1-r)} \left(\frac{n}{d}\right)^{\frac{1}{2\ic}}}_{:=A} \vee \underbrace{d^{2r} n^{-2(1-r)} \left(\frac{n}{d}\right)^{\frac{1}{\ic}}}_{:=B}
\right] 
$$
The study is then made explicit by considering some specific cases.
\begin{itemize}
\item If $r \ge 1$, we observe that $B \ge A$ for all values of $t \in [0,1)$. In that case we also verify that 
$
\gamma_{\varepsilon_n}^{-1} = n^{2r+\alpha^{-1}} d^{2r+1-\alpha^{-1}}.
$
The overall cost of computation is then given by
$$
N_{\varepsilon_n} = n^{2 \ic^{-1}+4r} d^{1+4r-2\ic^{-1}}.
$$
\item If $r=q=0$, since $d<n$, we verify that:
$$
\gamma_{\varepsilon_n}^{-1} = n^{\ic^{-1}}d^{1-\ic^{-1}} \vee n^2 = n^{2}
$$
and 
$$
N_{\varepsilon_n} = n^{\frac{1}{2}(1+\ic^{-1})} d^{\frac{1}{2}(1-\ic^{-1})}
$$

\end{itemize}
\end{proof}

\section{Discretization of the Langevin procedure - Strongly convex case}\label{sec:discretisation}

We adapt the proof of Theorem \ref{cor:thmPoisson} to the strongly convex case and to this end, start with an adaptation of Proposition \ref{prop:fundamentalbounds2}:
\begin{prop}\label{prop:bisbis} Assume $\SC$. {Let $f:\ER^d\rightarrow\ER^{d'}$ be a Lispchitz ${\cal C}^2$-function with $\flip\le 1$ and $\dflip\le 1$}.    Then, $g$ is  a ${\cal C}^2$-function and for every $\mte\in(0,1)$, a constant  $c_\mte$ exists (which only depends on $\mte$ and not on $d$), such that for any $x$:
\begin{enumerate}

\item[$i)$]

{$\nsp{Dg(x)}\le \frac{1}{\rho}.$}
\item[$ii)$] Let $\gamma \in (0,\rho/(2L^2)]$. Then,
$$\sup_{t\ge0} \E_{x_0}[| g(\bX_t)-g(x_0)|^2]\le \frac{4}{\rho^2} \left(|x_0-x^\star|^2+ \frac{d}{\rho}\right).$$
\item[$iii)$] Assume that $\hesdob$ is $\tilde{L}$-Lipschitz for the norm $\nsp{.}$. Then, {$x\mapsto Dg(x)$ is Lipschitz and its related Lipschitz constant 
$[Dg]_{1,\star}$ satisfies:}
$$[Dg]_{1,\star}=\sup_{x\neq y} \frac{\nsp{Dg(y)-Dg(x)}}{|y-x|}\le 
\left( {\frac{2 \tilde{L} }{\rho^2}} + {\frac{1}{2 \rho}}\right).$$
\end{enumerate}
\end{prop}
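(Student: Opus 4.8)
The plan is to transfer the classical synchronous-coupling estimates for strongly convex potentials to the solution $g$ of the Poisson equation, using the representation $g(x) = \int_0^{+\infty}[\pi(f) - P_s f(x)]\,\mathrm ds$ from Proposition \ref{theo:poisson}. The crucial input is the contraction property of the semigroup: under $\SC$, if $X_t^x$ and $X_t^y$ denote two solutions of \eqref{eq:EDS} driven by the same Brownian motion and started at $x$ and $y$, then $\frac{\mathrm d}{\mathrm dt}|X_t^x - X_t^y|^2 = -2\langle \nabla W(X_t^x) - \nabla W(X_t^y), X_t^x - X_t^y\rangle \le -2\rho |X_t^x - X_t^y|^2$, hence $|X_t^x - X_t^y| \le e^{-\rho t}|x-y|$ almost surely. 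This exponential contraction is what replaces the $\Hcu$-based moment bounds of Proposition \ref{lem:expbounds22} and Proposition \ref{prop:fundamentalbounds2} and makes all the constants dimension-free up to the explicit $\rho,L,\tilde L$ dependence.

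For $i)$: differentiating the representation, $Dg(x) = -\int_0^{+\infty} D(P_sf)(x)\,\mathrm ds$, and $D(P_sf)(x)h = \mathbb E[Df(X_s^x)\,\partial_h X_s^x]$ where $\partial_h X_s^x$ is the derivative flow, which by the same Grönwall argument satisfies $|\partial_h X_s^x| \le e^{-\rho s}|h|$. Since $\flip \le 1$, i.e. $\nsp{Df}\le 1$, we get $\nsp{D(P_sf)(x)} \le e^{-\rho s}$ and integrating in $s$ yields $\nsp{Dg(x)} \le \rho^{-1}$. For $iii)$: one differentiates once more; the second derivative flow $\partial^2_{h,h}X_s^x$ satisfies a linear ODE with the drift $-\nabla^2 W(X_s^x)$ and a source term involving $D^3W$ contracted against $(\partial_h X^x)^{\otimes 2}$. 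Using $\SC$ for the contraction and $\|D^3 W\| \le \tilde L$ (the $\tilde L$-Lipschitz continuity of $\nabla^2 W$ for $\nsp\cdot$) for the source, Grönwall gives $|\partial^2_{h,h}X_s^x| \lesssim \tilde L \rho^{-1} e^{-\rho s}|h|^2$ (after absorbing the quadratic-in-time factor). Combining with $\flip\le 1$ and $\dflip\le 1$ (so $\nsp{Df}\le 1$ and the Jacobian $Df$ is $1$-Lipschitz), one bounds $[D(P_sf)]_{1,\star}$ by a sum of a term $\le e^{-2\rho s}$ (from the $D^2f$ contribution, two flow factors each $e^{-\rho s}$) and a term $\lesssim \tilde L\rho^{-1}e^{-\rho s}$ (from the second-order flow), then integrates in $s$ to obtain $[Dg]_{1,\star}\le 2\tilde L\rho^{-2} + \tfrac1{2\rho}$, matching the stated constant.

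For $ii)$: this bounds $\sup_{t\ge0}\mathbb E_{x_0}[|g(\bX_t) - g(x_0)|^2]$ along the Euler scheme $\bX$ with step $\gamma \le \rho/(2L^2)$. By $i)$, $g$ is $\rho^{-1}$-Lipschitz, so $|g(\bX_t) - g(x_0)|^2 \le \rho^{-2}|\bX_t - x_0|^2 \le 2\rho^{-2}(|\bX_t - x^\star|^2 + |x_0 - x^\star|^2)$. It then remains to show $\sup_{t\ge0}\mathbb E_{x_0}[|\bX_t - x^\star|^2] \le |x_0 - x^\star|^2 + d/\rho$ (or a comparable bound giving the factor stated), which is a standard one-step contraction computation for the strongly convex Euler scheme: writing $\bX_{t_{k+1}} - x^\star = \bX_{t_k} - x^\star - \gamma(\nabla W(\bX_{t_k}) - \nabla W(x^\star)) + \sqrt{2\gamma}\,\zeta_{k+1}$, expanding the square, using $\SC$ and $L$-smoothness to get $\mathbb E|\bX_{t_{k+1}} - x^\star|^2 \le (1-\rho\gamma)\mathbb E|\bX_{t_k} - x^\star|^2 + 2\gamma d$ for $\gamma \le \rho/(2L^2)$, and summing the geometric series to get $\sup_k \mathbb E|\bX_{t_k} - x^\star|^2 \le |x_0-x^\star|^2 + 2d/\rho$. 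Plugging back gives the claimed $\tfrac{4}{\rho^2}(|x_0-x^\star|^2 + d/\rho)$.

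The main obstacle is the second-order differentiation of the semigroup needed for $iii)$: one must carefully control the second variation flow of the SDE, keeping track of the source term $D^3W(X_s)[\partial_h X_s, \partial_h X_s]$ and the polynomial-in-$s$ growth it produces before it is killed by the $e^{-2\rho s}$ contraction, and verify that the resulting integral constant is genuinely $2\tilde L\rho^{-2} + \tfrac1{2\rho}$ rather than something larger; the rest is routine Grönwall bookkeeping and a standard Euler-scheme moment estimate.
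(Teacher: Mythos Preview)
Your plan is correct and essentially matches the paper's proof for parts $i)$ and $ii)$: the paper likewise obtains $\nsp{Y_t^x}\le e^{-\rho t}$ from Lemma~\ref{lem:pathconttang}$(i)$ and integrates, then for $ii)$ uses the $\rho^{-1}$-Lipschitz bound on $g$ together with the Euler-scheme moment estimate (which the paper cites as Lemma~5.1 of \cite{egea-panloup} rather than rederiving).

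For part $iii)$ your route differs slightly. The paper does \emph{not} introduce the second variation process $\partial^2_{h,h}X_s^x$; instead it works at the finite-difference level, bounding $\nsp{Y_t^y-Y_t^x}$ directly via Lemma~\ref{lem:pathconttang}$(ii)$, which under $\SC$ gives $\nsp{Y_t^y-Y_t^x}\le \tilde L\rho^{-1}e^{-\rho t/2}|x-y|$, and then uses the decomposition $Df(X_t^y)Y_t^y-Df(X_t^x)Y_t^x=Df(X_t^y)(Y_t^y-Y_t^x)+(Df(X_t^y)-Df(X_t^x))Y_t^x$. Integrating the two pieces yields exactly $2\tilde L\rho^{-2}$ and $\tfrac1{2\rho}$. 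Your infinitesimal approach via the second-order flow is equivalent in spirit and gives the same (or slightly better) constants, but the finite-difference route is cleaner here: it directly produces the Lipschitz ratio $\nsp{Dg(y)-Dg(x)}/|y-x|$ without having to pass from a bound on $D^2g(x)[h,h]$ to the operator-norm Lipschitz constant $[Dg]_{1,\star}$, and it avoids the bookkeeping of the polynomial-in-$s$ source term you flagged as the main obstacle.
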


\begin{proof}[Proof of Theorem  \ref{cor:thmPoissonSC}]
(i) We follow the proof of Theorem  \ref{cor:thmPoisson}(i). Keeping the notations of the proof of Theorem  \ref{cor:thmPoisson},  we first deduce from $(ii)$ that
$$\E_{x_0}\left|A_{t_N}^{(0)}\right|^2\le \frac{4}{\rho^2 t_N^2} \left(|x_0-x^\star|^2+ \frac{d}{\rho}\right)\le \frac{8d}{\rho^3 t_N^2}$$
since $|x_0-x^\star|^2\le d/\rho$.


Second, by the deterministic bound $i)$ of Proposition \ref{prop:bisbis}: $\nsp{Dg(x)} \le \rho^{-1}$  and the arguments given in the proof of Theorem \ref{cor:thmPoisson}, we have:
$$\E_{{x_0}}\left|A_{t_N}^{(1)}\right|^2\le \frac{d'}{\rho^2 t_N}.$$
For $A_{t_N}^{(3)}$, we remark that the only modification comes from the control of 
$\sup \E|\nabdob(\bar{X}_{\un{s}})|^2$.
Since $\nabdob$ is $L$-Lipschitz and $\nabdob(x^\star)=0$,  
$$\E|\nabdob(\bar{X}_{\un{s}})|^2\le L^2 \E[|\bar{X}_{\un{s}}-x^\star|^2].$$
By Lemma 5.1 of \cite{egea-panloup}, we deduce that 
$$\E|\nabdob(\bar{X}_{\un{s}})|^2\le L^2\left(|x_0-x^\star|^2+ {\frac{2d}{\rho}}\right)\le \frac{8L^2d}{\rho},$$
since $|x_0-x^\star|^2\le d/\rho$.
\begin{align*}
\E_{{x_0}}\left|A_{t_N}^{(3)}\right|^2\le \frac{d\gamma^2}{3 t_N}+
{L^2 \gamma^2 \left(|x_0-x^\star|^2+\frac{d}{\rho}\right)}
+10d^2\gamma^2.
\end{align*}

For $A_{t_N}^{(2,1)}$, the fact that $\nsp{Dg(x)}\le \rho^{-1}$  yields:
%
%
\begin{align*}
\E_{{x_0}}[|A_{t_N}^{(2,1)}|^2]&\le \frac{L^2}{\rho^2 t_N} \int_{0}^{ t_N}(s-\un{s})^2 \E_{{x_0}}[|\nabdob (\bX_{\un{s}})|^2] {\rm d}s\\
&\le  \frac{L^2}{\rho^2t_N}\times  { L^2\left(|x_0-x^\star|^2+ {\frac{2d}{\rho}}\right)}\times N \gamma^3 = { 
 \frac{L^4}{\rho^2}\times  \left(|x_0-x^\star|^2+ \frac{2d}{\rho} \right)\times \gamma^2 
}
\end{align*}

Finally, for $A_{t_N}^{(2,2)}$, we deduce from \eqref{eq:firstview} that
\begin{equation*}
\E_{{x_0}}[|A_{t_N}^{(2,2)}|^2]\le \frac{L^2}{\rho^2 t_N} \int_{0}^{t_N} E_{{x_0}}[|\Delta_{\un{s}s}|^2] {\rm{d}}s 
\le \frac{2L^2 d}{\rho^2 t_N} \int_{0}^{t_N} (s-\un{s}) {\rm{d}}s\le \frac{2L^2 d}{\rho^2}\gamma 
.
\end{equation*}
Thus, collecting the previous bounds and $L^2 d / \rho \leq L^4 d / \rho^3$:
\begin{align*}
\E\Big|\frac{1}{t_N} \sum_{k=1}^N \gamma_k {f}(\bX_{t_{k-1}}) -\pi(f)\Big|^2 & \le \frac{d'}{\rho^2 t_N}+\frac{8d}{\rho^3 t_N^2}+\frac{d\gamma^2}{3 t_N}+10d^2\gamma^2+ \frac{8 L^4 d}{\rho^3} \gamma^2+\frac{2L^2 d}{\rho^2}\gamma\\
& + {2 \frac{L^4 }{\rho^2}  \gamma^2 |x_0-x^\star|^2}.
\end{align*}
{The larger term above regarding the size of $\gamma$ is $L^2 \rho^{-2 }d  \gamma$ and we are led to choose $\gamma \leq \frac{\rho^2}{2 L^2 d }\varepsilon^2$. We now verify the size of the other terms.}
Obviously we have:
$$
{d^2 \gamma^2 \leq d^2 \frac{\rho^4}{L^4 d^2} \varepsilon^4  \leq \varepsilon^4}
$$
since $\rho \leq L$.
We  observe that:
$$
{\frac{L^4 d }{\rho^3} \gamma^2 \leq \frac{L^4 d}{\rho^3} \frac{\rho^2 }{L^2 d} \varepsilon^2 \gamma = \varepsilon^2  \left(\frac{L^2}{\rho} \gamma\right)}
$$
At last, the terms involving the initial condition may be upper bounded in the same way:
$$
{2 \frac{L^4 }{\rho^2} \gamma^2 |x_0-x^\star|^2 \leq \varepsilon^2 \times \frac{\rho^2}{2 L^2 d} \left(\gamma |x_0-x^\star|^2 \times 2 \frac{L^4 }{\rho^2} \right) \leq 
\varepsilon^2 \times \frac{\gamma |x_0-x^\star|^2 L^2}{d}.
}
$$
{Using again that $\rho \leq L$, we then deduce that we have to choose $\gamma$ such that:
$$
\gamma \leq \frac{\rho^2}{L^2 d } \varepsilon^2 \wedge \frac{\rho}{L^2}  \wedge \frac{d}{L^2 |x_0-x^\star|^2}.
$$
}
{ Now, fix $N \ge\varepsilon^{-4} \frac{L^2}{\rho^4} d' d $ in such a way that $\frac{d'}{\rho^2 t_N}\le 2\varepsilon^2$.
Then,
$$\frac{8d}{\rho^3 t_N^2}= \frac{d'}{\rho^2 t_N} \times \frac{8 d}{\rho d' t_N}
\le \varepsilon^2 \times \frac{16 d }{\rho  d' t_N} \le 16 \varepsilon^2,$$
under the condition $N \ge \frac{d}{d' \rho} \gamma^{-1}$.
 Finally, since $\gamma\le \frac{\varepsilon^2}{d}$ and $N \ge 1$ it is obvious that $\frac{d\gamma^2}{3 t_N}\le \varepsilon^2/2$. The result follows.}\\

\noindent $(ii)$ As in $(i)$, we follow the proof of Theorem  \ref{cor:thmPoisson}(ii). With the notations of this proof, we check that the only difference between $(i)$ and $(ii)$ comes from the control of $A_{t_N}^{(2,2)}$. Actually, in this part, owing to a further expansion, $\E_{{x_0}}[|A_{t_N}^{(2,2)}|^2]$ is controlled by $\E_{{x_0}}[|\circled{1}|^2]+\E[|\circled{2}|^2]$ where $\circled{1}$ and $\circled{2}$ are defined in \eqref{def:atn22}.
On the one hand, by Proposition \ref{prop:bisbis}(iii), the Jensen inequality and $(a+b)^2 \leq 2 a^2+2b^2$: 
\begin{align*}
\E_{{x_0}}[|\circled{1}|^2]&\le L^2{ \left( \frac{1 }{2 \rho^2}+ \frac{8 \tilde{L}^2}{\rho^4}\right)} \frac{1}{t_N}\int_0^{t_N} \E[ |\Delta_{\un{s}s}|^4] ds\\
&\le  L^2 {\left( \frac{1 }{2 \rho^2}+ \frac{8 \tilde{L}^2}{\rho^4}\right)}(4d^2-d)\frac{1}{t_N} \int_0^{t_N} (s-\un{s})^2 ds\le  \frac{4\gamma^2}{3 } L^2 {\left( \frac{1 }{2 \rho^2}+ \frac{8 \tilde{L}^2}{\rho^4}\right)}.
\end{align*}
For $\E[|\circled{2}|^2]$, we recall that this term is divided into two parts denoted by $\circled{2a}$ and $\circled{2b}$ (see \eqref{def2a2b}. 
On the one hand,  by \eqref{def2aa}, \eqref{defdfll} and Proposition \ref{prop:bisbis}(i),
\begin{equation*}
\circled{2a} \le  \frac{2 d' L^2 }{\rho^2 t_N^2} \sum_{k=1}^N   \int_{t_{k-1}}^{t_k} \int_{t_{k-1}}^{u}  \int_{t_{k-1}}^{v}  \text{d}s \text{d}v \text{d}u\le 
\frac{ d' L^2 \gamma^2}{3\rho^2 t_N}.
\end{equation*}
Finally, for $\circled{2b}$, we derive from \eqref{eq:terme2exact} and Proposition \ref{prop:bisbis}(i) that, 
$$\circled{2b}\le \frac{ \|\vec{\Delta}(\nabdob)\|_{2,\infty}^2 }{\rho^2 t_N} \int_{0}^{t_N}
(s-\un{s})^2 \text{d}s\le \frac{ \|\vec{\Delta}(\nabdob)\|_{2,\infty}^2 \gamma^2}{3 \rho^2 }.$$
Finally, we get
$$\E_{x_0}[|A_{t_N}^{(2,2)}|^2]\le \gamma^2 \left(
L^2 {\left( \frac{d'}{2 \rho^2}+ \frac{8 \tilde{L}^2}{\rho^4}\right)}
+\frac{ \|\vec{\Delta}(\nabdob)\|_{2,\infty}^2 }{3 \rho^2 }\right),$$
where we used that {$\frac{ d' L^2 }{3 \rho^2 t_N}\le
 L^2 \left( \frac{d' }{2 \rho^2}+ \frac{8 \tilde{L}^2}{\rho^4}\right)$} since $t_N \ge 1$.
 Collecting the whole bounds and replacing the one of $i)$ by the one we just obtained, we deduce that:
\begin{align*}
&\E\Big|\frac{1}{t_N} \sum_{k=1}^N \gamma_k {f}(\bX_{t_{k-1}}) -\pi(f)\Big|^2\le \frac{\frac{d'}{\rho^2}+\frac{8d}{\rho^3 t_N}}{t_N}+
{\mathfrak{b}_2}\gamma^2 \\
&\textnormal{where} \quad\mathfrak{b}_2 \lesssim_{uc} d^2+\frac{L^4 d}{\rho^3}+
L^2 {\left( \frac{d' }{\rho^2}+ \frac{\tilde{L}^2}{\rho^4}\right)}
+
\frac{ \|\vec{\Delta}(\nabdob)\|_{2,\infty}^2 }{\rho^2  } 
+{\left(L^2  + \frac{L^4 }{\rho^2} \right)  |x_0-x^\star|^2} 
,
\end{align*}
%
where we used that $\frac{d\gamma^2}{3 t_N}\le \gamma^2 (L^4 d)/\rho^3$ since $\rho\le L$ and that $L\ge 1$.
For a given $\varepsilon$, we thus fix $\gamma_\varepsilon= \mathfrak{b}_2^{-\frac{1}{2}} \varepsilon$ so that ${\mathfrak{b}_2}\gamma^2\le \varepsilon^2$. Then, we fix  
$ N_\varepsilon=\varepsilon^{-3} \frac{\sqrt{\mathfrak{b}_2} d'}{\rho^2} \vee \gamma_{\varepsilon}^{-1} \vee d \gamma_{\varepsilon}^{-1} \rho^{-1}$, that implies that   $\frac{d'}{\rho^2 t_N}\le \varepsilon^2$. 
It remains to observe that $8 d / (\rho t_N) \leq 8$, the result then follows.
\end{proof}

The purpose of this paragraph is finally to prove the complexity bound stated in the strongly convex situation.
 
\begin{proof}[Proof of Theorem \ref{theo:learning_strong_convex}]
We first consider $(i.a)$.
We shall apply the above results  in our Bayesian framework. Given the set of $n$ observations $(\xi_1,\ldots,\xi_n)$, we have:
$$W_n(\mathbf{\xi^n},x) = \sum_{i=1}^n U(\xi_i,x).
$$
Assuming that $U(\xi,.)$ satisfies $\AL$, the triangle inequality shows that $W_n(\mathbf{\xi^n},.)$ satisfies  $\AnL$ regarless the value of $\mathbf{\xi^n}$.
{In} the meantime, assuming that $U(\xi,.)$ satisfies $\SC$ yields 
$  W_n(\mathbf{\xi^n},.)$ is $n \rho$-strongly convex \textit{i.e.} satisfies $(\mathbf{ SC}_{n\rho})$. The result in then straightforward observing that the condition on $\gamma$ is driven by $\gamma_{\varepsilon_n} \leq \frac{d}{n^2 L^2 |\theta_0-\theta^\star|^2} = \cal{O}_{id}(n^{-2})$ and pluging it in the value of $N_{\varepsilon_n}$.

Concerning $(i.b)$, we shall verify that the constraint on $\gamma$ is brought by $A_{t_N}^{(2,1)}$ that should verify $n d \gamma^2 \leq \varepsilon_n^2$. Then the constraint on $N$ is then deduced in the same way.
\end{proof}

\appendix
\section{Concentration results for posterior consistency rate \label{sec:app:concentration}}
In this Appendix, we provide the technical proofs related to the posterior mean concentration, stated in Section \ref{sec:bayes} that essentially relies on $\UPI$.

 Note that, among other settings,
 Assumption $\UPI$ holds true  for any location model since, in this case, the Poincar\'e constant of each distribution $\pi_{\theta}$ is independent of  $\theta$, as indicated by the next proposition:
\begin{prop}\label{prop:poincare_pi_theta}
If $U(\xi,\theta)=U(\xi-\theta)$, for all $\xi$ and $\theta$, then $C_P(\pi_\theta)=C_P(\pi)$ where $C_P(\pi)$ stands for the Poincar\'e constant related to $\pi\, \propto \, e^{-U}$.
\end{prop} 
We also point out that this assumption may be verified in {a more general setting} using the upper bound of \cite{KLS} (see the statement in Theorem \ref{theo:log-concave_poincare}).
Finally, the Poincar\'e inequality is satisfied as soon as the log-concave distribution has a second order moment. We observe here that $\CPU$ may include a dimensional effect even though it is clear that it is not the case for strongly log-concave probability distributions with the help of the Bakry-Emery result (see \cite{BakryEmery}). If we believe in the \textit{Kannan-Lovász-Simonovits conjecture} \cite{KLS}, then the constant $\CPU$ may be considered in our model as independent from the dimension $d$, which entails a correct minimax dependency of the Bayesian strategy with respect to $d$.

\begin{proof}[Proof of Proposition \ref{prop:poincare_pi_theta}]
We consider $\theta \in \R^d$ and $f\in W^{1,2}(\pi_{\theta})$ where $W^{1,2}$ stands for the usual Sobolev space (also denoted $H^1$).  For any $\epsi> 0$, a density argument  proves that a $f_\epsi \in \mathcal{C}_{K}^\infty(\R^d,\R) $, \textit{e.g.}, a compactly supported and infinitely differentiable function, exists such that $\pi_{\theta}((f-f_\epsi)^2) \leq \epsi^2$ and $\pi_{\theta}(|\nabla f-\nabla f_\epsi|^2) \leq \epsi^2$ (see Theorem 9.2 of \cite{MR2759829}).

 We shall remark that if $f^{-\theta}_\epsi: x \longmapsto f_{\epsi}(x+\theta)$, then
$\pi_{\theta}(f_\epsi) = \pi(f^{-\theta}_\epsi).$
The function  $f^{-\theta}_{\epsi}$ is infinitely differentiable and compactly supported, we shall apply the Poincaré inequality with the measure $\pi$:
\begin{equation}\label{eq:poincare_pi}
Var_{\pi}(f^{-\theta}_{\epsi}) \leq C_P(\pi) \pi(|\nabla f^{-\theta}_{\epsi}|^2).
\end{equation}
Now,   a straigthforward change of variable yields:
$$
Var_{\pi_{\theta}}(f_\epsi) = Var_{\pi}(f^{-\theta}_\epsi) \quad \text{ and } \quad  \pi(|\nabla f^{-\theta}_\epsi|^2) = \pi_{\theta}(|\nabla f_\epsi|^2).
$$
We then deduce from the previous equalities and from \eqref{eq:poincare_pi} that
\begin{equation}\label{eq:poincare_feps}
Var_{\pi_{\theta}}(f_\epsi)  \leq  C_P(\pi)  \pi_{\theta}(|\nabla f_\epsi|^2).
\end{equation}
Now, we end the proof with a density argument: the Cauchy-Schwarz inequality shows that
$$
|\pi_{\theta}(f)-\pi_{\theta}(f_\epsi)|\leq \sqrt{\pi_{\theta}((f-f_\epsi)^2)} \leq \epsi, \quad |\pi_{\theta}(f^2)-\pi_{\theta}(f_\epsi^2)| \leq 2 [\pi_{\theta}(f^2)+\pi_{\theta}(f_\epsi^2)] \epsi.
$$
Finally, we can prove that
$
|Var_{\pi_{\theta}}(f_\epsi) - Var_{\pi_{\theta}}(f)| \leq 5 \epsi [\pi_{\theta}(f^2)+ \pi_{\theta}(f_\epsi^2)],
$
and in the meantime $|\pi_{\theta}(|\nabla f_\epsi|^2)-\pi_{\theta}(|\nabla f|^2)| \leq \epsi \sqrt{2 \pi_{\theta}(|\nabla f|^2 + |\nabla f_\epsi|^2})$. We use these last upper bounds in \eqref{eq:poincare_feps}, we obtain since $\epsi$ may be chosen arbitrarily small, that:
$$
Var_{\pi_{\theta}}(f) \leq  C_P(\pi) \pi_{\theta}(|\nabla f|^2),
$$
which ends the proof of the proposition.
\end{proof} 

\begin{proof}[Proof of Proposition \ref{prop:BK}]
The proof is straightforward as soon as we remark that Assumption $\UPI$  implies that each $\pi_\theta$ satisfies a Poincaré inequality with constant $\CPU$. 
Since $f$ satisfies $\|\nabla f\|_{\infty} \leq k$ and for any $\theta$, $U_\theta$ is a convex coercive function, then $U_\theta(x)$ has a linear growth for large values of $x$. Hence, $f \in L^2(\pi_\theta)$ and we 
then  simply apply 
the concentration inequality stated in Corollary 3.2 of \cite{BL}. 
This ends the proof of the proposition.
\end{proof}

\begin{proof}[Proof of Corollary \ref{lem:BK}]
The proof of $i)$ is a straightforward application of Proposition \ref{prop:BK} with $f = \Psi$, which is a $1$-Lipschitz function.  
The proof of $ii)$ is similar. {Since $Z_\theta=1$ for any $\theta$, } a direct integration yields:
$${\mathbb{E}}_{\theta}[ \nabla_{\theta} U(\xi,\theta)] = \int_{\R^d} \nabla_{\theta} U(\xi,\theta) e^{-U(\xi,\theta)} \text{d}\xi=\nabla_{\theta} \int_{\R^d}  e^{-U(\xi,\theta)} \text{d}\xi =\nabla_{\theta} Z_\theta= 0.
$$
We then use a union bound deduced by the triangle inequality: if $Z_i = \nabla_{\theta} U(\xi_i,\theta)$, then:
 \begin{align}
\left\{ \left|\frac{1}{n} \displaystyle\sum_{i=1}^n Z_i\right|  \geq \delta \right\} 
& \subset \bigcup_{j=1}^d  \left\{  \frac{1}{n} \left| \sum_{i=1}^n Z^j_i \right|  \geq \frac{\delta}{\sqrt{d}} \right\}. \label{eq:union}
\end{align}
We now apply Proposition \ref{prop:BK} to each term in the union bound and we get that:
$$
\mathbb{P}_{\theta} \left( \left| \frac{1}{n}  \sum_{i=1}^n Z_i^j \right| \geq \frac{\delta}{\sqrt{d}} \right) \leq 2 e^{- n \frac{\delta^2}{4d L^2 \CPU} \wedge \frac{\delta}{2 L \sqrt{d \CPU}}}.
$$
The bound being independent of $j$, the result {follows} by summing over $j$ in  \eqref{eq:union}
\end{proof}

Finally, we derive the proof of the upper bounds related to the first and second type error of the family of tests $(\phi_n^r)$.

\begin{proof}[Proof of Proposition \ref{prop:test}]
The first upper bound $i)$ follows directly from Lemma 3.2 applied with $\theta^{\star}$. For the second estimation, we consider $\theta$ such that $|\theta-\theta^{\star}| \geq \ran$ and we get:
 \begin{align*}
 \lefteqn{
 \left\{ 
 \left| \frac{\sum_{i=1}^n \Psi(\xi_i)}{n} -\mathbb{E}_{\theta^{\star}}[\Psi(X)] \right| \leq \frac{c(\ran)}{2}\right\}}\\
 & = 
  \left\{ 
 \left| \frac{\sum_{i=1}^n \Psi(\xi_i)}{n} -\mathbb{E}_{\theta}[\Psi(X)] + \mathbb{E}_{\theta}[\Psi(X)] - \mathbb{E}_{\theta^{\star}}[\Psi(X)] \right| \leq \frac{c(\ran)}{2}\right\} \\
 & \subset \left\{ \left|\mathbb{E}_{\theta}[\Psi(X)] - \mathbb{E}_{\theta^{\star}}[\Psi(X)] \right|
- \left| \frac{\sum_{i=1}^n \Psi(\xi_i)}{n} -\mathbb{E}_{\theta}[\Psi(X)] \right|
 \leq \frac{c(\ran)}{2}\right\}\\
 & \subset  \left\{\left| \frac{\sum_{i=1}^n \Psi(\xi_i)}{n} -\mathbb{E}_{\theta}[\Psi(X)] \right|
 \geq c(|\theta-\theta^{\star}|) - \frac{c(\ran)}{2}\right\}\\
 & \subset  \left\{\left| \frac{\sum_{i=1}^n \xi_i}{n} -\mathbb{E}_{\theta}[X] \right|
 \geq   \frac{c(\ran)}{2}\right\}. 
 \end{align*}
 In the previous lines, we used the triangle inequality $|a+b| \geq |a|-|b|$ in the third line, the identifiability property $\IWC$ and the fact that $c$ is an increasing map.
 Applying again $i)$, Corollary 3.2, we obtain an upper bound of the probability of deviations uniform regarding the condition $|\theta -\theta^{\star}| \geq \ran$. Taking the supremum over $\theta$, we then obtain the proof of $ii)$.
\end{proof}

\section{Discretization tools - Weakly convex case \label{app:disc}}

In the first part of this section, we prove some bounds on the moments of the Euler scheme. More precisely, in this weakly convex setting, we first establish some  bounds on the exponential moments and then derive some bounds on the moments by Jensen-type arguments. Note that this part is unfortunately highly technical (and much more involved than in the strongly convex case).\\

\begin{rmq} In the weakly convex setting, it is usual to first consider exponential moments. In some sense, introducing exponential Lyapunov functions is a way to compensate the lack of mean-reverting in this case.
\end{rmq}

\noindent For the proofs, we introduce the following notations.  For any positive $c$ and any $\gamma_0>0$, let  ${\cal C}^K_{c,\gamma_0}$ be defined by:
\begin{equation}\label{eq:compactc}
\begin{split}
&{\cal C}^K_{c,\gamma_0}:=
\{x\in\mathbb{R}^d,  |\nabdob (x)|^2-c d\bar{\lambda}_{{\rm loc}} (\gamma_0,x,K) \le 1\} \quad \text{with,}\\
  \; &\bar{\lambda}_{{\rm loc}} (\gamma_0,x,K):=\sup_{ |u-x| < 
  \gamma_0|\nabdob(x)|+\sqrt{K}} \bar{\lambda}_{\hesdob}(u).
  \end{split}
\end{equation}
We observe that   $\Hcu$ combined with Lemma \ref{lem:gradW} entails the compactness of ${\cal C}^K_{c,\gamma_0}$ for any positive $c$ and $\gamma_0$.
 For every positive $\aaa$, we set:
\begin{equation}\label{def:betalambda}
\beta(\aaa,c,\gamma_0,K):=\sup_{x\in {\cal C}^K_{c,\gamma_0}} e^{\aaa \dob(x)} (1+c d L). 
\end{equation}
In what follows, the size of $\beta(\aaa,c,\gamma_0,K)$  will be of first importance. This is why before going further, we study these quantities under assumption $\Hcu$.
\subsection{Preliminary bounds under $\Hcu$}
\begin{lem}\label{lem:gradW} Assume $\Hcu$. Then, 
$$ \frac{\deux}{1-r}\left( \dob^{1-r}(x)-\dob^{1-r}(x^\star)\right)\le   |\nabla \dob(x)|^2\le \frac{\troiss}{1-q}\left( \dob^{1-q}(x)-\dob^{1-q}(x^\star)\right).$$
Furthermore, 
\begin{equation}\label{eq:asympdob}
\dob^{1+r}(x)-\dob^{1+r}(x^\star)\ge (1+r)\deux|x-x^\star|^2\;\textnormal{and}\;\dob^{1+q}(x)-\dob^{1+q}(x^\star)\le \frac{\troiss(1+q)}{1-q}|x-x^\star|^2.
\end{equation}
\end{lem}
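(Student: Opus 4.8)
\textbf{Proof plan for Lemma \ref{lem:gradW}.}

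The strategy is to integrate the spectral bounds in Assumption $\Hcu$ along the gradient flow, using $\dob$ itself as the natural integrating variable. First I would observe that, since $\dob$ is ${\cal C}^2$ and positive with a bound on $\bar\lambda_{\hesdob}$ that forces $\dob\to+\infty$ (via Lemma \ref{lem:gradW} applied to itself—actually more cleanly, via coercivity inherited from convexity plus $\min\dob\,{=_{uc}}\,1$), the function $\dob$ attains a minimum at some point $x^\star$ with $\nabla\dob(x^\star)=0$. For a fixed $x$, consider the segment $t\mapsto x_t := x^\star + t(x-x^\star)$, $t\in[0,1]$, and set $\varphi(t):=\dob(x_t)$. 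Then $\varphi'(t)=\langle\nabla\dob(x_t),x-x^\star\rangle$ and $\varphi''(t)=\langle\nabla^2\dob(x_t)(x-x^\star),x-x^\star\rangle$, and the $\Hcu$ bounds give
$$\deux\,\varphi(t)^{-r}|x-x^\star|^2\le \varphi''(t)\le\troiss\,\varphi(t)^{-q}|x-x^\star|^2.$$
Multiplying by $\varphi'(t)\ge 0$ (which holds by convexity since $\varphi$ is convex with $\varphi'(0)=0$) and noting $\frac{d}{dt}\big(\varphi^{1+r}\big)=(1+r)\varphi^r\varphi'$ is not quite what appears—rather one uses $\varphi^{-r}\varphi''\ge$ something. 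A cleaner route: integrate $\varphi''(t)\ge\deux\varphi(t)^{-r}|x-x^\star|^2$ against $dt$ directly after first establishing the gradient identity.

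Concretely, for the gradient estimates I would argue along an arbitrary ${\cal C}^1$ path, or simply note the following ODE comparison. Along the segment, $\frac{d}{dt}|\nabla\dob(x_t)|^2 = 2\langle\nabla^2\dob(x_t)\nabla\dob(x_t), x-x^\star\rangle$; this is awkward because $\nabla\dob(x_t)$ need not be parallel to $x-x^\star$. The standard trick is instead to use the \emph{gradient flow} curve or, most economically, to invoke the one-dimensional comparison on $\psi(t):=\dob(x_t)$: from $\deux\psi^{-r}|x-x^\star|^2\le\psi''\le\troiss\psi^{-q}|x-x^\star|^2$ and $\psi'(0)=0$, integrate once to get
$$\frac{\deux}{1-r}\big(\psi(1)^{1-r}-\psi(0)^{1-r}\big)\ \le\ \tfrac12\,\text{(bound on }\psi'(1)^2/|x-x^\star|^2\text{)}\ \le\ \frac{\troiss}{1-q}\big(\psi(1)^{1-q}-\psi(0)^{1-q}\big),$$
by multiplying $\psi''$ by $\psi'$ and integrating ($\int_0^1\psi''\psi'\,dt=\tfrac12\psi'(1)^2$, while $\int_0^1\psi^{-r}\psi'\,dt=\tfrac{1}{1-r}(\psi(1)^{1-r}-\psi(0)^{1-r})$). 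Finally $\psi'(1)=\langle\nabla\dob(x),x-x^\star\rangle\le|\nabla\dob(x)|\,|x-x^\star|$ gives one inequality; for the reverse one uses convexity differently, namely $|\nabla\dob(x)|^2=\langle\nabla\dob(x),\nabla\dob(x)\rangle$ and the monotonicity $\langle\nabla\dob(x)-\nabla\dob(x^\star),x-x^\star\rangle\ge 0$ together with a direct integration of $\frac{d}{ds}\dob(x+s\nabla\dob(x))$ — I would pick the direction $u=\nabla\dob(x)/|\nabla\dob(x)|$ and integrate the second-order Taylor expansion of $s\mapsto\dob(x-s u)$ from $0$ back toward the minimum to extract $|\nabla\dob(x)|^2$ exactly, bounding the Hessian term by $\Hcu$.

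For the second pair of inequalities in \eqref{eq:asympdob}, I would again use $\psi(t)=\dob(x_t)$: from $\psi''(t)\ge\deux\psi(t)^{-r}|x-x^\star|^2$ and convexity ($\psi(t)\ge\psi(0)$, so $\psi^{-r}\ge\psi(1)^{-r}$ when... no—$\psi^{-r}$ is decreasing in $\psi$, so $\psi(t)^{-r}\ge\psi(1)^{-r}$ only if $\psi(t)\le\psi(1)$, which is true along the segment by convexity). Hence $\psi''(t)\ge\deux\,\dob(x)^{-r}|x-x^\star|^2$, integrate twice with $\psi'(0)=0$ to get $\dob(x)-\dob(x^\star)\ge\tfrac{\deux}{2}\dob(x)^{-r}|x-x^\star|^2$, i.e. $\dob(x)^{1+r}-\dob(x^\star)\dob(x)^r\ge\tfrac{\deux}{2}(1+r)|x-x^\star|^2\cdot\tfrac{2}{1+r}$... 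I need to be a little careful with constants, but the mechanism gives $\dob^{1+r}(x)-\dob^{1+r}(x^\star)\gtrsim\deux|x-x^\star|^2$ after using $\dob(x)^r\ge\dob(x^\star)^r$. The upper bound in \eqref{eq:asympdob} is symmetric, using $\psi''\le\troiss\psi^{-q}|x-x^\star|^2\le\troiss\dob(x^\star)^{-q}|x-x^\star|^2$ and integrating, then converting $\dob(x)-\dob(x^\star)\le C|x-x^\star|^2$ into the stated $\dob^{1+q}$ form via the mean value theorem $a^{1+q}-b^{1+q}=(1+q)\xi^q(a-b)\le(1+q)a^q(a-b)$ combined with the first-pair bound on $\dob(x)^q$ in terms of $|\nabla\dob|$, or more simply by a second integration keeping $\psi^{-q}$ inside. \textbf{The main obstacle} I anticipate is getting the \emph{lower} bound on $|\nabla\dob(x)|^2$ with the correct constant $\tfrac{\deux}{1-r}$: the clean segment-integration argument naturally produces the upper bound, and for the lower bound one must integrate in the \emph{gradient direction} rather than the radial direction $x-x^\star$, then relate $\langle\nabla\dob(x),\nabla\dob(x)\rangle$ back to the $\dob$-difference — keeping track of which endpoint's value of $\dob$ appears (so that $\dob^{1-r}(x)-\dob^{1-r}(x^\star)$, not the reverse, shows up) requires care, and is exactly the place where convexity (monotonicity of $\nabla\dob$) must be invoked rather than just the $\Hcu$ Hessian bounds.
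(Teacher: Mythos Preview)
Your approach is workable but differs substantially from the paper's, and your closing paragraph has the roles reversed.

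The paper does not use the segment $x^\star+t(x-x^\star)$ for the gradient bounds. Instead it runs the \emph{gradient flow} $\dot x(t)=-\nabla W(x(t))$, $x(0)=x$, which converges to $x^\star$ by convexity. Along this curve the velocity is exactly $-\nabla W$, so differentiating $t\mapsto|\nabla W(x(t))|^2$ produces a multiple of $\langle D^2 W\,\nabla W,\nabla W\rangle$, while differentiating $t\mapsto W^{1-\rho}(x(t))$ produces a multiple of $W^{-\rho}|\nabla W|^2$. A single integration over $[0,\infty)$ then yields \emph{both} the upper and the lower bound at once, with no Cauchy--Schwarz loss and no need for two different curves.

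On your route, the segment argument you describe actually delivers the \emph{lower} bound on $|\nabla W|^2$, not the upper: from $\tfrac12\psi'(1)^2/|x-x^\star|^2\ge\tfrac{\deux}{1-r}(W^{1-r}(x)-W^{1-r}(x^\star))$ together with $\psi'(1)\le|\nabla W(x)|\,|x-x^\star|$ you get $|\nabla W(x)|^2\ge\ldots$; the other direction fails precisely because Cauchy--Schwarz points the wrong way. Your secondary suggestion---integrate along the line $s\mapsto x-s\,\nabla W(x)/|\nabla W(x)|$ up to the first critical point $s^\ast$ of the one-variable restriction---does give the upper bound (multiply $\phi''\le\troiss\phi^{-q}$ by $\phi'\le 0$, integrate over $[0,s^\ast]$, and use $\phi(s^\ast)\ge W(x^\star)$). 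So your ``main obstacle'' paragraph has lower and upper interchanged.

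For \eqref{eq:asympdob} the paper is again more direct: it computes the Hessian of $W^{1+r}$ and $W^{1+q}$. Since $D^2(W^{1+r})=(1+r)r\,W^{r-1}\nabla W(\nabla W)^T+(1+r)W^r D^2W$ with the first term positive semidefinite and the second $\succeq(1+r)\deux I_d$ by $\Hcu$, the function $W^{1+r}$ is $(1+r)\deux$-strongly convex; likewise $W^{1+q}$ is $\tfrac{(1+q)\troiss}{1-q}$-smooth once one plugs in the already-proved upper bound on $|\nabla W|^2$. This reduces \eqref{eq:asympdob} to the standard second-order Taylor inequality with the exact constants, and avoids the juggling of $W(x)^r$ versus $W(x^\star)^r$ that your double-integration route runs into.
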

\begin{proof} Since $\nabla \dob(x^\star)=0$ and since $\dob$ is convex, the differential equation $\dot{x}(t)=-\nabla \dob(x(t))$ with $x(0)=x$ satisfies:
$\lim_{t\rightarrow+\infty} x(t)=x^\star$.  Thus, setting $\Upsilon_1(t)= |\nabla \dob(x(t))|^2$, we have
$$  |\nabla \dob(x)|^2=-\int_0^{+\infty} \Upsilon_1'(s) ds=\int_0^{+\infty} \langle D^2 \dob(x(s)) \nabla \dob(x(s)), \nabla \dob(x(s))\rangle ds.$$
Thus, using Assumption $\Hcu$, 
$$ \deux \int_0^{+\infty} \dob^{-r}(x(s))|\nabla \dob(x(s))|^2 ds\le   |\nabla \dob(x)|^2\le \troiss \int_0^{+\infty} \dob^{-q}(x(s))|\nabla \dob(x(s))|^2 ds.$$
Setting $\Upsilon_2(t)= \frac{1}{1-\rho}\dob^{1-\rho}(x(t))$ with $\rho\in[0,1)$, we remark that $\Upsilon'_2(t)= -\dob^{-\rho}(x(t))|\nabla \dob(x(t))|^2$ so that
(with $\rho=r$ and $\rho=q$), we get
$$ \frac{\deux}{1-r}\left( \dob^{1-r}(x)-\dob^{1-r}(x^\star)\right)\le   |\nabla \dob(x)|^2\le \frac{\troiss}{1-r}\left( \dob^{1-q}(x)-\dob^{1-q}(x^\star)\right).$$
Finally, in order to prove \eqref{eq:asympdob}, one checks that
$$\underline{\lambda}_{D^2 \dob^{1+r}}(x)\ge (1+r) \dob^r(x)\underline{\lambda}_{D^2 \dob}(x)\le (1+r)\deux$$
and that
$$\bar{\lambda}_{D^2 \dob^{1+q}}(x)\le (1+q)\dob^q(x)\bar{\lambda}_{D^2 \dob}(x)+q(1+q)\dob^{q-1}(x)|\nabdob(x)|^2\le \troiss(1+q)\left(1+\frac{q}{1-q}\right), $$
which in turn easily implies \eqref{eq:asympdob}. 
\end{proof}
\begin{lem}\label{lem:gradW2} Assume $\Hcu$. Assume that $L\gamma_0\le 1/4$. For any $c>0$ and $K>0$, ${\cal C}^K_{c,\gamma_0}$
is a compact set.  Furthermore, setting 
$$\cpar(c,K)=(4 c\troiss \deux^{-1})^{\frac{1}{1+q-r}}+d^{-{\frac{1}{1+q-r}}}\left(
\left(\frac{\troiss}{1-r}\right)^{\frac{1}{1+q}}\vee 4 L\sqrt{K}\vee \left(\frac{2(1-r)}{\deux}\right)^{\frac{1}{1-r}}
\vee(4^{\frac{1}{1-r}}\dob(x^\star))\right),$$
we have
 $$\beta(\aaa,c,\gamma_0,K) {\le} (1+cd L) 
e^{\aaa  \cpar(c,K) d^{\frac{1}{1+q-r}}},$$
where
$\beta(\aaa,c,\gamma_0,K)$ is defined by \eqref{def:betalambda}.
\end{lem}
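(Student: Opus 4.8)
The statement to prove is Lemma \ref{lem:gradW2}: under $\Hcu$ and the step-size restriction $L\gamma_0\le 1/4$, the set ${\cal C}^K_{c,\gamma_0}$ is compact and the quantity $\beta(\aaa,c,\gamma_0,K)$ is controlled by $(1+cdL)\,e^{\aaa\cpar(c,K)\,d^{1/(1+q-r)}}$. The natural strategy is: (i) use Lemma \ref{lem:gradW} to convert the defining inequality of ${\cal C}^K_{c,\gamma_0}$ into an upper bound on $\dob(x)$ itself; (ii) deduce compactness from the coercivity of $\dob$ (which follows from $\Hcu$ via Lemma \ref{lem:gradW}, since $\dob^{1+r}(x)\ge \dob^{1+r}(x^\star)+(1+r)\deux|x-x^\star|^2$); (iii) substitute that pointwise bound into the definition \eqref{def:betalambda} of $\beta$.

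First I would estimate the local spectral supremum $\bar\lambda_{\rm loc}(\gamma_0,x,K)$. By $\Hcu$, for any $u$, $\bar\lambda_{\hesdob}(u)\le\troiss \dob(u)^{-q}\le\troiss$ (recall $\min\dob\asymp 1$ and $q\ge0$), so crudely $\bar\lambda_{\rm loc}(\gamma_0,x,K)\le\troiss\wedge L$; but to get the sharp $\dob$-dependence I would instead keep it as $\bar\lambda_{\rm loc}(\gamma_0,x,K)\le\troiss\,\big(\inf_{|u-x|<\gamma_0|\nabdob(x)|+\sqrt K}\dob(u)\big)^{-q}$ and, using $L\gamma_0\le 1/4$ together with the growth controls of Lemma \ref{lem:gradW}, show that on the relevant ball $\dob(u)$ stays comparable to $\dob(x)$ (the displacement $\gamma_0|\nabdob(x)|$ is small relative to $\dob(x)$ because $|\nabdob(x)|^2\lesssim\troiss\dob^{1-q}(x)$ and $\gamma_0\le 1/(4L)$, so a standard descent-type estimate keeps $\dob$ within a constant factor). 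Hence on ${\cal C}^K_{c,\gamma_0}$ the defining inequality $|\nabdob(x)|^2 - cd\bar\lambda_{\rm loc}(\gamma_0,x,K)\le 1$ becomes, via the lower bound $|\nabdob(x)|^2\ge\frac{\deux}{1-r}(\dob^{1-r}(x)-\dob^{1-r}(x^\star))$ of Lemma \ref{lem:gradW}, an inequality of the shape
$$\frac{\deux}{1-r}\dob^{1-r}(x)\le 1+\frac{\deux}{1-r}\dob^{1-r}(x^\star)+c\troiss d\,\dob^{-q}(x)\,\cdot C,$$
for a universal constant $C$ absorbing the ball-comparison factor. Multiplying by $\dob^{q}(x)$ and splitting the right-hand side into the three competing terms, one gets $\dob^{1+q-r}(x)\lesssim \max\big((cd\troiss\deux^{-1}),\,(\deux^{-1})^{1/(1-r)}\text{-type terms},\,\dob(x^\star)\text{-terms},\,K\text{-terms}\big)$, which after taking the $(1+q-r)$-th root gives exactly $\dob(x)\le\cpar(c,K)\,d^{1/(1+q-r)}$ with the $\cpar$ defined in the statement (the four terms in the $\max$ correspond precisely to the four terms in the definition of $\cpar(c,K)$, and the factor $4$'s come from the splitting $a+b+c+d\le 4\max$).

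Finally, compactness of ${\cal C}^K_{c,\gamma_0}$ follows at once: a uniform upper bound on $\dob$ on this set plus the quadratic coercivity $\dob^{1+r}(x)\ge\dob^{1+r}(x^\star)+(1+r)\deux|x-x^\star|^2$ from \eqref{eq:asympdob} forces $|x-x^\star|$ to be bounded, and ${\cal C}^K_{c,\gamma_0}$ is closed by continuity of $x\mapsto|\nabdob(x)|^2 - cd\bar\lambda_{\rm loc}(\gamma_0,x,K)$ (continuity of the local sup requires a short argument, since it is a supremum over a moving ball, but the radius depends continuously on $x$ and $\bar\lambda_{\hesdob}$ is continuous, so this is routine). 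For the bound on $\beta$: by definition $\beta(\aaa,c,\gamma_0,K)=\sup_{x\in{\cal C}^K_{c,\gamma_0}}e^{\aaa\dob(x)}(1+cdL)$, and plugging in the just-obtained bound $\dob(x)\le\cpar(c,K)d^{1/(1+q-r)}$ yields $\beta(\aaa,c,\gamma_0,K)\le(1+cdL)\,e^{\aaa\cpar(c,K)d^{1/(1+q-r)}}$, as claimed.

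\textbf{Main obstacle.} The delicate point is step (ii)-bis, the ball-comparison: showing that $\dob(u)$ does not drop below a fixed multiple of $\dob(x)$ on the ball $\{|u-x|<\gamma_0|\nabdob(x)|+\sqrt K\}$, so that $\bar\lambda_{\rm loc}$ is genuinely controlled by $\troiss\dob^{-q}(x)$ up to a constant rather than by the much larger $\troiss\cdot(\min\dob)^{-q}$. This is where the hypothesis $L\gamma_0\le 1/4$ is used, and it needs a careful estimate combining $|\nabdob(x)|^2\lesssim\troiss(\dob^{1-q}(x)-\dob^{1-q}(x^\star))$, the $L$-Lipschitz bound on $\nabdob$, and the quadratic lower bound \eqref{eq:asympdob}; everything else is bookkeeping of the four terms in $\cpar(c,K)$.
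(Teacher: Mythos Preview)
Your plan is correct and matches the paper's proof essentially step for step: the paper also uses Lemma~\ref{lem:gradW} to lower-bound $|\nabdob(x)|^2$ by $\frac{\deux}{1-r}(\dob^{1-r}(x)-\dob^{1-r}(x^\star))$, upper-bounds $\bar\lambda_{\rm loc}$ by $2^q\troiss\dob^{-q}(x)$ via the same ball-comparison (using $\dob(u)\ge \dob(x)-L(\gamma_0|\nabdob(x)|+\sqrt K)$ and the estimate $|\nabdob(x)|\dob^{-1}(x)\le\sqrt{\troiss/(1-r)}\,\dob^{-(1+q)/2}(x)$ to make the ratio $\le 1/2$ when $\dob(x)$ is large), and then reads off the four threshold conditions on $\dob(x)$ that constitute the terms of $\cpar(c,K)$. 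The only cosmetic difference is that the paper argues by contrapositive (if $\dob(x)\ge\cpar(c,K)d^{1/(1+q-r)}$ then $x\notin{\cal C}^K_{c,\gamma_0}$) rather than directly bounding $\dob$ on the set, and it does not spell out the closedness argument you mention.
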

{
\begin{rmq} Keeping all the constants is unfortunately necessary for the objectives of this paper (but sometimes hard to read). But, if for a moment, we forget the constants $L$, $K$, $c$,  $\deux$ and $\troiss$, the result reads $\beta(\aaa,c,\gamma_0,K) \le C_1 d e^{ C_2 d^{\frac{1}{1+q-r}}}$ where $C_1$ and $C_2$ are some positive constants which do not depend on $d$.
\end{rmq} 
}
\begin{proof} By Lemma \ref{lem:gradW} and $\Hcu$, 
\begin{equation}
\label{eq:minoration_croissance} |\nabdob (x)|^2-c d\bar{\lambda}_{{\rm loc}} (\gamma_0,x)\ge \frac{\deux}{1-r}\left( \dob^{1-r}(x)-\dob^{1-r}(x^\star)\right)- c\troiss d \sup_{ |u-x| < 
  \gamma_0|\nabdob(x)|+\sqrt{K}} \dob^{-q}(u).
  \end{equation}
Since $\dob$ is $L$-Lipschitz, for any $u\in B(x, \gamma_0|\nabdob(x)|+\sqrt{K})$,
\begin{equation}\label{mino:Llipschitz}
\dob(u)\ge  \dob(x)-L(\gamma_0|\nabdob(x)|+\sqrt{K})
\end{equation}
so that when the right-hand member is positive,
$$\dob^{-q}(u)\le \dob(x)^{-q} \left(1-\frac{L(\gamma_0|\nabdob(x)|+\sqrt{K})}{\dob(x)}\right)^{-q} .$$
By Lemma \ref{lem:gradW}, $ |\nabdob(x)| \dob^{-1}(x)\le \sqrt{\frac{\troiss}{1-r}} \dob^{-\frac{q+1}{2}}(x).$ Hence, 
if 
\begin{equation}\label{eq:cond1compact}
\dob(x)\ge \left( 4 L\sqrt{K}\right)\vee \left((4L\gamma_0)^{\frac{2}{1+q}} \left(\frac{\troiss}{1-r}\right)^{\frac{1}{1+q}}\right),
\end{equation}
then, $\frac{L(\gamma_0|\nabdob(x)|+\sqrt{K})}{\dob(x)}\le 1/2$ and thus,  $(1-\frac{L(\gamma_0|\nabdob(x)|+\sqrt{K})}{\dob(x)})^{-q} \ge {2^{q}}$. 
Thus, under \eqref{eq:cond1compact},
$$\sup_{ |u-x| < 
  \gamma_0|\nabdob(x)|+\sqrt{K}} \dob^{-q}(u)\le 2^q \dob(x)^{-q}.$$
We deduce that under \eqref{eq:cond1compact},
Equation \eqref{eq:minoration_croissance} yields:
\begin{align*}
 |\nabdob (x)|^2-c d\bar{\lambda}_{{\rm loc}} (\gamma_0,x,K)&\ge 
 \frac{\deux}{1-r}\left( \dob^{1-r}(x)-\dob^{1-r}(x^\star)\right)- c d 2^q \troiss W^{-q}(x) \\
 & \ge \frac{\deux}{1-r} W^{1-r}(x) \left[ 1-\frac{W^{1-r}(x^\star)}{W^{1-r}(x)} -  \frac{2^q c d \troiss}{\deux W^{1+q-r}(x)} \right].
\end{align*}
We now fix some conditions on $\dob(x)$ in order that 
$ 1-\frac{W^{1-r}(x^\star)}{W^{1-r}(x)} -  \frac{2^q c d \troiss}{\deux W^{1+q-r}(x)} \ge \frac{1}{2}$ and $\frac{\deux}{1-r} W^{1-r}(x)\ge 2$.
We thus assume that
\begin{equation}\label{eq:cond2compact}
\dob(x)\ge \left(\frac{2(1-r)}{\deux}\right)^{\frac{1}{1-r}}\vee (4^{\frac{1}{1-r}}\dob(x^\star))\vee (2^{q+2} c\troiss \deux^{-1} d)^{\frac{1}{1+q-r}}).
\end{equation}
Finally, if \eqref{eq:cond1compact} and \eqref{eq:cond2compact} are satisfied, then 
 \begin{equation*}
 |\nabdob (x)|^2-c d\bar{\lambda}_{{\rm loc}} (\gamma_0,x,K)\ge 1.
 \end{equation*}
 But the definition of $\cpar$, we remark that \eqref{eq:cond1compact} and \eqref{eq:cond2compact} are satisfied if 
 $\dob(x)\ge \cpar(c,K)d^{\frac{1}{1+q-r}}$ so that 
$${\cal C}^K_{c,\gamma_0}\subset\{x \in\ER^d, \dob(x)\le  \cpar(c,K)d^{\frac{1}{1+q-r}}\}.$$
The bound on $\beta(\lambda,c,\gamma_0,K)$   easily follows.

\end{proof}

%
%

\subsection{Exponential bounds for the continuous-time process}
\begin{lem}\label{lem:expbounds1cont} { Assume that ${\cal C}^0_{c,0}$ defined by \eqref{eq:compactc} is a compact set.Then, for any $\aaa\in(0,1)$,
$$ \E_x [e^{\aaa \dob(X_t)}]\le e^{\aaa \dob (x)} e^{- \tilde{\aaa} t}+\beta(\aaa, (1-\aaa)^{-1},0,0),$$
where $\tilde{\aaa}=\aaa(1-\aaa)^{-1}$ and $\beta(\aaa, (1-\aaa)^{-1},0,0)$ is defined by \eqref{def:betalambda}.
In particular, under $\Hcu$,}
$$\sup_{t\ge0} \E_x [e^{\aaa \dob(X_t)}]\le e^{\aaa \dob (x)} + (1+(1-\aaa)^{-1}d L) 
e^{  \cpar(1,0) d^{\frac{1}{1+q-r}}}.$$
\end{lem}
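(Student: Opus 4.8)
The plan is to use the function $V(x) = e^{\aaa W(x)}$ as an exponential Lyapunov function and to apply the generator $\LL = -\langle \nabla W, \nabla \cdot\rangle + \Delta$ of the diffusion \eqref{eq:EDS} to it, in order to obtain a drift inequality of the form $\LL V \le -\tilde\aaa V + C \ind_{\cal{C}}$ for a suitable compact set $\cal{C}$, and then integrate this via Dynkin's formula (or Gr\"onwall after taking expectations). First I would compute
$$
\LL V(x) = e^{\aaa W(x)}\left[ \aaa \Delta W(x) + \aaa^2 |\nabla W(x)|^2 - \aaa |\nabla W(x)|^2\right]
= \aaa\, e^{\aaa W(x)}\left[ \Delta W(x) - (1-\aaa) |\nabla W(x)|^2\right].
$$
The term $\Delta W(x) = \tr(\nabla^2 W(x))$ is controlled by $d\,\bar\lambda_{\hesdob}(x)$, hence (since here $\gamma_0 = 0$ and $K=0$, so $\bar\lambda_{{\rm loc}}(0,x,0) = \bar\lambda_{\hesdob}(x)$) by $d\,\bar\lambda_{{\rm loc}}(0,x,0)$. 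Therefore
$$
\LL V(x) \le \aaa\, e^{\aaa W(x)}\left[ d\,\bar\lambda_{{\rm loc}}(0,x,0) - (1-\aaa)|\nabla W(x)|^2\right].
$$

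**Splitting according to the compact set.** With $c = (1-\aaa)^{-1}$, the complement of $\cal{C}^0_{c,0}$ is exactly $\{x : |\nabla W(x)|^2 - c\, d\,\bar\lambda_{{\rm loc}}(0,x,0) > 1\}$, i.e. $(1-\aaa)|\nabla W(x)|^2 - d\,\bar\lambda_{{\rm loc}}(0,x,0) > (1-\aaa) = \aaa\tilde\aaa^{-1}\cdot(1-\aaa)^2$; more simply, on $(\cal{C}^0_{c,0})^c$ we have $d\,\bar\lambda_{{\rm loc}}(0,x,0) - (1-\aaa)|\nabla W(x)|^2 < -(1-\aaa)$, hence $\LL V(x) \le -\aaa(1-\aaa) e^{\aaa W(x)} \le -\tilde\aaa V(x)$ — here one checks the bookkeeping on constants so that $\tilde\aaa = \aaa(1-\aaa)^{-1}$ comes out (one can afford to be slightly wasteful, since $\aaa(1-\aaa)\ge \aaa(1-\aaa)^{-1}$ fails for $\aaa$ close to $1$, so the precise split must put $(1-\aaa)|\nabla W|^2 \ge d\bar\lambda + \tilde\aaa$ on the complement, which is what defines $\cal{C}^0_{c,0}$ up to harmless rescaling of the defining threshold). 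On $\cal{C}^0_{c,0}$ itself, which is compact by $\Hcu$ and Lemma \ref{lem:gradW} (so $W$ is bounded there), we bound $\LL V(x) \le \aaa\, e^{\aaa W(x)} d\,\bar\lambda_{{\rm loc}}(0,x,0) \le \aaa\, e^{\aaa W(x)}(1+c\,dL) \le \aaa\,\beta(\aaa, c, 0, 0)$, using that $\bar\lambda_{\hesdob}\le L$ by $\AL$ and the definition \eqref{def:betalambda}. Combining the two cases: $\LL V(x) \le -\tilde\aaa V(x) + \aaa\,\beta(\aaa,(1-\aaa)^{-1},0,0)\,\ind_{\cal{C}^0_{c,0}}(x)$ for all $x$.

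**Integration and conclusion.** Apply Dynkin's formula to $t\mapsto e^{\tilde\aaa t} V(X_t)$ (which requires a short localization argument using the non-explosion of the diffusion, guaranteed since $W$ is coercive, plus the already-obtained super-Lyapunov structure to justify the stopping-time limit):
$$
\frac{\der}{\der t}\E_x[e^{\tilde\aaa t} V(X_t)] = e^{\tilde\aaa t}\E_x[(\tilde\aaa V + \LL V)(X_t)] \le e^{\tilde\aaa t}\,\aaa\,\beta(\aaa,(1-\aaa)^{-1},0,0).
$$
Integrating from $0$ to $t$, dividing by $e^{\tilde\aaa t}$, and using $\aaa/\tilde\aaa = 1-\aaa \le 1$ gives
$$
\E_x[e^{\aaa W(X_t)}] \le e^{-\tilde\aaa t} e^{\aaa W(x)} + (1-\aaa)\,\beta(\aaa,(1-\aaa)^{-1},0,0)\,(1 - e^{-\tilde\aaa t}) \le e^{\aaa W(x)} e^{-\tilde\aaa t} + \beta(\aaa,(1-\aaa)^{-1},0,0),
$$
which is the first claim. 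The ``in particular'' part then follows by taking the supremum over $t\ge 0$ (the exponential term is maximal at $t=0$) and plugging in the explicit bound $\beta(\aaa,(1-\aaa)^{-1},0,0) \le (1+(1-\aaa)^{-1}dL)\, e^{\cpar(1,0)\, d^{1/(1+q-r)}}$ from Lemma \ref{lem:gradW2}, noting $\cpar$ is non-decreasing in its first argument so $\cpar(1,0)$ dominates $\cpar((1-\aaa)^{-1},0)$ only if $(1-\aaa)^{-1}\le 1$ — actually one should keep $\cpar((1-\aaa)^{-1},0)$ or absorb the $\aaa$-dependence; the cleanest route is to note the statement as written uses $\cpar(1,0)$, so one restricts attention to the regime or re-derives $\beta$ directly bounding $e^{\aaa W}\le e^{W}$ on the compact and using $\cpar(1,0)$ from the outset when applying Lemma \ref{lem:gradW2}. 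The main obstacle is the careful constant bookkeeping in the drift split so that precisely $\tilde\aaa = \aaa(1-\aaa)^{-1}$ and the constant $\beta(\aaa,(1-\aaa)^{-1},0,0)$ appear; the analytic content (Lyapunov + Dynkin) is routine once $\LL e^{\aaa W}$ is computed.
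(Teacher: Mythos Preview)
Your proposal is correct and follows essentially the same approach as the paper: compute $\LL e^{\aaa W}$, factor out $\aaa(1-\aaa)$ to get the drift term $-|\nabla W|^2 + (1-\aaa)^{-1} d\bar\lambda_{\hesdob}$, split on the compact $\mathcal{C}^0_{(1-\aaa)^{-1},0}$, and apply Gr\"onwall/Dynkin. You even correctly flag the constant issue---the natural drift rate is $\aaa(1-\aaa)$, not $\tilde\aaa=\aaa(1-\aaa)^{-1}$, and the ``in particular'' should involve $\cpar((1-\aaa)^{-1},0)$ rather than $\cpar(1,0)$; these look like typos in the paper's statement and do not affect the method.
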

\begin{proof}
We apply the Ito formula to $f_\aaa$ defined by $f_\aaa(x)=e^{\aaa \dob(x)}$ 
 ($\aaa$ has to be chosen):
 $$ f_\aaa(X_t)= f_\aaa(x)+ \int_0^t \mathcal{L} f_\aaa(X_s) ds+2 \underbrace{\int_0^t \langle \nabla f_\aaa(X_s), dB_s}_{M_t}\rangle.$$
 Then, one can check that
\begin{align*}
 \frac{ \mathcal{L} f_\aaa(x)}{f_\aaa(x)}&=-\aaa |\nabdob(x)|^2+ \aaa^2 |\nabdob(x)|^2+\aaa \Delta \dob(x)\\
 &\le  {\aaa}{(1-\aaa)}\left(- |\nabdob(x)|^2+ d(1-\aaa)^{-1} \bar{\lambda}_{\hesdob}(x)\right).
 \end{align*}
 Setting $\tilde{\aaa}=\aaa(1-\aaa)^{-1}$ and $c=(1-\aaa)^{-1}$, we deduce from $\Hcu$ and $\AL$ that:
 \begin{align}
 \mathcal{L} f_\aaa(x)& \le  - \tilde{\aaa} f_\aaa(x) 1_{\{x \in \{{\cal C}^K_{c,\gamma_0}\}^c\}}+\tilde{\aaa} f_\aaa(x)\left(- |\nabdob(x)|^2+ d(1-\aaa) \bar{\lambda}_{\hesdob}(x)\right) 1_{\{x \in {\cal C}^K_{c,\gamma_0}\}}\label{eq:lfpluspetitquef}\\
 &\le - \tilde{\aaa} f_\aaa(x)+\tilde{\aaa} f_\aaa(x)\left(1+ d (1-\aaa)^{-1} \bar{\lambda}_{\hesdob}(x) \right) 1_{\{x \in {\cal C}^K_{c,\gamma_0}\}}\\
 &\le  -\tilde{\aaa} f_\aaa(x)+\tilde{\aaa} \beta(\aaa,  (1-\aaa)^{-1} ,0,0).\nonumber
 \end{align}
It follows that $(M_t)_{t\ge0}$ is a true martingale and the Gronwall lemma leads to:
$$ \E_x[f_\aaa(X_t)]\le f_\aaa(x) e^{-\tilde{\aaa} t}+\beta(\aaa,  (1-\aaa)^{-1} ,0,0)  \int_0^s \tilde{\aaa}e^{\tilde{\aaa}(s-t)} ds\le f_\aaa(x)e^{-\tilde{\aaa} t}+{\beta(\aaa,  (1-\aaa)^{-1} ,0,0)}.$$
\end{proof}
%
%
%
\subsection{Exponential bounds for the continuous-time Euler scheme (Proposition \ref{lem:expbounds22})\label{sec:expo_continuous}}

\begin{rmq} The proof of this result is rather technical and the important thing is to pay a specific attention to the dependency  with respect to $d$ and $L$. As indicated in our statement, we are led to choose $\gamma$ {lower than} $(L d)^{-1}$.
\end{rmq}
{
\begin{prop}\label{lem:expbounds2scheme}
Assume $\Hcu$. Assume that  $\gamma\le\gamma_0:= \frac{1}{4 d L + 1}$, then:
For any $\aaa \leq 1/16$, a constant $C_\aaa$ (depending only on $\aaa$) exists such that
$$\sup_{t\ge0} \E_x[e^{ \aaa \dob(\bX_{t})}]\le {
e^{{\aaa} \dob(x)}+C_\aaa (1+5d L) 
e^{\aaa  \cpar(5,32\aaa^{-1}) d^{\frac{1}{1+q-r}}}}.
$$ 
\end{prop}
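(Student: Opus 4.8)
The statement to prove is Proposition~\ref{lem:expbounds2scheme}: a uniform-in-time exponential moment bound for the continuous-time Euler scheme $(\bX_t)_{t\ge0}$ under $\Hcu$, with step size $\gamma\le\gamma_0=(4dL+1)^{-1}$. The natural strategy is to mimic the proof of Lemma~\ref{lem:expbounds1cont} but in the discrete/interpolated setting, where the drift is frozen at the grid point $\bX_{\un t}$. The plan is to work with the Lyapunov function $f_\aaa(x)=e^{\aaa W(x)}$ for small $\aaa$ (we will need $\aaa\le 1/16$), compute $\mathbb{E}[f_\aaa(\bX_{t_{k+1}})\mid \mathcal{F}_{t_k}]$ or more precisely $\frac{d}{dt}\mathbb{E}[f_\aaa(\bX_t)\mid\mathcal{F}_{t_k}]$ on the interval $[t_k,t_{k+1})$ by the Itô formula applied to \eqref{barxt}, and show that it satisfies an approximate drift inequality of the form $\mathcal{L}_{\text{disc}} f_\aaa \le -\tilde\aaa f_\aaa + (\text{bounded term on a compact set})$, up to discretization error terms that are controlled by the choice $\gamma\le\gamma_0$.

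\textbf{Key steps.} First, on $[t_k,t_{k+1})$, apply Itô to $f_\aaa(\bX_t)$ with $d\bX_t=-\nabla W(\bX_{t_k})\,dt+\sqrt2\,dB_t$; the generator acting here is $\bar{\cal L}f_\aaa(x,\un x)=-\aaa f_\aaa(x)\langle\nabla W(x),\nabla W(\un x)\rangle+\aaa f_\aaa(x)\Delta W(x)+\aaa^2 f_\aaa(x)|\nabla W(x)|^2$. The cross term $\langle\nabla W(x),\nabla W(\un x)\rangle$ is the source of extra difficulty: write $\nabla W(\un x)=\nabla W(x)+(\nabla W(\un x)-\nabla W(x))$, use $|\nabla W(\un x)-\nabla W(x)|\le L|x-\un x|$ (by $\AL$), and bound $|x-\un x|$ in terms of $\gamma|\nabla W(\un x)|$ plus the Brownian increment. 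One then needs to re-express everything in terms of $\nabla W(x)$ and control $|\nabla W(\un x)|$ via $|\nabla W(x)|$ plus a Gaussian tail; this is where the local supremum of $\bar\lambda_{\hesdob}$ over the ball of radius $\gamma_0|\nabla W(x)|+\sqrt K$ (the quantity $\bar\lambda_{\rm loc}$ in \eqref{eq:compactc}) and the compact set ${\cal C}^K_{c,\gamma_0}$ enter, forcing the constants $c=5$ and $K=32\aaa^{-1}$ in the statement. Second, absorb the Gaussian increment: conditionally on $\mathcal F_{t_k}$, $\bX_t-\bX_{t_k}$ is Gaussian, so $\mathbb{E}[e^{\aaa W(\bX_t)}\mid\mathcal F_{t_k}]$ can be bounded using the $L$-Lipschitz (hence linear-growth) control of $W$ and sub-Gaussian moment generating function estimates; the smallness of $\aaa$ (say $\aaa\le 1/16$) guarantees the relevant exponential integrals converge with a harmless multiplicative constant $C_\aaa$. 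Third, combine the per-step estimate into a Gronwall/geometric recursion: show $\mathbb{E}_x[f_\aaa(\bX_{t_{k+1}})]\le e^{-\tilde\aaa\gamma}\mathbb{E}_x[f_\aaa(\bX_{t_k})](1+o(\gamma)) + \gamma\,\tilde\aaa\,C_\aaa\,\beta(\aaa,5,\gamma_0,32\aaa^{-1})$, where $\beta$ is controlled by Lemma~\ref{lem:gradW2} under $\Hcu$, and iterate to get the uniform bound. The final bound $\sup_t \mathbb{E}_x[e^{\aaa W(\bX_t)}]\le e^{\aaa W(x)}+C_\aaa(1+5dL)e^{\aaa\cpar(5,32\aaa^{-1})d^{1/(1+q-r)}}$ then follows from Lemma~\ref{lem:gradW2}.

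\textbf{Main obstacle.} The delicate point is the treatment of the frozen-drift cross term $\langle\nabla W(x),\nabla W(\bX_{\un s})\rangle$ together with the interplay between the discretization error $|\nabla W(\bX_{\un s})-\nabla W(\bX_s)|$ and the need to keep the Lyapunov drift \emph{negative outside a compact set whose size is explicit in $d$}. One must simultaneously (i) ensure the $-\aaa f_\aaa|\nabla W|^2$ term dominates the positive contributions $\aaa\Delta W f_\aaa$ and $\aaa^2 |\nabla W|^2 f_\aaa$ (handled by $\aaa$ small and $\Hcu$ relating $\Delta W$ to $W^{-q}$), and (ii) ensure the additional terms created by freezing the drift — of order $\gamma L|\nabla W(\bX_{\un s})|^2$ and $\gamma L\sqrt{d}|\nabla W|$ type — are absorbed, which is exactly why $\gamma\le(4dL+1)^{-1}$ is imposed: it makes $\gamma L d$ small, so these error terms are a fraction of the main negative term. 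Getting all the numerical constants ($\aaa\le1/16$, $c=5$, $K=32\aaa^{-1}$) to line up self-consistently, while bounding the conditional Gaussian expectations uniformly in the current position $\bX_{t_k}$, is the bulk of the technical work; once the per-step inequality is established in the clean form $\mathbb{E}[f_\aaa(\bX_{t_{k+1}})\mid\mathcal F_{t_k}]\le (1-\tfrac{\tilde\aaa\gamma}{2})f_\aaa(\bX_{t_k})+\tilde\aaa\gamma\,C_\aaa\beta$, the iteration and passage to $\sup_t$ are routine.
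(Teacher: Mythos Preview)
Your plan takes a genuinely different route from the paper, and as written it has a gap that the paper's approach sidesteps.

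The paper does \emph{not} apply It\^o to $f_\aaa(\bX_t)=e^{\aaa W(\bX_t)}$ on $[t_k,t_{k+1})$. Instead it works purely at the discrete step: it Taylor-expands $W$ (not $f_\aaa$) at $\bX_{t_k}$,
\[
W(\bX_{t_{k+1}})\le W(\bX_{t_k})-\gamma|\nabla W(\bX_{t_k})|^2+\langle\nabla W(\bX_{t_k}),\Delta_{k+1}\rangle+\Bigl(\int_0^1\bar\lambda_{\hesdob}(\bX_{t_k}^{(\theta)})\,d\theta\Bigr)\bigl(\gamma^2|\nabla W(\bX_{t_k})|^2+|\Delta_{k+1}|^2\bigr),
\]
exponentiates, and then computes the conditional expectation. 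The crucial point is that the right-hand side depends only on the $\mathcal F_{t_k}$-measurable quantity $\bX_{t_k}$ and on the \emph{independent} Gaussian $\Delta_{k+1}$, so the conditional expectation factorizes as $f_\aaa(\bX_{t_k})\cdot e^{(-\aaa\gamma+L\aaa\gamma^2)|\nabla W(\bX_{t_k})|^2}\cdot\Psi_\gamma(\bX_{t_k})$, where $\Psi_\gamma$ is an explicit Gaussian integral. The split $\Psi_\gamma=\Psi_\gamma^{(1)}+\Psi_\gamma^{(2)}$ on $\{|Z|^2\le K/(2\gamma)\}$ and its complement, together with the exact identity $\E[e^{\alpha_1 Z_1+\alpha_2 Z_1^2}]=(1-2\alpha_2)^{-1/2}e^{\alpha_1^2/(2(1-2\alpha_2))}$, then produces the constants $c=5$ and $K=32\aaa^{-1}$ with no cross term ever appearing.

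Your It\^o route manufactures the cross term $-\aaa f_\aaa(\bX_s)\langle\nabla W(\bX_s),\nabla W(\bX_{t_k})\rangle$, and after your Lipschitz/Young manipulations you are left with quantities like $\E\bigl[f_\aaa(\bX_s)\,|\Delta_{t_k,s}|^2\mid\mathcal F_{t_k}\bigr]$ in which the two factors are correlated (since $\bX_s$ depends on $\Delta_{t_k,s}$). You cannot factor this, and a Cauchy--Schwarz attempt leads you to bound $\E[f_{2\aaa}(\bX_s)\mid\mathcal F_{t_k}]$, which is the very object you are trying to control with a doubled parameter. Your step ``absorb the Gaussian increment'' appeals to ``the $L$-Lipschitz (hence linear-growth) control of $W$'', but $W$ is not $L$-Lipschitz under $\AL$---it is $\nabla W$ that is $L$-Lipschitz, so $W$ has \emph{quadratic} local growth; this is precisely why one needs the second-order Taylor expansion together with the truncation on $\{|Z|^2\le K/(2\gamma)\}$ to keep the Gaussian Laplace transform finite for small $\aaa$.

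In short: Taylor-expand $W$ at the grid point and exponentiate afterwards. This decouples the $\mathcal F_{t_k}$-measurable part from the independent Gaussian increment and makes the whole computation explicit; your continuous-time generator approach loses this decoupling and, as sketched, does not close.
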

}
\begin{proof}
 We assume that $\aaa \leq \frac{1}{16}$ and that $\gamma(4dL+1) \leq 1$.
The Taylor formula yields:
\begin{align*}
\dob(\bX_{t_{k+1}})&\le \dob(\bX_{t_k})-\gamma |\nabdob(\bX_{t_k})|^2+\langle \nabdob(\bX_{t_k}), \Delta_{k+1}\rangle\\
&+
\left(\int_0^1\bar{\lambda}_{\hesdob}(\bX_{t_{k}}^{(\theta)})\text{d}\theta\right) (\gamma^2 |\nabdob(\bX_{t_k})|^2+|\Delta_{k+1}|^2),
\end{align*}
where $\Delta_{k+1}=\sqrt{2}(B_{t_{k+1}}-B_{t_k})$ and $\bX_{t_{k}}^{(\theta)}=\bX_{t_k}+\theta(-\gamma \nabdob(\bX_{t_k})+\Delta_{k+1}).$
 Setting $f_\aaa(x)=e^{\aaa \dob(x)}$ and using $\AL$, we deduce that:
\begin{equation}\label{eq:fl345}
\E[f_\aaa(\bX_{t_{k+1}})|{\cal F}_{t_k}]\le f_\aaa(\bX_{t_k}) e^{(-\aaa \gamma+ L \aaa \gamma^2) |\nabdob(\bX_{t_k})|^2}\Psi_\gamma(\bX_{t_k}),
\end{equation}
where 
$$ \Psi_\gamma: x \longmapsto \E \exp\left({\aaa \sqrt{2\gamma} \langle \nabdob(x), Z\rangle+2\aaa\gamma \left(\int_0^1\bar{\lambda}_{\hesdob}(x(\theta,\gamma,Z))\text{d}\theta\right)  |Z|^2}\right),
$$
with ${Z \sim {\cal N}(0,I_d)}$ and $x(\theta,\gamma,z):=x+\theta(-\gamma \nabdob(x)+\sqrt{{2}\gamma} z)$. 
 We decompose $ \Psi_\gamma$ into two parts:
\begin{align*}
\Psi_\gamma(x)&=\underbrace{\E\left[\exp\left(\aaa \sqrt{2\gamma} \langle \nabdob(x), Z\rangle+2\aaa\gamma \left(\int_0^1\bar{\lambda}_{\hesdob}(x(\theta,\gamma,Z))\text{d}\theta\right)  |Z|^2\right)1_{\{|Z|^2\le (2\gamma)^{-1}{K}\}}\right]}_{:=\Psi_\gamma^{(1)}(x)}\\
&+\underbrace{\E\left[\exp\left({\aaa \sqrt{2\gamma} \langle \nabdob(x), Z\rangle+2\aaa\gamma \left(\int_0^1\bar{\lambda}_{\hesdob}(x(\theta,\gamma,Z))\text{d}\theta\right)  |Z|^2}\right)1_{\{|Z|^2> (2\gamma)^{-1}{K}\}}\right]}_{:=\Psi_\gamma^{(2)}(x)},
\end{align*}
where $K>0$ will be chosen later on.\\

\noindent
$\bullet$ \underline{Upper bound of $\Psi_\gamma^{(1)}(x)$.}
If $|Z|^2 \leq  (2\gamma)^{-1}K$,  then $|x(\theta,\gamma,Z)-x| \leq \gamma |\nabla W(x)| + \sqrt{K}$, so that: 
\begin{align*}
\Psi_\gamma^{(1)}(x)&\le \E\left[\exp\left({\sqrt{2\gamma} \aaa \langle \nabdob(x), Z\rangle+ 2 \aaa \gamma \bar{\lambda}_{{\rm loc}} (\gamma,x) |Z|^2}\right)\right]\\
&\le \prod_{i=1}^d \E_{{Z_1 \sim {\cal N}(0,1)}}[e^{\sqrt{2\gamma} \aaa\partial_i\dob(x)Z_1+ 2 \aaa  \gamma \bar{\lambda}_{{\rm loc}} (\gamma,x)  Z_1^2}],
\end{align*}
where we used   $\bar{\lambda}_{{\rm loc}}(\gamma,x)  :=\sup_{u \in B(x,\gamma|\nabdob(x)|+{\sqrt{K}})}\bar{\lambda}_{\hesdob}(u)$. {We choose to alleviate this notation in the sequel of the proof by  writing only $\bar{\lambda}_{{\rm loc}}$ (instead of $\bar{\lambda}_{{\rm loc}}(\gamma,x)$).}

 Below, we will use that:
\begin{equation}\label{eq:formexacte}
\forall \alpha_1 \in \R \quad \forall \alpha_2<1/2 \qquad 
\E_{{Z_1 \sim {\cal N}(0,1)}}[e^{\alpha_1 Z_1+\alpha_2 Z_1^2}]= \frac{1}{\sqrt{1-2\alpha_2} }e^{\frac{\alpha_1^2}{2(1-2\alpha_2)}},
\end{equation}
with $\alpha_1= \sqrt{2\gamma}\aaa\partial_i\dob(x)$ and $\alpha_2=2 \aaa \gamma \bar{\lambda}_{{\rm loc}}  $. {Since $ \bar{\lambda}_{\rm loc}\le L$}, our choice of $\gamma$ and $\aaa$ leads to $\alpha_2 < 1/2$ and we  deduce from Equation \eqref{eq:formexacte} that:
$$ \Psi_\gamma^{(1)}(x)=\left(\frac{1}{1- 4 \aaa \gamma 
{\bar{\lambda}_{{\rm loc}} }}
\right)^{\frac{d}{2}} e^{\frac{\gamma \aaa^2 |\nabdob(x)|^2}{1-4 \aaa\gamma \bar{\lambda}_{{\rm loc}}  }} = \exp \left( -\frac{d}{2} \log (1-4 \aaa \gamma \bar{\lambda}_{{\rm loc}} )
+\frac{\gamma \aaa^2  |\nabdob(x)|^2}{1-4 \aaa \gamma \bar{\lambda}_{{\rm loc}}}\right).
$$

\noindent We observe that $\log(1-u) \ge - 5/4 u$ when $u \in [0,1/2]$ and apply this inequality with $u=4 \aaa \gamma \bar{\lambda}_{{\rm loc}}<1/2$. Using again that $\bar{\lambda}_{{\rm loc}}  \leq L$ in the second term, we obtain:

\begin{equation}\label{eq:psigam111}
 \Psi_\gamma^{(1)}(x) \leq \exp\left(\frac{5}{2}  \aaa \gamma d \bar{\lambda}_{{\rm loc}}   +  \frac{\gamma \aaa^2  |\nabdob(x)|^2}{1-4 \aaa \gamma L}\right).
\end{equation}



$\bullet$ \underline{Upper bound of $\Psi_\gamma^{(2)}(x)$.}
Using the Cauchy-Schwarz inequality and the exponential Markov inequality,
\begin{align*}
\Psi_\gamma^{(2)}(x)&\le \E\left[\exp\left( 2\aaa \sqrt{2\gamma} \langle \nabdob(x), Z\rangle+ 4 \aaa  \gamma L  |Z|^2 \right)\right]^{\frac{1}{2}}\left[\mathbb{P}\left(|Z|^2 \ge \tcb{K} \gamma^{-1}\right)\right]^{\frac{1}{2}}\nonumber\\
& \le 
\exp \left( -\frac{d}{4} \log (1-8 \aaa \gamma L )+
\frac{2 \aaa^2 \gamma |\nabdob(x)|^2}{ 1-8 \aaa \gamma L}\right) 
e^{-\frac{K}{8\gamma}} (\E[e^{\frac{Z_1^2}{4}}])^{\frac{d}{2}}.
 \end{align*}
 By \eqref{eq:formexacte}, we deduce that
 \begin{align*}
\Psi_\gamma^{(2)}(x)&\le \exp \left(\frac{5d}{2} \aaa \gamma  L + \frac{2 \aaa^2 \gamma  |\nabdob(x)|^2}{ 1-8 \aaa \gamma L}\right)
e^{-\frac{K}{8\gamma}+d\frac{\log 2}{4}}\\
 & \leq {\exp \left(  - \frac{{K}}{8 \gamma}\left[1 - 20 \aaa \frac{ \gamma^{2} d L}{{K}} - \frac{\gamma d \log 2 }{2{K}}\right] + \frac{2 \aaa^2 \gamma  |\nabdob(x)|^2}{ 1-8 \aaa \gamma L} \right)}.
\end{align*}

%

%
Checking that $20 \aaa \gamma^2 d L + \gamma d \log 2/2 \leq 10$ and choosing $K$ larger then $20$ yields:
\begin{equation}\label{psi2part1}
\Psi_\gamma^{(2)}(x) \le \exp \left(  - \frac{{K}}{16 \gamma} + \frac{2 \aaa^2 \gamma  |\nabdob(x)|^2}{ 1-8 \aaa \gamma L} \right).
\end{equation}

We then plug \eqref{eq:psigam111} and \eqref{psi2part1} into \eqref{eq:fl345} and obtain that:
\begin{align*}
\E&[f_\aaa(\bX_{t_{k+1}})|{\cal F}_{t_k}]\le \E[f_\aaa(\bX_{t_{k}})] e^{(- \gamma \aaa + L \aaa \gamma^2) |\nabdob(\bX_{t_k})|^2} \\
& \times \left[ 
\exp\left(
\frac{5}{2}  \aaa \gamma d \bar{\lambda}_{{\rm loc}}   +  \frac{\gamma \aaa^2  |\nabdob(x)|^2}{1-4 \aaa \gamma L}\right)+
\exp \left(  -   \frac{{K}}{16 \gamma} + \frac{2 \aaa^2 \gamma  |\nabdob(x)|^2}{ 1-8 \aaa \gamma L}\right)\right] \\
& \leq \E[f_\aaa(\bX_{t_{k}})] 
 e^{- \aaa \gamma  \left(1-   L  \gamma - \frac{2 \aaa}{1-8 \aaa \gamma L}\right) |\nabdob(\bX_{t_k})|^2 } \left( \exp\left( \frac{5}{2} \aaa   \gamma d \bar{\lambda}_{{\rm loc}} (\gamma,\bX_{t_{k}},K)\right) + \exp\left(- \frac{{K}}{16 \gamma} \right)\right) \\
 & \leq \E[f_\aaa(\bX_{t_{k}})]  \left[ \exp\left(- \frac{K}{16 \gamma}\right) + \exp\left( -\frac{ \aaa \gamma}{2} \left[   |\nabdob(\bX_{t_k})|^2 -5  d \bar{\lambda}_{{\rm loc}}(\gamma,\bX_{t_{k}},K) \right] \right)\right],
\end{align*}
since  $L \gamma + \frac{2 \aaa}{1-8 \aaa \gamma L} \leq 1/2$.
Using our assumption $\Hcu$ and the notations introduced in \eqref{eq:compactc}, we know that:
$$- |\nabdob(x)|^2+
{5} d \bar{\lambda}_{\rm loc}(\gamma,x,K))\le -1  \quad \textnormal{when $x \in \{{\cal C}^K_{5,\gamma_0}\}^c$}.
$$
We introduce  $\rho=\rho({\gamma,K})= e^{-\frac{\aaa \gamma}{2}}+e^{-\frac{K}{16 \gamma}}$ and we shall observe that a $K$ large enough exists such that for our choice of $\gamma$, $\rho< 1$. Therefore, we have:
\begin{align*}
\E[f_\aaa(\bX_{t_{k+1}})|{\cal F}_{t_k}] &\le 
f_\aaa(\bX_{t_k})\left( e^{-\frac{a\gamma}{2}}1_{\{\bX_{t_k}\in \{{\cal C}^K_{5,\gamma_0}\}^c\}}+ e^{-\frac{K}{16 \gamma}}{+}
e^{\frac{\aaa \gamma}{2} ( -|\nabdob(\bX_{t_k})|^2+
{5} d L)}
 1_{\{\bX_{t_k}\in {\cal C}^K_{5,\gamma_0}\}}
 \right) \\ 
 & \le \rho  f_\aaa(\bX_{t_k})
 +  f_\aaa(\bX_{t_k})\left[ e^{\frac{5 \aaa \gamma dL}{2}}-e^{-\frac{a\gamma}{2}} \right] 1_{\{\bX_{t_k}\in {\cal C}^K_{5,\gamma_0}\}}\\
 & \le \rho f_\aaa(\bX_{t_k})
 +  e^{-\frac{\gamma \aaa}{2}} f_\aaa(\bX_{t_k})\left[ e^{\frac{\gamma \aaa }{2}(1+5dL)}- 1 \right] 1_{\{\bX_{t_k}\in {\cal C}^K_{5,\gamma_0}\}}.
 \end{align*}

We observe that $\frac{\gamma \aaa }{2}(1+5dL) \leq 1$ and using $e^{x} \le 1+2x$ when $x \in [0,1]$, we obtain that:
$$
\E[f_\aaa(\bX_{t_{k+1}})|{\cal F}_{t_k}] \le  \rho  f_\aaa(\bX_{t_k})+ \gamma \aaa \beta(\aaa,5,\gamma_0,K),
$$
where $\beta(\aaa,c,\gamma_0,K)$ is defined by \eqref{def:betalambda}.
\noindent Thus, setting $v_k=\E[f_\aaa(\bX_{t_k})]$, we obtain that:
$$  \forall k \ge 0 \qquad 
v_{k+1}\le \rho  v_k+ \gamma \aaa \beta(\aaa,5,\gamma_0,K).$$
An induction leads to
$$ \forall k \ge 1 \qquad v_k\le \gamma \bar{\beta}_d \sum_{j=0}^{k-1} \rho^{j} + \rho^k v_0 =  \frac{\gamma \aaa \beta(\aaa,5,\gamma_0,K)}{1-\rho(\gamma,K)} +  v_0.
$$
{We finally have to lower-bound $1-\rho(\gamma,K)$:}{ using that $\exp(-x)\le (2x)^{-1}$ for $x\ge 1$ and $\exp(-x)\le 1-x$ for $x\ge0$, we have $1-\rho(\gamma,K)\le \gamma(\frac{\aaa}{2}-\frac{8}{K})\ge \gamma\frac{\aaa}{4}$ by taking $K\ge 32 \aaa^{-1}$}.
 We finally deduce that 
$$\sup_{k\ge0} \E_x[e^{{\aaa} \dob(\bX_{t_k})}]\le  e^{{\aaa} \dob(x)}+C_\aaa \beta(\aaa,5,\gamma_0,K){\le
e^{{\aaa} \dob(x)}+C_\aaa (1+5d L) 
e^{\aaa  \cpar(5,32\aaa^{-1}) d^{\frac{1}{1+q-r}}}}
,$$
{where $C_\aaa$ does not only depend on $\aaa$ and the last inequality follows from Lemma \ref{lem:gradW2}}. To extend to any time $t\ge0$, it is enough to write for any $t\in[t_k,t_{k+1}]:$
$$\E[f_\aaa(\bar{X}_t)]=\E[\E[f_\aaa(\bX_t)|{\cal F}_{t_k}]],$$
and then to adapt the beginning of the proof. The details are left to the reader.
\end{proof}

We conclude this section by a useful technical result for our purpose, which is stated as $ii)$ of Proposition \ref{lem:expbounds22} in the main part of the paper.

In what follows, we assume that $\aaa \leq \frac{1}{16}$ and that $\gamma$ satisfies $\gamma(4dL+1) \leq 1$.

\begin{prop}\label{cor:momentsEUler}
Under the Assumptions of Proposition \ref{lem:expbounds22}, assume that  $\gamma\le\gamma_0:= \frac{1}{4 dL+1}$.
Let $p>0$. Then, 
$$\sup_{t\ge0} \E_x[\dob ^p (X_{t})]+\sup_{t\ge0} \E_x[\dob ^p (\bX_{t})]\le c_p \left(  \dob^p (x)+ \Upsilon^p\right),$$
where $p\mapsto c_p$ is a locally finite positive function on $[0,+\infty)$ and where,
$$\Upsilon=c_r {(\troiss\vee L)^{\frac{1}{1+q-r}}}{\deux^{-\frac{1}{1-r}}}\log(1+dL) d^{\frac{1}{1+q-r}},$$ 
with $c_r$ depending only on $r$. In particular, for any $p\le 9$,
\begin{equation}\label{eq:universalboundmoment}
\sup_{t\ge0} \E_x[\dob ^p (X_{t})]+\sup_{t\ge0} \E_x[\dob ^p (\bX_{t})]\le  C( \dob^p (x)+ \Upsilon^p),
\end{equation}
where $C$ is a universal constant.
\end{prop}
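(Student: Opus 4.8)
The plan is to deduce this result from the exponential moment bounds already established for both the continuous-time diffusion (Lemma \ref{lem:expbounds1cont}) and the continuous-time Euler scheme (Proposition \ref{lem:expbounds2scheme}), by converting exponential control into polynomial control. The key elementary fact is that for any $p>0$ and any $a>0$, one has $u^p \le (p/(ea))^p e^{au}$ for all $u\ge 0$; equivalently, $u^p \lesssim_p a^{-p} e^{au}$ uniformly in $u\ge0$. Applying this with $u=\dob(X_t)$ (which is $\ge \min\dob =_{uc} 1$, so positive) and with $a=\aaa$ a fixed constant, say $\aaa=1/16$, gives
\begin{equation*}
\E_x[\dob^p(X_t)] \le \left(\frac{p}{e\aaa}\right)^p \E_x\big[e^{\aaa \dob(X_t)}\big],
\end{equation*}
and similarly for $\bX_t$. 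Taking the supremum over $t\ge0$ and invoking Lemma \ref{lem:expbounds1cont} and Proposition \ref{lem:expbounds2scheme}, the right-hand sides are bounded by $(p/(e\aaa))^p$ times a quantity of the form $e^{\aaa \dob(x)} + C_\aaa(1+5dL)e^{\aaa \cpar(5,32\aaa^{-1})d^{1/(1+q-r)}}$. So the main work is to repackage this exponential-in-$d$ bound as $c_p(\dob^p(x)+\dw^p)$ with the stated $\dw$.

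The key step is therefore the algebraic manipulation of the $d$-dependent constant. First, $e^{\aaa\dob(x)} \le \dob^p(x)\cdot \sup_{u\ge1}(u^{-p}e^{\aaa u})$; but more simply, one applies the same elementary inequality in reverse is not needed — instead note $e^{\aaa\dob(x)}$ is exactly what we want to avoid, so I would instead keep $\dob^p(x)$ on the left by a direct route: use the decomposition $\dob^p(X_t)\le 2^p(\dob(X_t)-\dob(x))_+^p + 2^p\dob^p(x)$ is also awkward. The cleanest path: bound $\E_x[\dob^p(X_t)]\le (p/(e\aaa))^p\E_x[e^{\aaa\dob(X_t)}]$ and then plug in $\E_x[e^{\aaa\dob(X_t)}]\le e^{\aaa\dob(x)}+ B_d$ where $B_d := C_\aaa(1+5dL)e^{\aaa\cpar(5,32\aaa^{-1})d^{1/(1+q-r)}}$; and separately bound $e^{\aaa\dob(x)}\le (p/(e\aaa))^{-p}\dob^p(x) + e^{\aaa\dob(x)}\mathbf 1_{\dob(x)\ge \dw}$... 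This is getting circular. The actual right move is: since we are free to choose the constant $\dw$, we should \emph{define} $\dw$ so that $e^{\aaa u}\le e^{\aaa\dw}$ when $u\le \dw$ and $e^{\aaa u}\le (\text{const})u^p$ when $u\ge\dw$ — but actually the paper's $\dw$ is $\propto (\troiss\vee L)^{1/(1+q-r)}\deux^{-1/(1-r)}\log(1+dL)d^{1/(1+q-r)}$, and we want $\dw^p \gtrsim B_d^{?}$. Here is the point: we must instead bound $B_d$ itself by $C\dw^p$ for $p$ up to $9$ (or by $c_p\dw^p$ in general). This requires $\log B_d \lesssim \log(\dw^p) = p\log\dw$, i.e. $\aaa\cpar d^{1/(1+q-r)}+\log(1+5dL)+\log C_\aaa \lesssim p\log\dw$. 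Since $\dw \gtrsim d^{1/(1+q-r)}$ this cannot hold — $\log\dw \sim \frac{1}{1+q-r}\log d$ is only logarithmic while $B_d$ is exponential in $d^{1/(1+q-r)}$.

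So the genuine obstacle — and the real content of the proposition — is that the crude exponential bounds from Proposition \ref{lem:expbounds2scheme} are \emph{not} enough; one needs a sharper argument giving polynomial moments directly. I would therefore instead revisit the recursion $v_{k+1}\le \rho v_k + \gamma\aaa\beta$ — or rather its polynomial analogue. The plan: apply the It\^o/Taylor expansion directly to $\dob^p$ (not to $e^{\aaa\dob}$), using $\Hcu$ (via Lemma \ref{lem:gradW}, which gives $|\nabla\dob|^2\gtrsim \deux(\dob^{1-r}-\dob^{1-r}(x^\star))$) and $\AL$ to get a drift inequality of the form $\mathcal L\dob^p(x)\le -c\deux p\,\dob^{p-r}(x) + C\,d\,p\,\dob^{p-1-q}(x)\bar\lambda_{\hesdob}\cdot(\ldots)$; since $p-r > p-1-q$ when $r<1+q$, the negative term dominates once $\dob(x)\gtrsim (\text{ratio of constants}\times d)^{1/(1+q-r)}$, i.e. once $\dob(x)\gtrsim \dw$ with precisely the stated $\dw$ (matching the computation of $\cpar$ in Lemma \ref{lem:gradW2} but now with a polynomial Lyapunov function). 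This yields a Lyapunov drift $\mathcal L\dob^p \le -\delta\dob^{p-r} + K_{d,p}\mathbf 1_{\{\dob\le C\dw\}}$, hence $\sup_t\E_x[\dob^p(X_t)]\le \dob^p(x)+c_p\dw^p$. The discrete case follows the \emph{same} scheme as the proof of Proposition \ref{lem:expbounds2scheme}: Taylor-expand $\dob^p(\bX_{t_{k+1}})$, take conditional expectation, control the Gaussian moments (using $\gamma\le(4dL+1)^{-1}$ to keep error terms small), split the complement of a compact set as in \eqref{eq:compactc}, and run the induction $v_{k+1}\le\rho v_k+\gamma(\text{const})$, again with the compact set now being $\{\dob\le C\dw\}$. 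The constant $c_p$ is locally finite because the various combinatorial factors (powers of $p$, the $p$-dependent constant in the elementary inequalities and in the Gaussian moment bounds) are continuous in $p$; for $p\le 9$ one just tracks that $c_p$ and all prefactors are bounded by an absolute constant, giving \eqref{eq:universalboundmoment}. The main obstacle is thus bookkeeping: carrying out the polynomial Taylor expansion of $\dob^p$ with explicit dependence on $d$, $L$, $\deux$, $\troiss$ and $p$, and verifying that the threshold for the mean-reverting behaviour is exactly of order $\dw$ — this is the same calculation as in Lemma \ref{lem:gradW2} and Proposition \ref{lem:expbounds2scheme} but must be redone with the polynomial Lyapunov function rather than the exponential one.
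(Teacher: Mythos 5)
There is a genuine gap, and it occurs at the pivot of your argument. You correctly observe that the pointwise bound $u^p\lesssim_p \aaa^{-p}e^{\aaa u}$ is too lossy (it converts the exponential-in-$d^{1/(1+q-r)}$ constant $B_d$ into the final bound, which cannot be absorbed into $\dw^p$), but you then wrongly conclude that the exponential moment bounds of Lemma \ref{lem:expbounds1cont} and Proposition \ref{lem:expbounds2scheme} are insufficient. The paper's proof shows they are sufficient: the trick is not to bound $u^p$ by $e^{\aaa u}$ pointwise, but to write $\E_x[\dob^p(X_t)]=\aaa^{-p}\E_x[\log^p(e^{\aaa\dob(X_t)})]\le \aaa^{-p}\E_x[\log^p(e^{p-1+\aaa\dob(X_t)})]$ and use that $u\mapsto\log^p u$ is \emph{concave} on $[e^{p-1},+\infty)$, so Jensen moves the expectation inside the logarithm:
\begin{equation*}
\E_x[\dob^p(X_t)]\le \aaa^{-p}\Bigl(p-1+\log \E_x\bigl[e^{\aaa\dob(X_t)}\bigr]\Bigr)^p .
\end{equation*}
Taking the logarithm of the exponential bound $e^{\aaa\dob(x)}+C(1+dL)e^{c\,d^{1/(1+q-r)}}$ via $\log(e^a+\rho e^b)\le a+b+\log(2\rho)$ yields a quantity of order $\dob(x)+\log(1+dL)+d^{1/(1+q-r)}\lesssim \dob(x)+\dw$, and raising to the power $p$ gives exactly $c_p(\dob^p(x)+\dw^p)$. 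This is the one idea your proposal is missing.

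Your fallback route (a polynomial Lyapunov function $\dob^p$) also contains an unjustified "hence". Under $\Hcu$ with $r>0$ the drift inequality you would obtain is $\LL \dob^p\le -\delta\,\dob^{p-r}+K\mathbf 1_{\{\dob\le C\dw\}}$, which is a \emph{subgeometric} condition: writing $m(t)=\E_x[\dob^p(X_t)]$, you would need $\E_x[\dob^{p-r}(X_t)]\ge c\, m(t)^{1-r/p}$ to close a Gronwall argument, but Jensen's inequality for the concave map $v\mapsto v^{1-r/p}$ gives the reverse inequality. One cannot dominate $-\delta\dob^{p-r}+K$ by $-\delta'\dob^{p}+K'$ either, since $\delta'\dob^p-\delta\dob^{p-r}\to+\infty$. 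So $\sup_t\E_x[\dob^p(X_t)]\lesssim \dob^p(x)+\dw^p$ does not follow from that drift condition by the standard argument; recovering it would require genuinely more machinery (subgeometric drift theory), and this is precisely why the paper works with exponential Lyapunov functions in the weakly convex setting before converting to polynomial moments by the $\log^p$--Jensen device above.
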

\begin{rmq} Note that this property will play a fundamental in the proof of Proposition \ref{prop:fundamentalbounds2}. The second part is only a way to recall to the reader that the dependence in $p$ can be omitted in the proofs since we use this property for some values of $p$ which are always bounded by $9$ (More precisely, the ``worst'' value of $p$ in the proof is $p=8r(1+\mte)\vee (1+3r)(1+\mte)$ where $\mte$ is an arbitrary small positive number).
\end{rmq}
\begin{proof}[Proof of Proposition \ref{cor:momentsEUler}]
 Let us first consider the bound on the diffusion process. Owing to Jensen inequality and to the elementary inequality $(a+b)^p\le a^p+b^p$ for $p\le 1$ and positive $a$ and $b$, we can only consider the case $p\ge1$. {Let $\aaa\in(0, 1)$ and write}
$$\E_x [\dob^{p} (X_t)]=\aaa^{-p}\E_x[\log^p(e^{\aaa\dob(X_t)})]\le \aaa^{-p}\E_x[\log^p(e^{p-1+\aaa\dob(X_t)})]$$
Since $x\mapsto \log^p x$ is concave on $[e^{p-1},+\infty)$ and that by construction, $e^{p-1+\aaa\dob(X_t)}\ge e^{p-1}$, we deduce from Jensen inequality that
\begin{align*}
\E_x [\dob^{p} (X_t)]&\le \aaa^{-p} \left( (p-1)+ \log\E_x[e^{\aaa\dob(X_t)}]\right)^p\\
&\le  \aaa^{-p}\left( (p-1)+\log\left(e^{\aaa \dob (x)} + (1+(1-\aaa)^{-1}d L) 
e^{  \cpar(1,0) d^{\frac{1}{1+q-r}}}\right)\right)^p,
\end{align*}
where in the last line, we used Lemma \ref{lem:expbounds1cont}. For $a>0$, $\rho\ge 1$ and $b\ge0$, one easily checks that
$\log(e^a+\rho e^b)\le a+ b+\log(2\rho)$. Thus, 
\begin{align*}
\E_x [\dob^{p} (X_t)]&\le \aaa^{-p}\left( p-1+{\aaa \dob (x)} + \log(2(1+(1-\aaa)^{-1}d L)) 
+{  \cpar(1,0) d^{\frac{1}{1+q-r}}}\right)^p\\
&\le (3\aaa^{-1})^p\left( (p-1)^p+{\aaa^p \dob^p (x)} + \log^p(2(1+(1-\aaa)^{-1}d L)) 
+{  \cpar(1,0)^p d^{\frac{p}{1+q-r}}}\right).
\end{align*}
Taking $\aaa=1/2$ (for instance), we get
\begin{align*}
\E_x [\dob^{p} (X_t)]&\le c_p \left(\dob^p (x)+(\log(1+dL)\cpar(1,0))^p d^{\frac{p}{1+q-r}}\right).
\end{align*}
To deduce the result, one finally checks that $(\log(1+dL)\cpar(1,0)\lesssim_{uc}  \dw $.\\

\noindent 
A similar strategy based on Proposition \ref{lem:expbounds2scheme} leads to
\begin{align*}
\E_x [\dob^{p} (X_t)]&\le c_{p,\aaa} \left(\dob^p (x)+ (\log(1+5d L) 
  \cpar(5,32\aaa^{-1}))^p d^{\frac{p}{1+q-r}}\right).
\end{align*}
for any $\aaa\in(0,1/16)$. Taking $\aaa=1/16$ and checking that $\log(1+5d L) 
  \cpar(5,32\aaa^{-1})\lesssim_{uc} \dw$ leads to the result.

\end{proof}

\subsection{Analysis of the first and second variation processes\label{sec:tangent_technique}}
We introduce the \textit{first variation process} $Y^x=(Y^{x,ij})_{1\le i,j\le d}$ defined for all $(i,j)\in\{1,\ldots,d\}^2$ by
$Y_s^{x,ij}=\partial_{x_j} \{X_s^x\}^{i}$ where $\{X_s^x\}^{i}$ denotes the $i^{th}$ component of $X_s^x$. The process $Y^x$ is thus a matrix-valued process solution of  the ordinary differential equation: 
\begin{equation}\label{eq:Yt}
Y_0^{x} =I_d \qquad \text{and} \qquad \frac{d Y_s^{x}}{ds} = - \hesdob(X_t^x) Y_t^{x}.
\end{equation}



\begin{lem} \label{lem:pathconttang}
(i) { $\forall x\in\ER^d$,
$$ \nsp{Y^{x}_t}^2\le e^{-2\int_0^t \underline{\lambda}_{\hesdob}(X_s^x) {\rm{d}}s}.$$
}

(ii) { Assume that $\hesdob$ is $\tilde{L}$-Lipschitz for the norm $\nsp{.}$. Then, for any $x,y\in\ER^d$, 
$$\nsp{Y_t^y-Y_t^x}^2\le\tilde{L}^2 |x-y|^2
\int_0^t \frac{1}{\un{\lambda}_{\hesdob}(X_s^y)}e^{-\int_s^t \underline{\lambda}_{\hesdob}(X_u^y) {\rm{d}}u}e^{-2\int_0^s \underline{\lambda}_{\hesdob}(X_u^x) {\rm{d}}u} ds.
$$
}
%
\end{lem}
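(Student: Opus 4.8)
The plan is to establish both inequalities by elementary linear-ODE estimates, exploiting that $\nsp{A}=\sup_{|v|=1}|Av|$, that the operator norm $\nsp{\cdot}$ is submultiplicative, and that $\dob$ is convex under $\Hcu$ (indeed $\hesdob$ is positive definite since $\underline{\lambda}_{\hesdob}\ge\deux\dob^{-r}>0$), so that the gradient flow $t\mapsto X_t^x$ is a contraction: $|X_t^x-X_t^y|\le|x-y|$ for all $t\ge0$, by monotonicity of $\nabdob$. For $(i)$, I would fix $v\in\ER^d$ with $|v|=1$ and set $u_t=Y_t^x v$, which by \eqref{eq:Yt} solves $u_0=v$ and $\dot u_t=-\hesdob(X_t^x)u_t$. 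Then $\frac{d}{dt}|u_t|^2=-2\langle u_t,\hesdob(X_t^x)u_t\rangle\le-2\underline{\lambda}_{\hesdob}(X_t^x)|u_t|^2$, so Gronwall's lemma gives $|u_t|^2\le e^{-2\int_0^t\underline{\lambda}_{\hesdob}(X_s^x)\,ds}$, and taking the supremum over unit vectors $v$ proves $(i)$. The identical computation applied to the propagator $\Phi_{t,s}$, i.e. the matrix solution of $\frac{d}{dt}\Phi_{t,s}=-\hesdob(X_t^y)\Phi_{t,s}$ with $\Phi_{s,s}=I_d$, yields $\nsp{\Phi_{t,s}}\le e^{-\int_s^t\underline{\lambda}_{\hesdob}(X_u^y)\,du}$; this will be used for $(ii)$.

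For $(ii)$, set $Z_t=Y_t^y-Y_t^x$; subtracting the two copies of \eqref{eq:Yt} shows $Z_0=0$ and $\dot Z_t=-\hesdob(X_t^y)Z_t-(\hesdob(X_t^y)-\hesdob(X_t^x))Y_t^x$, a linear inhomogeneous equation whose solution by variation of constants is $Z_t=-\int_0^t\Phi_{t,s}\,(\hesdob(X_s^y)-\hesdob(X_s^x))\,Y_s^x\,ds$. Taking $\nsp{\cdot}$ and using submultiplicativity, the bound on $\nsp{\Phi_{t,s}}$, part $(i)$ for $\nsp{Y_s^x}$, the Lipschitz hypothesis $\nsp{\hesdob(X_s^y)-\hesdob(X_s^x)}\le\tilde L|X_s^y-X_s^x|$, and the contraction bound $|X_s^y-X_s^x|\le|x-y|$, I obtain
\[
\nsp{Z_t}\ \le\ \tilde L\,|x-y|\int_0^t e^{-\int_s^t\underline{\lambda}_{\hesdob}(X_u^y)\,du}\,e^{-\int_0^s\underline{\lambda}_{\hesdob}(X_u^x)\,du}\,ds .
\]

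The last, and only mildly delicate, step is to reorganize this integral into the stated form. I would apply the Cauchy--Schwarz inequality to the integral above with the weight $a(s)=\underline{\lambda}_{\hesdob}(X_s^y)\,e^{-\int_s^t\underline{\lambda}_{\hesdob}(X_u^y)\,du}$, observing that $a(s)=-\frac{d}{ds}e^{-\int_s^t\underline{\lambda}_{\hesdob}(X_u^y)\,du}$ so that $\int_0^t a(s)\,ds=1-e^{-\int_0^t\underline{\lambda}_{\hesdob}(X_u^y)\,du}\le1$, while the complementary factor equals
\[
\int_0^t \frac{1}{a(s)}\Big(e^{-\int_s^t\underline{\lambda}_{\hesdob}(X_u^y)\,du}\,e^{-\int_0^s\underline{\lambda}_{\hesdob}(X_u^x)\,du}\Big)^2 ds=\int_0^t\frac{1}{\underline{\lambda}_{\hesdob}(X_s^y)}\,e^{-\int_s^t\underline{\lambda}_{\hesdob}(X_u^y)\,du}\,e^{-2\int_0^s\underline{\lambda}_{\hesdob}(X_u^x)\,du}\,ds,
\]
which is exactly the right-hand side of $(ii)$; squaring the resulting inequality concludes. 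All integrals converge because $\underline{\lambda}_{\hesdob}>0$ under $\Hcu$, so beyond the continuity of the maps $s\mapsto X_s^x$ and $s\mapsto X_s^y$ there is no integrability subtlety, and the main point of the argument is precisely the choice of the weight $a(s)$ that converts the elementary double-exponential bound into the homogeneous-looking estimate claimed in the statement.
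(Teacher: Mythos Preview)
Your argument for $(i)$ is exactly the paper's. For $(ii)$ you take a genuinely different but equally valid route: the paper differentiates $|V_tz|^2$ with $V_t=Y_t^y-Y_t^x$, uses Young's inequality $2|ab|\le\lambda a^2+\lambda^{-1}b^2$ with the pointwise choice $\lambda=\underline{\lambda}_{\hesdob}(X_t^y)$ to split the cross term, and then applies Gronwall directly to the resulting linear differential inequality. You instead solve the linear inhomogeneous ODE by variation of constants, get an integral bound on $\nsp{Z_t}$, and invoke weighted Cauchy--Schwarz with $a(s)=\underline{\lambda}_{\hesdob}(X_s^y)\,e^{-\int_s^t\underline{\lambda}_{\hesdob}(X_u^y)\,du}$; this is precisely the integral dual of the paper's pointwise Young step, and the weight you found corresponds to the paper's choice of $\lambda$. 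Both proofs are short; yours makes the origin of the $1/\underline{\lambda}_{\hesdob}(X_s^y)$ factor transparent, while the paper's avoids introducing the propagator $\Phi_{t,s}$.

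Two minor slips, neither affecting correctness. First, $a(s)=+\tfrac{d}{ds}\,e^{-\int_s^t\underline{\lambda}_{\hesdob}(X_u^y)\,du}$, not $-\tfrac{d}{ds}$; your evaluation $\int_0^t a(s)\,ds=1-e^{-\int_0^t\underline{\lambda}_{\hesdob}(X_u^y)\,du}\le1$ is nonetheless right. Second, $X_t^x$ is the \emph{diffusion}, not a gradient flow; however, under synchronous coupling the Brownian increments cancel in the difference, so $\tfrac{d}{dt}(X_t^x-X_t^y)=-(\nabdob(X_t^x)-\nabdob(X_t^y))$ and your contraction $|X_t^x-X_t^y|\le|x-y|$ via monotonicity of $\nabdob$ is valid. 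The paper derives the same contraction from part $(i)$ instead, writing $|X_s^x-X_s^y|\le\sup_u\nsp{Y_s^u}\,|x-y|\le|x-y|$.
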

\begin{proof}
$(i)$ { By  \eqref{eq:Yt}, 
$$Y^{x}_t z=z-\int_0^t \hesdob(X_s) Y_s^{x}z  ds.$$
Thus, 
$$ |Y^{x}_t z|^2\le |z|^2- 2\int_0^t \underline{\lambda}_{\hesdob}(X_s^x) |Y^{x}_s z|^2 ds.$$
By a Gronwall-type argument, we deduce that
$$ |Y^{x}_t z|^2\le |z|^2 e^{-2\int_0^t \underline{\lambda}_{\hesdob}(X_s^x) {\rm{d}}s}$$
and the result follows.
}

\noindent  (ii) 
{For any $x,y\in\ER^d$,
\begin{align*}
 Y_t^{y}-Y_t^{x}&=\int_0^t -\hesdob(X_s^y)Y_s^{y}+\hesdob(X_s^x)Y_s^{x} ds\\
 &=\int_0^t -\hesdob(X_s^y)(Y_s^{y}-Y_s^x)+(\hesdob(X_s^x)-\hesdob(X_s^y))Y_s^{x} ds.
\end{align*}
}
{Setting $V_t=Y_t^{y}-Y_t^{x}$, we have for any $z\in\ER^d$ such that $|z|=1$,
\begin{align*}
\frac{d}{dt} |V_t z|^2&=-2 \langle V_t z, \hesdob(X_t^y) V_t z\rangle +2\langle V_t z,  (\hesdob(X_t^x)-\hesdob(X_t^y))Y_t^{x}z\rangle\\
&\le -2\un{\lambda}_{\hesdob}(X_t^y) |V_t z|^2+ 2| V_t z|\times \tilde{L} |X_t^x-X_t^y| e^{-\int_0^t \underline{\lambda}_{\hesdob}(X_s^x) {\rm{d}}s},
\end{align*}
where $\tilde{L}$ denotes the Lipschitz constant of $\nabla^2 W$ for the norm $\nsp{.}$. 
The inequality $2|ab|\le \lambda |a|^2+\lambda^{-1}|b|^2$ (for $\lambda>0$) then yields:
\begin{align*}
\frac{d}{dt} |V_t z|^2\le -\un{\lambda}_{\hesdob}(X_t^y) |V_t z|^2+ \frac{1}{\un{\lambda}_{\hesdob}(X_t^y)}\tilde{L}^2 |z|^2 |X_t^x-X_t^y|^2 e^{-2\int_0^t \underline{\lambda}_{\hesdob}(X_s^x) {\rm{d}}s}.
\end{align*}
Thus, by a Gronwall-type argument,  we deduce that
\begin{align*}
|V_t z|^2&\le \tilde{L}^2 |z|^2 \int_0^t \frac{|X_s^x-X_s^y|^2}{\un{\lambda}_{\hesdob}(X_s^y)}e^{-\int_s^t \underline{\lambda}_{\hesdob}(X_u^y) {\rm{d}}u}e^{-2\int_0^s \underline{\lambda}_{\hesdob}(X_u^x) {\rm{d}}u} ds.
\end{align*}
The result follows by using that $ |X_s^x-X_s^y|\le \sup_{u\in\ER^d}\nsp{Y_s^{u}}|y-x|\le |y-x|$ by $(i)$. 
}
\end{proof}
\subsection{ Solution of Poisson equation. Bounds on  the solution of the  Poisson equation and its derivatives \label{sec:tangent_technique}}
We remind that $g$ is solution of the Poisson equation {$f-\pi(f)={\cal L}g$ where $f$ is an at least ${\cal C}^2$-function from $\ER^d$ to $\ER^{d'}$ }and that, $x^\star$ is the unique minimizer of $W$.
\begin{proof}[Proof of Proposition \ref{theo:poisson}]
\underline{Uniqueness:} Consider two ${\cal C}^2$ solutions $g_1$ and $g_2$. Then, $\LL(g_1-g_2)=0$ and:
$$
\int (g_1-g_2) \LL(g_1-g_2) \text{d}\pi= - \int |\nabla(g_1-g_2)|^2 \text{d} \pi.
$$
Since the operator $\LL $ is elliptic, we know  that the density of $\pi$ is $a.s.$ positive so that $g_1-g_2$ is constant. The constraint $\pi(g_1)=\pi(g_2)=0$ implies that $g_1=g_2$.

\noindent
\underline{Existence:}
{ Let $g_t(x)=\int_0^t \nu(f) - P_s f(x) \text{d}s$. 
Following the arguments of Proposition \ref{prop:fundamentalbounds2} and its proof  below (mainly the fact that the first and second variation processes go to $0$ in $L^1$, {sufficiently fast and locally uniformly in $x$}), $g$ is well-defined, of class ${\cal C}^2$ and, 
 $(g_t)$, $D g_t$ and $D^2 g_t$ converge locally uniformly to $g$, $Dg$ and $D^2g$ respectively. In particular, $\LL g=\lim_{t\rightarrow+\infty} \LL g_t$. Now,
 using that $\LL$ is a linear operator (null on constant functions) and the Dynkin formula, we get:
$$
\LL g_t(x) =\int_{0}^t \LL (\nu(f)-P_s f)(x) \text{d}s = P_0 f(x)-P_t f(x) =  f(x) - P_t(f)(x)\xrightarrow{t\rightarrow+\infty} f(x)-\pi(f).
$$
Then,  $\LL g(x)=\lim_{t\rightarrow+\infty} \LL g_t(x)=f(x)-\pi(f)$ for every $x\in\ER^d$ (see Proposition A.8 of \cite{Pages_Panloup} for a similar but more detailed proof).}

\end{proof}


\begin{proof}[Proof of Proposition \ref{prop:fundamentalbounds2} {of the main document}]
The fact that $g$ is ${\cal C}^2$ is proved along the proof.
\noindent \underline{Proof of $i)$.}
If the conditions of the Lebesgue differentiability are met (checked later on), then:
$$ Dg(x)=\int_0^{+\infty}\E_x[ Df(X_t) Y_t] \text{d}t.$$
Thus, since $\nsp{.}$ is a norm and since $\sup_{x\in\ER^d} \nsp{Df(x)}=\flip$,
\begin{equation}\label{eq:controleDflipp}
 \nsp{Dg(x)}\le \flip \int_0^{+\infty} \E_x[\nsp{Y_t}] dt.
 \end{equation}
By Lemma \ref{lem:pathconttang}, for all $z\in\ER^d$, 
$$\E_x[|Y_t z|^2]\le |z|^2 \E_x[e^{-2\int_0^t \underline{\lambda}_{\hesdob}(X_s) \text{d}s}].$$
Thus, for any $t\ge1$, for every positive $\delta_1$, we have
\begin{align} \label{eq:trozpaze11}
\E_x|Y_t z|^2]\le |z|^2 e^{-2 t^{\delta_1}}+|z|^2\PE\left(\int_0^{t} \underline{\lambda}_{\hesdob}(X_u) \text{d}u\le t^{\delta_1}\right).
\end{align}
Then, using $\Hcu$ and the Markov inequality, we get for every positive $\mte$, for every $z\neq0$,
\begin{align} 
\frac{1}{|z|^2}\E_x[|Y_t z |^2]& \le e^{-2 t^{\delta_1}}+\PE_x\left(\int_0^{t} \dob^{-r} (X_u) \text{d}u\le {\deux^{-1}} t^{\delta_1}\right)\nonumber\\
& \le e^{-2t^{\delta_1}}+\PE_x\left(\left(\int_0^{t} \dob^{-r} (X_u) \text{d}u\right)^{-2(1+\mte)} \ge ({\deux^{-1}} t^{\delta_1})^{-2(1+\mte)}\right)\nonumber\\
&\le e^{-2t^{\delta_1}}+ ({\deux^{-1}} t^{\delta_1})^{2(1+\mte)} \E_x\left[\left(\int_0^{t} \dob^{-r} (X_u) \text{d}u\right)^{-2(1+\mte)} \right].\nonumber
\end{align}
Since $x\mapsto x^{-2(1+\mte)}$ is convex on $(0,+\infty)$, it follows from the Jensen inequality that:
\begin{align} \label{eq:trozpaze}
\frac{1}{|z|^2}\E_x[|Y_t z |^2]& \le e^{-2t^{\delta_1}}+ ({\deux^{-1}} t^{\delta_1})^{2(1+\mte)} t^{-2(1+\mte)} \sup_{u\ge0} \E_x[ \dob^{2r(1+\mte)} (X_u)].
\end{align}
Setting $\delta_1=\mte/(2(1+\mte))$, using the Jensen inequality and {the elementary inequality $(a+b)^\frac{1}{2}\le a^\frac{1}{2}+b^{\frac{1}{2}}$ for non negative $a$ and $b$}, we obtain:
\begin{align*} 
\E_x[|Y_t z |]& \le |z| e^{- t^{\frac{\mte}{2(1+\mte)}}}+|z|  \deux^{{-1-\mte}} t^{-1-\frac{\mte}{2}}\sup_{u\ge0} \E_x[ \dob^{r(1+\mte)} (X_u)].
\end{align*}
%
By Proposition \ref{cor:momentsEUler}, Inequality \eqref{eq:universalboundmoment}, we get
\begin{align*} 
\E_x[\nsp{Y_t }]=\sup_{z\neq0} \frac{\E_x[|Y_t z |]}{|z|}& \lesssim_{uc} e^{- 2t^{\frac{\mte}{2(1+\mte)}}}+   \deux^{{-1-\mte}} (1+t)^{-1-\frac{\mte}{2}} \left(\dob^{r(1+\mte)}(x)+\Upsilon^{r(1+\mte)}\right).
\end{align*}
The above property has several consequences. First, it implies that $g$ is well-defined. Actually,
\begin{align}
&|P_t f(x)-\pi(f)|\le \int |P_t f(x)-P_t f(y)|\pi({d}y)\le \flip \int \sup_{u\in[x,y]}\E_u[\nsp{Y_t}]|y-x|\pi(dy)\nonumber\\
&\le C\left(e^{- 2t^{\frac{\mte}{2(1+\mte)}}}\int|y-x|\pi(dy)+ (1+t)^{-1-\frac{\mte}{2}}\int (\dob^{r(1+\mte)}(x)+\dob^{r(1+\mte)}(y))|y-x|\pi(dy)\right)\nonumber\\
&\le C_x\max(e^{- 2t^{\frac{\mte}{2(1+\mte)}}}, (1+t)^{-1-\frac{\mte}{2}}).\label{eq:expconvtang}
\end{align}
In the above inequalities, we used the convexity of $\dob$ and the fact that $\pi$ integrates functions with polynomial growth (simple consequence of Lemma \ref{lem:expbounds1cont}). Thus $t\mapsto |P_t f(x)-\pi(f)|$ is integrable on $[0,+\infty)$ and $g$ is thus well defined.\\

\noindent  {Then, the Lebesgue differentiability theorem applies. This implies that  $Dg(x)$ is well-defined on $\ER^d$ and that (by \eqref{eq:controleDflipp}
), 
\begin{equation}\label{controledgdex}
\nsp{Dg(x) } \le \deux^{-1-\mte}c_\mte \flip \left(\dob^{r(1+\mte)}(x)+\Upsilon^{r(1+\mte)}\right).
\end{equation}
where $c_\mte$ denotes a positive constant depending only $\mte$ (\textbf{$c_\mte$ may change from line to line but always depends only the parameter $\mte$}).
}

For the second inequality of this assertion, we again use Inequality \eqref{eq:universalboundmoment} of Proposition  \ref{cor:momentsEUler}. More precisely:
\begin{align}\nonumber
\E_{{{x_0}}} [\nsp{Dg(\bX_t)}^2]&\le c_\mte \deux^{-2-2\mte} \flip^2 \left(\E_{x_0}[\dob^{2r(1+\mte)}(\bX_t)]+\Upsilon^{2r(1+\mte)}\right)\\
&\le c_\mte \deux^{-2-2\mte}\flip^2 \left(\dob^{2r(1+\mte)}(x_0)+2\Upsilon^{2r(1+\mte)}\right). \label{eq:argeuler2}
\end{align}
The result follows.

\noindent \underline{Proof of $ii-a)$.}
{By the first order Taylor formula and \eqref{controledgdex},
\begin{align*}
 |g(x)-g(x^\star)|^2&\le \sup_{u\in[0,1]}\nsp{Dg(x^\star+u(x-x^\star)}^2|x-x^\star|^2\\
 &\le c_\mte \deux^{-2-2\mte} \flip^2 \left(\dob^{2r(1+\mte)}(x^\star+u(x-x^\star))+\Upsilon^{2r(1+\mte)}\right)|x-x^\star|^2.
 \end{align*}
Since $\dob$ is a convex function, $\dob(x^\star+u(x-x^\star))\le \dob(x)$. Thus, using \eqref{eq:asympdob}, we get
\begin{align*}
 \frac{1}{\flip^2} |g(x)-g(x^\star)|^2 &\le c_\mte \deux^{-3-2\mte} \left(\dob^{2r(1+\mte)}(x)+\Upsilon^{2r(1+\mte)}\right) \dob^{1+r}(x)\\
&\le c_\mte \deux^{-3-2\mte}  \left(\dob^{(1+3r)(1+\mte)}(x)+\Upsilon^{(1+3r)(1+\mte)}\right),
\end{align*}
where in the second line, we used the Young inequality with $p=(1+3r)/(2r)$ and $q=(1+3r)/(1+r)$.}

\noindent \underline{Proof of $ii-b)$.}
 We apply the result of $ii-a)$ and obtain that:
$${\E_{x_0}[| g(\bX_t)-g(x^\star)|^2]}{}\le c_\mte
\deux^{-3-2\mte} \flip^2\left(\E_{x_0}[{\dob}^{(1+3r)(1+\mte)}(\bX_t)]+ \Upsilon^{(1+3r)(1+\mte)}\right).
$$
By Proposition \ref{cor:momentsEUler} (Inequality \eqref{eq:universalboundmoment}), this yields
\begin{align*}
\E_{x_0}[| g(\bX_t)-g(x^\star)|^2]
\le c_\mte
\deux^{-3-2\mte} \flip^2 \left({\dob}^{(1+3r)(1+\mte)}(x_0)+2 \Upsilon^{(1+3r)(1+\mte)}\right).
 \end{align*}
The result follows under the assumption $W(x_0)\lesssim_{uc} \Upsilon$.  \\

\noindent Again, by $ii-a)$
$$ |g(x_0)-g(x^\star)|^2\le c_\mte \deux^{-3-2\mte} \flip^2\left({\dob}^{(1+3r)(1+\mte)}(x_0)+\Upsilon^{(1+3r)(1+\mte)}\right),$$
and the fact  that $W(x_0)\lesssim_{uc} \Upsilon$  implies that $ |g(x_0)-g(x^\star)|^2\le c_\mte \deux^{-3-2\mte} \flip^2\Upsilon^{(1+3r)(1+\mte)}.$
The result follows.\\



\noindent \underline{Proof of $iii)$.}
First, since $Df(X_t^y) Y_t^y-Df(X_t^x) Y_t^x= Df(X_t^y) (Y_t^y-Y_t^x)+(Df(X_t^y)-Df(X_t^x))Y_t^x$,
\begin{equation}\label{eq:raslebol24}
 \nsp{Dg(y)-Dg(x)}\le \int_0^{+\infty} \flip \E[\nsp{Y_t^y-Y_t^x}]+ \dflip \E[|X_t^y-X_t^x| \nsp{Y_t^x} ] dt.
 \end{equation}
First, since  $\sup_{u\in\ER^d,t\ge0} \nsp{Y_t^u}\le 1$, $|X_t^y-X_t^x|\le |y-x|$ and with the same argument as in \eqref{controledgdex},
\begin{equation}\label{eq:raslebol25}
\int_0^{+\infty}\E[|X_t^y-X_t^x| \nsp{Y_t^x} ]\le |y-x| \deux^{-1-\mte}c_\mte \flip \left(\dob^{r(1+\mte)}(x)+\Upsilon^{r(1+\mte)}\right).
\end{equation}
Second, set $\Upsilon_{x,y}(s)=\min\left(\underline{\lambda}_{\hesdob}(X_u^x),\underline{\lambda}_{\hesdob}(X_u^y)\right)$.
By Lemma \ref{lem:pathconttang},
\begin{align*}
 \E[\nsp{Y_t^y-Y_t^x}^2]  &\le \tilde{L}^2 {|y-x|^2} \E\left[e^{-\int_0^t  \Upsilon_{x,y}(s) ds} \int_0^t \underline{\lambda}^{-1}_{\hesdob}(X_u^y) ds\right].
 \end{align*}
 By $\Hcu$, Cauchy-Schwarz and Jensen inequalities, this yields:
 \begin{align}
  \E[\nsp{Y_t^y-Y_t^x}^2]  &\le  \tilde{L}^2  {|y-x|^2}\E\left[e^{-2\int_0^t  \Upsilon_{x,y}(s) ds}\right]^{\frac{1}{2}} \E\left[\left(\int_0^t  \frac{\dob^r(X_s^y)}{\deux} ds \right)^2\right]^{\frac 12}\nonumber\\
  &\le   \frac{\tilde{L}^2}{\deux} {|y-x|^2} \E\left[e^{-2\int_0^t  \Upsilon_{x,y}(s) ds}\right]^{\frac{1}{2}} t\sup_{t\ge0} \E[\dob^{2r}(X_t^y)]^{\frac{1}{2}}\nonumber\\
&\lesssim_{uc}  \frac{\tilde{L}^2}{\deux} {|y-x|^2}  \E\left[e^{-2\int_0^t  \Upsilon_{x,y}(s) ds}\right]^{\frac{1}{2}} t \left(\dob^r(y)+\Upsilon^r\right),\label{raslebol23}
\end{align}
 where in the last line, we used Proposition \ref{cor:momentsEUler} (Inequality \eqref{eq:universalboundmoment}). 
For the first right-hand term, we use a similar strategy as for 
\eqref{eq:trozpaze11} and get for every $t\ge1$, for any positive $\delta_1$ and $\delta_2$,
$$\E\left[e^{-2\int_0^t \Upsilon_{x,y}(s) {\rm{d}}u}\right]
\le e^{-2 t^{\delta_1}}+ t^{\delta_2(\delta_1-1)} \sup_{t\ge0} \E[  \Upsilon_{x,y}(t)^{-\delta_2}].$$
By $\Hcu$, it follows that
$$\E\left[e^{-2\int_0^t \Upsilon_{x,y}(s) {\rm{d}}u}\right]
\le e^{-2 t^{\delta_1}}+ \deux^{-\delta_2}t^{\delta_2(\delta_1-1)} \sup_{t\ge0} \E[  \dob^{r\delta_2}(X_t^x)+\dob^{r\delta_2}(X_t^y)].$$
By \eqref{raslebol23}, we deduce that
\begin{align*}
  \frac{\E[\nsp{Y_t^y-Y_t^x}^2]}{|y-x|^2} &\lesssim_{uc}  \frac{\tilde{L}^2}{\deux}\left(\dob^r(y)+\Upsilon^r\right)\left[ te^{-2 t^{\delta_1}}+
   \deux^{-\delta_2}t^{\delta_2(\delta_1-1)+1} \sup_{t\ge0} \E[  \dob^{r\delta_2}(X_t^x)+\dob^{r\delta_2}(X_t^y)]\right].
\end{align*}
For a positive $\mte$, we fix $\delta_2=3(1+\mte)$ and $\delta_1=\mte/(3(1+\mte))$ so that $\delta_2(\delta_1-1)+1=-2(1+\mte)$.
By  Proposition \ref{cor:momentsEUler} (Inequality \eqref{eq:universalboundmoment}), we deduce that:
\begin{align*}
  \frac{\E[\nsp{Y_t^y-Y_t^x}^2]}{|y-x|^2} \lesssim_{uc}  & \frac{\tilde{L}^2}{\deux}\left(\dob^r(y)+\Upsilon^r\right) \times\\
& \left[ te^{-2 t^{\frac{\mte}{3(1+\mte)}}}+
  \deux^{-3(1+\mte)}t^{-2(1+\mte)}\left(\dob^{3r(1+\mte)}(x)+\dob^{3r(1+\mte)}(y)+\Upsilon^{3r(1+\mte)}\right)\right].
\end{align*}
By \eqref{eq:raslebol24}, Jensen Inequality and the elementary inequality $(a+b)^p\le c_p(a^p+b^p)$, we finally obtain
\begin{align*}
 \frac{\int_0^{+\infty}\E[\nsp{Y_t^y-Y_t^x}]dt}{|y-x|}\le&  \,c_\mte \frac{\tilde{L}}{\sqrt{\deux}}
 \left(\dob^\frac{r}{2}(y)+\Upsilon^\frac{r}{2}\right)\times\\
& \left[1+\deux^{-\frac{3}{2}(1+\mte)}\left(\dob^{\frac{3r}{2}(1+\mte)}(x)+\dob^{\frac{3r}{2}(1+\mte)}(y)+\Upsilon^{\frac{3r}{2}(1+\mte)}\right)\right]\\
\le  & \,c_\mte \deux^{-2(1+\mte)}\tilde{L}\left(\dob^{2r(1+\mte)}(x)+\dob^{2r(1+\mte)}(y)+\Upsilon^{2r(1+\mte)}\right),
\end{align*}
where in the last line, we used Young inequality with $p=4/3$ and $q=4$. Thus, by \eqref{eq:raslebol24}, \eqref{eq:raslebol25} and the previous equation, we deduce the announced result.

\noindent \underline{Proof of $iii)-b)$.}  Set $\tX_{{t}}=\bX_{\un{t}}-(t-\un{t}) \nabdob(\bX_{\un{t}})$. We have

\begin{align*}
|(Dg(\bX_{{t}})-&Dg(\tX_{{t}}))(\nabla W(\bX_{\un{t}}+\Delta_{\un{t}t})-\nabla W(\bX_{\un{t}}))|\le
\nsp{Dg(\bX_{{t}})-Dg(\tX_{{t}})}L |\Delta_{\un{t}t}| \\
& \lesssim_{uc} c_\mte  \deux^{-2(1+\mte)}\tilde{L}\left(\dob^{2r(1+\mte)}(\bX_{{t}})+\dob^{2r(1+\mte)}(\tX_{{t}})+\Upsilon^{2r(1+\mte)}\right)L|\Delta_{\un{t}t}|^2(t-\un{t})
\end{align*}
where in the second line, we used $(iii)-a)$. By  Cauchy-Schwarz inequality and the elementary inequality $|a+b+c|^2\le 3(|a|^2+|b|^2+|c|^2$, we deduce that
$$\E_{x_0}\left[|(Dg(\bX_{{t}})-Dg(\tX_{{t}}))(\nabla W(\bX_{\un{t}}+\Delta_{\un{t}t})-\nabla W(\bX_{\un{t}}))|^2\right]\lesssim_{uc} \mathfrak{c}_\mte \frac{(L\tilde{L})^2}{\deux^{2(1+\mte)}} \E[|\Delta_{\un{t}t}|^8]^{\frac{1}{2}} \E_{x_0}[\Xi_t^2]^{\frac{1}{2}},
$$
with
$$\Xi_t=\dob^{{4r}(1+\mte)}(\bX_t)+\dob^{{4r}(1+\mte)}(\tX_{{t}})+\Upsilon^{4r(1+\mte)}.
$$ 
 By Cauchy-Schwarz inequality and Proposition \ref{cor:momentsEUler} (Inequality \eqref{eq:universalboundmoment}) and a slight adaptation for 
$\dob^{p}(\tX_{{t}})$),
\begin{align*}
\E_{x_0}[\Xi_t^2]^{\frac{1}{2}}&\lesssim_{uc} \dob^{4r(1+e)}(x_0)+\Upsilon^{4r(1+\mte)}.
\end{align*}
Note that we again used elementary inequalities (including Young inequality) which involved constants which can be bounded by universal constants (and are thus ``hidden'' in the notation ``$\lesssim_{uc}$'').
The result follows by using that 
$$\E[|\Delta_{\un{t}t}|^8]^{\frac{1}{2}}=4(t-\un{t})^2\E[(\chi_2(d))^4]^{\frac{1}{2}}=4(t-\un{t})^2(d(d+2)(d+4)(d+6))^{\frac{1}{2}}\lesssim_{uc} (t-\un{t})^2d^2.$$
%
\end{proof}

\section{Discretization tools - Strongly convex case}

\begin{proof}[Proof of Proposition \ref{prop:bisbis} {of the main document}]

\noindent $(i)$ By Lemma \ref{lem:pathconttang}, the strong convexity yields
$$ \forall x\in\ER^d
\qquad  \nsp{Y^{x}_t}\le e^{-\rho t}.$$
Thus, by Equation \eqref{eq:controleDflipp},
$$ \nsp{Dg(x)}\le \flip \int_0^{+\infty} e^{-\rho t} dt\le \frac{\flip}{\rho}$$.
The first assertion follows since $\flip\le 1$.

$(ii)$ By $(i)$,
$$\E_{x_0}[|g(\bar{X}_t)-g(x_0)|^2]\le \frac{1}{\rho^2}\E_{x_0}[|\bar{X}_t-x_0|^2].$$
By Lemma 5.1(i) of \cite{egea-panloup}, if $\nabdob(x^\star)=0$,
$$\E[|\bar{X}_t^{x_0}-x^\star|^2]\le |x_0-x^\star|^2 e^{-{\frac{\rho}{2}} t} + {\frac{2d}{\rho}}.$$

\noindent
The result follows by using the elementary inequality $|\bar{X}_t^{x_0}-x_0|^2\le 2 (|\bar{X}_t^{x_0}-x^\star|^2+|{x_0}-x^\star|^2).$

\noindent $(iii)$ By Equation \eqref{eq:raslebol24} and the Cauchy-Schwarz inequality,
\begin{equation}
 \nsp{Dg(y)-Dg(x)}\le \int_0^{+\infty} \flip \E[\nsp{Y_t^y-Y_t^x}]+ \dflip \E[|X_t^y-X_t^x|^2]^\frac{1}{2} \E[\nsp{Y_t^x}^2 ]^\frac{1}{2} dt.
 \end{equation}
On the one hand, by Lemma \ref{lem:pathconttang}$(ii)$ and the fact $\hesdob$ is $\tilde{L}$-Lipschitz for the norm $\nsp{.}$,
  
$$\nsp{Y_t^y-Y_t^x}^2\le\tilde{L}^2 |x-y|^2
e^{-\rho t} \int_0^t \frac{1}{\rho}e^{-\rho s} ds\le \frac{e^{-\rho t}}{\rho^2}{ \tilde{L}^2  |x-y|^2} \quad\forall t\ge0.
$$
On the other hand, by Lemma \ref{lem:pathconttang}$(i)$ and the Jensen inequality,
 $$\E[|X_t^y-X_t^x|^2]=\E\left[\left|\int_0^1 Y_t^{x+\theta(y-x)} (y-x)d\theta \right|^2\right]\le \sup_{\theta\in[0,1]} \E \nsp{Y_t^{x+\theta(y-x)}}^2 |y-x|^2 \le |x-y|^2 e^{-2\rho t}.$$
The Cauchy-Schwarz inequality, $\flip \leq 1$ and $\dflip \leq 1$ yield:
$$ \nsp{Dg(y)-Dg(x)}\le  |y-x| \int_0^{+\infty} \left({\frac{\flip\tilde{L}}{\rho}} e^{-{\frac{\rho}{2}} t} + \dflip e^{-{2 \rho} t} \right)  dt\le |x-y| \left( {\frac{2 \tilde{L} }{\rho^2}} + {\frac{1}{2 \rho}}\right).$$



\bibliographystyle{alpha}
\bibliography{ref_bayesian_langevin}

\newcommand{\etalchar}[1]{$^{#1}$}
\begin{thebibliography}{MHW{\etalchar{+}}19b}

\bibitem[BB05]{Bagnoli_economics}
M.~Bagnoli and T.~Bergstrom.
\newblock Log-concave probability and its applications.
\newblock {\em Econ. Theory}, 26(2):445--469, 2005.

\bibitem[BBCG08]{Poincare_log_concave}
D.~Bakry, F.~Barthe, P.~Cattiaux, and A.~Guillin.
\newblock A simple proof of the poincare inequality for a large class of
  probability measures including the log-concave case.
\newblock {\em Electron Commun. Probab.}, (13):60--66, 2008.

\bibitem[BDLM10]{Bolte}
J.~Bolte, A.~Daniilidis, O.~Ley, and L.~Mazet.
\newblock Characterizations of lojasiewicz inequalities: subgradient flows,
  talweg, convexity.
\newblock {\em Trans. Amer. Math. Soc.}, (362):3319--3363, 2010.

\bibitem[BE83]{BakryEmery}
D.~Bakry and M.~Emery.
\newblock Diffusions hypercontractives.
\newblock {\em Séminaire de probabilités, XIX, Univ. Strasbourg, Springer},
  84:177--206, 1983.

\bibitem[BG99]{Bobkov-Gotze}
S.~Bobkov and F.~G{\"o}tze.
\newblock Exponential integrability and transportation cost related to
  logarithmic sobolev inequalities.
\newblock {\em J. Funct. Anal.}, 163(1), 1999.

\bibitem[BGL14]{BGL}
D.~Bakry, I.~Gentil, and M.~Ledoux.
\newblock {\em Analysis and geometry of {M}arkov diffusion operators}, volume
  348.
\newblock Springer, Cham, 2014.

\bibitem[Bir83]{Birge}
L.~Birgé.
\newblock Approximation dans les espaces métriques et théorie de
  l'estimation.
\newblock {\em Z. Wahrscheinlichkeitstheorie verw. Geb}, pages 181--237, 1983.

\bibitem[BL97]{BL}
S.~Bobkov and M.~Ledoux.
\newblock Poincar\'{e}'s inequalities and {T}alagrand's concentration
  phenomenon for the exponential distribution.
\newblock {\em Probab. Theory Related Fields}, 107(3):383--400, 1997.

\bibitem[Bob99]{Bobkov-aop}
S.~G. Bobkov.
\newblock Isoperimetric and analytic inequalities for log-concave probability
  measures.
\newblock {\em Ann. Probab.}, 27(4):1903--1921, 1999.

\bibitem[Bre11]{MR2759829}
Haim Brezis.
\newblock {\em Functional analysis, {S}obolev spaces and partial differential
  equations}.
\newblock Universitext. Springer, New York, 2011.

\bibitem[Bub15]{Bubeck}
S.~Bubeck.
\newblock Convex optimization: Algorithms and complexity.
\newblock 8(3,4):232, 2015.

\bibitem[CCG12]{CCG}
P.~Cattiaux, D.~Chafai, and A.~Guillin.
\newblock Central limit theorems for additive functionals of ergodic markov
  diffusions.
\newblock {\em ALEA, Lat. Am. J. Probab. Math. Stat.}, 9:337--382, 2012.

\bibitem[CN91a]{Caplin_competition}
A.~Caplin and B.~Naelbuff.
\newblock Aggregation and imperfect competition: On the existence of
  equilibrium.
\newblock {\em Econometrica}, 59(1):25--59, 1991.

\bibitem[CN91b]{Caplin_election}
A.~Caplin and B.~Naelbuff.
\newblock Aggregation and social choice: A mean voter theorem.
\newblock {\em Econometrica}, 59(1):1--23, 1991.

\bibitem[CvdV12]{CasvdV}
I.~Castillo and A.~van~der Vaart.
\newblock Needles and straw in a haystack: posterior concentration for possibly
  sparse sequences.
\newblock {\em Ann. Stat.}, 40(4), 2012.

\bibitem[CW14]{Cai-Wu}
Tony~T. Cai and Yihong Wu.
\newblock Optimal detection of sparse mixtures against a given null
  distribution.
\newblock {\em IEEE Transactions on Information Theory}, 60(4):2217--2232,
  2014.

\bibitem[Dal17]{Dalalyan}
A.~Dalalyan.
\newblock Theoretical guarantees for approximate sampling from a smooth and
  log-concave density.
\newblock {\em Journal of the Royal Statistical Society}, 79:651--676, 2017.

\bibitem[DL82]{Dowson}
D.~C. Dowson and B.~V. Landau.
\newblock The fréchet distance between multivariate normal distributions.
\newblock {\em Journal of Multivariate Analysis}, 12(3):450--455, 1982.

\bibitem[DM19]{durmus}
A.~Durmus and E.~Moulines.
\newblock High-dimensional {B}ayesian inference via the unadjusted langevin
  algorithm.
\newblock {\em Bernoulli}, 25(4A):2854, 2019.

\bibitem[DMM19]{Durmus3}
A.~Durmus, S.~Majewski, and B.~Miasojedow.
\newblock Analysis of langevin monte carlo via convex optimization.
\newblock {\em J. Mach. Learn. Res.}, 20:1, 2019.

\bibitem[DRD20]{Dalalyan-Bernoulli}
A~Dalalyan and L.~Riou-Durand.
\newblock On sampling from a log-concave density using kinetic {L}angevin
  diffusions.
\newblock {\em Bernoulli}, 26:1956--1988, 2020.

\bibitem[DRDK20]{Dalalyan2}
A.~Dalalyan, L.~Riou-Durand, and A.~Karagulyan.
\newblock Bounding the error of discretized langevin algorithms for
  non-strongly log-concave targets.
\newblock {\em Bernoulli}, (26):1956--1988, 2020.

\bibitem[EP21]{egea-panloup}
Maxime Egea and Fabien Panloup.
\newblock {Multilevel-Langevin pathwise average for Gibbs approximation}.
\newblock working paper or preprint, September 2021.

\bibitem[GGvdV00]{GhosalGhoshvdVaart}
S.~Ghosal, J.~K. Ghosh, and A.~W. van~der Vaart.
\newblock Convergence rates of posterior distributions.
\newblock {\em Ann. Stat.}, 28(2):500--531, 2000.

\bibitem[GKMM20]{Gadat_mix}
S.~Gadat, J.~Kahn, C.~Marteau, and C.~Maugis.
\newblock Parameter recovery in two-component contamination mixtures: the l2
  strategy.
\newblock {\em Annales de l’Institut Henri Poincaré (B)}, 56(2):1391--1418,
  2020.

\bibitem[GM96]{Meyn_Poisson}
P.~Glynn and S.~Meyn.
\newblock A {L}iapounov bound for solutions of the {P}oisson equation.
\newblock {\em Ann. Probab.}, 24(2):916--931, 1996.

\bibitem[GP22]{Gadat-Panloup}
S.~Gadat and F.~Panloup.
\newblock {Optimal non-asymptotic bound of the Ruppert-Polyak averaging without
  strong convexity}.
\newblock 2022.

\bibitem[GPP22a]{Main}
S.~Gadat, F.~Panloup, and P.~Pellegrini.
\newblock On the cost of {B}ayesian posterior mean strategy for log-concave
  models.
\newblock 2022.

\bibitem[GPP22b]{Suppl}
S.~Gadat, F.~Panloup, and P.~Pellegrini.
\newblock Supplementary materials: On the cost of {B}ayesian posterior mean
  strategy for log-concave models.
\newblock 2022.

\bibitem[HH90]{Heckman_roy}
J.J. Heckman and B.E. Honore.
\newblock The empirical content of the roy model.
\newblock {\em Econometrica}, 58(5):1121--1149, 1990.

\bibitem[HK18]{Heinrich-Kahn}
Philippe Heinrich and Jonas Kahn.
\newblock {Strong identifiability and optimal minimax rates for finite mixture
  estimation}.
\newblock {\em The Annals of Statistics}, 46(6A):2844 -- 2870, 2018.

\bibitem[HMP20]{pages_menozzi}
I.~Honor{é}, S.~Menozzi, and G.~Pagès.
\newblock Non-asymptotic gaussian estimates for the recursive approximation of
  the invariant measure of a diffusion.
\newblock {\em Ann. Inst. H. Poincaré Probab. Statist.}, (56):1559, 2020.

\bibitem[HN16a]{Ho1}
N.~Ho and X.L. Nguyen.
\newblock Convergence rates of parameter estimation for some weakly
  identifiable finite mixtures.
\newblock {\em The Annals of Statistics}, 44(6):2726--2755, 2016.

\bibitem[HN16b]{Ho2}
N.~Ho and X.L. Nguyen.
\newblock On strong identifiability and convergence rates of parameter
  estimation in finite mixtures.
\newblock {\em Electronic Journal of Statistics}, 10(1):271--307, 2016.

\bibitem[HS87]{Holley}
R.~Holley and D.~Stroock.
\newblock Logarithmic {S}obolev inequalities and stochastic {I}sing model.
\newblock {\em J. Stat. Phys.}, (46):1159--1194, 1987.

\bibitem[IH81]{IbraHas}
I.~Ibragimov and R.~Has'minskii.
\newblock {\em Statistical Estimation, Asymptotic Theory}.
\newblock Stochastic Modelling and Applied Probability. Springer, 1981.

\bibitem[KLS95]{KLS}
R.~Kannan, L.~Lovasz, and M.~Simonovits.
\newblock Isoperimetric problems for convex bodies and a localization lemma.
\newblock {\em Discrete Comput. Geom.}, 13:541--559, 1995.

\bibitem[KS16]{Samworth}
A.~K.~H. Kim and R.~J. Samworth.
\newblock Global rates of convergence in log-concave density estimation.
\newblock {\em Ann. Stat.}, 44(6):2756--2779, 2016.

\bibitem[Kur98]{Kurdyka}
K.~Kurdyka.
\newblock On gradients of functions definable in o-minimal structures.
\newblock {\em Ann. Inst. Fourier (Grenoble)}, 48(3):769--783, 1998.

\bibitem[LC86]{LeCam}
L.~M. Le~Cam.
\newblock {\em Asymptotic Methods in Statistical Decision Theory}.
\newblock Springer, New York, 1986.

\bibitem[Led01]{Ledoux}
M.~Ledoux.
\newblock {\em The concentration of measure phenomenon}, volume~89 of {\em
  Mathematical Surveys and Monographs}.
\newblock American Mathematical Society, Providence, RI, 2001.

\bibitem[LMMR18]{Laurent-Marteau-Maugis}
Béatrice Laurent, Clément Marteau, and Cathy Maugis-Rabusseau.
\newblock {Multidimensional two-component Gaussian mixtures detection}.
\newblock {\em Annales de l'Institut Henri Poincaré, Probabilités et
  Statistiques}, 54(2):842 -- 865, 2018.

\bibitem[Loj63]{Lojasiewicz}
S.~Lojasiewicz.
\newblock Une propri\'et\'e topologique des sous-ensembles analytiques r\'eels.
\newblock {\em Editions du centre National de la Recherche Scientifique, Paris,
  Les \'Equations aux D\'eriv\'ees Partielles}, pages 87--89, 1963.

\bibitem[LP02]{lamberton_pages}
D.~Lamberton and G.~Pag\`es.
\newblock Recursive computation of the invariant distribution of a diffusion.
\newblock {\em Bernoulli}, 8(3):367--405, 2002.

\bibitem[LP03]{LP03}
D.~Lamberton and G.~Pag{\`e}s.
\newblock Recursive computation of the invariant distribution of a diffusion:
  the case of a weakly mean reverting drift.
\newblock {\em Stoch. Dyn.}, 3(4):435--451, 2003.

\bibitem[LT88]{Tirole_contracts}
J.J. Laffont and J.~Tirole.
\newblock Dynamics of incentive contracts.
\newblock {\em Econometrica}, 56(5):1153--1175, 1988.

\bibitem[MCC{\etalchar{+}}21]{Jordan-acceleration}
Y.~Ma, N.~Chatterji, X.~Cheng, N.~Flammarion, P.~Bartlett, and M.~Jordan.
\newblock Is there an analog of nesterov acceleration for {MCMC}?
\newblock {\em Bernoulli}, 3(27):1942--1992, 2021.

\bibitem[MCJ{\etalchar{+}}19]{Jordan-Flammarion}
Y.~Ma, Y.~Chen, C.~Jin, N.~Flammarion, and M.~Jordan.
\newblock Sampling can be faster than optimization.
\newblock {\em Proceedings of the National Academy of Sciences (PNAS)},
  116:20881–20885, 2019.

\bibitem[Mey71]{Meyer}
P.A. Meyer.
\newblock Solutions de l'\'equation de poisson dans le cas r\'ecurrent.
\newblock {\em S\'eminaire de probabilit\'es de Strasbourg}, 5:251--269, 1971.

\bibitem[MFWB19]{Improved_LSI}
W.~Mou, N.~Flammarion, M.~Wainwright, and P.~Bartlett.
\newblock {Improved bounds for discretization of Langevin diffusions:
  Near-optimal rates without convexity}.
\newblock arXiv preprint arXiv:1907.11331, 2019.

\bibitem[MHW{\etalchar{+}}19a]{Ho3}
W.~Mou, N.~Ho, M.~J. Wainwright, P.~Bartlett, and M.~I. Jordan.
\newblock Sampling for bayesian mixture models: Mcmc with polynomial-time
  mixing.
\newblock {\em Preprint}, 2019.

\bibitem[MHW{\etalchar{+}}19b]{Jordan-Wainwright}
W.~Mou, N.~Ho, M.J. Wainwright, P.~Bartlett, and M.I. Jordan.
\newblock A diffusion process perspective on posterior contraction rates for
  parameters.
\newblock {\em Preprint}, 2019.

\bibitem[Nes04]{Nesterov}
Y.~Nesterov.
\newblock {\em Introductory Lectures on Convex Optimization. A basic course.}
\newblock Applied Optimization. Kluwer Academic Publishers, Boston, MA, 2004.

\bibitem[Nev76]{Neveu}
J.~Neveu.
\newblock Potentiel markovien recurrent des chaines de harris.
\newblock {\em Ann. Inst. Fourier (Grenoble)}, 22:7--130, 1976.

\bibitem[PP12]{pagespanloup2012}
Gilles Pag\`es and Fabien Panloup.
\newblock Ergodic approximation of the distribution of a stationary diffusion:
  Rate of convergence.
\newblock {\em Ann. Appl. Probab.}, 22(3):1059--1100, 06 2012.

\bibitem[PP18]{pagespanloup2018}
Gilles Pag\`es and Fabien Panloup.
\newblock Weighted multilevel {L}angevin simulation of invariant measures.
\newblock {\em Ann. Appl. Probab.}, 28(6):3358--3417, 2018.

\bibitem[Rev84]{Revuz}
D.~Revuz.
\newblock {\em Markov Chains}.
\newblock 2nd ed. North-Holland, Amsterdam, 1984.

\bibitem[Sch65]{Schwartz}
L.~Schwartz.
\newblock On {B}ayes procedures.
\newblock {\em Z. Wahrscheinlichkeitstheorie verw. Geb.}, 4(1), 1965.

\bibitem[SW01]{ShenWasserman}
X.~Shen and L.~Wasserman.
\newblock Rates of convergence of posterior distributions.
\newblock {\em Ann. Stat.}, 29, 2001.

\bibitem[SW14]{SW}
A.~Saumard and J.A. Wellner.
\newblock Log-concavity and strong log-concavity: a review.
\newblock {\em Stat. Surv.}, 8:45--113, 2014.

\bibitem[Tal90]{talay}
D.~Talay.
\newblock Second order discretization schemes of stochastic differential
  systems for the computation of the invariant law.
\newblock {\em Stoch. Stoch. Rep.}, 29(1):13--36, 1990.

\bibitem[vdVvZ08]{vZvdV}
A.~van~der Vaart and J.H. van Zanten.
\newblock Rates of contraction of posterior distributions based on gaussian
  process priors.
\newblock {\em Ann. Stat.}, 36(3), 2008.

\bibitem[Wal09]{Walther}
G.~Walther.
\newblock Inference and modeling with log-concave distributions.
\newblock {\em Stat. Sci.}, 24(3):319--327, 2009.

\end{thebibliography}

\end{proof}
\end{document}